\newtheorem{prop}{Proposition}
\newtheorem{lemma}{Lemma}
\newtheorem{definition}{Definition}
\newtheorem{theorem}{Theorem}
\newtheorem{proposition}{Proposition}
\newtheorem{remark}{Remark}
\newtheorem{conjecture}{Conjecture}
\def\real{{\mathord{{\rm I\kern-2.8pt R}}}}        
\def\inte{{\mathord{{\rm I\kern-2.8pt N}}}}
\def\sZZ{{\rm Z\kern-2.8ptem{}Z}}
\def\z{{\mathchoice
  {\sZZ}
  {\sZZ}
  {\rm Z\kern-0.30em{}Z}
  {\rm Z\kern-0.25em{}Z} }}
\def\sQQ{{\kern 0.27em \vrule height1.45ex width0.03em depth0em
          \kern-0.30em \rm Q}}
\def\qu{{\mathchoice
    {\sQQ}
    {\sQQ}
  {\kern 0.225em \vrule height1.05ex width0.025em depth0em \kern-0.25em \rm Q}
  {\kern 0.180em \vrule height0.78ex width0.020em depth0em \kern-0.20em \rm Q}
        }}
\def\sCC{{\kern 0.27em \vrule height1.45ex width0.03em depth0em
          \kern-0.30em \rm C}}
\def\complex{{\mathchoice
    {\sCC}
    {\sCC}
  {\kern 0.225em \vrule height1.05ex width0.025em depth0em \kern-0.25em \rm C}
  {\kern 0.180em \vrule height0.78ex width0.020em depth0em \kern-0.20em \rm C}
        }}
\newcommand{\ba}{\begin{array}}
\newcommand{\ea}{\end{array}}
\newcommand{\be}{\begin{equation}}
\newcommand{\ee}{\end{equation}}
\newcommand{\bea}{\begin{eqnarray}}
\newcommand{\eea}{\end{eqnarray}}
\newcommand{\beaa}{\begin{eqnarray*}}
\newcommand{\eeaa}{\end{eqnarray*}}
\def\z{\zeta}
\font\tenmath=msbm10 \font\sevenmath=msbm7 \font\fivemath=msbm5
\def \={{\buildrel {\rm (law)} \over =}}
\def\qed{ \hfill \vrule width.25cm height.25cm depth0cm\smallskip}
\newcommand{\basa}{\begin{assumption}}
\newcommand{\easa}{\end{assumption}}
\newcommand{\bas}{\begin{assum}}
\newcommand{\eas}{\end{assum}}
\newcommand{\N}{\mathbb{N}}
\newcommand{\E}{\mathbb{E}}
\newcommand{\ignore}[1]{}
\begin{document}

\renewcommand{\thefootnote}{\fnsymbol{footnote}}

\renewcommand{\thefootnote}{\fnsymbol{footnote}}

\title{Free Malliavin-Stein-Dirichlet method: multidimensional semicircular approximations and chaos of a quantum Markov operator}

\author[1,2]{Charles-Philippe Diez \thanks{charles-philippe.diez@univ-lille.fr}}
\affil[1]{CNRS, Universit\'e de Lille,
Laboratoire Paul Painlev\'{e}.}
\affil[2]{Department of Statistics, The Chinese University of Hong-Kong.}

\renewcommand\Authands{ and }

\maketitle

\begin{abstract}
We combine the notion of free Stein kernel and the free Malliavin calculus to provide quantitative bounds under the free (quadratic) Wasserstein distance in the multivariate semicircular approximations for self-adjoint vector-valued multiple Wigner integrals. On the way, we deduce an HSI inequality for a modified non-microstates free entropy with respect to the potential associated with these semicircular families in the case of non-degeneracy of the covariance matrix. The strategy of the proofs is based on functional inequalities involving the free Stein discrepancy. We obtain a bound which depends on the second and fourth free cumulant of each component. We then apply these results to some examples such as the convergence of marginals in the free functional Breuer-Major CLT for the non commutative fractional Brownian motion, and we provide a bound for the free Stein discrepancy with respect to semicircular potentials for {\it q-semicirculars operators}. Lastly, we develop an abstract setting on where it is possible to construct a free Stein Kernel with respect to the semicircular potential: the quantum chaos associated to a quantum Markov semigroup whose $L^2$ generator $\Delta$ can be written as the square of a real closable derivation $\delta$ valued into the square integrable bi-processes or into a direct sum of the coarse correspondence.
\end{abstract}

\vskip0.3cm

{\bf 2010 AMS Classification Numbers:}   46L54, 60H07, 60H30.

\vskip0.3cm

{\bf Key Words and Phrases}: Free probability, Wigner chaos, Malliavin calculus, quantum Markov semigroup.

\section{Introduction}
Stein's method, invented by Charles Stein in 1972 \cite{Stein}, is a powerful tool to prove the central limit theorem and to obtain bounds for distances between probability measures. In the classical case, a lot has been discovered on this topic using various techniques such as exchangeable pairs (Nathan Ross in \cite{NR}) or Malliavin calculus (see the monograph of Nourdin and Peccati \cite{NP-book} for a complete exposition). The rate of convergence under numerous distances and conditions of convergence in the univariate and multidimensional cases to reference measures are well known (see e.g.  Kusuoka and Tudor \cite {ST} and a constantly updated webpage maintained by Nourdin \url{https://sites.google.com/site/malliavinstein/home}, for a complete list of papers related to {\it fourth moment theorems}). Transportation cost inequalities and functional inequalities between quantities such as Wasserstein distance, entropy or Fisher information were proved using various branches of Mathematics: PDE techniques (Otto and Villani \cite{VI}), semigroup approach or Gamma-calculus (see for example the monograph of Bakry, Gentil Ledoux \cite{BGL} to have a complete exposure of the theory). These results have been very useful in many applications, for example, in the topic of concentration of measure (e.g Gozlan \cite{G}). It is still a topic of interest for many researchers in the commutative and free case, which try to improve these inequalities. One can mention the remarkable and recent paper \cite{LNP} of Ledoux, Nourdin and Peccati (2015) which improves respectively the log-Sobolev and Talagrand transportation cost inequalities by two new inequalities called {\it "HSI" and "WSH"}, which links a new quantity called the Stein discrepancy.
\bigbreak
Motivated by deep problems related to von Neumann algebras (with a particular interest for the von Neumann algebras generated by free groups) such as the existence of prime property or Cartan subalgebras for $\Pi_1$ factors (the precises definitions could be find in general textbooks of von Neumann algebras, see for example the book of Sinclair \cite{sain}), Voiculescu, in a series of breakthrough papers \cite{Voic1}, \cite{Voic2}, has developed powerful techniques to have a deeper understanding of their structure via the idea of free probability. In fact, he discovered an analog of classical independence called free independence, he introduced the notion of free convolution which led to a free central limit theorem and proved that the large $N$ limit of $N\times N$ Gaussian random matrices behave as semicircular systems. Voiculescu in \cite{V} was also able to define a free analogue of information theory: free Fisher information, free entropy (microstates and non-microstates), non-commutative Hilbert transform... Voiculescu has shown they behave as well (for example, change of variable for microstates entropy) as in the commutative case and are very useful to prove results for the von Neumann algebras $W^*(X_1,\ldots,X_n)$ generated by $X_1,\ldots,X_n$ elements of a finite von Neumann algebra equipped with a faithful normal tracial state. For example, under finite microstates entropy, these von Neumann algebras does not have Cartan subalgebras and are primes (Voiculescu \cite{Voic2}, Ge \cite{Ge}).
\bigbreak
Unfortunately, two notions of entropy have appeared and, until recently it has remain an open problem to prove or disprove that the two quantities are equal (the Connes embedding conjecture which have been claimed to hold false, by a recent result of Zhengfeng Ji, Anand Natarajan, Thomas Vidick, John Wright, Henry Yuen
\cite{CEC}, and which states that any separable $II_1$ factor embeds into an ultrapower of the hyperfinite factor $R$, which would imply that the two entropies cannot agree in full generality). However, one has an important inequality due to Biane, Capitaine and Guionnet which shows by that the non-microstate entropy is greater than or equal to the microstate entropy (see \cite{BGC}). Recently, Dabrowski in \cite{Dab16} was able to show the equality between the two entropies for tuples satisfying a Schwinger-Dyson equation with a subquadratic bounded from below strictly convex potentials with Lipschitz derivative sufficiently approximable by non-commutative polynomials by means of Malliavin calculus and in particular the Boué-Dupuis formula. In light of these results, one could also point out that the associated free energy/relative entropy appears as the large deviations rate function for the empirical spectrum of large random matrices. It also detects freeness: a tuple of non-commutative random variables are free if and only if the entropy of the tuple is the sum of entropy of each one.
Numerous inequalities which hold true in the commutative case have been proved to hold true in the context of free probability : {\it Free Stam} inequality, {\it Cramer-Rao bound}, {\it log-Sobolev} inequality with respect to the semicircular potential, due to Voiculescu \cite{V}, Biane and Speicher in \cite{BS} in the one dimensional case and more recently the free "{\it HSI}" inequality proved by Fathi and Nelson for semicircular potential with homothetic covariance in \cite {FN} (where "H" stands for the free entropy, "S" for the free Stein discrepancy and "I" for the free Fisher information). We will focus on the last one, whose proof relies on the study of free Stein kernels. In fact, surprisingly and contrary to the commutative case where their existence is not always ensured,  Cébron, Fathi and Mai showed that the existence of free Stein kernels relative to a potential is always ensured provided a moment condition relative to the cyclic derivative potential is fulfilled (see \cite{FCM} for details and construction by two different methods).

\bigbreak
Considering analogies between Wigner and Wiener chaos, Kemp, Nourdin, Peccati and Speicher in their remarkable work \cite{KNPS}, have studied the convergence of self-adjoint multiple Wigner integral (living in a fixed homogeneous Wigner chaos) toward a free $(0,1)$ semicircular variable and were able to provide a free probabilistic analog of the fourth moment theorem. They also obtained a quantitative bound under a distance $d_{C_2}$ (defined over smooth functions such that the non-commutative derivative of the derivative is bounded by 1) for second order 
multiple Wigner integrals with "mirror-symmetric" kernel by means of free stochastic analysis and especially free Malliavin Calculus introduced by Biane, Speicher in \cite{BS}. These objects are respectively the analogue in the context of free probability of the well-known Wiener chaos and Gaussian random variable. In 2017, Bourguin and Campese \cite{BC} strongly improved the result, by using a new product formula for bi-integrals, and obtained a more general bound for the distance $d_{C_2}$ and fully-symmetric multiple Wigner integral of any order, with a constant which grows linearly with the order of the chaos. In 2018, Cébron \cite{C} has extended the results by proving that the free Stein discrepancy is bounded by the fourth free cumulant by constructing a new free Stein kernel (with respect to the potential associated with a free $(0,1)$ semicircular variable) for all (centered) self-adjoint elements in Wigner chaos. In particular, in his proofs, the fully-symmetry assumption is no longer required. He was also able to get a quantitative bound for the Wasserstein distance (introduced by Biane and Voiculescu in \cite{BV}) between a self-adjoint multiple Wigner integral with and a free $(0,1)$ semicircular variable which involves again the fourth free cumulant to the pow $\frac{1}{4}$ instead of the usual square root with a constant dependent on the order of chaos to the pow $\frac{3}{4}$. He has also shown that the distance $d_{C_2}$ is weaker than the quadratic Wasserstein distance (the non commutative and the commutative notions coincide in dimension one by the result of Biane and Voiculescu \cite{BV}).
\bigbreak
Following the ideas of Cébron in \cite{C}. We extend here the results to the multivariate semicircular approximation for semicircular family with covariance $C$ (a symmetric positive definite matrix) by constructing a free Stein kernel with respect to their associated potential for all self-adjoint tuple belonging to some (non necessarily homogeneous) finite Wigner chaos, and we also obtain a quantitative estimation for the free quadratic Wasserstein distance between a tuple of multiple Wigner integrals and a semicircular family with a non degenerate covariance matrix which involves the second and fourth free cumulants of each components, which will allow us to prove in an easier way the theorem 1.3 of Nourdin, Peccati and Speicher \cite{NPS}. This last result is in fact the free analog of the famous multivariate fourth moment theorem on Wiener chaos which was proved by Peccati and Tudor in \cite{PT}, and where several years later the powerful tools of Malliavin-Stein method developed in the multidimensional case by Nourdin, Peccati and Réveillac in \cite{NPR} has allowed to obtain quantitative bounds for usual $1$-Wasserstein distance between functionals of a centered Gaussian isonormal process and a centered Gaussian vector with strictly positive covariance matrix. In the construction, we especially see the importance of the Ornstein-Uhlenbeck operator (the infinite dimensional Laplacian on the Wiener space) and its pseudo-inverse, the key lemma to obtain this theorem being a multivariate counterpart of the Stein's identity for multivariate Gaussian vectors.
Contrary to the classical case, our strategy is based on functional inequalities, especially on the links between the free quadratic Wasserstein distance and free Stein discrepancy with respect to strictly convex semicircular potentials. A starting point of our investigations is related to the notion of conjugate variables with respect to a potential, which will be specified later in the paper. An inequality between the free Fisher information associated with these potential along the flow of a free stochastic differential equation and the associated free Stein Discrepancy with our target will be provided. The methods to find these bounds are generalized ideas of breakthrough papers, such as: Dabrowski in \cite{Dab10} for the notion of non-commutative path space to construct stationary solutions of free stochastic differential equations which lead to functional inequalities, and on the other side, a construction of a new free Stein kernel for all self-adjoint elements in some homogeneous Wigner chaos by Cébron in \cite{C} which allows relaxing the fully-symmetry assumption (which was a necessary condition for the free Stein kernel constructed in \cite{KNPS} or \cite{BC}, remark 3.9). In contrast to the Gaussian case, we have to avoid the use of the free Ornstein-Uhlenbeck operator (and its inverse), and use better the properties of operators of free Malliavin and Ito calculus.
\bigbreak
Lastly, we will develop an abstract setting build on quantum Markov semigroup (QMS) on where it might be plausible to derive quantitative fourth moment theorems. In particular, we will give a possible notion of chaotic random variables in this context. That is, we begin with a real closable derivation valued into the square integrable biprocesses or valued in a direct sum of the coarse correspondence. We construct via the associated carré du champ, a free Stein kernel with respect to the standard semicircular potential (we can also extend the results to more general free Gibbs state). Note that, this particular assumption over the derivation is necessary as the Schwinger-Dyson equation which characterize a semicircular family involved free difference quotient which are valued in this coarse correspondence. Moreover, one cannot expect to have convergence toward the semicircular law for a general setting since there are uncountably many non-isomorphic $II_1$ separable factors (Mc Duff \cite{MC}). In particular, it will be of interest to show that for the ones we have constructed, under suitable assumptions, possibly on the magnitude of the eigenvalues (discrete spectrum) of the corresponding generator of the completely Dirichlet form, or existence of non-amenability set, (see Dabrowski \cite{DAB} v1, Dabrowski and Ioana \cite{DI}) the underlying von Neumman algebra behaves as the free groups factors $L(\mathbb{F}_n),1\leq n\leq \infty$: non-Gamma property, strong solidity, absence of Cartan subalgebras, primeness or even non $L^2$-rigidity results (see Peterson \cite{Pete}).
\newpage
\section{Definitions and notations}
\begin{flushleft}
Let us denote $\mathcal{M}$ a von Neumann algebra equipped with $\tau$ a faithful normal state.
\newline
Let $\mathds{P} =\mathbb{C}\langle t_1,...,t_n \rangle$ be the algebra of non-commutative polynomials in $n$ variables $t_1,...,t_n$.
\end{flushleft}
\begin{definition}
A free Stein kernel for a $n$-tuple $X$ with respect to a potential $V\in \mathds{P}$ is an element of $L^2(M_n(\mathcal{M}\bar{\otimes} \mathcal{M}^{op}),(\tau \otimes \tau^{op}) \circ Tr)$ such that for any $P \in \mathds{P} ^n$:
\begin{equation}
    \langle [DV](X),P(X)\rangle_{\tau}=\langle A,[\mathcal{J}P](X)\rangle_{\tau \otimes \tau^{op}}
\end{equation}
\end{definition}
\begin{flushleft}
The Stein discrepancy of $X$ relative to $V$ is then defined as :
\end{flushleft}
\begin{equation}
\Sigma ^*(X|V)=\inf_{A}\lVert A-(1\otimes 1^{op})\otimes I_n\rVert_{L^2(M_n(\mathcal{M}\bar{\otimes} \mathcal{M}^{op}),(\tau \otimes \tau^{op}) \circ Tr)}
\end{equation}
where the infinimum is taken over all admissible Stein kernel $A$ of $X$ relative to $V$.

\begin{flushleft}
Here $DV$ is a cyclic gradient and $\mathcal{J}P$ is the Jacobian matrix of $P$ which will be defined in the forthcoming pages.
\end{flushleft}
\begin{flushleft}
Recall by the GNS construction, $\tau$ defines an inner product on $\mathcal{M}$ by setting for all $x,y \in \mathcal{M}$.
\end{flushleft}
\begin{equation}
    \langle x,y\rangle_{\tau}=\tau (x^*y)\nonumber
\end{equation}
\begin{flushleft}
The completion of $M$ with respect to the induced norm
$\lVert.\rVert_{\tau}$ is denoted $L^2(\mathcal{M},\tau)$. We will omit to denote the state when its clearly defined and denote $\lVert.\rVert_{\tau}$  as $\lVert.\rVert_{2}$ and $L^2(\mathcal{M},\tau)$ as $L^2(\mathcal{M})$. We can also define in the same way the spaces $L^p(\mathcal{M},\tau)$ for $1\leq p\leq\infty $ by taking the completion with respect to the norm :
\end{flushleft}
\begin{equation}
    \lVert x\rVert_p=\tau(\lvert x\rvert^p)^{\frac{1}{p}}
\end{equation}
where $\lvert x\rvert=(x^*x)^{\frac{1}{2}}$
and $L^{\infty}(\mathcal{M},\tau):=\mathcal{M}$ equipped with the operator norm $\rVert .\lVert$.
\begin{flushleft}
One also have (more generally for every unital $C^*$ algebra) for a faithful state $\tau$ and $x\in M$:
\begin{equation}
    \lVert x\rVert=\lim_{n\rightarrow \infty}\tau( (x^*x)^{n})^{\frac{1}{2n}}
\end{equation}
\end{flushleft}
\begin{flushleft}
From the von Neumann tensor product $\mathcal{M}\bar{\otimes} \mathcal{M}^{op}$ equipped with operator norm : $\lVert .\rVert_{\mathcal{M}\bar{\otimes} \mathcal{M}^{op}}$, and the faithful normal state $\tau\otimes \tau^{op}$, we can consider the Hilbert space $L^2(\mathcal{M}\bar{\otimes} \mathcal{M}^{op},\tau\otimes \tau^{op})$. This space can be identified with $HS(L^2(\mathcal{M}))$ which is the space of Hilbert–Schmidt operators on $L^2(M)$ via the following map.
\end{flushleft}
\begin{equation}\label{HS}
    x\otimes y \mapsto \langle y,.\rangle_2 x\nonumber
\end{equation}
For a $n$-tuple $X$ we define $\lVert X\rVert=\max_j \lVert x_j\rVert$.
\begin{flushleft}
We will also write $C^{*}(X)$ and $W^{*}(X)$ for the $C^*$-algebra and von Neumann algebra generated by $X=(x_1, . . . , x_n)$.
\end{flushleft}
\begin{flushleft}
Let $\mathcal{A}_1,..,\mathcal{A}_n$ be von Neumann subalgebras of $\mathcal{M}$. These subalgebras are called $free$ if for all $n\in \mathbb{N}$ and all indices $i_1\neq i_2\neq\ldots\neq i_n$ such as $\tau(A_{j})=0$ and $A_{j}\in\mathcal{A}_{i_j}$, then $\tau(A_{1}\ldots A_{n})=0$.
\end{flushleft}
\begin{flushleft}
We will say that a family $a_1,\ldots a_n\in \mathcal{A}$ are free from another family $b_1,...,b_n$ if $W^*(a_1,\ldots, a_n)$ and $W^*(b_1,\ldots,b_n)$ are free.
\end{flushleft}
\begin{flushleft}
Given another $n$-tuple $Y = (y_1,...,y_n) \in \mathcal{M}^n$, we write
$\langle X,Y\rangle_{\tau} =\sum_{j=1}^n\langle x_j,y_j\rangle_{\tau}$ and when $\tau$ is cleared we will denote it just by $\langle X,Y\rangle_{2}$.
\end{flushleft}

\begin{remark}\label{adjo}
In the sequel, we will use linear transformation of an element in $\mathcal{M}^n$: for a vector $X=(x_1,\ldots,x_n)\in \mathcal{M}^n$ by $C$ a matrix in $M_n(\mathbb{C})$, we will denote $CX$ an element of $\mathcal{M}^n$ as:
\begin{equation}
    CX=\left(\sum_{j=1}^nC_{i,j}x_j\right)_{i=1}^n
\end{equation}
We will be mainly concerned with $C$ a real symmetric positive definite matrix which allows us to deduce the following but useful result (also true in the case where $C$ is hermitian).
\begin{equation}
\langle CX,Y\rangle_2=\langle X,CY\rangle_2
\end{equation}
\end{remark}
We easily notice that $M_n(\mathbb{C})$ embeds in $M_n(\mathbb{C})\otimes \mathcal{M}$ as $M_n(\mathbb{C})\otimes 1$ and thus can be seen as von Neumann subalgebra of $M_n(\mathcal{M})$ equipped with the usual matrix operations (addition, multiplication and involution) and endowed with the operator norm $\lVert .\rVert_{M_n(\mathcal{M})}$.
\bigbreak
If one allows usual matrix multiplications notations, one can also write for $X,Y\in M_{1,n}(\mathcal{M})$ (identified as $\mathcal{M}^n$):
\begin{equation}
\langle CX,Y\rangle_2=(CX)^*Y,
\end{equation}
We recall the following inequality which can be easily deduced from the positivity of $\tau$.
\bigbreak 
For $x,y \in \mathcal{M}$, we have:
\begin{equation}
    \lVert xy\rVert_2\leq \lVert x\rVert\lVert y\rVert_2,
\end{equation}
\bigbreak
We can also deduce for $C\in M_n(\mathbb{C})$ and $X,Y \in {L^2(\mathcal{M}^n)}$:
\begin{equation}
    \lvert\langle CX,Y\rangle_2\rvert \leq \lVert C\rVert_{op}\lVert X\rVert_2\lVert Y\rVert_2,
\end{equation}
We now remind some properties of non-commutative differential calculus. A complete monograph of all the theory and even further results is contained in \cite{MS} of Mai and Speicher.
\begin{definition}[Voiculescu, \cite{Voic1}]
Formally, we define the cyclic derivative on monomials $m\in \mathds{P}$ as :
\begin{equation}
    Dp=(D_1p,...,D_np),\nonumber
\end{equation}
where
\begin{equation}
D_jm=\sum_{m=at_{j}b}ba,\nonumber
\end{equation}
and then we extend linearly to $\mathds{P}$.
\end{definition}
\begin{definition}(Voiculescu section 3 in \cite{V})
The $j$-free difference quotient is defined as:
\begin{equation}
    \partial_jp=\sum_{m=at_jb}a\otimes b^{op},\nonumber
\end{equation}
and then extended linearly to $\mathds{P}$.
\end{definition}
It is now obvious that the free difference quotients and cyclic derivatives are related by the following equations:
\begin{equation}
    D_j=m\circ flip (\partial_j),
\end{equation}
where for $A=a\otimes b,B=c\otimes d \in M\otimes M^{op}$, 
$flip(A)=b\otimes a$ and $m(B)=cd$.
\begin{flushleft}
We will also denote for $A,B \in M_n(\mathcal{M}\bar{\otimes} \mathcal{M}^{op}),(\tau\otimes\tau^{op})\circ Tr)$ (identified with $(\mathcal{M}\otimes \mathcal{M}^{op})\otimes M_n(\mathbb{C}))$, the inner product: \begin{eqnarray}
    \langle A,B\rangle_{(\tau\otimes\tau^{op})\circ Tr}=\sum_{i,j=1}^n \langle [A_{j,k}],[B_{j,k}]\rangle_{\tau\otimes\tau^{op}}\nonumber
    =(\tau\otimes\tau^{op})\circ Tr(A^*.B)
\end{eqnarray}
\end{flushleft}
where "$.$" stands the multiplication in $M_n(\mathcal{M}\bar{\otimes} \mathcal{M}^{op})$. Note that the trace $(\tau\otimes\tau^{op})\circ Tr$ in non-normalized).

\begin{flushleft}
One also can take the completion of $M_n(\mathcal{M}\bar{\otimes} \mathcal{M}^{op})$ with respect to this inner product and denote the completion as $L^2(M_n(\mathcal{M}\bar{\otimes} \mathcal{M}^{op}),(\tau\otimes\tau^{op})\circ Tr)$.
And when the state is clearly fixed, we will denote it simply as $\langle A,B\rangle_{HS}$.
\end{flushleft}
\bigbreak
The space $M_n(\mathcal{M}\bar{\otimes} \mathcal{M}^{op})$ is a von Neumann algebra equipped again with the usual operations (addition, multiplication and involution) and endowed with the operator norm which will be denoted as $\lVert .\rVert_{M_n(\mathcal{M}\bar{\otimes }\mathcal{M}^{op})}$ to avoid confusion. One can obtain the unique norm making $M_n(\mathcal{M}\bar{\otimes} \mathcal{M}^{op})$ a $C^*$-algebra by using the GNS construction $\pi : \mathcal{M}\rightarrow \mathcal{B}(\mathcal{H})$. It implies it particular that the norm is independent of the choice of such representation, since one-to-one $*$-isomorphism between $C^*$-algebras is isometric. Note that in full generality, we don't have a closed formula to compute this operator norm, however we have the following bounds (see e.g \cite{PS}):
\bigbreak
For $T=(T_{i,j})\in M_n(\mathcal{M}\bar{\otimes} \mathcal{M}^{op})$
\begin{equation}
    \lVert T_{i,j}\rVert_{\mathcal{M}\bar{\otimes}\mathcal{M}^{op}}\leq \lVert (T_{i,j})\rVert_{M_n(\mathcal{M}\bar{\otimes} \mathcal{M}^{op})}\leq \sum_{i,j=1}^n\lVert T_{i,j}\rVert_{\mathcal{M}\bar{\otimes} \mathcal{M}^{op}}
\end{equation}
\begin{flushleft}
For an element $C \in M_n(\mathbb{C})$, we will write the element $(1 \otimes 1^{op})\otimes C$ the element of 
$M_n(\mathcal{M}\bar{\otimes}\mathcal{M}^{op})$ (this element can also be identified with $\sum_{i,j=1}^n (C_{i,j}.1\otimes1^{op}) \otimes E_{i,j}$ where $\left(E_{i,j}\right)_{i,j=1}^n$ is the usual basis of $M_n(\mathbb{C})$:
\begin{equation}
    \left((1 \otimes 1^{op}) \otimes C\right)=\left(C_{i,j}.1\otimes 1^{op}\right)_{i,j=1}^n,\nonumber
\end{equation}
\end{flushleft}
It is known that it is very difficult to compute the norm $\lVert .\rVert_{M_n(\mathcal{M}\bar{\otimes}\mathcal{M}^{op}}$, fortunately we can actually prove that for the following type of matrices, we have:
\begin{equation}
    \lVert (1\otimes 1)\otimes C\rVert_{M_n(\mathcal{M}\bar{\otimes} \mathcal{M}^{op})}=\lVert C\rVert_{op},\nonumber
\end{equation}
Where, as usual, $\lVert.\rVert_{op}$ stands for the usual matrix operator norm which is the largest singular value of $C$ : $\sqrt{\rho(CC^*)}$ where $\rho$ is the spectral radius in $M_n(\mathbb{C})$.
It it simply achieved using the cross norm property of the tensor norm in $M_n(\mathbb{C})\otimes (\mathcal{M}\bar{\otimes}\mathcal{M}^{op}).$
\begin{remark}
It is then easily checked for $C,D\in M_n(\mathbb{C})$:
\begin{equation}\label{rem2}
    \left((1\otimes 1^{op})\otimes C\right).\left((1\otimes 1^{op})\otimes D\right)=(1\otimes 1^{op})\otimes CD,\nonumber
\end{equation}
\end{remark}
This also preserve the commutation relations between two matrices and, indeed for $C,D \in M_n(\mathbb{C})$ which commutes, we have:
\begin{equation}
    \left((1\otimes 1^{op})\otimes C\right).\left((1\otimes 1^{op})\otimes D\right)=(1\otimes 1^{op})\otimes CD\nonumber=(1\otimes 1^{op})\otimes DC,\nonumber
\end{equation}
and for $ K \in GL_n(\mathbb{C})$, $(1\otimes1^{op})\otimes K$ is invertible in $M_n(\mathcal{M}\bar{\otimes} \mathcal{M}^{op})$ with inverse: $(1\otimes 1^{op}) \otimes K^{-1}$.
\bigbreak
One also has the inequality for $A,B \in M_n(\mathcal{M}\bar{\otimes}\mathcal{M}^{op})$.
\begin{equation}
    \lVert AB\rVert_{HS}\leq \lVert B\rVert_{M_n(\mathcal{M}\otimes \mathcal{M}^{op}})\lVert A\rVert_{HS},\nonumber
\end{equation}
If the state $\tau$ is also a trace, then we have :
\begin{equation}
    \lVert AB\rVert_{HS}\leq \lVert A\rVert_{op}\lVert B\rVert_{HS},\nonumber
\end{equation}

\begin{definition}
For $P=(p_1,...,p_n)$, we define the non-commutative Jacobian as :
\begin{equation*}
\mathcal{J}P = 
\begin{pmatrix}
\partial_1 p_1 & \partial_2 p_1 & \cdots & \partial_n p_n\\
\vdots  & \vdots  & \ddots & \vdots  \\
\partial_1 p_n & \partial_2 p_n & \cdots & \partial_n p_n
\end{pmatrix} \in M_n(\mathds{P}\otimes \mathds{P}^{op}),\nonumber
\end{equation*}
\end{definition}
Fortunately, the Jacobian enjoys a fundamental chain rule properties (up to natural operations).
In the following, we will be concerned with the  particular case of linear transformation of polynomials.

\begin{flushleft}

Let $t_1,...,t_n$ be non-commutating self-adjoint indeterminates, collected as the $n$-tuple $T = (t_1, . . . , t_n)$. We let $\mathds{P}$ denote the set of polynomials in these indeterminates. For a polynomial $p \in \mathds{P}$ and a monomial $m$, we let $c_{m(p)}\in \mathbb{C}$ which denotes the coefficient of $m$ in $p$. After [18], for each $R > 0$ we define the following norm
\begin{equation}
    \lVert P\rVert_R=\sum_m c_m(p)R^{deg (m)}\nonumber
\end{equation}
where the (finite) sum is running over all monomials appearing in p. We can then take the completion of $\mathds{P}$ with respect to this norm and we will denote the space as $\mathds{P}^{(R)}$, this can be interpreted as the formal power series of radius of convergence at least $R$.
\end{flushleft}
\begin{remark}
We could extend all the previous definitions to the formal power series setting by a simple argument of polynomial approximation.
\end{remark}
\begin{flushleft}
We recall here the definition of free Fisher information, which is defined via conjugate variables.
\end{flushleft}

\begin{definition}(Voiculescu def 3.1 in \cite{V})
 We say that $X=(x_1,\ldots,x_n)$ admits conjugate variables $\xi_{x_1},...,\xi_{x_n}\in L^2(W^*(X),\tau)$, if the following relation holds true for every $p\in \mathds{P}$ :
\begin{equation}
    \langle \xi_{x_j},p(x)\rangle_2=\langle 1\otimes 1^{op},[\partial_jp](X)\rangle_{\tau\otimes\tau^{op}},
\end{equation}
\end{definition}
\begin{definition}[Voiculescu def 6.1 in \cite{V}]
 The free Fisher information of X relative to a potential $V \in \mathds{P}^{(R)} $ (with $R > \lVert X\rVert $) is the quantity :
 \begin{equation}
     \Phi^*(X|V)=\sum_{j=1}^n \lVert \xi_{x_j}-[D_jV](X)\rVert_2 ^2=\lVert \xi_{X}-[DV](X)\rVert_2^2,
 \end{equation}
where 
\begin{equation}
    \xi_X=(\xi_{x_1},...,\xi_{x_n}),
\end{equation}
and 
\begin{equation}
    [DV](X)=\left([D_iV](X),...,[D_nV](X)\right),
    \end{equation}
 if such conjugate variables for $X$ exists and $+ \infty$ otherwise.
\end{definition}
For $V \in \mathds{P}^{(R)}, R > \lVert X\rVert$, we say that the joint law of X with respect to $\varphi$ is a free Gibbs state with potential $V$ if for each $j = 1,...,n$ and each $p \in \mathds{P}$ :
\begin{equation}
    \langle [D_jV](X),p(X)\rangle_2 =\langle 1\otimes 1^{op},[\partial_jp](X)\rangle_{HS}
\end{equation}
That is, if the conjugate variables to $X$ are given by $[D_1V ](X), . . . , [D_nV](X)$. Equivalently, the following equation holds for all $P \in \mathds{P}^n$:
\begin{equation}
    \langle[DV](X),P(X)\rangle_2 =\langle (1\otimes 1^{op})\otimes I_n,[\mathcal{J}P](X)\rangle_{HS}
\end{equation}
\bigbreak
Where $p(X):=ev_X(p)$ is the image of $p$ through the canonical evaluation homomorpism (always surjective but may fail to be injective).
\begin{eqnarray}
    ev_X : \mathds{P} \rightarrow \mathcal{M}\nonumber
\end{eqnarray}
which sends $ev_X(1)=1$ and $ev_X(t_i)=x_i$.
\bigbreak
In the same vein, the Jacobian defined in the section 2 is formally defined on indeterminates $t_1,\ldots,t_n$, and then specified through a matricial canonical evaluation homomorphism :
\begin{eqnarray}
    M_n(ev_Y\otimes ev_Y) : M_n(\mathds{P}\otimes \mathds{P}^{op}) \rightarrow M_n(\mathbb{C}\langle X\rangle \otimes \mathbb{C}\langle X\rangle^{op} )\nonumber
\end{eqnarray}
It is important to recall that we can actually replace the formal indeterminates $T=(t_1,\ldots,t_n)$ by $X=(x_1,\ldots,x_n)$ and write $\partial_i=\partial_{X_i}$ and $J_X$ defined on polynomials acting onto the variable $X$, provided that the tuple $X$ does satisfy no algebraic relations.
\bigbreak
This can rephrased in an algebraic formulation, by saying that the two-sided ideal :
\begin{equation}
    I_X=\left\{P\in \mathds{P}^n, P(x_1,\ldots,x_n)=0\right\}
\end{equation}
is the zero ideal of $\mathds{P}$. Equivalently, it means that the evaluation homomorphism $ev_X$ is in fact an isomorphism.
\bigbreak
There exists a powerful characterization of the absence of algebraic relations due to Mai, Speicher and Weber. Indeed, this condition is always ensured under finite free Fisher information (details could be found in \cite{MSW}).
\bigbreak
This implies in particular that we can see the Jacobian $J_X$ as a densely-defined closable operator on $L^2(W^*(X))^n$ with codomain $M_n(L^2(W^*(X)\bar{\otimes}W^*(X)))$.
\bigbreak
We will denote respectively the adjoint the free difference quotient and Jacobian respectively as: $\partial_i^*$ for each $i=1,\ldots,n$ and $J^*_X$.

\begin{remark}
The previous proposition could also be reformulated in the following way : when $\Phi^*(X) <\infty $, then $(1\otimes 1^{op})\otimes I_n$ belongs to $dom(\mathcal{J}_X^*)$, $\mathcal{J}_X^*$ acts on $B\in M_n(\mathbb{C}\langle X\rangle\otimes\mathbb{C}\langle X\rangle^{op})$ as:
\begin{equation}
    \mathcal{J}_X^*(B)=\left(\sum_{i=1}^{n}\partial_{i}^*(B_{j,i})\right)_{j=1}^{n}:=\left(J_{X,1}^*(B),\ldots,J_{X,k}^*(B)\right),
\end{equation}
Note also that we denote for $Q \in M_n(\mathcal{M}\otimes \mathcal{M}^{op})$ and $X=(X_i)_{i=1}^n$, the action of $Q$ on $X$, $Q\sharp X$ by:
\begin{equation}
    Q\sharp X=\bigg(\sum_{i=1}^n Q_{i,j}\sharp X_i\bigg)_{j=1}^n,
\end{equation}
where for $q=a\otimes b\in \mathcal{M}\otimes \mathcal{M}^{op}$ and $x\in \mathcal{M}$,
\begin{equation}
    q\sharp x=axb
\end{equation}
It is also easy to deduce for $A\in M_n(\mathbb{C})$, and $B\in dom(J_X^*)$, that $\left((1\otimes 1^{op})\otimes A\right).B$ belongs to $dom(J_X^*)$ and:
\begin{equation}
    J_X^*\left(((1\otimes 1^{op})\otimes A\right).B)=AJ_X^*(B),
\end{equation}
\end{remark}
\begin{proof}
Firstly, one can see that $dom(J_X^*)$ admits a left $M_n(\mathbb{C}\langle X\rangle \otimes \mathbb{C}\langle X\rangle^{op})$-action (section 2.2 in \cite{CN}).
\bigbreak
Then a simple computation shows that :
\begin{eqnarray}\label{25}
    J_X^*\bigg((\left(1\otimes 1^{op})\otimes A\right).B\bigg) &=&\left(\sum_{i=1}^n\partial_i^*(\left(\left(1\otimes 1^{op})\otimes A\right).B\right)_{j,i}\right)_{j=1}^n\nonumber\\
    &=&\left(\sum_{i=1}^n\partial_i^*\left(\sum_{k=1}^n(A_{j,k}.1\otimes1^{op})\sharp B_{k,i}\right)\right)_{j=1}^n\nonumber\\
    &=&\left(\sum_{k=1}^nA_{j,k}J_{X,k}^*(B)\right)_{j=1}^n\nonumber\\
    &=&AJ_X^*(B)
\end{eqnarray}\label{adj}

Where in the fourth line we used the linearity of the adjoint of free difference quotient (which is well-defined as a unbounded and densely defined closable operator) since the free Fisher information is finite. This formula is a particular case of the multidimensional extension of Voiculescu formula (section 4.1 in \cite{V}).
\end{proof}
\begin{definition}
We denote $V_C$ the following (self-adjoint) potential defined for all $C\in S_n^{++}(\mathbb{R})$ symmetric positive definite real matrix, and we denote this class of potential \begin{equation}
    V_{SC}^{++}=\left\{V_C,C\in S_n^{++}(\mathbb{R})\right\},
\end{equation}
\begin{equation}
V_C=\frac{1}{2}\sum_{i,j=1}^n C_{i,j}^{-1}t_i t_j,
\end{equation}
\end{definition}
\begin{remark}
We can see that $V_{SC}^{++}$ is a convex set of self-adjoint potential $V_{sa}=\left\{V=V^*, V\in \mathds{P}^n\right\}$ since $S_n^{++}(\mathbb{R})$ is a convex set and we have for every positive scalars $\lambda,\mu\geq0$ and $C,C'\in S_n(\mathbb{R}^{++})$, $\lambda V_{C}+\mu V_{C'}=V_{\lambda C+\mu C'}$.
\end{remark}
Note that, as will we see it further in the paper, this potential is the potential associated with a $n$-semicircular family with covariance $C$. In fact, by taking $C$ diagonal with $\rho > 0$ we find the potential $V_{\rho}$ which characterize a free  $(0,\rho^{-1})$ semicircular n-tuple.
\begin{definition}
 [The non-microstates free entropy. Voiculescu definition 7.1 in \cite{V}]
 Let $S = (s_1,\ldots, s_n)$ be a free (0, 1)-semicircular n-tuple,
free from $x_1,\ldots , x_n$. Then the non-microstates free entropy of $x_1, . . . , x_n$ is defined in \cite{V}  to be the quantity 
\begin{equation}
    \chi^*(x_1,\ldots,x_n)=\frac{1}{2}\int_{0}^{\infty}\left(\frac{n}{1+t}-\Phi^*(X+ \sqrt{t}S)\right)dt+\frac{n}{2}log(2\pi e),
\end{equation}
which we also denote by $\chi^*(X)$.
If $X$ is a free $(0, \rho^{-1})$-semicircular family, then
\newline
$X +\sqrt{t}S$ is also semicircular, 
therefore $\Phi^{*}(X +\sqrt{t}S)=n({\rho}^{-1}+t)^{-1}$. From this it is easy to compute $\chi^*(X) = \frac{n}{2}log(2\pi e\rho^{-1})$.
\end{definition} 
\bigbreak
\begin{definition}
For $X = (x_1,\ldots,x_n) \in M^n$, let $V \in P^{(R)}$ for $R \geq \lVert X \rVert$, we define the non-microstates
free entropy of $x_1,\ldots, x_n$ relative to $V$ to be the quantity
\begin{equation}
\chi^*(x_1,\ldots,x_n |V):=\varphi(V(X))-\chi^*(x_1,...,x_n),
\end{equation}
which we will also denote by $\chi^*(X | V )$. In this context, we refer to $V$ as a potential.
Since $\chi^*(.)$ is maximized (for fixed variance) by an n-tuple of free semicircular operators, it is easy to see
that $\chi^*(.| V_\rho)$ is minimized by a free $(0, \rho^{-1})$-semicircular n-tuple.
\end{definition}

\textbf{In the sequel, we will only moreover assume that the state is also a trace and repressed the notation $op$ for the trace. We also will work in $(\mathcal{A},\tau)$ a tracial $W^*$-probability space, which is a von Neumann algebra equipped with a faithful normal tracial state.}

\section{Stein method and semicircular families}
From now, we will focus on multidimensional semicircular approximations for semicircular family with a given covariance matrix $C$ (suppose to be symmetric definite positive). We will see in the sequel that our approach seems different for the original one on Gaussian case. Indeed, this one focus on minimizing the discrepancy between a Stein kernel and the matrix $C$ defined by the usual Gaussian integration by parts, which involves a second order differential operator : the Hessian. In free probability, the context is a bit different. Indeed it is better to understand the tuple of conjugate variables for a potential $V$ (suppose to be a convex potential to ensure uniqueness of the Free Gibbs State : \cite{YGS} for technical requirements). An approach which would follow the original proof will lead to an estimation for a weaker distance (not yet defined and studied in the multivariate case) by linking the tuple and the target via a bridge (this idea taken from proofs of smart path methods) and use usual Malliavin operators don't seem optimal as pointed out by Cébron because it leads to an estimation for a weaker distance and the fully-symmetry assumption.
\bigbreak
That's why, we adapt the original proofs of Cébron (\cite{C}) which is based on transport inequality (links between quantities such as Wasserstein distance, and non-microstate free entropy or free Stein discrepancy). As a concluding remark, we point out that our strategy of is closed to the ideas from Ledoux, Nourdin Peccati which states such inequalities in the classical case for larger classes of random variables, with a particular focus on multivariate normal, gamma, beta and even more general measures. They use in fact the semigroup interpolation and powerful tools of iterated gradient of Markov operators and Gamma calculus to get various inequalities ({\it LSI, HSI, WSH}) for invariant measure of diffusion process (second order differential operators, see \cite{LNP} for details).
\bigbreak
Let us introduce the fundamental distribution in free probability, which is the semicircular distribution:
\begin{definition}
The centered semicircular distribution of variance $\sigma^2$ is the probability distribution :
\begin{equation}
    S(0,\sigma^2)(dx)=\frac{1}{2\pi\sigma^2}\sqrt{4\sigma^2-x^2}dx, \; \lvert x\rvert\leq 2\sigma,
\end{equation}
This distribution has all his odd moments which vanish by the symmetry of the distribution around $0$. Its even moments can be expressed by the help of the {\it Catalan numbers} through the following relation for all non-negative integers $m$ :
\begin{equation}
    \int_{-2\sigma}^{2\sigma}x^{2m}S(0,\sigma^2)(dx)=C_m\sigma^{2m},
\end{equation}
Where $C_m=\frac{1}{m+1}\binom{2m}{m}$ is the {mth Catalan number}.

\end{definition}
\begin{definition}
Let $n\geq 2$ be an integer, and let $C = (C_{i, j})_{i,j=1}^n$
be a positive definite symmetric matrix. A n-dimensional vector $(S_1, ..., S_n)$ of random variables in $(\mathcal{A},\tau)$ is said to be a semicircular family with covariance $C$, if $\forall n \in \mathbb{N}$, $\forall(i_1, ..., i_n) \in [n]=\left\{1,\ldots,n\right\}$ :

\begin{equation}
    \varphi(S_{i_1}S_{i_2}...S_{i_n})=\sum_{\pi \in NC_2[n]}\prod_{\left\{a,b\right\}\in \pi}C_{i_a,i_b},
\end{equation}
where $NC_2(n)$ is the set of pairings of $\left\{1,\ldots,n\right\}$ which have no crossing.
\end{definition}
\begin{lemma}
Let $C^{\frac{1}{2}}$, the unique non-singular symmetric matrix such as $\left(C^{\frac{1}{2}}\right)^2=C$, then given a free (0, 1)-semicircular family $(X_1,...,X_n)$,
\begin{equation}
    C^{\frac{1}{2}}X=\left(\sum_{j=1}^nC^{\frac{1}{2}}_{i,j}X_j\right)_{i=1}^n
\end{equation} is a semicircular family with covariance C.
\end{lemma}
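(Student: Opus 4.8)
The plan is to verify the defining moment formula for a semicircular family with covariance $C$ directly, by reducing the mixed moments of the linear combination $Y := C^{1/2}X$ to mixed moments of the free $(0,1)$-semicircular family $X=(X_1,\dots,X_n)$, for which the pairing formula (with identity covariance) is known. First I would fix indices $i_1,\dots,i_m\in[n]$ and expand
\begin{equation}
\varphi(Y_{i_1}\cdots Y_{i_m})=\sum_{j_1,\dots,j_m=1}^n C^{1/2}_{i_1,j_1}\cdots C^{1/2}_{i_m,j_m}\,\varphi(X_{j_1}\cdots X_{j_m}),\nonumber
\end{equation}
using multilinearity of $\varphi$. Then I would insert the known formula $\varphi(X_{j_1}\cdots X_{j_m})=\sum_{\pi\in NC_2[m]}\prod_{\{a,b\}\in\pi}\delta_{j_a,j_b}$, which is exactly the covariance-$I_n$ case of the family in the preceding definition. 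Swapping the two finite sums gives
\begin{equation}
\varphi(Y_{i_1}\cdots Y_{i_m})=\sum_{\pi\in NC_2[m]}\ \sum_{j_1,\dots,j_m=1}^n \Bigl(\prod_{k=1}^m C^{1/2}_{i_k,j_k}\Bigr)\prod_{\{a,b\}\in\pi}\delta_{j_a,j_b}.\nonumber
\end{equation}

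The heart of the argument is the inner sum for a fixed non-crossing pairing $\pi$. The constraints $\delta_{j_a,j_b}$ force $j_a=j_b$ on each block $\{a,b\}\in\pi$, so the free summation index set is really indexed by the $m/2$ blocks of $\pi$; for each block $\{a,b\}$ one is left with $\sum_{j=1}^n C^{1/2}_{i_a,j}C^{1/2}_{i_b,j}$, and since $C^{1/2}$ is symmetric this is precisely $(C^{1/2}C^{1/2})_{i_a,i_b}=C_{i_a,i_b}$. The sum over the distinct block-indices factorizes because distinct blocks involve disjoint summation variables, giving $\prod_{\{a,b\}\in\pi}C_{i_a,i_b}$. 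Summing over $\pi\in NC_2[m]$ then yields exactly $\sum_{\pi\in NC_2[m]}\prod_{\{a,b\}\in\pi}C_{i_a,i_b}$, which is the required identity; in particular all odd moments vanish since $NC_2[m]=\varnothing$ for odd $m$. I would also note that self-adjointness of each $Y_i$ is immediate since $C^{1/2}$ is real and the $X_j$ are self-adjoint.

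I do not expect a serious obstacle here; the only mild subtlety is the bookkeeping of the factorization of the inner sum over blocks of $\pi$, i.e. checking carefully that collapsing the indices along each pair and then summing independently over one representative per block is legitimate — this is a routine consequence of the blocks of a pairing being disjoint and covering $[m]$. One could alternatively phrase the whole computation at the level of the combinatorial $R$-transform / free cumulants: a tuple is semicircular with covariance $C$ iff its only nonvanishing free cumulants are the second-order ones, $\kappa_2(Y_i,Y_j)=C_{i,j}$, and linear combinations transform the second cumulant bilinearly while preserving the vanishing of all higher cumulants; but the direct moment computation above is the most self-contained and is what I would write out.
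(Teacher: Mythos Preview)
Your proof is correct. The paper's own proof simply invokes the standard fact that semicircular families are stable under real linear transformations (citing lemma 1.4 in \cite{DS} and section 2.9 in \cite{NT}) and that such families are determined by their covariance; your argument is precisely the explicit moment computation underlying that cited fact, so the approach is essentially the same, just carried out in full rather than by reference.
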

\begin{proof}
Standard, as semicirculars families in tracial $W^*$-probability spaces are stable by linear transformation. Indeed:
\newline
For $k,n\geq 1$, let $S=(S_1,...,S_n)$ a centered semircircular family in some tracial $W^*$-probability space and let $M\in M_{k,n}(\mathbb{R})$, then $Y:=MX$ is a semicircular family. In particular with $M\in M_{1,n}(\mathbb{R})=(\lambda_1,\ldots,\lambda_n)$ a column vector we have that $Y=\lambda_1S_1+\ldots+\lambda_nS_n$ is semicircular. This last result ensure that any semicircular family in a tracial $W^*$-probability space are only determined by their covariance matrix $\left\{\tau(X_iX_j)/ i,j \in [n]\right\}$
(lemma 1.4 in \cite{DS} and section 2.9 in \cite{NT}) :
\end{proof}

\bigbreak
Now, we would like to be able to compute the conjugate variables (solution of Schwinger-Dyson equations), to have a natural guess for our construction of free Stein Kernel. It turns out, as in the classical Gaussian case, there is a simple expression for the conjugate system of semicircular family with covariance C (see \cite{NPSw} for details in the commutative case).
\begin{lemma}
Let $S$ a n-semicircular family with covariance C, then the conjugate system of $S$, is given by :
\begin{equation}
    C^{-1}S=\left(\sum_{j=1}^n C^{-1}_{i,j}S_{j}\right)_{i=1}^n,
\end{equation}
\end{lemma}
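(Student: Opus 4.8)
The plan is to produce a tuple $\xi_S=(\xi_{s_1},\dots,\xi_{s_n})\in L^2(W^*(S),\tau)$ verifying the conjugate‑variable relation $\langle \xi_{s_j},p(S)\rangle_2=\langle 1\otimes 1,[\partial_j p](S)\rangle_{\tau\otimes\tau}$ for every $p\in\mathds{P}$ and every $j$, and to check that $\xi_S=C^{-1}S$ does the job; uniqueness of conjugate variables (both sides are determined by their values against the total family $\{p(S):p\in\mathds{P}\}$) then gives the statement. Note $(C^{-1}S)_j=\sum_kC^{-1}_{j,k}s_k\in W^*(S)\subset L^2(W^*(S),\tau)$, and by bilinearity it is enough to test the relation on a monomial $p=t_{i_1}\cdots t_{i_m}$. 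I would run the argument by reducing to the free $(0,1)$–semicircular case via a linear change of variables.

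By the preceding Lemma we may realize $S=C^{1/2}X$ with $X=(x_1,\dots,x_n)$ a free $(0,1)$–semicircular $n$-tuple generating the same von Neumann algebra, so conversely $X=C^{-1/2}S$. Voiculescu's computation of the conjugate system of a free $(0,1)$–semicircular family tells us that $X$ is its own conjugate system, i.e. $\langle x_j,q(X)\rangle_2=\langle 1\otimes 1,[\partial_j q](X)\rangle_{\tau\otimes\tau}$ for all $q\in\mathds{P}$. Given $p\in\mathds{P}$, let $q=\sigma(p)$, where $\sigma:\mathds{P}\to\mathds{P}$ is the unital algebra morphism $t_i\mapsto\sum_k C^{1/2}_{i,k}t_k$, so that $q(X)=p(S)$. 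The chain rule for the free difference quotient applied to this linear substitution gives $\partial_j\circ\sigma=\sum_i C^{1/2}_{i,j}\,(\sigma\otimes\sigma)\circ\partial_i$, hence, after evaluation at $X$ (using $[(\sigma\otimes\sigma)(w)](X)=w(S)$), $[\partial_j q](X)=\sum_i C^{1/2}_{i,j}[\partial_i p](S)$. Inverting the symmetric matrix $C^{1/2}$ this reads $[\partial_m p](S)=\sum_j C^{-1/2}_{m,j}[\partial_j q](X)$; pairing against $1\otimes 1$ and invoking the conjugate system of $X$,
\begin{equation}
\langle 1\otimes 1,[\partial_m p](S)\rangle_{\tau\otimes\tau}=\sum_j C^{-1/2}_{m,j}\langle x_j,q(X)\rangle_2=\Big\langle \textstyle\sum_j C^{-1/2}_{m,j}x_j,\,p(S)\Big\rangle_2 .
\end{equation}
Since $\sum_j C^{-1/2}_{m,j}x_j=\sum_{j,k}C^{-1/2}_{m,j}C^{-1/2}_{j,k}s_k=\sum_k C^{-1}_{m,k}s_k=(C^{-1}S)_m$, this is exactly the asserted formula.

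A second, fully self‑contained route is a head‑on combinatorial check. Since $\partial_j(t_{i_1}\cdots t_{i_m})=\sum_{\ell:\,i_\ell=j}t_{i_1}\cdots t_{i_{\ell-1}}\otimes t_{i_{\ell+1}}\cdots t_{i_m}$, the right‑hand side of the conjugate relation equals $\sum_{\ell:\,i_\ell=j}\tau(S_{i_1}\cdots S_{i_{\ell-1}})\,\tau(S_{i_{\ell+1}}\cdots S_{i_m})$, while the left‑hand side equals $\sum_k C^{-1}_{j,k}\,\tau(S_kS_{i_1}\cdots S_{i_m})$. From the defining Wick‑type sum over $NC_2$, the block of any non‑crossing pairing containing the first leg of $\tau(S_kS_{i_1}\cdots S_{i_m})$ must be $\{0,\ell\}$ for some $\ell\in\{1,\dots,m\}$, and non‑crossingness then forces the remaining legs to split into a non‑crossing pairing of $\{1,\dots,\ell-1\}$ and one of $\{\ell+1,\dots,m\}$, yielding the recursion $\tau(S_kS_{i_1}\cdots S_{i_m})=\sum_{\ell=1}^m C_{k,i_\ell}\,\tau(S_{i_1}\cdots S_{i_{\ell-1}})\,\tau(S_{i_{\ell+1}}\cdots S_{i_m})$. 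Substituting and using $\sum_k C^{-1}_{j,k}C_{k,i_\ell}=(C^{-1}C)_{j,i_\ell}=\delta_{j,i_\ell}$ collapses the left‑hand side onto the right‑hand side.

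In both routes the computation is short. The only points needing care are: in the first route, the precise index bookkeeping in the chain rule for $\partial_j$ under a linear substitution, together with the standard fact that a free $(0,1)$–semicircular family is its own conjugate system; in the second route, the justification that the arc attached to the first leg separates the remaining non‑crossing pairing into two independent pieces. Everything else is the linear‑algebra identity $C^{-1}C=I$ and the reduction to monomials.
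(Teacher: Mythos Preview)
Your proposal is correct; both routes you give are valid proofs of the lemma. The paper itself states this lemma without proof, treating it as a standard fact (it proceeds directly to a remark identifying the associated potential $V_C$), so there is no paper proof to compare against. Your first route via the change of variables $S=C^{1/2}X$ and Voiculescu's formula for the conjugate system of a free $(0,1)$ family is the natural way to derive it, and your second route via the non-crossing pairing recursion is a clean self-contained verification; either would be an appropriate justification to insert.
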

\begin{remark}
In fact it's now obvious that the potential $V_C$ defined before, which satisfies $[D_iV_C]=\sum_{j=1}^nC^{-1}_{i,j}t_j$ is the potential associates with a $n$-semicircular family with covariance C.
Moreover, it characterizes completely a semicircircular with symmetric positive definite covariance $C$. Indeed, a self-adjoint vector $X=(x_1,\ldots,x_n)$ satisfies the following Schwinger-Dyson equation for all $P\in \mathds{P}^n$:
\begin{equation}
    \langle C^{-1}X,P(X)\rangle_2=\langle (1\otimes 1)\otimes I_n,[\mathcal{J}P](X)\rangle_{HS},
\end{equation}
if and if only $X$ is a semicircular family with covariance $C$. 
\end{remark}
\bigbreak
Let us consider the following free stochastic differential equation, where $C^{-1}$ is the inverse of the covariance matrix which does exist by the positive definite assumption on $C$, with $(X_0^1,\ldots,X_0^n)$ free from $(S_t^i)_{i=1}^n$:
\begin{equation*}
X_t=X_0-\frac{1}{2}\int_0^t C^{-1}X_tdt+S_t
 \end{equation*}
in vector notations or equivalently for all $i=1,\ldots,n$ :
\begin{equation*}
X_t^i=X_0^i-\frac{1}{2}\int_0^t (C^{-1}X_t)_idt+S_t^i,
 \end{equation*}
where 
\begin{equation}
    (C^{-1}X_t)_i=\sum_{j=1}^nC^{-1}_{i,j}X_t^j,
\end{equation} for all $i=1,\ldots, n$.
\begin{remark}
The existence of such a free SDE is ensured, by a classical Picard fixed point argument, since the drift and the volatility are linear functions, the existence and uniqueness turns out to be well understood (see \cite{Dab10}, theorem 25 or \cite{BS2}).
\end{remark}
\section{An infinitesimal estimate for free SDE}
In this section, we will be interested in infinitesimal estimations of Wasserstein distance between two close points of the free stochastic differential equations. Before stating our results, we introduced the non-commutative Wasserstein distance, which is the free counterpart of the usual quadratic Wasserstein distance.
\begin{definition}(Biane Voiculescu, \cite{BV})
The free quadratic Wasserstein distance is defined as the infinimum over couplings :
\begin{eqnarray*}
    d_W((X_1,...,X_n),(Y_1,...,Y_n))&=&inf \Bigl\{\lVert (X_i^{'} - Y_i^{'})_{1\leq i\leq n}\rVert_2 /
    (X_1^{'},...,X_n^{'},Y_1^{'},...,Y_n^{'}) \subset (M_3,\tau)\nonumber\\
    &&(X_1^{'},...,X_n^{'})\simeq(X_1,...,X_n),(Y_1^{'},...,Y_n^{'})\simeq(Y_1,...,Y_n)\Bigl\}
, \end{eqnarray*}
with $(M_3,\tau)$ is a $W^*$ tracial probability space, and with each $(X_i^{'},Y_i^{'})\in M_3$, where $\simeq$ means the equality in distribution.
\end{definition}
\begin{remark}
The following theorem is a powerful result of Dabrowski (\cite{Dab10}), We don't expose all the theory invented by Dabrowski which is rather complex, and we send the reader to the paper to have a complete exposure of all the results. We will only remind a part of the proof to show what type of computations we will use in the sequel. We also remcall to the interested reader that a free analog of the Wasserstein space as been introduced and studied by Jekel, Shlyakhtenko and Gangbo in \cite{JSG} and a Monge-Kantorovitch duality was obtained in this context in \cite{JSGN}.
\end{remark}
\begin{theorem}(Dabrowski, \cite{Dab10})\label{th1}
Assume that we are given a process of the following form where
 $S_t=(S_t^1,...,S_t^n)$ is a free Brownian motion on some filtered $W^*$-probability space $(\mathcal{A},\tau)$ with self-adjoint initial conditions:
\begin{equation*}
X_t=X_0-\frac{1}{2}\int_0^t \Theta_t dt+S_t
 \end{equation*} 
 in vectors notations or equivalently :
 \begin{equation*}
X_t^i=X_0-\frac{1}{2}\int_0^t \Theta_t^i dt+S_t^i
 \end{equation*} 
 We suppose for each $s\geq 0$, that $\Theta_s=(\Theta_1,...,\Theta_n)$ is continuous to the right for $\lVert .\rVert_2$ and belongs to the von Neumann algebra $W^*(X^1_s,...,X^1_s)$, we also assume that  conjugate variables for this tuple exists :
 \begin{equation}\partial^*_{i,s}(1\otimes 1)=\xi^i_s
 \end{equation}
  Then we have the following estimation of the Wasserstein distance between two close points (see \cite{Dab10} for the precise definition), i.e for $t\geq s$ :
 \begin{equation}
     d_W(X_t,X_s)\leq \frac{t-s}{2}\lVert \Theta_{s}-\xi_s\rVert_2 +o(t-s),
 \end{equation}
\end{theorem}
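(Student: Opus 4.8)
\medskip\noindent\textit{Proof strategy.} Write $h=t-s>0$ and read all estimates as $h\to 0^+$. The key point is that $d_W$ is an infimum over couplings, so the crude bound $d_W(X_t,X_s)\le\lVert X_t-X_s\rVert_2$ — which only gives $O(\sqrt{h})$ because of the free Brownian increment $S_t-S_s$ — will be replaced by a coupling in which the noise is (almost) transported away. The plan is to interpolate between $X_s$ and $X_t$ through two auxiliary tuples. First, since $u\mapsto\Theta_u$ is right-continuous in $\lVert\cdot\rVert_2$ at $u=s$, one has $\lVert\int_s^t\Theta_u\,du-h\,\Theta_s\rVert_2\le h\sup_{u\in[s,t]}\lVert\Theta_u-\Theta_s\rVert_2=o(h)$; thus, realizing $B:=X_s-\tfrac12 h\,\Theta_s+(S_t-S_s)$ inside $(\mathcal{A},\tau)$ alongside $X_t$ yields $d_W(X_t,B)\le\lVert X_t-B\rVert_2=o(h)$. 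Second, the increment $S_t-S_s$ is a semicircular family with covariance $h\,I_n$ that is free from $\mathcal{F}_s\supseteq W^*(X_s)$, so I set $B_0:=X_s-\tfrac12 h\,\xi_s+(S_t-S_s)\in L^2(\mathcal{A},\tau)^n$; then $B-B_0=-\tfrac12 h(\Theta_s-\xi_s)$, whence $d_W(B,B_0)\le\tfrac12 h\,\lVert\Theta_s-\xi_s\rVert_2$. By the triangle inequality for $d_W$ the whole statement reduces to the key estimate
\[ d_W(B_0,X_s)=o(h). \]

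Conceptually $B_0$ is exactly the value at time $h$ of the free SDE with constant drift $-\tfrac12\xi_s$ started from $X_s$ — the ``free Langevin/Ornstein--Uhlenbeck'' step attached to $\mathrm{Law}(X_s)$ — and the claim is that this step fixes $\mathrm{Law}(X_s)$ to first order in $h$. To make this quantitative I will show that the $\ast$-moments of $B_0$ and of $X_s$ agree up to $O(h^2)$. Fix $P\in\mathds{P}$ and Taylor-expand $\tau(P(B_0))$ around $X_s$ in the increment $\delta:=-\tfrac12 h\,\xi_s+(S_t-S_s)$; a term of the expansion carrying $m$ factors of the semicircular increment (covariance $\sim h$, and a trace of a word with an odd number of such centered free-semicircular letters vanishes) and $k-m$ factors of $-\tfrac12 h\,\xi_s$ scales as $h^{k-m/2}$, so the only contributions at order $h$ are the drift term $k=1,m=0$ and the free-It\^o ``number-operator'' term $k=2,m=2$, all others being $O(h^2)$. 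The first is $-\tfrac12 h\sum_i\langle\xi_{s,i},[D_iP](X_s)\rangle_2$ (the increment part of the linear term drops out since $[D_iP](X_s)\in W^*(X_s)$ is free from and orthogonal to $S_t-S_s$), and the second is $+\tfrac12 h\sum_i\langle 1\otimes 1,[\partial_i D_iP](X_s)\rangle_{\tau\otimes\tau}$ by freeness and semicircularity of the increment. Now the defining Schwinger--Dyson relation for the conjugate variables, applied to $q=D_iP$ and summed over $i$, reads
\[ \sum_i\langle\xi_{s,i},[D_iP](X_s)\rangle_2=\sum_i\langle 1\otimes 1,[\partial_i D_iP](X_s)\rangle_{\tau\otimes\tau}, \]
so the two order-$h$ terms cancel and $\tau(P(B_0))=\tau(P(X_s))+O(h^2)$ for every $P$; this cancellation is precisely the statement that $-\tfrac12\xi_s$ is the Fokker--Planck drift that compensates the free Laplacian.

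I expect the main obstacle to be the passage from this $\ast$-moment comparison to the genuine Wasserstein estimate $d_W(B_0,X_s)=o(h)$: this is not automatic, since $d_W$ is an $L^2$-transport cost (the ``obvious'' coupling of $B_0$ with $X_s$ still moves by $O(\sqrt h)$ because of the Brownian increment), and since $\xi_s$ need not be bounded, so a direct moment-to-$d_W$ comparison on a fixed operator-norm ball is not available without care. This is exactly where the free stochastic machinery of \cite{Dab10} is needed: one realizes $\mathrm{Law}(B_0)$ on a common space with $X_s$ by transporting it back along the stationary free diffusion inside the non-commutative path space, and bounds the transport cost by the $O(h^2)$ discrepancy of the two laws obtained above. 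Granting this, one concludes
\[ d_W(X_t,X_s)\le d_W(X_t,B)+d_W(B,B_0)+d_W(B_0,X_s)\le\tfrac12 h\,\lVert\Theta_s-\xi_s\rVert_2+o(h), \]
which is the asserted bound; since the excerpt records only ``a part of the proof'', it is enough here to keep the three-term decomposition together with the free-It\^o computation yielding the order-$h$ cancellation, and to refer to \cite{Dab10} for the transport step.
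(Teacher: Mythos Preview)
The paper does not prove this theorem: it is stated with attribution to Dabrowski, preceded by the explicit disclaimer ``we don't expose all the theory invented by Dabrowski which is rather complex, and we send the reader to the paper,'' and no proof environment follows---Theorem~\ref{th1} is used as a black box in the proof of Theorem~\ref{Th2}. So there is no in-paper argument to compare against.

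That said, your sketch is correct and captures the right mechanism. The three-term decomposition through $B$ and $B_0$ is the natural one; the right-continuity bound $d_W(X_t,B)=o(h)$ and the trivial coupling bound $d_W(B,B_0)=\tfrac12 h\lVert\Theta_s-\xi_s\rVert_2$ are immediate; and your free It\^o computation showing that the order-$h$ drift term $-\tfrac12 h\sum_i\langle\xi_{s,i},[D_iP](X_s)\rangle_2$ cancels the It\^o correction $+\tfrac12 h\sum_i\tau\otimes\tau([\partial_iD_iP](X_s))$ via the Schwinger--Dyson relation is exactly the statement that $-\tfrac12\xi_s$ is the invariant drift for the free Fokker--Planck flow at $\mathrm{Law}(X_s)$. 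You are also right to flag the two genuine technical issues: $\xi_s$ is only in $L^2$, so $B_0$ need not lie in $\mathcal{A}^n$ and the Wasserstein distance is a priori not defined for it; and moment agreement to order $O(h^2)$ does not automatically yield $d_W=o(h)$ without constructing an actual coupling. Both points are precisely what Dabrowski's non-commutative path-space construction in \cite{Dab10} is designed to handle, and your deferral to that reference at this step is appropriate---indeed it matches what the paper itself does.
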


We would like to apply the theorem before to obtain quantitative bounds for the Wasserstein distance between a $n$-tuple of self-adjoint random variables and a n-semicircular family with covariance $C$.
Intuitively, we have to connect them through an appropriate Ornstein-Uhlenbeck process, the idea to make appear the conjugate variables is to see that if the tuple is close to be a $n$-semicircular family with covariance $C$, and thus the conjugate variables of $X$ should be close to $C^{-1}X$, its then natural to consider a free SDE with that potential, in fact for a $n$-$(0,1)$ free semicircular system, we get that the conjugate variables should be close to the tuple itself (this characterized for centered normalized semicircular system, see \cite{Dab10} proof of free Talagrand inequality). We will see in the following that Fisher information is closely related to the Stein discrepancy.
\bigbreak 
We consider the following approach. Following Dabrowski's ideas: the drift,i.e the convex potential $V$ of the free SDE carries all the information.
\bigbreak
The other one is based on a linear transformation of a free SDE: the information is contained in the volatility process and not anymore in the drift.
\bigbreak
\begin{theorem}\label{Th2}
Let's consider as before the following free differential equation, with the initial conditions $X_0$ free from $(S_t)_{t\geq 0}$ :
\begin{equation*}
X_t=X_0-\frac{1}{2}\int_0^t C^{-1}X_tdt+S_t,
 \end{equation*}
This equation admits a solution which turns out to be equal in distribution to 
\begin{eqnarray}\label{24}
X_t&\simeq& e^{-\frac{t}{2}C^{-1}}X_0+\left(I_n-e^{-tC^{-1}}\right)^{\frac{1}{2}}C^{\frac{1}{2}}S^{'}\\
&\simeq& e^{-\frac{t}{2}C^{-1}}X_0+\left(I_n-e^{-tC^{-1}}\right)^{\frac{1}{2}}S_C
\end{eqnarray}
with $S'$ a free $(0,1)$ $n-$semicircular system and $S_C$ a $n$-semicircular family with covariance $C>0$.
\bigbreak
Since the transformation is linear, the potential is again valued in the Von-Neumann algebra $W^*(X_s^1,...,X_s^n)$ and thus
all hypothesis of our previous theorem are fulfilled, especially the right continuity of the process for the 2-norm.
\bigbreak
From all of these conditions, we get that: \begin{equation}
\frac{d^{+}}{dt} d_W(X_t,S_C)\leq \frac{1}{2}\Phi^*(X_t|V_C)^{\frac{1}{2}}
\end{equation}
\end{theorem}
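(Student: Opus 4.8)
The plan is to apply Theorem~\ref{th1} with the specific drift $\Theta_t = C^{-1}X_t$ and to identify the resulting quantity $\lVert \Theta_s - \xi_s\rVert_2$ with the square root of the free Fisher information $\Phi^*(X_t|V_C)$, using the fact that $V_C$ is precisely the potential whose cyclic gradient is $[D_iV_C] = \sum_j C^{-1}_{i,j}t_j$. First I would verify that the hypotheses of Theorem~\ref{th1} hold along the flow: the solution exists and is unique by the Picard argument recorded in the Remark above (linear drift, linear volatility), and by the explicit representation \eqref{24} the tuple $X_t$ at each time $t$ lies in $W^*(X_0, S_1,\dots,S_n)$; moreover, since the transformation carrying $X_0$ and a free semicircular system to $X_t$ is linear, the drift $C^{-1}X_t$ is valued in $W^*(X_t^1,\dots,X_t^n)$, and right-continuity in $\lVert\cdot\rVert_2$ follows from continuity of the free Brownian motion. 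One must also check that conjugate variables $\xi^i_s$ for the tuple $X_s$ exist; this is where finiteness of $\Phi^*(X_s)$ (equivalently, finiteness of $\Phi^*(X_s|V_C)$, since the difference is controlled by the bounded operators $C^{-1}X_s \in W^*(X_s)$) enters, and it holds provided $\Phi^*(X_0) < \infty$ because adding a free semicircular perturbation only improves Fisher information.

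The core computation is then purely definitional. By Theorem~\ref{th1}, for $t \geq s$,
\begin{equation*}
d_W(X_t, X_s) \leq \frac{t-s}{2}\,\big\lVert C^{-1}X_s - \xi_{X_s}\big\rVert_2 + o(t-s),
\end{equation*}
where $\xi_{X_s} = (\xi^1_s,\dots,\xi^n_s)$ is the conjugate system of $X_s$. Now, by the Remark following Lemma~3, the potential $V_C$ satisfies $[D_iV_C](X) = (C^{-1}X)_i$, so by the very definition of the free Fisher information relative to a potential,
\begin{equation*}
\Phi^*(X_s \mid V_C) = \big\lVert \xi_{X_s} - [DV_C](X_s)\big\rVert_2^2 = \big\lVert \xi_{X_s} - C^{-1}X_s\big\rVert_2^2.
\end{equation*}
Hence $\lVert C^{-1}X_s - \xi_{X_s}\rVert_2 = \Phi^*(X_s \mid V_C)^{1/2}$, and the infinitesimal estimate becomes $d_W(X_t,X_s) \leq \tfrac{t-s}{2}\,\Phi^*(X_s\mid V_C)^{1/2} + o(t-s)$.

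Finally, to pass from the two-point estimate to the stated bound on the upper right derivative $\tfrac{d^+}{dt}d_W(X_t,S_C)$, I would use the triangle inequality for $d_W$ together with the fact that $S_C$ is a stationary point of this flow: since $S_C$ is the free Gibbs state for $V_C$, its conjugate system is exactly $C^{-1}S_C$, so a solution started at $S_C$ stays (in distribution) at $S_C$. Thus
\begin{equation*}
d_W(X_{t+h}, S_C) - d_W(X_t, S_C) \leq d_W(X_{t+h}, X_t) \leq \frac{h}{2}\,\Phi^*(X_t\mid V_C)^{1/2} + o(h),
\end{equation*}
dividing by $h>0$ and letting $h \downarrow 0$ gives the claim. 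The main obstacle I anticipate is not the algebra but the analytic bookkeeping needed to invoke Theorem~\ref{th1}: one must be careful that the drift $\Theta_s = C^{-1}X_s$ genuinely satisfies the right-continuity and domain hypotheses uniformly along the flow, and that the conjugate variables exist at every time $s \ge 0$ (not merely almost everywhere), which is really the place where the structure of the linear free SDE and the propagation of finite Fisher information must be used carefully. A secondary subtlety is justifying that $S_C$ is indeed the equilibrium of this particular equation in the precise sense required by Dabrowski's two-point comparison, i.e.\ that one may take the "base point" in the estimate to be $S_C$ itself.
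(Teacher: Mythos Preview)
Your proposal is correct and follows essentially the same approach as the paper: apply Theorem~\ref{th1} with $\Theta_s=C^{-1}X_s$, recognize $\lVert C^{-1}X_s-\xi_s\rVert_2=\Phi^*(X_s\mid V_C)^{1/2}$ from the definition of $V_C$, and conclude via the triangle inequality for $d_W$. One minor remark: the final step does not actually require knowing that $S_C$ is stationary for the flow; the reverse triangle inequality $\lvert d_W(X_{t+\epsilon},S_C)-d_W(X_t,S_C)\rvert\leq d_W(X_{t+\epsilon},X_t)$ holds for any fixed reference point, so your discussion of the equilibrium is an unnecessary (though harmless) addition.
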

\begin{proof}
By the previous theorem, we get that $d_W(X_t,X_s)\leq\frac{1}{2}\lVert C^{-1}X_s-\xi_s\rVert_2+o(t-s)$.
Now it is essentially the same idea of \cite{Dab10}, which is a triangle inequality and the estimation given in the Theorem \ref{th1} for the quadratic Wasserstein distance, to get:
\begin{equation}
\limsup_{\epsilon \to 0}\frac{1}{\epsilon}|d_W(X_{t+\epsilon},S)-d_W(X_t,S)|\leq \frac{1}{2} {\Phi^*(X_t|V_C)}^{\frac{1}{2}}.
\end{equation}
\end{proof}
\qed
\begin{remark}
Note that we have the equality in distribution by a simple Ito formula with the usual variation of constant method with integrating factor: $e^{tC^{-1}}$, which is the semigroup generated by $C^{-1}$ (understand as the usual matrix exponential).
\bigbreak
The following process $(e^{tC^{-1}}X_t)_{t\geq 0}$ is then well-defined because it is a simple linear transformation of the process $(X_t)_{t\geq 0}$, and moreover, we get by integration by parts for all $t\geq 0$:
\begin{equation}
    e^{tC^{-1}}X_t=X_0+\int_0^{t}e^{vC^{-1}}dS_v
\end{equation}
where for an adapted process $\left(U_t\right)_{t\geq 0}$ valued in $M_n(\mathcal{A})$ (to the filtration generated by the free Brownian motion : $W^*(S_t^1\ldots,S_t^n)$), we denote, as in the preliminaries (where the integral is constructed for step process and then extended to the class of matricial adapted processes by density and Ito isometry) :
\begin{equation}
    \int_{0}^tUdS_v=\left(\sum_{j=1}^n\int_0^tU_{i,j}dS_v^j\right)_{i=1}^n
\end{equation}
From that we deduce:
\begin{equation}
    X_t=e^{-\frac{t}{2}C^{-1}}X_0+e^{-\frac{t}{2}C^{-1}}\int_0^{t}e^{\frac{v}{2}C^{-1}}dS_v
\end{equation}
From this, it is easy to remark that the last part of the member: the stochastic integral, is a (multivariate) semicircular process, with centered mean, and moreover we can deduce that:
\bigbreak
For all $t\geq 0$,
\begin{equation}
    \tau(X_t)=e^{-\frac{t}{2}C^{-1}}
    \left(
    \begin{array}
  c \tau (X_0^1)\\
  \tau (X_0^2) \\
  \vdots \\
  \tau(X_0^n)\\
  \end{array} 
    \right)\\
\end{equation}
Note moreover, that if we assume that $X_0$ is a semicircular family with covariance $C$, by freeness between $X_0$ and $(S_t)_{t\geq 0}$, then $X_t$ is stationary (in distribution).
\bigbreak
The cross covariance matrix of the process $(V_t)_{t\geq 0}=\left(e^{-\frac{t}{2}C^{-1}}\int_0^{t}e^{\frac{v}{2}C^{-1}}dS_v\right)_{t\geq 0}$ is simply obtained by Ito isometry (with the commutativity between the two exponential matrix since they are functions of the matrix $C$) : \begin{equation}
    cov(V_t,V_s)=e^{-\frac{(t+s)}{2}C^{-1}}\int_0^{t\wedge s}e^{vC^{-1}}dv
\end{equation}
Now, it is well known that the last integral could be computed explicitly, since for $A\in GL_n(\mathbb{R})$ :
\begin{equation}
    \int_0^{t}e^{sA}ds=\left(e^{tA}-I_n\right)A^{-1}
\end{equation}
which gives for the covariance matrix of $X_t$ (here $t$ is fixed) :
\begin{equation}
    cov(V_t)=\left(I_n-e^{-tC^{-1}}\right)C
\end{equation}
\bigbreak
And finally we get in distribution, for all $t\geq 0$:
\begin{equation}
X_t\simeq e^{-\frac{t}{2}C^{-1}}X_0+\left(I_n-e^{-tC^{-1}}\right)^{\frac{1}{2}}C^{\frac{1}{2}}S^{'}
\end{equation}
With $S^{'}$ a free $(0,1)$ semicircular system.
\bigbreak
This provides a useful interpolation between the tuple $X$ and the semicircular family of covariance $C$ (analog of the Ornstein-Ulhenbeck semigroup with drift in the classical case). It is easily seen that if the matrix $C=\rho I_n$, we find the usual interpolation given by the Ornstein-Uhlenbeck semigroup and already used in \cite{FN} (with a simple time change in this case $t\mapsto \rho t$) or in \cite{C}. It seems rather convenient to work with the interpolation (\ref{24}) compared to the following theorem \ref{3} which computes explicitly the conjugate variable of a free SDE with polynomial drift, in order to deduce a bound of free Fisher information along the path of the Ornstein-Uhlenbeck process by means of free Stein discrepancy. Note that we will work with $\left(X_{2t}\right)_{t\geq 0}$ to avoid the factor $\frac{1}{2}$ in the exponentials.
\end{remark}
\bigbreak
As in the commutative case, our previous results will provide some inequalities relating free Fisher information and a quantity which is slightly modification (gaussian variant) of non-microstates free entropy. This is the price to pay to deduce such inequality since there is still an unknown change of variables for the non-microstate free entropy $\chi^{*}$, this type of inequality is known as Logarithmic Sobolev inequality ({\it LSI} in abbreviation). Before that, we have to introduce some notations and theorems of Dabrowski.
\begin{theorem}(theorem 25 in \cite{Dab10})\label{3}
For a free SDE with self-adjoint initial conditions
\begin{equation}\label{dabc}
    X_t^i=X_0^i-\frac{1}{2}\int_0^t D_iV(X_t)dt+S_t^i
\end{equation}
with $V$ a self-adjoint polynomial convex potential as in \cite{Dab10} (one can solve this equation in a strong sense. In the sequel, we will only suppose that we are given a solution), then
$X_t^i,\ldots X_t^n$ have conjugate variable $\xi_t^i$ for $t>0$, and if they also does for $t=0$, we have  $\tilde\xi_{V,t}=(\tilde\xi_{V,t}^i,\ldots,\tilde\xi_{V,t}^n)$ with for each $i=1,\ldots,n$:
\begin{equation}
    \tilde\xi_{V,t}^i=\tilde\xi_t^i-D_iV(X_t)\nonumber
\end{equation}
solutions of the following free SDE :
\begin{equation}
   \tilde\xi_{V,t}^i= \tilde\xi_{V,0}^i-\frac{1}{2}\sum_{j=1}^n\int_0^{t}\partial_j([D_iV](X_s))\sharp\tilde\xi_{V,s}^ids\nonumber
\end{equation}\label{dablem}
Moreover $\xi_s^i=\tau(\tilde\xi_s^i|X_s)$ where $\tau(.|X_s)$ is the (only trace preserving) conditional expectation on $W^*(X_s)$.
\bigbreak
Two notions of convexity have been studied, the first one due do Guionnet and Shlyakhtenko in \cite{GS}, precisely one says that $V$ is $(c,M)$-convex if for all $X,Y$ with $\lVert X\rVert,\lVert Y\rVert\leq M$, we have the existence of a constant $c>0$, such that:
\begin{equation}
    [DV(X)-DV(Y)].(X-Y)\geq c(X-Y).(X-Y)
\end{equation}
where $X.Y=\frac{1}{2}\sum_{j=1}^nX_i.Y_i+Y_iX_i$ stands for the anti-commutator of $X$ and $Y$ (if $X,Y$ are self-adjoint and $\lVert X\rVert$,$\lVert Y\rVert \leq M$).
\newline
We can also obtain the following exponential decay of the free Fisher information if the potential $V$ satisfies the following (different) convexity assumption, i.e there exist $c>0$ (uniform in $i,j$):
\begin{equation}\label{conv}
    (\partial_jD_iV)_{i,j}\geq c(1\otimes 1\delta_{i,j})_{i,j}
\end{equation}
for all $i,j=1,\ldots,n$, where the inequality holds in $M_n(\mathds{P}\otimes\mathds{P}^{op})$.
\bigbreak
\begin{equation}
    \Phi^*(X_t^1,\ldots,X_t^n|V)\leq e^{-c(t-s)}    \Phi^*(X_s^1,\ldots,X_s^n|V)
\end{equation}
\bigbreak
We will give a short proof (as we were unable to find a proof in the literature) of the last statement, we can already notice that if $X_1,\ldots,X_n$ and $V=V^*$, then $\theta(X)=JDV(X)$ is self-adjoint:
\begin{proof}
From (\ref{dabc}), we are left to prove the exponential decay of the $L^2$-norms of each $\tilde\xi^i_{V,u}$ for $i=1,\ldots,n$.
\bigbreak
Now, for $s\leq t$, a simple integration by parts shows that :
\begin{equation}
    e^{c\frac{t}{2}}\tilde\xi^i_{V,u}=e^{c\frac{s}{2}}\tilde\xi^i_{V,s}+\frac{1}{2}\sum_{j=1}^n\int_s^{t}\bigg(c.1\otimes1^{op}\delta_{i,j}-\partial_j[D_iV](X_u)\bigg)\sharp\left(e^{c\frac{u}{2}}\tilde\xi^i_{V,u}\right)du
\end{equation}
Taking the $L^2$ norms and applying Grownall lemma, since the semigroup  $e^{\frac{ct}{2}.(1\otimes 1^{op})\otimes I_n-\frac{t}{2}\mathcal{J}DV}$ (where the exponential is defined as the usual exponential in Banach algebra) generated by \begin{equation}
    Q=\left(\frac{c}{2}.(1\otimes 1^{op}\delta_{i,j})-\frac{1}{2}\partial_jD_iV\right)_{i,j=1}^n\in M_n(\mathds{P}\otimes \mathds{P}^{op})
\end{equation} is contractive by the previous assumption (\ref{conv}) as well as (in another context) the conditional expectation for the $L^2$-norm, we finally get the result.
\end{proof}
\begin{remark}
As mentioned by Guionnet and Shlyakhtenko, when the underlying non commutative probability space is the space of complex matrices equipped with natural operations (with in particular the transconjugate as involution and operator norm), a potential $V$ is self-adjoint $(c,M)$ convex if the map $\left((X_{i,j})_{i,j=1}^n\right)\rightarrow Tr(V(X))$ is strictly convex on the set of Hermitian matrices with spectral radius less or equal than $M$.
\bigbreak
The other one, investigated recently by Dabrowski, Guionnet and Shlyakhtenko in \cite{YGS} imposes that the trace of the Hessian of $V$ generates a semigroup of contraction in a new space of non commutative function, on which the topology is given by the extended Haagerup tensor product. For example quartic convex potential are allowed in this one. It's is really powerful, because only subquadratic potential were studied before.
\end{remark}
We remind to the reader that such estimates have been obtained by Fathi and Nelson in \cite{FN} for semicircular systems with (strictly positive) homothety covariance matrices, by computing conjugate variables with another method which is based on the equality in distribution of the free Ornstein-Uhlenbeck process. Note that this method will be used in the sequel to compute an inequality between Free Fisher information and Free Stein discrepancy along the path of an Ornstein-Uhlenbeck with positive linear drift.
\begin{remark}
In our case, a simple computation shows that this condition holds for $V_C$ defined before. In fact, the first condition which implies that one can solve the free SDE in a strong sense, and the last one to get exponential decay of the Free Fisher information holds true for $c=\lVert C\rVert_{op}^{-1}$.
\end{remark}
\end{theorem}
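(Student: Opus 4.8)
My plan is to split Theorem~\ref{3} into two parts of very different character and treat them in turn. The deep part is the free stochastic--analysis input of Dabrowski (his Theorem~25 in \cite{Dab10}): that the solution $X_t$ of the free SDE has conjugate variables $\xi^i_t$ for every $t>0$, that the relative (unconditioned) conjugate variables $\tilde\xi^i_{V,t}=\tilde\xi^i_t-D_iV(X_t)$ solve the displayed linear free SDE, and that $\xi^i_s=\tau(\tilde\xi^i_s\mid X_s)$. The elementary part is the exponential decay of $\Phi^*(X_t\mid V)$ under hypothesis (\ref{conv}), which I would then deduce by a Gronwall/contraction argument; this is the short proof indicated above, and I spell it out since the rest of the paper relies on it.

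For the first part I would argue as in \cite{Dab10}. Solve the SDE by Picard iteration --- the drift $-\tfrac12 DV$ is a polynomial, Lipschitz on the balls visited, and one controls $M_t:=\sup_{s\le t}\lVert X_s\rVert<\infty$ --- so $X_t$ is adapted to the free Brownian filtration $\mathcal F_t=W^*(X_0)\vee W^*\big((S_u)_{u\le t}\big)$. Differentiating the SDE, the free Malliavin derivative $u\mapsto\mathcal D^k_u X^i_t$ solves a linear Volterra equation whose coefficients $\partial_j([D_iV](X_s))$ are controlled by $M_t$, hence it lies in the domain of the free divergence; the \emph{ambient} conjugate variable $\tilde\xi^i_t\in L^2(\mathcal F_t)$ is that divergence, and $\xi^i_t:=\tau(\tilde\xi^i_t\mid X_t)$ is the conjugate variable of $X_t$ in $W^*(X_t)$, since the trace-preserving conditional expectation is the $L^2$-orthogonal projection onto $L^2(W^*(X_t))$ and the defining Schwinger--Dyson relation pairs only against that subspace. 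Applying the free It\^o formula to $\tilde\xi^i_t$ through this representation and, separately, to $D_iV(X_t)$, one checks that their free stochastic-integral parts coincide --- the precise sense in which the conjugate variable is ``the divergence of the identity'' carried along the Langevin drift $-\tfrac12 DV$, and the crux of the computation --- so $\tilde\xi^i_{V,t}$ has no martingale part; gathering the finite-variation terms --- which, using $dX^j_t=-\tfrac12 D_jV(X_t)dt+dS^j_t$ and $\partial_j[D_iV]\sharp\tilde\xi^j_t-\partial_j[D_iV]\sharp D_jV=\partial_j[D_iV]\sharp\tilde\xi^j_V$, collapse to the single linear drift $-\tfrac12\sum_j\partial_j([D_iV](X_s))\sharp\tilde\xi^j_{V,s}$ --- gives the stated linear SDE. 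I expect this to be the main obstacle: making the It\^o/divergence manipulations and the martingale cancellation rigorous --- domains and closability, $L^2$-control of $\partial_j[D_iV](X_s)$, regularity of $s\mapsto X_s$ --- is exactly the hard analytic content of \cite{Dab10}.

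For the second part assume (\ref{conv}), i.e.\ $\mathcal J DV=(\partial_jD_iV)_{i,j}\ge c\,(1\otimes1)\otimes I_n$ in $M_n(\mathds P\otimes\mathds P^{op})$, which is meaningful precisely because $V=V^*$ makes $\theta(X)=\mathcal J DV(X)$ self-adjoint. Multiplying the linear SDE by the scalar integrating factor $e^{ct/2}$ gives, for $s\le t$,
\begin{equation*}
e^{ct/2}\tilde\xi^i_{V,t}=e^{cs/2}\tilde\xi^i_{V,s}+\frac12\sum_{j=1}^n\int_s^t\Big(c\,(1\otimes1)\delta_{i,j}-\partial_j[D_iV](X_u)\Big)\sharp\big(e^{cu/2}\tilde\xi^j_{V,u}\big)\,du .
\end{equation*}
The kernel $Q=\big(\tfrac c2(1\otimes1)\delta_{i,j}-\tfrac12\partial_jD_iV\big)_{i,j}$ is $\le0$ by (\ref{conv}), so the semigroup it generates is an $L^2$-contraction; differentiating the squared norm,
\begin{equation*}
\frac{d}{dt}\lVert e^{ct/2}\tilde\xi_{V,t}\rVert_2^2=2\langle e^{ct/2}\tilde\xi_{V,t},\,Q\sharp(e^{ct/2}\tilde\xi_{V,t})\rangle_2\le0,
\end{equation*}
hence $\sum_i\lVert\tilde\xi^i_{V,t}\rVert_2^2\le e^{-c(t-s)}\sum_i\lVert\tilde\xi^i_{V,s}\rVert_2^2$ (the Gronwall step). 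Finally, $\Phi^*(X_t\mid V)=\sum_i\lVert\tau(\tilde\xi^i_{V,t}\mid X_t)\rVert_2^2\le\sum_i\lVert\tilde\xi^i_{V,t}\rVert_2^2$ by contractivity of the conditional expectation, and by the Markov property and uniqueness in law of the free SDE it suffices to apply this to the solution restarted at time $s$ from $X_s$, for which the ambient datum at time $s$ coincides with the conditioned one, so that $\sum_i\lVert\tilde\xi^i_{V,s}\rVert_2^2=\Phi^*(X_s\mid V)$; combining the displays yields $\Phi^*(X_t^1,\ldots,X_t^n\mid V)\le e^{-c(t-s)}\Phi^*(X_s^1,\ldots,X_s^n\mid V)$. (For $V=V_C$ one has $\partial_jD_iV_C=C^{-1}_{i,j}\,1\otimes1$, so $\mathcal J DV_C=(1\otimes1)\otimes C^{-1}\ge(1\otimes1)\otimes\lVert C\rVert_{op}^{-1}I_n$: hypothesis (\ref{conv}) holds with $c=\lVert C\rVert_{op}^{-1}$, and the constant-coefficient linear drift makes the SDE strongly solvable.)
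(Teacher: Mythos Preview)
Your proposal is correct and follows essentially the same approach as the paper: both multiply the linear SDE for $\tilde\xi_{V,t}$ by the integrating factor $e^{ct/2}$, use that $Q\le 0$ under (\ref{conv}) to obtain contractivity (Gronwall), and finish via the $L^2$-contractivity of the conditional expectation. You supply two details the paper leaves implicit --- the restart-at-time-$s$ argument needed to identify $\sum_i\lVert\tilde\xi^i_{V,s}\rVert_2^2$ with $\Phi^*(X_s\mid V)$ at the initial time, and the correct index $\tilde\xi^j_{V,u}$ inside the sum over $j$ (the paper writes $\tilde\xi^i_{V,u}$, an apparent typo).
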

\begin{definition}
For a free SDE with $V\in \mathbb{C}\langle t_1,\ldots,t_n\rangle$ (we assume that we are given a strong solution, it is always ensured if the first convexity assumption is fulfilled) : 
\begin{equation}
    X_t=X_0-\frac{1}{2}\int_0^t DV(X_t)dt+S_t\nonumber
\end{equation} where use vector notations and $X_0=\left(X_0^i\right)_{i=1}^n$.
\bigbreak
We define the modified non-microstates free entropy relative to a potential $V$ by :
\begin{equation}
    \chi^*_V(X_0^1,...,X_0^n)=-\int_0^{\infty}\frac{1}{2}\sum_{j=1}^n\lVert \xi^i_t-D_iV(X_t)\rVert_2^2dt=-\int_0^{\infty}\frac{1}{2}\lVert \xi_t-DV(X_t)\rVert_2^2dt
\end{equation} where $\xi_t$ is the conjugates variables of $X_t$ (provided that their exists).
\end{definition}
This allows us to obtain the Log-Sobolev inequality for self-adjoint convex potential $V$, since we have the following De Bruijn's formula which holds true (in fact this is the main motivation for considering such a different definition of the non-microstates free entropy relative to a potential)
\begin{theorem}
Let $V$ a self-adjoint convex potential as in \cite{Dab10}, and $\chi^*_V$, the modified non-microstates free entropy defined previously, then by definition, we have :
\begin{equation}
    \chi^*_V(X_0^1,...,X_0^n)=-\frac{1}{2}\int_0^{\infty}\Phi^*(X_t^1,\ldots,X_t^n|V)dt,
\end{equation}
And the following Log-Sobolev inequality holds :
\begin{equation}
    -\chi^*_V(X_0^1,...,X_0^n)\leq \frac{1}{2c}\Phi^*(X_0^1,\ldots,X_0^n|V),
    \end{equation}
\end{theorem}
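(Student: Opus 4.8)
The plan is to split the statement into two parts --- the De Bruijn-type identity (the first display) and the logarithmic Sobolev inequality (the second) --- and to treat them in that order.

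For the identity, I would simply unwind the two definitions at hand. By the definition of $\chi_V^*$ introduced just above,
\[
-\chi_V^*(X_0^1,\ldots,X_0^n)=\frac{1}{2}\int_0^{\infty}\sum_{i=1}^n\lVert\xi_t^i-D_iV(X_t)\rVert_2^2\,dt,
\]
where $(\xi_t^1,\ldots,\xi_t^n)$ is the conjugate system of the tuple $X_t=(X_t^1,\ldots,X_t^n)$ running along the trajectory of the free SDE $X_t=X_0-\tfrac12\int_0^t [DV](X_s)\,ds+S_t$. On the other hand, Voiculescu's definition of the free Fisher information relative to a potential gives $\Phi^*(X_t^1,\ldots,X_t^n|V)=\lVert\xi_{X_t}-[DV](X_t)\rVert_2^2=\sum_{i=1}^n\lVert\xi_t^i-D_iV(X_t)\rVert_2^2$; substituting the latter expression into the former yields the claimed formula. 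The only genuine point is to check that each integrand makes sense, i.e.\ that the conjugate variables $\xi_t^i$ exist for every $t>0$: this is exactly the content of Theorem~\ref{3} (Dabrowski's Theorem~25), while existence at $t=0$ is part of the standing hypothesis $\Phi^*(X_0|V)<\infty$ --- otherwise the announced inequality is vacuous, both sides being $+\infty$.

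For the logarithmic Sobolev inequality, the engine is the exponential decay of the free Fisher information established above from the convexity assumption (\ref{conv}): for $0\le s\le t$ one has $\Phi^*(X_t^1,\ldots,X_t^n|V)\le e^{-c(t-s)}\Phi^*(X_s^1,\ldots,X_s^n|V)$. I would apply this with $s=0$ and integrate over $t\in[0,\infty)$, using the identity just proved:
\[
-\chi_V^*(X_0^1,\ldots,X_0^n)=\frac{1}{2}\int_0^{\infty}\Phi^*(X_t^1,\ldots,X_t^n|V)\,dt\le\frac{1}{2}\,\Phi^*(X_0^1,\ldots,X_0^n|V)\int_0^{\infty}e^{-ct}\,dt=\frac{1}{2c}\,\Phi^*(X_0^1,\ldots,X_0^n|V).
\]
The same estimate shows en passant that the improper integral defining $\chi_V^*$ converges absolutely whenever $\Phi^*(X_0|V)<\infty$, so the modified entropy is a well-defined finite quantity in the only regime where the inequality carries content.

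The proof is therefore essentially bookkeeping: the analytic work has already been done in Theorem~\ref{3}, whose Gronwall argument on $e^{ct/2}\tilde\xi_{V,t}^i$ (via contractivity of the semigroup generated by $Q$ and of the trace-preserving conditional expectation onto $W^*(X_t)$) is where the convexity hypothesis is genuinely consumed. The main point to watch is to keep the same constant $c$ throughout: for $V=V_C$ this is $c=\lVert C\rVert_{op}^{-1}$ by the remark preceding the theorem. One should also note that right-continuity of $t\mapsto X_t$ for $\lVert\cdot\rVert_2$ together with lower semicontinuity of the free Fisher information makes $t\mapsto\Phi^*(X_t|V)$ measurable, so that all the integrals above are meaningful.
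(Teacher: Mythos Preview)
Your proposal is correct and follows essentially the same route as the paper: the identity is unwound directly from the definition of $\chi_V^*$ and of $\Phi^*(\cdot|V)$, and the log-Sobolev inequality is obtained by plugging the exponential decay $\Phi^*(X_t|V)\le e^{-ct}\Phi^*(X_0|V)$ from Theorem~\ref{3} into the integral and computing $\int_0^\infty e^{-ct}\,dt=1/c$. Your additional remarks on the vacuous case $\Phi^*(X_0|V)=+\infty$, on absolute convergence of the defining integral, and on measurability of $t\mapsto\Phi^*(X_t|V)$ are welcome pieces of hygiene that the paper leaves implicit.
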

\begin{proof}
The first part follows easily from the definition and for the other one we use the exponential decay of the free Fisher information, to get :
\begin{eqnarray}
    -\chi^{*}_V(X_0^1,...,X_0^n)&\leq& \frac{1}{2}\Phi^*(X_0^1,\ldots,X_0^n|V)\int_0^{\infty} e^{-cs}ds\nonumber\\
    &=&\frac{1}{2c}\Phi^*(X_0^1,\ldots,X_0^n|V)
    \end{eqnarray}
which leads to the desired conclusion.
\end{proof}
\qed
\bigbreak
The following lemma allows us to obtain an inequality between the free Fisher information relative to the semicircular potentials and the associated free Stein discrepancy and also a bound for the quadratic Wassertein distance (WS) between a tuple and a semicircular family of covariance $C>0$. Moreover, it will also lead to an {\it HSI} inequality for the class of potential $V_C$. It was already obtained by Fathi and Nelson in \cite{FN} for semicircular family of covariance $C=\rho^{-1}I_n$ and for $\chi^*(.|V_C)$, we will obtain it for the modified non-microstates free entropy $\chi^*_V$.
\bigbreak
It seems really important to notice that in the case of \cite{FN}, we have in fact the equality between the two non-microstates free entropy : $\chi^*(.|V_{\rho^{-1}I_n})-\chi^*(S_{\rho}|{V_\rho^{-1}I_n})=\chi^*_{V_{\rho^{-1}I_n}}(.)$ and it remains an open conjecture to prove the following formula which is a change of variable for $\chi^*_V$ :
\begin{equation}
    \chi^*_V(.)=\chi^*(.|V)-K
\end{equation}
for every convex potential where $K$ is an unknown constant.
\paragraph{First approach}
\begin{flushleft}
For the first approach, consider $X$ a $n$-tuple of self-adjoint non commutative random variables. We note for $S^{'}$ a standard semicircular family and a covariance matrix $C>0$, and $S$ a semicircular family with covariance $C>0$. $X_t=e^{-2t}+\sqrt{1-e^{-2t}}C^{\frac{1}{2}}S$.
\end{flushleft}
\begin{flushleft}
Then it is easy to remark that for the first equality is evident since $S\stackrel{(d)}{=}C^{\frac{1}{2}}S^{'}$ (where $(d)$ denotes the equality in distribution) and that the free Wasserstein distance compares only the distributions :
\begin{equation}
     \frac{d^+}{dt}d_W(X_t,X)\leq \lVert C\rVert_{op}^{\frac{1}{2}}\Phi^*(C^{-\frac{1}{2}}X|V_{I_n})^{\frac{1}{2}}
\end{equation}
Now it is essentially equivalent to prove an estimate for $\Phi^*(C^{-\frac{1}{2}}X_t|V_{I_n})^{\frac{1}{2}}$ by a quantity involving a different Stein discrepancy. Indeed, in this approach, one have to in a different free Stein discrepancy.
\end{flushleft}
\begin{flushleft}
By Cébron result (lemma 2.6 in \cite{C}) , one has : 
\begin{equation}
    \Phi^*(C^{-\frac{1}{2}}X_t|V_{I_n})^{\frac{1}{2}}\leq \frac{e^{-2t}}{\sqrt{1-e^{-2t}}} \Sigma^*(C^{-\frac{1}{2}}X|V_{I_n})
\end{equation}
Now the trick to pursue the computations, is to remark that when $A$ is a free Stein kernel with respect to the potential $V_{I_n}$, then for any invertible matrix $B$, $(1\otimes 1)\otimes B.A.(1\otimes 1)\otimes B^{T}$ is a free Stein kernel for $BX$ with respect to the potential $V_{I_n}$.
\newline
Indeed, denote $A$ a free Stein kernel for $X$ with respect to $V_{I_n}$. By the chain rule for the non-commutative Jacobian, one has :
\begin{eqnarray}
\langle BX,P(BX)\rangle_2&=&\langle X,B^TP(BX)\rangle_2\nonumber\\
&=&\langle A,[J(B^TP)](BX)\rangle_2\nonumber\\
&=&\langle A,(1\otimes 1)\otimes B^T. J_X(BX)\rangle_{HS}.\nonumber\\
&=&\langle A,(1\otimes 1)\otimes B^T. J_{BX}(BX).(1\otimes 1)\otimes B\rangle_{HS}.\nonumber\\
&=&\langle (1\otimes 1)\otimes B.A.(1\otimes 1)\otimes B^T,[JP](BX)\rangle_{HS}.
\end{eqnarray}
where for $P\in \mathds{P}^n$, $B^TP$ is the linear transformation of $P$ by $B^T$.
\end{flushleft}

Now we define the following modified Stein discrepancy, called the 
\begin{definition}
The free Gaussian Stein discrepancy with respect to the potential $V_C$ is then defined as :
\begin{eqnarray}
\widetilde{\Sigma} ^*(X|V_C)=\inf_{A}\bigg\lVert (1\otimes 1)\otimes C^{-\frac{1}{2}}.A.(1\otimes 1)\otimes C^{-\frac{1}{2}}-(1\otimes 1^{op})\otimes I_n\bigg\rVert_{L^2(M_n(\mathcal{M}\bar{\otimes} \mathcal{M}^{op}),(\tau \otimes \tau^{op}) \circ Tr))}\nonumber
\end{eqnarray}
where $A$ is a free Stein kernel with respect to $V_{I_n}$.
\end{definition}
\begin{flushleft}
And we can conclude that :
\end{flushleft}

\begin{proposition}
For any self adjoint vector $X$ of non-commutative random variables, one has :
\begin{equation}
    d_W(X,S)\leq \lVert C\rVert^{\frac{1}{2}}_{op}\widetilde{\Sigma}(X|V_C)
\end{equation}
\end{proposition}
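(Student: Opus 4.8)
The plan is to convert the three displayed inequalities of the First approach above into a single telescoping estimate along the interpolating family $X_t$ introduced there, which satisfies $X_0=X$, has $X_t$ converging in distribution to $S$ as $t\to\infty$, and has $C^{-1/2}X_t$ equal to the standard free Ornstein--Uhlenbeck flow started from $C^{-1/2}X$. First I would note that $d_W$ depends only on distributions, so $d_W(X,S)\le d_W(X,X_T)+d_W(X_T,S)$ for every $T$ and the last term tends to $0$ as $T\to\infty$ (estimate it through the obvious coupling), hence $d_W(X,S)\le\liminf_{T\to\infty}d_W(X,X_T)$. Since $t\mapsto d_W(X,X_t)$ is continuous, vanishes at $t=0$, and has upper right Dini derivative bounded by the infinitesimal estimate recorded above, $\frac{d^{+}}{dt}d_W(X_t,X)\le\lVert C\rVert_{op}^{1/2}\,\Phi^*(C^{-1/2}X_t|V_{I_n})^{1/2}$ --- itself the volatility-side analogue of Theorem~\ref{Th2}, obtained by applying Dabrowski's Theorem~\ref{th1} to the free SDE governing $X_t$ (the linear image under $C^{1/2}$ of the standard free Ornstein--Uhlenbeck equation) --- the fundamental theorem of calculus for functions with bounded upper Dini derivative yields $d_W(X,S)\le\lVert C\rVert_{op}^{1/2}\int_0^\infty\Phi^*(C^{-1/2}X_t|V_{I_n})^{1/2}\,dt$.

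Next I would insert Cébron's decay estimate (Lemma 2.6 in \cite{C}), $\Phi^*(C^{-1/2}X_t|V_{I_n})^{1/2}\le\frac{e^{-2t}}{\sqrt{1-e^{-2t}}}\,\Sigma^*(C^{-1/2}X|V_{I_n})$, pull the $t$-independent Stein discrepancy out of the integral, and compute $\int_0^\infty\frac{e^{-2t}}{\sqrt{1-e^{-2t}}}\,dt=1$ (substitute $u=1-e^{-2t}$). This already gives $d_W(X,S)\le\lVert C\rVert_{op}^{1/2}\,\Sigma^*(C^{-1/2}X|V_{I_n})$, so it only remains to identify $\Sigma^*(C^{-1/2}X|V_{I_n})$ with $\widetilde{\Sigma}(X|V_C)$.

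For that identification I would use the transformation rule established above: for invertible $B$, the map $A\mapsto(1\otimes1)\otimes B\cdot A\cdot(1\otimes1)\otimes B^T$ sends free Stein kernels of $X$ relative to $V_{I_n}$ to free Stein kernels of $BX$ relative to $V_{I_n}$. Taking $B=C^{-1/2}$ (symmetric, invertible, with $(1\otimes1)\otimes C^{-1/2}$ invertible in $M_n(\mathcal{M}\bar{\otimes}\mathcal{M}^{op})$ of inverse $(1\otimes1)\otimes C^{1/2}$), this map is a bijection between the admissible kernels for $X$ and those for $C^{-1/2}X$, with inverse the transformation for $B=C^{1/2}$; under it the quantity minimized in $\Sigma^*(C^{-1/2}X|V_{I_n})$ becomes exactly the quantity minimized in $\widetilde{\Sigma}(X|V_C)$, so passing to infima gives $\Sigma^*(C^{-1/2}X|V_{I_n})=\widetilde{\Sigma}(X|V_C)$ and hence the Proposition. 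If $\widetilde{\Sigma}(X|V_C)=+\infty$ there is nothing to prove, so one may assume it finite, which supplies the finiteness of free Fisher information along the flow used in the first step.

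The scalar integral and the bijection bookkeeping are routine; the real content is the infinitesimal estimate feeding the first step, and I expect the main obstacle to lie there. One has to verify that $C^{-1/2}X_t$ genuinely is the standard free Ornstein--Uhlenbeck process started at $C^{-1/2}X$, that the drift of the SDE satisfied by $X_t$ is the $C^{1/2}$-image of the drift $C^{-1}X_t$ so that the hypotheses of Theorem~\ref{th1} --- right $\lVert\cdot\rVert_2$-continuity of the process and the drift lying in $W^*(X_s)$ --- are met, and that the passage from $\lVert\Theta_s-\xi_{X_s}\rVert_2$ to $\lVert C\rVert_{op}^{1/2}\,\Phi^*(C^{-1/2}X_s|V_{I_n})^{1/2}$ uses Voiculescu's change-of-variable formula for conjugate variables under the linear map $C^{1/2}$. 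This is precisely the ``information carried by the volatility'' variant of Dabrowski's estimate advertised earlier, and it, rather than the subsequent algebra, is where care is needed.
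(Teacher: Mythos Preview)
Your proposal is correct and follows essentially the same route as the paper's ``First approach'': the paper records the same Dini-derivative bound $\frac{d^{+}}{dt}d_W(X_t,X)\leq\lVert C\rVert_{op}^{1/2}\Phi^*(C^{-1/2}X_t|V_{I_n})^{1/2}$, invokes C\'ebron's Lemma~2.6, and derives the Stein-kernel transformation rule under $B=C^{-1/2}$, then simply states ``And we can conclude that'' before the Proposition---you have filled in exactly the integration step $\int_0^\infty\frac{e^{-2t}}{\sqrt{1-e^{-2t}}}\,dt=1$ and the bijection argument identifying $\Sigma^*(C^{-1/2}X|V_{I_n})$ with $\widetilde{\Sigma}(X|V_C)$ that the paper leaves implicit. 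Your caution about verifying the infinitesimal estimate via the linear change of variables for conjugate variables is well placed and is indeed where the paper's sketch is thinnest.
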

\begin{flushleft}
Note that this approach contributes to nothing, in other functionnal inequalities. Indeed, this one can only be useful for probabilistic approximations.
\end{flushleft}
\paragraph{Second approach}
\begin{flushleft}
The following lemma is dedicated to the second approach which is more adapted to the free setting, and this one will connect better quantites with respect to the potential $V_c$. In fact, to prove more general inequalities involving the Dabrowski's variant of non-microstates free entropy, the free Fisher information the free Wasserstein distance and the free Stein discrepancy with respect to the potential $V$, one have to choose this approach.
First, to lighten the notations, we will work directly with $S$ a semicircular family with covariance $C$ which will be supposed to be free from $X$, rather than suppose that $X$ and $S'$ (a centered normalized semicircular system) and then consider $C^{\frac{1}{2}}S'$.
\end{flushleft}
\begin{lemma}\label{lma3}
Let X be a n-tuple of self-adjoint element of $(\mathcal{A},\tau)$ a $W^*-$probability space and let $S$ be a n semicircular family of covariance C living also in the same space $(\mathcal{A},\tau)$ (one always can construct such space with the help of the free product) suppose definite positive (it implies in particular that $C^{-1}$ is bounded from below by $cI_n$ in the usual operator sense). 
\bigbreak
Denote for all $t\geq0$, $X_t=e^{-tC^{-1}}X+\left({I_n-e^{-2tC^{-1}}}\right)^{\frac{1}{2}}S$, then we have :

\begin{equation}
    \Phi^*(X_t|V_C)^{\frac{1}{2}}\leq \frac{e^{-2t\lVert C\rVert_{op}^{-1}}}{\sqrt{1-e^{-2t\lVert C\rVert_{op}^{-1}}}}\lVert C^{-1}\rVert_{op}^{\frac{1}{2}} \Sigma(X|V_C)
\end{equation}
\end{lemma}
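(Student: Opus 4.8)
The strategy is to combine the infinitesimal Wasserstein estimate along the Ornstein--Uhlenbeck flow (as in Theorem~\ref{Th2} and Theorem~\ref{3}) with the exponential decay of the free Fisher information, and then to identify the resulting bound with the free Stein discrepancy via the characterization of conjugate variables through a Stein kernel. First I would observe that, by Theorem~\ref{3} applied to the convex potential $V_C$ (which we noted satisfies the convexity assumption \eqref{conv} with constant $c=\lVert C\rVert_{op}^{-1}$), the free Fisher information along the flow decays exponentially:
\begin{equation}
\Phi^*(X_t\,|\,V_C)\leq e^{-2t\lVert C\rVert_{op}^{-1}}\,\Phi^*(X_0\,|\,V_C),\nonumber
\end{equation}
where the factor $2$ comes from working with the time-rescaled process $(X_{2t})_{t\geq 0}$ as indicated in the remark following Theorem~\ref{Th2}. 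This reduces the problem to estimating $\Phi^*(X_0\,|\,V_C)$ --- or rather its value at the initial time along a \emph{shifted} interpolation --- in terms of the Stein discrepancy.

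The key algebraic step is the identity at $t=0^+$. Write $X_t=e^{-tC^{-1}}X+(I_n-e^{-2tC^{-1}})^{1/2}S$ with $S$ a semicircular family of covariance $C$ free from $X$. For small $t$ the second summand is (up to higher order) $\sqrt{2t}\,C^{-1/2}S_C'$-like, so $X_t$ is a small semicircular perturbation of $X$, and its conjugate variables can be computed by the standard stability of semicircular perturbations: if $A$ is \emph{any} free Stein kernel for $X$ with respect to $V_C$, then the conjugate variable of $X_t$ is
\begin{equation}
\xi_{X_t}=C^{-1}X_t-\frac{1}{\text{(something of order }\sqrt t)}\Big(A\,\sharp\,(\text{the added semicircular directions})\Big)+\text{l.o.t.},\nonumber
\end{equation}
so that $\xi_{X_t}-[DV_C](X_t)=\xi_{X_t}-C^{-1}X_t$ is controlled, in $L^2$, by $\lVert A-(1\otimes 1)\otimes I_n\rVert_{HS}$ times the derivative of the interpolation coefficients. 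Concretely, $\frac{d}{dt}e^{-tC^{-1}}\big|_{t=0}=-C^{-1}$ and $\frac{d}{dt}(I_n-e^{-2tC^{-1}})^{1/2}$ blows up like $t^{-1/2}C^{-1/2}$, which produces exactly the prefactor $\frac{e^{-2t\lVert C\rVert_{op}^{-1}}}{\sqrt{1-e^{-2t\lVert C\rVert_{op}^{-1}}}}\lVert C^{-1}\rVert_{op}^{1/2}$ once one runs the OU flow forward from a general time. Taking the infimum over all admissible Stein kernels $A$ converts the bound into $\Sigma(X\,|\,V_C)$. This is essentially the multivariate, covariance-$C$ generalization of Cébron's Lemma~2.6 in \cite{C}, and I would follow that proof line by line, carrying the matrix $C^{-1}$ through the computations using Remark~\ref{adjo} and the relation $\langle C^{-1}X,Y\rangle_2=\langle X,C^{-1}Y\rangle_2$.

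Putting the two pieces together: at time $t$ the Fisher information $\Phi^*(X_t\,|\,V_C)$ equals the ``initial'' Fisher information of the flow started at time $t$, which by the semicircular-perturbation computation is bounded by $\big(\text{interpolation-derivative prefactor at time }t\big)^2\,\Sigma(X\,|\,V_C)^2$; taking square roots gives the claimed inequality. The main obstacle, and the step requiring the most care, is the conjugate-variable computation for the perturbed tuple $X_t$: one must verify that a free Stein kernel for $X$ with respect to $V_C$ indeed furnishes (via the $\sharp$-action and the Jacobian chain rule) an admissible candidate for the conjugate variables of $X_t$, with the correct $L^2$-error, and that the constants coming from $\lVert C^{-1}\rVert_{op}$ versus $\lVert C\rVert_{op}^{-1}$ track correctly through the freeness of $X$ and $S$ and the non-normalized Hilbert--Schmidt norm on $M_n(\mathcal{M}\bar\otimes\mathcal{M}^{op})$. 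The positive-definiteness of $C$ is used exactly to guarantee $C^{-1}$ is bounded below (so $c=\lVert C\rVert_{op}^{-1}>0$ and the exponential decay is genuine) and that $(I_n-e^{-2tC^{-1}})^{1/2}$ is well-defined and invertible for $t>0$.
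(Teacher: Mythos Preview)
Your proposal contains a genuine gap in the overall strategy. The opening move --- applying the exponential decay from Theorem~\ref{3} to reduce to $\Phi^*(X_0\,|\,V_C)=\Phi^*(X\,|\,V_C)$ --- does not work: nothing in the hypotheses guarantees that $\Phi^*(X\,|\,V_C)$ is finite, and in fact the whole point of the lemma is to produce a bound valid even when $X$ has no conjugate variables at all (only a Stein kernel). The exponential decay $\Phi^*(X_t|V_C)\le e^{-2t\lVert C\rVert_{op}^{-1}}\Phi^*(X_0|V_C)$ is therefore vacuous here. Your parenthetical ``or rather its value at the initial time along a shifted interpolation'' hints that you sense the problem, but shifting the start time to some $s>0$ and then bounding $\Phi^*(X_s|V_C)$ requires exactly the same direct computation at $s$ that the lemma asks for at $t$, so nothing is gained. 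Likewise the ``key algebraic step at $t=0^+$'' cannot be carried out there: $t=0$ is precisely where the bound blows up because $(I_n-e^{-2tC^{-1}})^{1/2}$ degenerates.

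The paper's proof does not use exponential decay at all. It works at a \emph{fixed} $t>0$ and computes the conjugate variable $\xi_t$ explicitly via Voiculescu's formula for free semicircular perturbations (corollary~3.9 and~6.8 in \cite{V}), namely $\xi_t=(I_n-e^{-2tC^{-1}})^{-1/2}\,\tau(C^{-1}S\,|\,X_t)$. The Stein kernel $A$ enters through the identity $\tau(C^{-1}X\,|\,X_t)=\tau\big(\mathcal{J}_S^*(A\cdot D_t)\,|\,X_t\big)$, obtained from the Stein-kernel relation combined with the Jacobian chain rule $\mathcal{J}_{X_t}=\mathcal{J}_S\cdot\big((1\otimes1)\otimes(I_n-e^{-2tC^{-1}})^{-1/2}\big)$. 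One then expands $\langle\xi_t-C^{-1}X_t,\xi_t-C^{-1}X_t\rangle_2$ directly, rewrites it as $\langle(I_n-e^{-2tC^{-1}})^{-1/2}\mathcal{J}_S^*\big[E_{2t}-B_{2t}\cdot A\cdot D_{2t}\big],\,\xi_t-C^{-1}X_t\rangle_2$, and bounds using $\lVert\mathcal{J}_S^*(B)\rVert_2\le\lVert C^{-1}\rVert_{op}^{1/2}\lVert B\rVert_{HS}$ together with $\lVert E_{2t}-B_{2t}AD_{2t}\rVert_{HS}\le\lVert e^{-2tC^{-1}}\rVert_{op}\lVert(1\otimes1)\otimes I_n-A\rVert_{HS}$. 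The factors $e^{-2t\lVert C\rVert_{op}^{-1}}$ and $(1-e^{-2t\lVert C\rVert_{op}^{-1}})^{-1/2}$ emerge from the operator norms of $e^{-2tC^{-1}}$ and $(I_n-e^{-2tC^{-1}})^{-1/2}$, not from any semigroup decay. Your reference to following C\'ebron's Lemma~2.6 line by line is the right instinct --- that is exactly what the paper does --- but the argument must be executed at each fixed $t>0$, not asymptotically near $t=0$ and not via a reduction to $t=0$.
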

\begin{proof}
{\it We follow carefully the original proof of Cébron with some adaptations. 
In fact, by Voiculescu formulas, since we have freeness between $X$ and $S$ (corollary 3.9 and corollary 6.8 in \cite{V}), we get :
\begin{eqnarray}
    \xi_t=\left({I_n-e^{-2tC^{-1}}}\right)^{-\frac{1}{2}}\tau\left(C^{-1}S|X_t\right)\nonumber
\end{eqnarray}
With
\begin{equation}
    \tau(C^{-1}S|X_t)=\bigg(\tau((C^{-1}S)_i)|X_t)\bigg)_{i=1}^n
\end{equation}
where we use vector notations in order to avoid heavy ones. In particular the conditional expectation has to be taken componentwise, i.e \begin{equation}
    \xi_t=(\xi^1_t,...,\xi^1_t)
\end{equation}
And since we have a linear transformation of $S$, we can also see that $\tau(C^{-1}S|X_t)=C^{-1}\tau(S|X_t)$. Note that using one or the other, it will lead to the same conclusion.
\begin{equation}
    \tau(C^{-1}S|X_t)=\bigg(\tau((C^{-1}S)_i)|X_t)\bigg)_{i=1}^n
\end{equation}
We can also assume that the free Stein discrepancy is finite: $\Sigma(X|V_C)< \infty$ (if not we have nothing to prove,  moreover as seen in \cite{FCM}, it always exists, provided that the evaluation of the tuple by the cyclic derivative potential vanishes) and we denote by $A$, a free Stein kernel with respect to $V_C$.
\bigbreak
Then because $X$ is supposed free from $S$, and since the orthogonal projection of Stein kernel onto the subspace $L^2(M_n(W^*(F)\otimes W^*(F)),(\tau \otimes \tau)\circ Tr) $ is also a free Stein kernel, we can assume that $A$ belongs to this subspace.
\bigbreak
Now, we can consider the map for each $i=1\ldots,n$:
\begin{equation}\label{65}
    \partial_{S_i} : L^2(W^*(X,S))\rightarrow L^2(W^*(X,S))\otimes L^2(W^*(X,S))
\end{equation}
\begin{eqnarray}
\partial_{S_i}=\sum_{M=P_1S_iP_2} P_1 \otimes P_2\nonumber
\end{eqnarray}
already defined before and the non commutative Jacobian associated to this $n$-tuple of semicircular family with covariance $C$ (which is an unbounded closable operator).
\bigbreak
On $\mathbb{C}\langle X\rangle^{\otimes^2}$,
we easily deduce by the freeness between $F$ and $S$, that :
\begin{eqnarray}
\partial_{S_{i}}^*(P_1\otimes P_2)= P_1(C^{-1}S)_i P_2\nonumber
\end{eqnarray}
which implies for all $D \in \mathbb{C}\langle X\rangle^{\otimes^2}$
\begin{eqnarray}
    \lVert \partial_{S_i}^*(D) \rVert_2^2=\langle \partial_{S_i}\partial_{S_i}^*(D),D\rangle= C^{-1}_{i,i}\lVert D \rVert^2\nonumber
\end{eqnarray}
The absolute value is not needed for $C^{-1}_{i,i}$, since all diagonal elements of a positive definite matrix are positive.
\bigbreak
We can then deduce that the norm of the adjoint of the non-commutative Jacobian is given for all $B\in M_n(W^*(X)\otimes W^*(X))$ by using the freeness between $B$ and $S$
\begin{eqnarray}
    \lVert \mathcal{J}_{S}^*(B) \rVert_{2}^2=\lVert B\sharp(C^{-1}S) \rVert_{2}^2\leq \lVert C^{-1}\rVert_{op}\lVert B\rVert_{HS}^2\nonumber
\end{eqnarray}
\bigbreak
Where $(C^{-1}S)_i=\sum_{j=1}^n C^{-1}_{i,j}S_j$ for all $i=1,\ldots n$.
\bigbreak
Note also that the free Fisher information of the $n$-semicircular family $S$ with covariance $C$ is equal to :
\begin{equation}
    \Phi^*(S)=tr(C^{-1})
\end{equation}
\bigbreak
As pointed out previously in some papers of Cébron or Fathi, it could be very difficult to compute $J^*_S(A)$, however, under the conditional expectation $\tau(.|X_t)$, this term could be handled more easily.
\bigbreak
To have a nicer exposure, because the following computations involve a lot of notations, we set for all $t>0$:
\begin{eqnarray}
B_t&=&(1_{\mathcal{A}}\otimes 1_{\mathcal{A})}\otimes \left({I_n-e^{-tC^{-{1}}}}\right)^{\frac{1}{2}}e^{-tC^{-1}}\nonumber\\
D_t&=&(1_{\mathcal{A}}\otimes 1_{\mathcal{A}})\otimes \left({I_n-e^{-tC^{-{1}}}}\right)^{-\frac{1}{2}}e^{-tC^{-1}}\nonumber\\\nonumber\\
E_t&=&(1_{\mathcal{A}}\otimes 1_{\mathcal{A}})\otimes e^{-tC^{-1}}\nonumber\\
\end{eqnarray}
\bigbreak
Indeed, one can show that :
\begin{eqnarray}\label{expec}
\tau(C^{-1}X|X_t)=\tau\bigg(\mathcal{J}^*_S(A.D_t)|X_t\bigg)
\end{eqnarray}
To see that, we use the existence of free Stein kernel $A$, the freeness between $X$ and $S$, the fact that the matrices are self-adjoint and finally the chain-rule for the non-commutative Jacobian (details can be found in \cite{V}).
\bigbreak
Finally, we introduce the two following Jacobian $J_X$ and $J_{X_t}$ defined with respect to $X$ and $X_t$ in the same way of \ref{65} : 
\bigbreak
We now remind that on $\mathbb{C}\langle X_t\rangle$, we can invert the conditional expectation and the free difference quotient (by the freeness between $X$ and $S$). Then, we have the following proposition which is nothing but a chain rule for the Jacobian of a linear transformation $X\mapsto AX$ with \label{chain} $A\in GL_n(\mathbb{C})$:
$J_{AX}=J_X.(1\otimes1)\otimes A^{-1}$
\begin{equation}\label{pro67}
 \mathcal{J}_X.\bigg((1_{\mathcal{A}}\otimes 1_{\mathcal{A}})\otimes e^{tC^{-{1}}}\bigg) =\mathcal{J}_{X_t}=J_S.\left((1_{\mathcal{A}}\otimes 1_{\mathcal{A}})\otimes\left({I_n-e^{-2tC^{-{1}}}}\right)^{-\frac{1}{2}}\right)
 \end{equation}
\bigbreak 
We can then compute for all $P\in \mathds{P}^n$ :

\begin{eqnarray}
   \left\langle P(X_t), C^{-1}X\right\rangle_{2}\nonumber
   &=&\left\langle  \mathcal{J}_{X_t}(P(X_t)).E_t,A\right\rangle_{HS}\nonumber\\
   &=&\left\langle \mathcal{J}_S(P(X_t)),A.D_t\right\rangle_{HS}\nonumber\\
    &=&\left\langle P(X_t),\mathcal{J}_S^*[A.D_t]\right\rangle_{2}\nonumber\\
\end{eqnarray}
Where we used the remark \ref{adj} in the first line. In the second one, we used the equation (\ref{pro67}). In the fifth line we used the chain rule satisfied by the Jacobian. Finally, using the property (\ref{25}), we may obtain the desired conclusion. 
\bigbreak
From all these computations, we evaluate the free Fisher information with respect to the potential $V_C$ by and we note for $t>0$, $\psi_t=\xi_t-C^{-1}X_t$:
\begin{eqnarray}
    \Phi^*(X_t|V_C)&
    =&
    \left\langle \xi_t-C^{-1}X_t,\xi_t-C^{-1}X_t\right\rangle_2\nonumber\\
    &=&\left\langle \xi_t-C^{-1}\left(e^{-tC^{-1}}X+\left({I_n-e^{-2tC^{-1}}}\right)^{\frac{1}{2}}S\right),\xi_t-C^{-1}X_t\right\rangle_{2} \nonumber\\
    &=&\left\langle \left({I_n-e^{-2tC^{-1}}}\right)^{-\frac{1}{2}}C^{-1}S-e^{-tC^{-1}}C^{-1}X-C^{-1}\left({I_n-e^{-2tC^{-1}}}\right)^{\frac{1}{2}}S,\xi_t-C^{-1}X_t\right\rangle_{2}\nonumber\\
    &=&\left\langle\left({I_n-e^{-2tC^{-1}}}\right)^{-\frac{1}{2}}\left(C^{-1}S-\left({I_n-e^{-2tC^{-1}}}\right)^{\frac{1}{2}}e^{-tC^{-1}}C^{-1}X-\left(I_n-e^{-2tC^{-1}}\right)C^{-1}S\right),\psi_t\right\rangle_{2}\nonumber\\
    &=&\left\langle\left({I_n-e^{-2tC^{-1}}}\right)^{-\frac{1}{2}}\left(e^{-2tC^{-1}}C^{-1}S-\bigg({I_n-e^{-2tC^{-1}}}\bigg)^{\frac{1}{2}}e^{-tC^{-1}}C^{-1}X\right),\psi_t\right\rangle_{2}\nonumber\\
    &=&\left\langle\left({I_n-e^{-2tC^{-1}}}\right)^{-\frac{1}{2}}\left(e^{-2tC^{-1}}J_S^*\bigg[(1_{\mathcal{A}}\otimes1_{\mathcal{A}})\otimes I_n\bigg]-\bigg({I_n-e^{-2tC^{-1}}}\bigg)^{\frac{1}{2}}e^{-tC^{-1}}\mathcal{J}_S^*(A.D_t)\right),\psi_t\right\rangle_{2}\nonumber\\
    &=&\left\langle\left({I_n-e^{-2tC^{-1}}}\right)^{-\frac{1}{2}} \bigg(\mathcal{J}_S^*\bigg[E_{2t}-B_{2t}.A.D_{2t}\bigg]\bigg),\psi_t\right\rangle_{2} \nonumber\\
    &\leq& \left\lVert\left({I_n-e^{-2tC^{-1}}}\right)^{-\frac{1}{2}}\right\rVert_{op}\left\lVert \mathcal{J}_S^*\bigg[E_{2t}-B_{2t}.A.D_{2t}\bigg] \right\rVert_{2} \Phi^*(X_t|V_C)^{\frac{1}{2}}\nonumber\\
    &\leq& \left\lVert\left({I_n-e^{-2tC^{-1}}}\right)^{-\frac{1}{2}}\right\rVert_{op}\lVert C^{-1}\rVert_{op}^{\frac{1}{2}}\bigg\lVert E_{2t}-B_{2t}.A.D_{2t} \bigg\rVert_{HS}\Phi^*(X_t|V_C)^{\frac{1}{2}}\nonumber
\end{eqnarray}

Now we use that $\bigg(I_n-e^{-2tC^{-1}}\bigg)^{-\frac{1}{2}}$ is symmetric (because $C$ is symmetric). Moreover, it is straightforward to see that the operator norm of this matrix is equal to :
\begin{equation}
    \bigg\lVert\left(I_n-e^{-2tC^{-1}}\right)^{-\frac{1}{2}}\bigg\rVert_{op}=\frac{1}{\sqrt{1-e^{-2t\lVert C\rVert_{op}^{-1}}}}
\end{equation}
And that 
\begin{equation}
\bigg\lVert E_{2t}-B_{2t}.A.D_{2t} \bigg\rVert_{HS}\leq \bigg\lVert e^{-2tC^{-1}}\bigg\rVert_{op}\bigg\lVert (1_{\mathcal{A}}\otimes 1_{\mathcal{A}})\otimes I_n-A\bigg\rVert_{HS}
\end{equation}
\bigbreak
Since for all $t>0$, $B_{t},D_{t},E_{t}$ commutes, we see that:
\begin{equation}
    B_{2t}^{-1}E_{2t}D_{2t}^{-1}=(1_{\mathcal{A}}\otimes1_{\mathcal{A}})\otimes I_n
\end{equation}
By using that for $t>0$: $\lVert e^{-2tC^{-1}}\rVert_{op}=e^{-2t\lVert C\rVert_{op}^{-1}}$, the conclusion follows directly by minimizing over $A$.}
\end{proof}
\begin{remark}
When the covariance matrix is a homothety, it is easier to achieve the proof because in the proposition \ref{pro67}, we have that for $D$, a homothety in $M_n(\mathbb{C})$, $(1_{\mathcal{A}}\otimes 1_{\mathcal{A}})\otimes D$ belongs to the center of $M_n(\mathcal{A}\otimes \mathcal{A}^{op})$, and thus one can simplify formula (\ref{expec}).
\end{remark}
\begin{theorem}
For a potential $V_C$ where the matrix $C$ is symmetric definite positive, we have the following HSI inequality. 
\bigbreak
Let $X=(X_1,\ldots,X_n)$ a n-tuple of non-commutative random variables (self-adjoint operator in $(\mathcal{A},\tau)$, then 
\begin{equation}
    -\chi^{*}_{V_C}(X_1,...,X_n)\leq \frac{\lVert C\rVert_{op}\lVert C^{-1}\rVert_{op}}{2}\Sigma^*(X|V_C)^2 log\left(1+\frac{\Phi^*(X|V_C)}{\lVert C^{-1}\rVert_{op}\Sigma^*(X|V_C)^2}\right),
\end{equation}
\begin{proof}
The proof is obtained by the same arguments as in \cite{FN} Theorem 2.6.
Indeed, we begin with the following de-Bruijn´s identity, we assume that $\Phi^*(X|V_C)<\infty$ (if not there is nothing to prove). We denote also $X_t$ the interpolation given in lemma \ref{lma3} where $X$ is the vector of initial conditions :
\begin{eqnarray}
    -\chi^{*}_{V_C}(X_1,...,X_n)&=&\int_0^{\infty}\Phi^*(X_{2t}|V_C)dt\nonumber\\
    &\leq& \frac{1}{2}\Phi^*(X|V_C)\int_0^{u}e^{-2t\lVert C\rVert_{op}^{-1}}dt +\lVert C^{-1}\rVert_{op}{\Sigma^*(X|V_C)}^2\int_u^{\infty}\frac{e^{-4t\lVert C\rVert_{op}^{-1}}}{{1-e^{-2t\lVert C\rVert_{op}^{-1}}}}dt\nonumber\\
    &\leq& \frac{\lVert C\rVert_{op}}{2}\Phi^*(X|V_C)(1-e^{-2u\lVert C\rVert_{op}^{-1}})\nonumber\\
    &+&\frac{\lVert C\rVert_{op}\lVert C^{-1}\rVert_{op}}{2}{\Sigma^*(X|V_C)}^2\left(-{e^{-2u\lVert C\rVert_{op}^{-1}}}-log\left(1-e^{-2u\lVert C\rVert_{op}^{-1}}\right)\right)\nonumber\\
\end{eqnarray}
Then by optimizing in $r=1-e^{-2u\lVert C\rVert_{op}^{-1}}$, we get :
\begin{equation}
    1-e^{-2u\lVert C\rVert_{op}^{-1}}=\frac{\lVert C^{-1}\rVert_{op}\Sigma^*(X|V_C)^2}{\Phi^*(X|V_C)+\lVert C^{-1}\rVert_{op}\Sigma^*(X|V_C)^2}
\end{equation}
By replacing this value in the previous inequality, we get the desired conclusion.
\end{proof}
\end{theorem}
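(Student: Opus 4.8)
The strategy is to follow Fathi--Nelson's proof of their HSI inequality (Theorem 2.6 in \cite{FN}), the essential point being that Lemma \ref{lma3} has already upgraded the crucial Stein-discrepancy estimate from homothetic covariances to an arbitrary symmetric positive definite $C$. First I would dispose of the degenerate cases: if $\Phi^*(X|V_C)=\infty$ there is nothing to prove, and if $\Sigma^*(X|V_C)=\infty$ the right-hand side is $+\infty$, so by the existence result of \cite{FCM} I may assume both $\Phi^*(X|V_C)<\infty$ and $\Sigma^*(X|V_C)<\infty$, with a free Stein kernel $A$ relative to $V_C$ at hand. The starting point is the De Bruijn--type identity, which holds \emph{by the very definition} of the modified entropy: with $X_t=e^{-tC^{-1}}X+(I_n-e^{-2tC^{-1}})^{\frac{1}{2}}S$ the free Ornstein--Uhlenbeck interpolation of Lemma \ref{lma3}, the definition of $\chi^*_{V_C}$ gives, after the time change absorbing the factor $\tfrac{1}{2}$ in the drift,
\[
-\chi^*_{V_C}(X_1,\ldots,X_n)=\int_0^{\infty}\Phi^*(X_{2t}|V_C)\,dt .
\]
This is exactly the reason $\chi^*_{V_C}$ was introduced in place of $\chi^*(\,\cdot\,|V_C)$: it makes this identity available at no cost, requiring no (conjectural) change-of-variables formula for $\chi^*$.

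Next I would fix a cut-off $u>0$ and split the integral at $u$. On $[0,u]$ I bound the integrand by the exponential decay of the free Fisher information along the flow (Theorem \ref{3}, valid for $V_C$ with constant $c=\lVert C\rVert_{op}^{-1}$): $\Phi^*(X_{2t}|V_C)\le e^{-2t\lVert C\rVert_{op}^{-1}}\Phi^*(X|V_C)$. On $[u,\infty)$ I use the square of the estimate of Lemma \ref{lma3}: $\Phi^*(X_{2t}|V_C)\le \dfrac{e^{-4t\lVert C\rVert_{op}^{-1}}}{1-e^{-2t\lVert C\rVert_{op}^{-1}}}\lVert C^{-1}\rVert_{op}\,\Sigma^*(X|V_C)^2$. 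Both resulting one-variable integrals are elementary: $\int_0^{u}e^{-2t\lVert C\rVert_{op}^{-1}}\,dt=\tfrac{\lVert C\rVert_{op}}{2}(1-e^{-2u\lVert C\rVert_{op}^{-1}})$, while the substitution $r=e^{-2t\lVert C\rVert_{op}^{-1}}$ turns the second into $\tfrac{\lVert C\rVert_{op}}{2}\bigl(-e^{-2u\lVert C\rVert_{op}^{-1}}-\log(1-e^{-2u\lVert C\rVert_{op}^{-1}})\bigr)$.

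Finally I would optimize over the cut-off. Abbreviating $s=1-e^{-2u\lVert C\rVert_{op}^{-1}}\in(0,1)$, the bound reads $\tfrac{\lVert C\rVert_{op}}{2}\bigl(\Phi^*(X|V_C)\,s+\lVert C^{-1}\rVert_{op}\Sigma^*(X|V_C)^2\,(s-1-\log s)\bigr)$. Differentiating in $s$ and setting the derivative to zero gives $s^*=\dfrac{\lVert C^{-1}\rVert_{op}\Sigma^*(X|V_C)^2}{\Phi^*(X|V_C)+\lVert C^{-1}\rVert_{op}\Sigma^*(X|V_C)^2}$, and the stationarity relation $\Phi^*(X|V_C)\,s^*=\lVert C^{-1}\rVert_{op}\Sigma^*(X|V_C)^2(1-s^*)$ makes the two affine-in-$s$ contributions cancel, leaving $-\tfrac{\lVert C\rVert_{op}}{2}\lVert C^{-1}\rVert_{op}\Sigma^*(X|V_C)^2\log s^*$. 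Since $-\log s^*=\log\!\bigl(1+\Phi^*(X|V_C)/(\lVert C^{-1}\rVert_{op}\Sigma^*(X|V_C)^2)\bigr)$, this is precisely the asserted inequality.

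Where the real work sits: all the substantive content --- the De Bruijn identity, the exponential decay of $\Phi^*$ along the flow, and above all the Stein-discrepancy estimate of Lemma \ref{lma3} (which is where the conditional-expectation trick, the Voiculescu conjugate-variable formulas, and the two operator norms $\lVert C\rVert_{op}$ and $\lVert C^{-1}\rVert_{op}$ are generated) --- has already been carried out in the preceding sections, so the theorem itself reduces to the one-parameter optimization above; the only delicate point is keeping the operator-norm constants and the factors of $2$ in their correct places now that $C$ need not be a scalar matrix. Two sanity checks are worth recording: for $C=\rho^{-1}I_n$ both constants collapse to powers of $\rho$ and one recovers the Fathi--Nelson HSI inequality, and letting $\Sigma^*(X|V_C)\to 0$ (reading the right-hand side as its limit $\tfrac{\lVert C\rVert_{op}}{2}\Phi^*(X|V_C)$) recovers the logarithmic Sobolev inequality established earlier.
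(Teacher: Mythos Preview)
Your proof is correct and follows essentially the same route as the paper: De Bruijn identity for $\chi^*_{V_C}$, split of the time integral at a cut-off $u$, exponential decay of $\Phi^*$ on $[0,u]$ via Theorem~\ref{3}, the squared Stein-discrepancy bound of Lemma~\ref{lma3} on $[u,\infty)$, and then the one-parameter optimization in $s=1-e^{-2u\lVert C\rVert_{op}^{-1}}$. Your write-up is in fact slightly more careful than the paper's (you handle the degenerate cases, make the cancellation at the optimal $s$ explicit, and record the consistency checks with Fathi--Nelson and with the LSI), but the argument is the same.
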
 
 Note that this inequality improves the {\it LSI} inequality. This last one gives exactly the same {\it HSI} inequality found in \cite{FN}, because the product of the operator norm vanishes since the covariance matrix is a positive homothety and moreover the two entropies relative to the potential $V_{\rho^{-1}I_n}$ aggres. We also remark that we have the appearance of a quantity usually called condition number $k(A)=\lVert A\rVert_{op}\lVert A^{-1}\rVert_{op}$ in numerical analysis. This should be compared to the results exposed in \cite{LNP}, where a modified Stein discrepancy is defined in another way and not directly linked to the usual Stein discrepancy for approximations.
\bigbreak
Now, we finish the section by stating a transport inequality between the non commutative Wasserstein distance and the free Stein discrepancy.
\begin{lemma}
For any self-adjoint vector $X=(x_1,\ldots,x_n)$ in $(\mathcal{A},\tau)$, we have :
\begin{equation}
    d_W(X,S)\leq \lVert C\rVert_{op}\lVert C^{-1}\rVert_{op}^\frac{1}{2} \Sigma(X|V_C),
\end{equation}
\end{lemma}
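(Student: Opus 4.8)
The plan is to integrate the infinitesimal Wasserstein estimate of Theorem~\ref{Th2} along the Ornstein--Uhlenbeck interpolation of Lemma~\ref{lma3}, to insert the Stein-discrepancy bound furnished by that same lemma, and to evaluate one elementary integral. We may assume $\Sigma(X|V_C)<\infty$, since otherwise the inequality is trivial. Realize $X$ and a semicircular family $S$ with covariance $C$ as free elements of a common tracial $W^*$-probability space (possible via the free product), and set, as in Lemma~\ref{lma3},
\[
X_t=e^{-tC^{-1}}X+\bigl(I_n-e^{-2tC^{-1}}\bigr)^{1/2}S ,\qquad t\ge 0 ,
\]
so that $X_0=X$ and, since $e^{-tC^{-1}}\to 0$ and $\bigl(I_n-e^{-2tC^{-1}}\bigr)^{1/2}\to I_n$, we have $X_t\to S$ in $\lVert\cdot\rVert_2$ and hence $d_W(X_t,S)\to 0$ as $t\to\infty$. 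By Lemma~\ref{lma3} the map $t\mapsto\Phi^*(X_t|V_C)^{1/2}$ is finite on $(0,\infty)$, behaves like a constant times $t^{-1/2}$ near $0$, and decays exponentially at infinity; in particular it is integrable on $(0,\infty)$.

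The interpolation $X_t$ is, in distribution, the time-rescaled solution $t\mapsto Y_{2t}$ of the free SDE $Y_t=X-\tfrac12\int_0^t C^{-1}Y_s\,ds+S_t$ of Theorem~\ref{Th2}. Applying that theorem at time $2t$ and using the chain rule for the upper right derivative gives
\[
\frac{d^{+}}{dt}\,d_W(X_t,S)\;\le\;\Phi^*(X_t|V_C)^{1/2}.
\]
A standard partition argument (split $[s,t]$ into small subintervals, apply the triangle inequality for $d_W$ together with the infinitesimal bound on each piece, and pass to the limit, using the right-continuity of $t\mapsto X_t$ for $\lVert\cdot\rVert_2$ noted in the proof of Theorem~\ref{Th2}) upgrades this to $d_W(X_s,X_t)\le\int_s^t\Phi^*(X_u|V_C)^{1/2}\,du$ for $0\le s\le t$. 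Combining $d_W(X,S)\le d_W(X,X_T)+d_W(X_T,S)$ with this estimate and letting $T\to\infty$ (so that $d_W(X_T,S)\to 0$) yields
\[
d_W(X,S)\;\le\;\int_0^{\infty}\Phi^*(X_t|V_C)^{1/2}\,dt .
\]

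Finally I would insert the bound of Lemma~\ref{lma3},
\[
\Phi^*(X_t|V_C)^{1/2}\;\le\;\frac{e^{-2t\lVert C\rVert_{op}^{-1}}}{\sqrt{1-e^{-2t\lVert C\rVert_{op}^{-1}}}}\;\lVert C^{-1}\rVert_{op}^{1/2}\,\Sigma(X|V_C),
\]
and compute the resulting integral by the substitution $u=e^{-2t\lVert C\rVert_{op}^{-1}}$:
\[
\int_0^{\infty}\frac{e^{-2t\lVert C\rVert_{op}^{-1}}}{\sqrt{1-e^{-2t\lVert C\rVert_{op}^{-1}}}}\,dt
=\frac{\lVert C\rVert_{op}}{2}\int_0^{1}\frac{du}{\sqrt{1-u}}=\lVert C\rVert_{op}.
\]
Multiplying the constants gives $d_W(X,S)\le\lVert C\rVert_{op}\lVert C^{-1}\rVert_{op}^{1/2}\,\Sigma(X|V_C)$, the desired inequality. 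The routine calculus is not where the difficulty lies; the main point to be careful about is the passage from the pointwise right-derivative bound of Theorem~\ref{Th2} to the integrated inequality $d_W(X_s,X_t)\le\int_s^t\Phi^*(X_u|V_C)^{1/2}\,du$, i.e.\ verifying that a function whose upper right Dini derivative is dominated by an integrable function increases by at most the integral of that dominating function. This relies on the right-continuity of the interpolation for $\lVert\cdot\rVert_2$, the genuine finiteness of the Wasserstein distances in play, and the integrability of $\Phi^*(X_t|V_C)^{1/2}$ near $t=0$ guaranteed by Lemma~\ref{lma3}; once this is in hand everything reduces to the one-variable integral computed above.
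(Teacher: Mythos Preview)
Your proof is correct and follows essentially the same approach as the paper: apply the infinitesimal Wasserstein bound of Theorem~\ref{Th2} along the interpolation of Lemma~\ref{lma3}, insert the Fisher--Stein discrepancy inequality of that lemma, and integrate. The paper's own proof is much terser (it simply says ``it follows by a simple integration'' and records the value of the integral), whereas you have correctly spelled out the time-rescaling $t\mapsto 2t$ that absorbs the factor $\tfrac12$ from Theorem~\ref{Th2}, the convergence $d_W(X_t,S)\to 0$, and the passage from the upper Dini-derivative bound to the integrated inequality.
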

\begin{proof}
By the previous Theorem \ref{Th2} where $(X_t)_{t\geq0}$ is the interpolation given in lemma \ref{lma3}, we obtain that $\frac{d^{+}}{dt} W(X_t,S)\leq \Phi(X_t|V_C)^{\frac{1}{2}}$, then by using the previous inequality between free Fisher information and Stein discrepancy, it follows by a simple integration:

Where we have used that :
\begin{equation}
    \int_0^{\infty}\frac{e^{-2t\lVert C\rVert_{op}^{-1}}}{\sqrt{1-e^{-2t\lVert C\rVert_{op}^{-1}}}}=\lVert C\rVert_{op}
\end{equation}
\end{proof}

\begin{remark}
One can actually prove a Talagrand transport inequality for a convex potential $V$ bounded from below in the sense of (\ref{conv}) by $c(1\otimes 1)\otimes I_n$ for $\chi^*_V$ by the same arguments, it is done by Dabrowski (in his PhD Thesis \cite{Dabth}) and it leads to the following inequality where we denote the (only) free Gibbs state with convex potential $V$ with law $\tau_V$:
\begin{equation}
    d_W(X,\tau_V)\leq\sqrt{-\frac{2}{c}\chi_V^*(X)},
\end{equation}
A shorter way to reach the conclusion is to consider $g(\epsilon)=d_W(X(t+\epsilon),\tau_V)-(-\frac{2}{c}\chi^*_V(X))^{\frac{1}{2}}$,
by using LSI, one can show that $\frac{d}{d\epsilon}g(\epsilon)\leq0$, and then achieved the proof.
\end{remark}
\begin{remark}
Note that the two approach will provide exactly the same bounds for the for the multidimensional semicircular approximations on the Wigner space. In fact, we only want here to provide an upper bound for the quadratic Wasserstein distance in terms of the fourth free cumulants.
\end{remark}
\section{Wigner-Ito chaos}
In this section, we will describe the main tools to prove a multivariate quantitative limit theorem for a tuple whose belongs to some homogeneous Wigner chaos.

\begin{flushleft}
In the sequel, we will denote $S$, a family of joint centered semicircular variables in some tracial $W^{*}$-probability space $(\mathcal{A}, \tau)$ equipped with a faithful normal tracial state. For any integer $n\geq 0$, we denote by $\mathcal{P}_n$ the Wigner chaos of order $n$, that is to say the Hilbert space in $L^2(\mathcal{A},\tau)$ generated by the set $\left\{1_\mathcal{A}\right\}$ $\bigcup \left\{S_1\ldots S_k: 1 \leq k \leq n,S_1,...,S_k \in S\right\}$. 
\end{flushleft}
\begin{flushleft}
We also define the homogeneous Wigner chaos of order $n$ by:
\begin{equation}
    \mathcal{H}_n:=\mathcal{P}_n \cap \mathcal{P}_{n-1}^{\perp},
\end{equation}
and we obviously have:
\begin{equation}
    \mathcal{P}_n=\bigoplus_{k=0}^{n}\mathcal{H}_k,
\end{equation}
\end{flushleft}
\begin{remark}
Note here that the homogeneous chaos of order "$0$" is the complex linear span of $1_{\mathcal{A}}$ and the chaos of order "$1$" is the Hilbert Space generated by $S$, which is assumed to be an infinite separable Hilbert space. Now by the classification of infinite dimensional separable real Hilbert space, we know that the restrictions to real elements (self-adjoints), and we will suppose that $S$ is in this way, implies that it is isometrically isomorphic to $L^2_{\mathbb{R}}(\mathbb{R}_{+})$. We then deduce that $S=\left\{S(h) : h\in L^2_{\mathbb{R}}(\mathbb{R}_{+})\right\}$, and that the map:
$h \mapsto S(h)$ is an isomorphism from $L^2_{\mathbb{R}}(\mathbb{R}_+)$ to $S$.
\end{remark}
\begin{flushleft}
It turns out that the construction of Wigner chaoses could be done efficiently by using an isomorphism between the free Fock space and the Wigner space: such a construction is usually done via the Wick map or Wick product. We refer to the article \cite{BS} for a complete exposure.
\end{flushleft}
In fact, if we denote $\mathcal{H}$, the complexified of $L^2_\mathbb{R}(\mathbb{R}_+)$ to $S$, by setting $\mathcal{H}^{\otimes 0}=\mathbb{C}$, and the projection $\pi_n$ from $L^2(\mathcal{A},\tau)$ to $\mathcal{H}_n$:
\begin{equation}
    I_n : h_1\otimes...\otimes h_n \mapsto \pi_n(S(h_1)...S(h_n)),
\end{equation}
which can be extended to a linear isometry between $\mathcal{H}^{\otimes n}$ to $\mathcal{H}_n$.
\begin{flushleft}

An important lemma is the following, called the {\it Haagerup} inequality, or more precisely its semicircular version (proved by Haagerup for length functions over words in length $n$, $f: F_{\infty}\rightarrow C^*_{red}(F_{\infty})$), which implies that our multiple integrals constructed to be in  $L^2(\mathcal{A},\tau)$ are in fact $\mathcal{A}$ and more precisely they belong to the $C^*$-algebra generated by $\left\{S_t\right\}_{t\geq 0}$ (theorem 5.3.4 of \cite{BS}). This inequality shows that one has a control of the operator norm of a multiple Wigner integral by its $L^2$ norm:
\end{flushleft}
\begin{theorem}(Haagerup, Biane Speicher \cite{BS}).
Let $n\geq 0$, then for all $F \in \mathcal{H}_n$, we have :
\begin{equation}
    \lVert F \rVert_{\mathcal{A}} \leq (n+1)\lVert F \rVert_{L^2(\mathcal{A},\tau)},
\end{equation}
\end{theorem}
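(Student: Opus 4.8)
The plan is to transport the whole problem onto the full Fock space and then exploit the Wick (normal--ordering) decomposition of a multiple Wigner integral. First I would use the isometric isomorphism $I_n\colon\mathcal H^{\otimes n}\to\mathcal H_n$ recalled above to write an arbitrary $F\in\mathcal H_n$ as $F=I_n(f)$ for some $f\in\mathcal H^{\otimes n}$; since $I_n$ is an $L^2$-isometry this already gives $\lVert F\rVert_{L^2(\mathcal A,\tau)}=\lVert f\rVert_{\mathcal H^{\otimes n}}$. I would then realize $W^*(S)$ in its standard form acting on the full Fock space $\mathcal F(\mathcal H)=\bigoplus_{m\ge0}\mathcal H^{\otimes m}$, with vacuum $\Omega$, by $S(h)=\ell(h)+\ell(h)^*$, where $\ell(h)\xi=h\otimes\xi$ is the left creation operator (and $\ell$ extended multiplicatively to $\mathcal H^{\otimes k}$, with $\lVert\ell(g)\rVert=\lVert g\rVert_{\mathcal H^{\otimes k}}$). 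Because $\tau$ is faithful this representation is faithful, so $\lVert F\rVert_{\mathcal A}$ equals the operator norm of the image of $F$ on $\mathcal F(\mathcal H)$.

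Next I would invoke the Wick formula: on a simple tensor $f=h_1\otimes\cdots\otimes h_n$ one has
$I_n(f)=\sum_{k=0}^{n}\ell(h_1)\cdots\ell(h_k)\,\ell(h_{k+1})^*\cdots\ell(h_n)^*=:\sum_{k=0}^n W_k(f)$,
i.e.\ a sum of $n+1$ "$k$ creations followed by $n-k$ annihilations" pieces. This identity is checked by applying both sides to $\Omega$ (only the $k=n$ term survives, returning $f$, since every term with $k<n$ ends in an annihilator) together with the fact that the right-hand side is orthogonal to $\mathcal P_{n-1}$; by linearity and continuity it defines bounded operators $W_k(f)$ for every $f\in\mathcal H^{\otimes n}$, with $W_k(f)=\ell(g)\,\ell(\widetilde{g'})^*$ whenever $f=g\otimes g'$ with $g\in\mathcal H^{\otimes k}$, $g'\in\mathcal H^{\otimes(n-k)}$ (here $\widetilde{\cdot}$ reverses the tensor legs).

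The heart of the argument is then the estimate $\lVert W_k(f)\rVert_{B(\mathcal F)}\le\lVert f\rVert_{\mathcal H^{\otimes n}}$ for each $k$. I would get it by identifying $f$, under $\mathcal H^{\otimes n}\cong\mathcal H^{\otimes k}\otimes\mathcal H^{\otimes(n-k)}$, with a Hilbert--Schmidt operator $T_f\colon\mathcal H^{\otimes(n-k)}\to\mathcal H^{\otimes k}$, and observing that the restriction of $W_k(f)$ to $\mathcal H^{\otimes m}$ is (up to the harmless unitary reversal of legs in the annihilation part) exactly $T_f\otimes\mathrm{id}_{\mathcal H^{\otimes(m-n+k)}}$ when $m\ge n-k$, and is $0$ when $m<n-k$. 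Hence $\lVert W_k(f)\rVert=\lVert T_f\rVert_{op}\le\lVert T_f\rVert_{HS}=\lVert f\rVert_{\mathcal H^{\otimes n}}$. Summing the $n+1$ terms and using the triangle inequality yields $\lVert F\rVert_{\mathcal A}=\lVert I_n(f)\rVert\le\sum_{k=0}^n\lVert W_k(f)\rVert\le(n+1)\lVert f\rVert_{\mathcal H^{\otimes n}}=(n+1)\lVert F\rVert_{L^2(\mathcal A,\tau)}$.

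I expect the main obstacle to be bookkeeping rather than anything conceptual: stating the Wick expansion with the correct ordering and index conventions, handling the conjugate-linearity of $h\mapsto\ell(h)^*$ and the reversal of tensor factors in the annihilation legs, and carefully justifying that each mixed piece $W_k(f)$ really is a tensor product $T_f\otimes\mathrm{id}$ layer by layer on Fock space so that its operator norm collapses to $\lVert T_f\rVert_{op}\le\lVert T_f\rVert_{HS}$. All the structural inputs — the isometry of $I_n$, the faithfulness of the standard representation, the creation/annihilation calculus — are either already set up above or are classical (Biane--Speicher, \cite{BS}).
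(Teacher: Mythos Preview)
The paper does not prove this theorem; it merely quotes it as theorem 5.3.4 of Biane--Speicher \cite{BS}. Your argument is correct and is precisely the classical proof from that reference: realize the Wigner chaos on the full Fock space, expand $I_n(f)$ via the Wick decomposition into the $n+1$ terms $W_k(f)=\ell(h_1)\cdots\ell(h_k)\ell(h_{k+1})^*\cdots\ell(h_n)^*$, identify each $W_k(f)$ on a fixed Fock layer with $T_f\otimes\mathrm{id}$ for the Hilbert--Schmidt operator $T_f$ associated to $f\in\mathcal H^{\otimes k}\otimes\mathcal H^{\otimes(n-k)}$, bound $\lVert T_f\rVert_{op}\le\lVert T_f\rVert_{HS}=\lVert f\rVert$, and sum. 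The bookkeeping caveats you list (reversal of tensor legs in the annihilation part, conjugate-linearity of $h\mapsto\ell(h)^*$) are exactly the points where care is needed, but none of them hides a genuine obstacle.
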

The following proposition ensures that the multiple Wiener-Itô integral behaves well with respect to the product. Indeed elements in some homogeneous Wigner chaos are bounded operators by the previous lemma, and so, we are allowed to multiply them and obtain an element in $\mathcal{A}$. In fact, this formula provides a linearization property for the product of two multiple Wigner integrals. 
\begin{flushleft}
Before stating the result, we begin with the definition of (nested) contractions.
\end{flushleft}
\begin{definition}
Let $f \in L^2(\mathbb{R}^n_+)$ and $g \in L^2(\mathbb{R}^m_+)$, for every $0\leq p\leq n\wedge m$, we define the contraction of order $p$ of $f$ and $g$ as the element of $L^2(\mathbb{R}^{n+m-2p}_+)$ by:
\begin{equation}
    f\stackrel{p}{\frown} g(t_1,...,t_{n+m-2p})=\int_{\mathbb{R}_+^{p}}f(t_1,...,t_{n-p},s_p,...,s_1)g(s_1,...,s_p,t_{n-p+1}...,t_{n+m-2p})d_{s_1}...d_{s_p}
\end{equation}
\end{definition}
\begin{prop}(Biane, Speicher prop 5.3.3 in \cite{BS}) \label{pprodui}
For all $f \in L^2(\mathbb{R}^n_+)$ and $g \in L^2(\mathbb{R}^m_+)$, we have:
\begin{equation}
    I_n(f)I_m(g)=\sum_{p=0}^{n\wedge m}I_{n+m-2p}(f\stackrel{p}{\frown} g),
\end{equation}
\end{prop}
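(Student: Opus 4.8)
The plan is to reduce the identity to elementary tensors, realise the multiple Wigner integrals as Wick words on the full Fock space, establish a one-step product rule, and then induct on $n$. For the reduction, note that both sides are bilinear in $(f,g)$ and separately continuous on $L^2(\mathbb{R}^n_+)\times L^2(\mathbb{R}^m_+)$: the left-hand side because $I_n,I_m$ are $L^2$-isometries onto $\mathcal{H}_n,\mathcal{H}_m$ and, by the Haagerup inequality above, $\lVert I_k(f)\rVert_{\mathcal{A}}\leq (k+1)\lVert f\rVert_{L^2}$, so multiplication of these bounded operators is jointly continuous; the right-hand side because each contraction $(f,g)\mapsto f\stackrel{p}{\frown}g$ is bounded from $L^2(\mathbb{R}^n_+)\times L^2(\mathbb{R}^m_+)$ into $L^2(\mathbb{R}^{n+m-2p}_+)$ (Cauchy--Schwarz in the contracted variables), followed by the isometry $I_{n+m-2p}$. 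Since elementary tensors span a dense subspace of $\mathcal{H}^{\otimes n}\cong L^2(\mathbb{R}^n_+)$, it suffices to prove the formula for $f=h_1\otimes\cdots\otimes h_n$ and $g=h'_1\otimes\cdots\otimes h'_m$, in which case $f\stackrel{p}{\frown}g=\big(\prod_{k=1}^p\langle h_{n-k+1},h'_k\rangle\big)\,h_1\otimes\cdots\otimes h_{n-p}\otimes h'_{p+1}\otimes\cdots\otimes h'_m$ (with the convention from the definition of $\stackrel{p}{\frown}$, so no conjugates occur).

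Next I would use the canonical identification of the Wigner space with the full Fock space $\mathcal{F}(\mathcal{H})=\bigoplus_{k\geq0}\mathcal{H}^{\otimes k}$, writing $S(h)=\ell(h)+\ell(h)^*$ with $\ell(h)\,\xi=h\otimes\xi$ the left creation operator, $\tau=\langle\Omega,(\cdot)\Omega\rangle$, and $I_k(h_1\otimes\cdots\otimes h_k)\,\Omega=h_1\otimes\cdots\otimes h_k$. The key step is the one-step product rule, valid for $k\geq1$:
\begin{equation}
S(h_0)\,I_k(h_1\otimes\cdots\otimes h_k)=I_{k+1}(h_0\otimes h_1\otimes\cdots\otimes h_k)+\langle h_0,h_1\rangle\,I_{k-1}(h_2\otimes\cdots\otimes h_k).
\end{equation}
I would prove this by characterising $I_k(h_1\otimes\cdots\otimes h_k)$ as the unique operator in the $C^*$-algebra generated by $S$ that sends $\Omega$ to $h_1\otimes\cdots\otimes h_k$ and whose range lies in $\mathcal{H}_k$ (the Wick word), and then verifying that the right-hand side has these two defining properties of $S(h_0)I_k(\cdots)$: apply both sides to $\Omega$, use $\ell(h_0)^*\Omega=0$, and expand $S(h_0)=\ell(h_0)+\ell(h_0)^*$ on $h_1\otimes\cdots\otimes h_k$, the annihilation part producing exactly $\langle h_0,h_1\rangle\,h_2\otimes\cdots\otimes h_k$; the chaos-degree check is immediate. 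This is precisely the $n=1$ case of the proposition, reading $I_1(h_1)=S(h_1)$.

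Then I would induct on $n$. For $n=0$ the identity is trivial and for $n=1$ it is the one-step rule just proved. Assume the formula for $n-1$ and all $m$. Rearranging the one-step rule gives $I_n(f)=S(h_1)\,I_{n-1}(h_2\otimes\cdots\otimes h_n)-\langle h_1,h_2\rangle\,I_{n-2}(h_3\otimes\cdots\otimes h_n)$. Multiplying on the right by $I_m(g)$, applying the induction hypothesis to $I_{n-1}(\cdots)I_m(g)$ and to $I_{n-2}(\cdots)I_m(g)$, and moving $S(h_1)$ past the first sum via the $n=1$ case, one obtains a double sum in which: the creation parts give $\sum_{p\leq n-1}I_{n+m-2p}\big(h_1\otimes(h_2\otimes\cdots\otimes h_n\stackrel{p}{\frown}g)\big)=\sum_{p\leq n-1}I_{n+m-2p}(f\stackrel{p}{\frown}g)$; the annihilation parts of the first sum carry a factor $\langle h_1,h_2\rangle$ and cancel term by term against the contribution of $-\langle h_1,h_2\rangle\,I_{n-2}(\cdots)I_m(g)$; and the one remaining annihilation term, coming from the full contraction of $h_2\otimes\cdots\otimes h_n$ against $g$, supplies exactly the missing $p=n$ summand $I_{n+m-2n}(f\stackrel{n}{\frown}g)$. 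Collecting everything gives $\sum_{p=0}^{n\wedge m}I_{n+m-2p}(f\stackrel{p}{\frown}g)$.

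The step I expect to be the main obstacle is precisely this combinatorial reconciliation in the inductive step: through two uses of the inductive formula and one of the one-step rule one must track which inner products appear and against which surviving tensor, so as to see that the spurious ``self-contraction'' terms $\langle h_1,h_2\rangle$ cancel and that the boundary summand $p=n$ is generated exactly once (with a little extra care for the edge cases $m=n-1$ and $m<n-1$). The reduction to elementary tensors and the Fock-space computation are otherwise routine. An alternative that trades this bookkeeping for a matching of non-crossing pair partitions would be to test both sides under $\tau$ against arbitrary products $I_{k_1}(\phi_1)\cdots I_{k_r}(\phi_r)$ and invoke the Wick formula for mixed semicircular moments together with faithfulness of $\tau$ on $L^2(\mathcal{A},\tau)$; I would nonetheless keep the Fock-space induction as the primary argument.
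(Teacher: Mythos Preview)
The paper does not prove this proposition; it is stated with attribution to Biane--Speicher \cite{BS} and used as a known input. So there is no in-paper proof to compare against.

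Your proposal is correct and is essentially the standard Fock-space argument one finds in \cite{BS}: reduce to elementary tensors by bilinearity and continuity (the Haagerup bound making the operator product continuous), identify $I_k(h_1\otimes\cdots\otimes h_k)\Omega=h_1\otimes\cdots\otimes h_k$, prove the one-step recursion $S(h_0)I_k=I_{k+1}+\langle h_0,h_1\rangle I_{k-1}$ by evaluating on $\Omega$ and using that $\Omega$ is separating, and then induct. Your inductive bookkeeping is right in all three regimes $m\geq n$, $m=n-1$, $m<n-1$: the creation terms always produce $\sum_{p\leq (n-1)\wedge m}I_{n+m-2p}(f\stackrel{p}{\frown}g)$, the annihilation terms with factor $\langle h_1,h_2\rangle$ cancel exactly against $-\langle h_1,h_2\rangle I_{n-2}(h_3\otimes\cdots\otimes h_n)I_m(g)$ by the inductive formula for $n-2$, and when $m\geq n$ the single remaining annihilation term at $p=n-1$ (first surviving letter $h'_n$) yields precisely $I_{m-n}(f\stackrel{n}{\frown}g)$.

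One small wording issue: in your justification of the one-step rule you describe $I_k$ as ``the unique operator in the $C^*$-algebra generated by $S$ that sends $\Omega$ to $h_1\otimes\cdots\otimes h_k$ and whose range lies in $\mathcal{H}_k$''. The range condition is neither true nor needed; uniqueness follows already from the fact that $\Omega$ is separating for $W^*(S)$ (tracial GNS), so two operators in that algebra agreeing on $\Omega$ coincide. Your actual verification---apply both sides to $\Omega$ and expand $S(h_0)=\ell(h_0)+\ell(h_0)^*$---is exactly the right one. Drop the range clause and the argument is clean.
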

\begin{flushleft}
In particular for all $n,m\geq 0$ : \begin{equation}
\tau(I_n(f)^*I_m(g))=\delta_{n,m}\langle g,f\rangle_{L^2(\mathbb{R}_+^n)}
\end{equation}
\end{flushleft}
\begin{remark}
Given a function $f \in L^2(\mathbb{R}^n_+)$, the adjoint of this function is defined by:
\begin{equation}
    f^*(t_1,...,t_n)=\overline{f(t_n,...,t_1)}\nonumber
\end{equation}
which ensure that $I_n(f)^*=I_n(f^*)$. We then easily deduce that $I_n(f)$ is self-adjoint if and if only $f=f^*$. Such functions are usually called mirror-symmetric (see \cite{KNPS}).
\end{remark}
The contractions are really important in the context of Wigner chaos and Wiener chaos since they appear in the computation of moments of a chaotic random variable.

\begin{lemma}(Theorem 1.6 of \cite{KNPS}) :\label{lma5}
Let $f \in L^2(\mathbb{R}^n_+)$, then:
\begin{equation}
    \tau(|I_n(f)|^4)=2\lVert f\rVert^2_{L^2(\mathbb{R}^{n})}+\sum_{p=1}^{n-1}\lVert f\stackrel{p}{\frown} f^* \rVert_{L^2(\mathbb{R}^{n+m-2p}_+)}^2
\end{equation}

\end{lemma}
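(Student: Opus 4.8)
The plan is to deduce the identity entirely from the product formula of Proposition~\ref{pprodui} together with the orthogonality of the homogeneous Wigner chaoses of distinct orders. Write $A:=|I_n(f)|^2=I_n(f)^*I_n(f)$. Since $I_n(f)^*=I_n(f^*)$, the element $A=I_n(f^*)I_n(f)$ is positive and self-adjoint, so $\tau(|I_n(f)|^4)=\tau(A^2)=\tau(A^*A)=\lVert A\rVert_{L^2(\mathcal{A},\tau)}^2$. Thus everything reduces to expanding $A$ along the multiple Wigner integrals and reading off its squared $L^2$-norm.

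First I would apply Proposition~\ref{pprodui} to the pair $(f^*,f)$ to obtain $A=\sum_{p=0}^{n}I_{2n-2p}\bigl(f^*\stackrel{p}{\frown}f\bigr)$. Then I would use the isometry relation from the same proposition, $\tau\bigl(I_k(g)^*I_l(h)\bigr)=\delta_{k,l}\langle h,g\rangle_{L^2(\mathbb{R}_+^k)}$: integrals of different orders are orthogonal in $L^2(\mathcal{A},\tau)$, and on the chaos of a fixed order the $L^2(\mathcal{A},\tau)$-norm equals the $L^2(\mathbb{R}_+^k)$-norm of the kernel. Since the orders $2n-2p$ are pairwise distinct as $p$ runs over $\{0,\dots,n\}$, all cross terms vanish and $\lVert A\rVert_{L^2(\mathcal{A},\tau)}^2=\sum_{p=0}^{n}\bigl\lVert f^*\stackrel{p}{\frown}f\bigr\rVert_{L^2(\mathbb{R}_+^{2n-2p})}^2$.

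It then remains to extract the two extreme contractions. For $p=0$ one has $f^*\stackrel{0}{\frown}f=f^*\otimes f$, hence $\lVert f^*\otimes f\rVert^2=\lVert f^*\rVert^2\lVert f\rVert^2=\lVert f\rVert_{L^2(\mathbb{R}_+^n)}^4$; for $p=n$ the contraction is the scalar $f^*\stackrel{n}{\frown}f=\int_{\mathbb{R}_+^n}|f|^2=\lVert f\rVert_{L^2(\mathbb{R}_+^n)}^2$, again contributing $\lVert f\rVert_{L^2(\mathbb{R}_+^n)}^4$. These two terms combine to the term $2\lVert f\rVert_{L^2(\mathbb{R}_+^n)}^4$ of the statement, and what survives is the sum over $p=1,\dots,n-1$. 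Finally, in the inner terms one may replace $f^*\stackrel{p}{\frown}f$ by $f\stackrel{p}{\frown}f^*$, either via the relation $(a\stackrel{p}{\frown}b)^*=b^*\stackrel{p}{\frown}a^*$ together with $\lVert\,\cdot^*\rVert=\lVert\,\cdot\,\rVert$, or simply because $\tau\bigl((BA)^2\bigr)=\tau\bigl((AB)^2\bigr)$ for $A=I_n(f)$, $B=I_n(f^*)$ by traciality, which is precisely the present computation with the two boundary-free kernels interchanged. This yields the claimed formula.

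The two product-formula expansions and the evaluation of the boundary kernels are routine; the step that needs the most care is the orthogonality bookkeeping — one must check that the expansion of $A$ is into a genuinely orthogonal family, with the conjugations in the isometry relation of Proposition~\ref{pprodui} matched correctly, so that $\lVert A\rVert_{L^2}^2$ is literally the sum of the squared kernel norms. I expect this to be the only (mild) obstacle: by the Haagerup inequality every quantity in sight is a bounded operator, so there is no analytic subtlety and no issue of convergence.
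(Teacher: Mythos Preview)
Your argument is correct and is exactly the standard proof (the paper does not prove this lemma but cites \cite{KNPS}, where precisely this product--formula--plus--orthogonality computation is carried out). One small caveat: your first justification for swapping $f^*\stackrel{p}{\frown}f$ with $f\stackrel{p}{\frown}f^*$ via $(a\stackrel{p}{\frown}b)^*=b^*\stackrel{p}{\frown}a^*$ only yields that each of these kernels is self-adjoint, not that their norms coincide; however, your second justification via traciality, $\tau\bigl((I_n(f^*)I_n(f))^2\bigr)=\tau\bigl((I_n(f)I_n(f^*))^2\bigr)$, does the job cleanly and gives the claimed identity directly.
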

\bigbreak
A generalized notion of contractions exist also trough the context of pairings, 
we denote $\mathcal{P}_2(n_1\otimes\ldots\otimes n_r)$ the set of pairings $\pi \in \mathcal{P}_2(n_1+...+n_r)$ such that no blocs of $\pi$ contains more than one element from each interval set:
\begin{equation}
    \left\{1\ldots n_r\right\},\left\{n_1+1\ldots n_1+n_2\right\},\ldots,\left\{n_1+\ldots +n_{r-1},\ldots, n_1+\ldots+n_r\right\}\nonumber
\end{equation}
\bigbreak
An integral relative to such pairings is constructed by setting for all
$f_1 \in L^2(\mathbb{R}^{n_1}_+),\ldots f_r ,\in L^2(\mathbb{R}^{n_r}_+)$:
\begin{eqnarray}
\int_{\pi}f_1\otimes \ldots \otimes f_r=\int_{\mathbb{R}^{\frac{n_1+\ldots n_r}{2}}}(f_1\otimes\ldots\otimes f_r)(t_1,...,t_{n_1+...+n_r})\prod_{\left\{i,j\right\}\in \pi}d_{t_i}\nonumber
\end{eqnarray}
and $t_i$ is identified to $t_j$ when $i\overset{\pi}{\sim} j$ (belongs to the same block).
\bigbreak
We finish this section with following proposition which is found in \cite{KNPS} :
\begin{prop}(Lemma 2.1 of \cite{KNPS})
For all $f_1 \in L^2(\mathbb{R}^{n_1}_+),\ldots f_r \in L^2(\mathbb{R}^{n_r}_+)$, we have the following inequality :
\begin{equation}
   \left\lvert\int_{\pi}f_1\otimes \ldots \otimes f_r\right\lvert \leq \lVert f_1\rVert_{L^2(\mathbb{R}^{n_1}_+)}\ldots \lVert f_r\rVert_{L^2(\mathbb{R}^{n_r}_+)}  
\end{equation}
\end{prop}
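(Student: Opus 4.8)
The plan is to prove, by induction on $r$, a slightly more general statement in which some arguments of the $f_k$ are left free instead of being integrated out. This extra generality is essentially forced: peeling off a single factor, say $f_r$, by the Cauchy--Schwarz inequality contracts the remaining factors into an object that is naturally a function of the variables formerly paired with $f_r$, so the inductive hypothesis must accommodate such ``dangling'' variables from the start.

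I would encode the data as a finite multigraph $G$ on the vertex set $\{1,\dots,r\}$ having one edge $\{k,l\}$ for each block $\{a,b\}$ of $\pi$ with $a$ in the $k$-th interval and $b$ in the $l$-th interval, together with a finite set of ``free half-edges'' attached to the vertices. The hypothesis $\pi\in\mathcal{P}_2(n_1\otimes\dots\otimes n_r)$ says exactly that $G$ has no loop, i.e. every argument of $f_k$ is paired with an argument of some $f_l$ with $l\neq k$. Assigning a variable in $\mathbb{R}_+$ to every edge and to every free half-edge, and letting $f_k$ take as arguments the variables of the half-edges at $k$ (in the prescribed order, complex conjugations being irrelevant for the norm estimate), the contraction over the edge-variables defines a function $\Phi$ of the free variables, and the statement to prove is
\[
\lVert \Phi\rVert_{L^2}\;\le\;\prod_{k=1}^{r}\lVert f_k\rVert_{L^2}.
\]
The proposition is the case with no free half-edges, where $\Phi$ is the scalar $\int_{\pi}f_1\otimes\dots\otimes f_r$.

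For $r=1$ the no-loop condition leaves $f_1$ with no paired argument, so $\Phi=f_1$ and the bound is an equality. For the inductive step I would isolate the vertex $r$ and split the variables into those of the free half-edges at $r$ (written $\mathbf{d}_r$), those of the edges incident to $r$ (written $\mathbf{e}_r$; by the no-loop property these edges all lead to vertices $<r$), and the remaining ones $\mathbf{d}'$. Carrying out the $\mathbf{e}_r$-integration first gives $\Phi(\mathbf{d}_r,\mathbf{d}')=\int f_r(\mathbf{d}_r,\mathbf{e}_r)\,\Psi(\mathbf{e}_r,\mathbf{d}')\,d\mathbf{e}_r$, where $\Psi$ is the contraction of $f_1,\dots,f_{r-1}$ over the edges of $G-r$, viewed as a function of $\mathbf{d}'$ together with the $\mathbf{e}_r$-variables (which have become free half-edges of $G-r$), and in particular $\Psi$ does not involve $f_r$. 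The Cauchy--Schwarz inequality in the $\mathbf{e}_r$-variables yields $\lvert\Phi(\mathbf{d}_r,\mathbf{d}')\rvert^2\le\lVert f_r(\mathbf{d}_r,\cdot)\rVert^2\,\lVert\Psi(\cdot,\mathbf{d}')\rVert^2$; integrating over $\mathbf{d}_r$ and $\mathbf{d}'$ and using Tonelli, the two factors decouple (the first no longer depends on $\mathbf{d}'$, the second no longer on $\mathbf{d}_r$), so $\lVert\Phi\rVert_{L^2}^2\le\lVert f_r\rVert_{L^2}^2\,\lVert\Psi\rVert_{L^2}^2$. Since $G-r$ has $r-1$ vertices and is still loopless, the inductive hypothesis gives $\lVert\Psi\rVert_{L^2}\le\prod_{k<r}\lVert f_k\rVert_{L^2}$, and the result follows.

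The analytic content is thus a single application of Cauchy--Schwarz plus Tonelli at each step; I expect the only real work to be bookkeeping --- matching the arguments of each $f_k$ with its incident half-edges, and checking that deleting vertex $r$ produces a well-formed instance of the generalized statement (no loop created, every surviving argument recorded either as an edge or as a free half-edge). The no-loop hypothesis is exactly what makes $\Psi$ independent of $f_r$ so that the two norm factors separate; without it the bound genuinely fails, as one sees already for a self-paired rank-$n$ kernel $f(s,t)=\sum_{i=1}^{n}\phi_i(s)\phi_i(t)$ with $(\phi_i)$ orthonormal, where $\int f(s,s)\,ds=n$ while $\lVert f\rVert_{L^2}=\sqrt{n}$.
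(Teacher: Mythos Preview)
Your proof is correct. Note, however, that the paper itself does not give a proof of this proposition: it is simply quoted as Lemma~2.1 of \cite{KNPS} and used as a black box, so there is no ``paper's own proof'' to compare against here. Your argument---induction on $r$ via a multigraph encoding, peeling off one vertex with Cauchy--Schwarz in the variables paired to it, and using the no-loop hypothesis to decouple the two resulting factors---is exactly the standard proof of this inequality and is essentially how it is established in \cite{KNPS}. The generalization to allow free half-edges (i.e., proving an $L^2$ bound for the partially contracted object) is precisely the right strengthening needed to make the induction close, and your identification of why the no-loop condition is essential (with the rank-$n$ counterexample) is apt.
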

\section{The free Malliavin calculus}

The classical Malliavin operators have a free counterpart in the context of free probability due to the work of Biane and Speicher in \cite {BS}. This construction could be done efficiently on the Free Fock space and then transferred onto the algebra of field operator by the identification $X\mapsto X\Omega$ where $\Omega$ denotes the "vaccum" vector and the $*$-unital algebra generated by the field operators. Since, we are in presence of a closable operator, we will denote as usual the completion of this unital algebra with respect to the $L^2$-norm which is sufficient for our purpose. For sake of clarity, we will also assume standard identifications of spaces as usual in the Malliavin calculus.
\begin{definition}
The free Malliavin derivative is the unique unbounded closable operator (valued into the square integrable biprocesses $\mathcal{B}_2$):
\begin{eqnarray}
    \nabla &: &L^2(\mathcal{A},\tau) \rightarrow L^2(\mathbb{R}_+,L^2(\mathcal{A},\tau)\bar{\otimes} L^2(\mathcal{A},\tau))\nonumber\\
    &&A\mapsto \nabla A=(\nabla_t A)_{t\geq 0}
\end{eqnarray}
such that for all $h\in L^2_{\mathbb{R}}(\mathbb{R}_+)$, $\nabla(S(h))=h.1_{\mathcal{A}}\otimes 1_{\mathcal{A}}$, and that, for all $A,B \in S_{alg}$ (where $S_{alg}$ is the unital $*$-algebra generated by $\left\{S(h),h\in L^2_{\mathbb{R}}(\mathbb{R}_+)\right\}$, we have the derivation property $\nabla(AB)=A.\nabla B+ \nabla A.B$ where the left and right actions are given by the multiplication on the left leg and opposite multiplication on the right leg.
\end{definition}
\begin{definition}
We denote $dom(\nabla)$, the domain of $\nabla$, defined as the completion of the unital $*$-algebra generated by $\left\{S(h),h\in L^2_{\mathbb{R}}(\mathbb{R}_+)\right\}$, with respect to the following norm,:
\begin{equation}
    \lVert Y\rVert_{1,2}^2=\lVert Y\rVert_2^2+\lVert \nabla Y\rVert_{\mathcal{B}_2}^2.
\end{equation}
\end{definition}

\begin{prop}(Proposition 5.3.10 of \cite{BS})
The domain of $\nabla$ contains $\mathcal{P}_n$, and the restriction of $\nabla$ to this space is a bounded linear operator.
\end{prop}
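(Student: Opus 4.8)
The plan is to work through the multiple-integral (equivalently, free Fock space) description of the chaoses and to compute the $\mathcal{B}_2$-norm of $\nabla F$ explicitly on each homogeneous chaos $\mathcal{H}_k$. Since $dom(\nabla)$ is by definition the completion of $S_{alg}$ for the norm $\lVert\cdot\rVert_{1,2}^2=\lVert\cdot\rVert_2^2+\lVert\nabla\cdot\rVert_{\mathcal{B}_2}^2$ and $\nabla$ is closable, it is enough to establish an estimate of the form $\lVert\nabla F\rVert_{\mathcal{B}_2}\le c_n\lVert F\rVert_2$ for $F$ in the linear span of the monomials $S(h_1)\cdots S(h_k)$, $k\le n$, which by definition of $\mathcal{P}_n$ is $L^2$-dense in $\mathcal{P}_n$ and contained in $S_{alg}$. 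Closedness of $\nabla$ then forces $\mathcal{P}_n\subseteq dom(\nabla)$ together with $\lVert\nabla|_{\mathcal{P}_n}\rVert\le c_n$, since an $L^2$-Cauchy sequence of such monomials is, by the estimate, also $\lVert\cdot\rVert_{1,2}$-Cauchy.

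First I would identify the action of $\nabla$ on $\mathcal{H}_k$. Iterating the Leibniz rule $\nabla(AB)=A\cdot\nabla B+\nabla A\cdot B$ on a monomial, using $\nabla(S(h))=h\,1_{\mathcal{A}}\otimes 1_{\mathcal{A}}$, and transporting through the isometry $I_k:\mathcal{H}^{\otimes k}\to\mathcal{H}_k$ gives a ``slot-deletion'' formula: for $F=I_k(f)$ with $f\in L^2(\mathbb{R}_+^k)$,
\[
\nabla_t F=\sum_{i=1}^k (I_{i-1}\otimes I_{k-i})\bigl(f_{i=t}\bigr),
\]
where $f_{i=t}\in L^2(\mathbb{R}_+^{k-1})$ is obtained by fixing the $i$-th variable equal to $t$ (to be read in the $L^2$ sense after integration in $t$). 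The $i$-th summand lies in $\mathcal{H}_{i-1}\otimes\mathcal{H}_{k-i}\subseteq L^2(\mathcal{A})\bar{\otimes} L^2(\mathcal{A})$, and for distinct $i$ these subspaces are mutually orthogonal because $\mathcal{H}_{i-1}\perp\mathcal{H}_{j-1}$.

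Then I would compute the norm. Since $I_{i-1}\otimes I_{k-i}$ is an isometry and $\int_0^\infty\lVert f_{i=t}\rVert_{L^2(\mathbb{R}_+^{k-1})}^2\,dt=\lVert f\rVert_{L^2(\mathbb{R}_+^k)}^2$ by Fubini, the orthogonality above yields, for $F\in\mathcal{H}_k$,
\[
\lVert\nabla F\rVert_{\mathcal{B}_2}^2=\int_0^\infty\lVert\nabla_t F\rVert_{L^2(\mathcal{A})\bar{\otimes} L^2(\mathcal{A})}^2\,dt=\sum_{i=1}^k\lVert f\rVert_{L^2(\mathbb{R}_+^k)}^2=k\,\lVert F\rVert_2^2.
\]
For general $F=\sum_{k=0}^n F_k\in\mathcal{P}_n$ with $F_k\in\mathcal{H}_k$, the vectors $\nabla F_k$ lie in $\bigoplus_{a+b=k-1}\mathcal{H}_a\otimes\mathcal{H}_b$, hence are orthogonal for distinct $k$, so
\[
\lVert\nabla F\rVert_{\mathcal{B}_2}^2=\sum_{k=0}^n k\,\lVert F_k\rVert_2^2\le n\sum_{k=0}^n\lVert F_k\rVert_2^2=n\,\lVert F\rVert_2^2,
\]
that is $\lVert\nabla F\rVert_{\mathcal{B}_2}\le\sqrt{n}\,\lVert F\rVert_2$ and $\lVert F\rVert_{1,2}^2\le(1+n)\lVert F\rVert_2^2$. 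Together with closedness of $\nabla$ and density of monomials this gives $\mathcal{P}_n\subseteq dom(\nabla)$ and boundedness of $\nabla|_{\mathcal{P}_n}$.

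I expect the main obstacle to be the first step: pinning down the slot-deletion formula rigorously, in particular justifying the evaluation $f\mapsto f_{i=t}$ (meaningful only after integration in $t$, not literally pointwise) and checking that iterating the Leibniz rule on a monomial and then projecting onto the orthogonal chaos components collapses exactly to the displayed expression, without leaving lower-order cross terms. Once that identification is secured, the norm computation and the passage to the $\lVert\cdot\rVert_{1,2}$-completion are routine.
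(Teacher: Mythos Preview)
The paper does not supply a proof of this proposition; it is quoted verbatim from Biane--Speicher \cite{BS}, and the explicit slot-deletion formula you rely on is exactly Proposition~5.3.9 of \cite{BS}, stated in the paper immediately afterwards. Your argument is correct and is the standard one: once the identity $\nabla_t I_k(f)=\sum_{i=1}^k(I_{i-1}\otimes I_{k-i})(f_t^i)$ is in hand, the orthogonality of the blocks $\mathcal{H}_{i-1}\otimes\mathcal{H}_{k-i}$ and Fubini give $\lVert\nabla F\rVert_{\mathcal{B}_2}^2=k\lVert F\rVert_2^2$ on $\mathcal{H}_k$, hence $\lVert\nabla\rVert_{\mathcal{P}_n\to\mathcal{B}_2}\le\sqrt{n}$, and closability plus density push this from $S_{alg}$ to all of $\mathcal{P}_n$.

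Regarding the obstacle you flag: iterating Leibniz on a raw monomial $S(h_1)\cdots S(h_k)$ gives $\sum_i h_i(t)\,S(h_1)\cdots S(h_{i-1})\otimes S(h_{i+1})\cdots S(h_k)$, whose legs are products (elements of $\mathcal{P}_{i-1}\otimes\mathcal{P}_{k-i}$), not multiple integrals. The clean way to collapse this to the slot-deletion formula for $I_k(f)$ is to work on the Fock side, where $I_k$ is the Wick map and $\nabla$ is the canonical annihilation gradient removing one tensor factor; this is how Biane--Speicher proceed, and it makes the absence of lower-order cross terms automatic rather than something to be checked by hand.
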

\begin{flushleft}
We can also explicit the action of $\nabla$ on $\mathcal{P}_n$. First, we note that for any $n,m\geq 0$, the map $I_n\otimes I_m$ from ${L^2(\mathbb{R}^{n}_+)\otimes L^2(\mathbb{R}^{m}_+)}$ to $\mathcal{H}_n\otimes \mathcal{H}_m$.
By the isomorphism between ${L^2(\mathbb{R}^{n+m}_+)}$ and ${L^2(\mathbb{R}^{n}_+)\otimes L^2(\mathbb{R}^{m}_+)}$, we can see the linear extension of the map:
\begin{eqnarray}
    I_n\otimes I_m &:& L^2(\mathbb{R}^{n+m}_+) \rightarrow \mathcal{H}_n\otimes \mathcal{H}_n\nonumber\\
    &&h_1\otimes...\otimes h_{n+m} \mapsto \pi_n(S(h_1)...S(h_n))\otimes\pi_m(S(h_{n+1})...S(h_m))
\end{eqnarray}
\end{flushleft}
For all $A\otimes B \in \mathcal{P}_n \otimes \mathcal{P}_n$ and $B\otimes C \in \mathcal{P}_n \otimes \mathcal{P}_n$,we denote :
$(A\otimes B)^{*}=A^*\otimes B^*$ and $(A\otimes B)\sharp (C\otimes D)=AC\otimes DB$, and we extend them by linearity for the first map which turns out to provide a map from $(\mathcal{P}_n \otimes \mathcal{P}_n)$ to $\mathcal{P}_n \otimes \mathcal{P}_n$, and by bilinearity and continuity for the second one which gives a map from $(\mathcal{P}_n \otimes \mathcal{P}_n)^2$ to $P_{2n} \otimes P_{2n}$.
\begin{flushleft}

Since the state is tracial, it implies moreover (the proof is checked on elementary tensors of the form $A_1\otimes A_2$ and $B_1\otimes B_2$ and then extend by linearity and density) :
for all $A,B \in (\mathcal{P}_n\otimes \mathcal{P}_n)^2$ :
\begin{equation}
    \langle A,B\rangle_{L^2(\mathcal{A},\tau)\otimes L^2(\mathcal{A},\tau)}=\tau\otimes\tau(B\sharp A^{*}),\nonumber
\end{equation}
We define for all $f\in L^2(\mathbb{R}^{n}_+)$, the function $f_t^k \in L^2(\mathbb{R}^{n-1}_+)$ by the equality for almost all $t\geq 0$ :
\begin{equation}
    f(t_1,...,t,t_{k+1},...,t_n)=f_t^k(t_1,..,t_{k-1},t_{k+1},...,t_n)\nonumber
\end{equation}
\end{flushleft}
\begin{prop}\label{pp3}(Proposition 5.3.9 of \cite{BS})
The Malliavin derivative maps $\mathcal{P}_n$ into $L^2(\mathbb{R},\mathcal{P}_n\otimes \mathcal{P}_n)$, indeed for $f \in L^2(\mathbb{R}^{n}_+)$, then for almost all $t\geq 0$ :

\begin{equation}
    \nabla_t(I_n(f))=\sum_{k=1}^n I_{k-1}\otimes I_{n-k}(f_t^k),
\end{equation}
\end{prop}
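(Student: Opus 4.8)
The plan is to establish the formula first for elementary tensors $f = h_1 \otimes \cdots \otimes h_n$, with $h_1, \ldots, h_n \in L^2_{\mathbb{R}}(\mathbb{R}_+)$, by induction on $n$, and then to extend it to arbitrary $f \in L^2(\mathbb{R}^n_+)$ by continuity. Writing $W_n(h_1, \ldots, h_n) := I_n(h_1 \otimes \cdots \otimes h_n)$ (with $W_0 := 1_{\mathcal{A}}$), the single-variable instance of the product formula of Proposition~\ref{pprodui} reads
\[
S(h_1)\, W_{n-1}(h_2, \ldots, h_n) = W_n(h_1, \ldots, h_n) + \langle h_1, h_2 \rangle\, W_{n-2}(h_3, \ldots, h_n),
\]
the last term being absent for $n \leq 1$; rearranged, this expresses $W_n$ recursively through $W_{n-1}, W_{n-2}$ and shows that every $W_m$ belongs to $S_{alg}$, so that the derivation property of $\nabla$ may be applied to it.

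For the inductive step I would fix (almost every) $t \geq 0$, treat the scalars $h_k(t)$ as coefficients, and differentiate the recursion by the Leibniz rule, using $\nabla_t S(h_1) = h_1(t)\, 1_{\mathcal{A}} \otimes 1_{\mathcal{A}}$ and that the left and right actions act on the two legs:
\[
\nabla_t W_n = h_1(t)\, 1_{\mathcal{A}} \otimes W_{n-1}(h_2, \ldots, h_n) + S(h_1) \cdot \nabla_t W_{n-1}(h_2, \ldots, h_n) - \langle h_1, h_2 \rangle\, \nabla_t W_{n-2}(h_3, \ldots, h_n).
\]
The first summand is exactly the $k = 1$ term of the claimed identity, since $(h_1 \otimes \cdots \otimes h_n)_t^1 = h_1(t)\, h_2 \otimes \cdots \otimes h_n$. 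Into the other two summands I substitute the inductive hypotheses; the middle one becomes a sum over $\ell = 2, \ldots, n$ of terms $h_\ell(t)\, \big(S(h_1)\, W_{\ell-2}(h_2, \ldots, h_{\ell-1})\big) \otimes W_{n-\ell}(h_{\ell+1}, \ldots, h_n)$, and applying the product identity once more to $S(h_1)\, W_{\ell-2}(h_2, \ldots, h_{\ell-1})$ splits each of these into a ``top'' piece $W_{\ell-1}(h_1, \ldots, h_{\ell-1}) \otimes W_{n-\ell}(h_{\ell+1}, \ldots, h_n)$ --- which is the $k = \ell$ term of the claim, since $(h_1 \otimes \cdots \otimes h_n)_t^\ell$ is the corresponding tensor --- plus a correction $\langle h_1, h_2 \rangle\, W_{\ell-3}(h_3, \ldots, h_{\ell-1}) \otimes W_{n-\ell}(h_{\ell+1}, \ldots, h_n)$ (absent when $\ell = 2$). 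A further application of the inductive hypothesis identifies the sum of these corrections with $\langle h_1, h_2 \rangle\, \nabla_t W_{n-2}(h_3, \ldots, h_n)$, so they cancel the third summand, and what is left is precisely the sum of the $k = 2, \ldots, n$ terms. Together with the $k=1$ term this is the asserted formula; the base case $n=1$ is immediate.

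Finally I would pass to general $f \in L^2(\mathbb{R}^n_+)$ by density: $f \mapsto \nabla(I_n(f))$ is bounded by the boundedness of $\nabla$ on $\mathcal{P}_n$ recorded above, while $f \mapsto \sum_{k=1}^n I_{k-1} \otimes I_{n-k}(f_t^k)$ is bounded because, by Fubini, $f \mapsto (f_t^k)_{t \geq 0}$ is norm non-increasing from $L^2(\mathbb{R}^n_+)$ into $L^2\big(\mathbb{R}_+; L^2(\mathbb{R}^{n-1}_+)\big)$ and each $I_{k-1} \otimes I_{n-k}$ is an isometry of $L^2(\mathbb{R}^{n-1}_+)$ onto $\mathcal{H}_{k-1} \otimes \mathcal{H}_{n-k} \subseteq \mathcal{P}_n \otimes \mathcal{P}_n$; since elementary tensors are total, the identity propagates, and in particular $\nabla$ maps $\mathcal{P}_n$ into $L^2(\mathbb{R}_+; \mathcal{P}_n \otimes \mathcal{P}_n)$ (in fact into $L^2(\mathbb{R}_+; \mathcal{P}_{n-1} \otimes \mathcal{P}_{n-1})$). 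The one genuinely delicate point is the bookkeeping in the inductive step: keeping the index shifts in the two sums aligned so that the corrections produced by the product formula cancel exactly against $\langle h_1, h_2\rangle\, \nabla_t W_{n-2}$; the remaining ingredients (measurability in $t$, the Fubini bound, the density argument) are routine.
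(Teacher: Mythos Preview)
Your argument is correct. The paper does not actually prove this proposition; it is quoted from Biane--Speicher \cite{BS} as an external result, so there is no in-paper proof to compare against. Your inductive scheme via the single-variable product formula and the Leibniz rule is the natural way to establish the identity from first principles, and the cancellation you describe between the correction terms and $\langle h_1,h_2\rangle\,\nabla_t W_{n-2}$ is exactly right; the density extension is likewise sound since both sides define bounded linear maps on $L^2(\mathbb{R}^n_+)$.
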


\begin{flushleft}
We now state the chain rule of the free Malliavin derivative:
\end{flushleft}

\begin{prop}\label{pp5}
For all $F\in \mathcal{P}_d$, and for all $P\in \mathbb{C}[X]$
\begin{equation}
    \nabla_t(P(F))=(\partial P(F))\sharp \nabla_t(F),
\end{equation}
and for the multivariate case, we have for all $(F_1,...,F_n)$ with each $ F_i\in \mathcal{P}_d$ and $P \in \mathds{P}$ :
\begin{equation}
    \nabla_t (P(F))=\sum_{k=1}^n (\partial_k P(F))\sharp \nabla_t F_k,
\end{equation}
\end{prop}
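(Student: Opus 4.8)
\noindent The plan is to reduce the statement to monomials and then to iterate the Leibniz rule that is built into the definition of $\nabla$. By linearity of $\nabla$, of the free difference quotient $\partial$, and of the $\sharp$-action, it suffices to treat $P(X)=X^m$ for $m\geq 1$ (the case $m=0$ being trivial, since $\nabla$ kills scalars and $\partial 1=0$). First I would record the structural fact that every $F\in\mathcal{P}_d$ is an \emph{element of} $\mathcal{A}$, i.e.\ a bounded operator: decomposing $F=\sum_{k=0}^d F_k$ along $\mathcal{P}_d=\bigoplus_{k=0}^d\mathcal{H}_k$ and applying the Haagerup inequality to each homogeneous piece gives $\lVert F\rVert_{\mathcal{A}}\leq(d+1)\sum_{k=0}^d\lVert F_k\rVert_2$. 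Hence, by Proposition \ref{pprodui}, $F^m$ lies in the fixed finite chaos $\mathcal{P}_{md}$ and is bounded, so $F^m\in\mathrm{dom}(\nabla)$ and $\nabla$ is bounded on $\mathcal{P}_{md}$; this makes all the manipulations below licit.

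The core computation is an induction on $m$ using the derivation property $\nabla_t(AB)=A\cdot\nabla_t B+\nabla_t A\cdot B$ (multiplication on the left leg, opposite multiplication on the right leg). Writing $F^m=F\cdot F^{m-1}$ and unfolding the recursion yields the telescoping identity
\[
\nabla_t(F^m)=\sum_{j=0}^{m-1}F^{j}\cdot\nabla_t(F)\cdot F^{m-1-j}.
\]
On the other hand, directly from the definition of $\partial$ one has $\partial(X^m)=\sum_{j=0}^{m-1}X^{j}\otimes X^{m-1-j}$, hence $\partial P(F)=\sum_{j=0}^{m-1}F^{j}\otimes F^{m-1-j}$; and recalling that $(A\otimes B)\sharp(C\otimes D)=AC\otimes DB$, the right-hand side of the displayed identity is precisely $(\partial P(F))\sharp\nabla_t F$. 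This proves the one-variable chain rule.

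The one point requiring care is that the derivation rule is, a priori, only asserted for $A,B$ in the $*$-algebra $S_{\mathrm{alg}}$, whereas above it is applied to $A=F^{j}$, $B=F^{m-1-j}$. I would close this gap by approximation: choose $F_N\in S_{\mathrm{alg}}$ with $F_N\to F$ in $L^2$; since all these elements sit in $\mathcal{P}_d$, the Haagerup inequality upgrades this to convergence in operator norm, whence $F_N^{m}\to F^{m}$ in operator norm, the identity holds for each $F_N$, and one passes to the limit using boundedness of $\nabla$ on $\mathcal{P}_{md}$ and the joint continuity of the $\sharp$-action with respect to the mixed $L^2$/operator-norm topologies (via $\lVert xy\rVert_2\leq\lVert x\rVert\,\lVert y\rVert_2$). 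Alternatively one could compute $\nabla_t$ of a product of multiple Wigner integrals directly from Propositions \ref{pprodui} and \ref{pp3}, but the Leibniz argument is shorter and cleaner. I expect this bookkeeping, rather than any genuine difficulty, to be the main obstacle.

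The multivariate statement follows along the same lines. By multilinearity it suffices to take $P=t_{i_1}\cdots t_{i_r}$ a monomial, so that $P(F)=F_{i_1}\cdots F_{i_r}$ with each $F_{i_\ell}\in\mathcal{P}_d$; iterating the derivation rule once through the product gives
\[
\nabla_t(P(F))=\sum_{\ell=1}^{r}F_{i_1}\cdots F_{i_{\ell-1}}\cdot\nabla_t(F_{i_\ell})\cdot F_{i_{\ell+1}}\cdots F_{i_r}.
\]
Collecting the terms for which the differentiated factor equals $F_k$, the left/right sandwiching reassembles exactly into $(\partial_k P)(F)$ acting through $\sharp$ on $\nabla_t F_k$, and summing over $k=1,\dots,n$ produces $\sum_{k=1}^n(\partial_k P(F))\sharp\nabla_t F_k$, as claimed.
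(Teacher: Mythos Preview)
The paper does not supply a proof of this proposition; it is stated as a structural fact following directly from the derivation property of $\nabla$ and is implicitly attributed to the Biane--Speicher framework \cite{BS}. Your argument is correct and fills in precisely the details one would expect: reduction to monomials by linearity, iteration of the Leibniz rule, and identification of the resulting telescoping sum with $(\partial P(F))\sharp\nabla_t F$ via the explicit formula $\partial(X^m)=\sum_{j=0}^{m-1}X^j\otimes X^{m-1-j}$.

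Your care in justifying the use of the Leibniz rule beyond $S_{\mathrm{alg}}$ is appropriate and not wasted effort: the paper only asserts the derivation identity on $S_{\mathrm{alg}}$, so the passage to general $F\in\mathcal{P}_d$ does require an approximation step. Your mechanism---approximate $F$ in $L^2$ by $F_N\in S_{\mathrm{alg}}\cap\mathcal{P}_d$, upgrade to operator-norm convergence via Haagerup on each homogeneous component, and then pass to the limit using boundedness of $\nabla$ on the fixed chaos $\mathcal{P}_{md}$---is exactly the right one. One small point: when you write ``since all these elements sit in $\mathcal{P}_d$'' you are implicitly using that $S_{\mathrm{alg}}\cap\mathcal{P}_d$ is $L^2$-dense in $\mathcal{P}_d$, which follows from the very definition of $\mathcal{P}_d$ as the closed linear span of products of length $\leq d$; it would do no harm to make this explicit. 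The multivariate case is handled correctly by the same iteration, and your regrouping of the sum by which variable is differentiated is the standard way to recover the $\partial_k$'s.
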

\begin{theorem}(Biane, Speicher, prop 5.3.12 in\cite{BS})
Let $F\in dom(\nabla)$, then we have the following Clark-Ocone-Bismut formula:
\begin{equation}
    F=\tau(F)+\delta(\Gamma\circ \nabla F)
\end{equation}
where $\Gamma$ denotes the orthogonal projection from $\mathcal{B}_{2}$ onto the square integrable adapted biprocesses $\mathcal{B}_{2}^a$ and $\delta$ the Skorohod integral (the adjoint of the Malliavin derivative). 
\end{theorem}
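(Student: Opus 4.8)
\medskip
\noindent\emph{Proof idea.} The plan is to verify the identity one Wigner chaos at a time and then assemble it by linearity and closability. Because $\nabla$ and $\delta$ are closed and $\Gamma$ is a contraction, and because for $F=\sum_{n\ge 0}I_n(f_n)\in\mathrm{dom}(\nabla)$ the truncations converge in the graph norm of $\nabla$, it suffices to prove
\[
\delta\bigl(\Gamma\circ\nabla I_n(f)\bigr)=I_n(f)\qquad\text{for every }n\ge 1,\ f\in L^2(\mathbb{R}_+^n),
\]
together with the trivial case $n=0$ (where $\nabla F=0$ and $F=\tau(F)\,1_{\mathcal A}$); summing over $n$ then gives the claim, since the summands $\delta(\Gamma\circ\nabla I_n(f_n))$ live in the mutually orthogonal chaoses $\mathcal H_n$ and closability of $\delta$ upgrades the finite identity to $\delta(\Gamma\circ\nabla F)=F-\tau(F)$.

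For the single-chaos step I would push $I_n(f)$ through the three operators in turn. Proposition \ref{pp3} gives $\nabla_t\bigl(I_n(f)\bigr)=\sum_{k=1}^{n}(I_{k-1}\otimes I_{n-k})(f_t^k)$ for almost every $t$, the first $k-1$ time arguments being fed to the left leg and the last $n-k$ to the right leg. Writing $\mathcal A_t=W^*(S_s:s\le t)$ for the free Brownian filtration, one recalls that a Wigner integral $I_m(g)$ lies in $L^2(\mathcal A_t)$ exactly when $g$ is supported on $[0,t]^m$; hence the adapted projection $\Gamma$, whose component at time $t$ is $P_t\otimes P_t$ with $P_t$ the trace-preserving conditional expectation onto $L^2(\mathcal A_t)$, merely truncates each surviving time variable to $[0,t]$. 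Finally, on adapted biprocesses $\delta$ coincides with the free It\^o integral $a_\bullet\otimes b_\bullet\mapsto\int_0^\infty a_t\,dS_t\,b_t$ of Biane--Speicher (\cite{BS}), so $\delta(\Gamma\circ\nabla I_n(f))$ is the sum over $k$ of the iterated integrals $\int_0^\infty A_t^{k}\,dS_t\,B_t^{k}$, where $A_t^{k}$ and $B_t^{k}$ are the Wigner integrals built from the piece of $f$ obtained by setting the $k$-th variable equal to $t$ and restricting all the others to $[0,t)$.

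It then remains to recognize the $k$-th term as the Wigner integral of $f$ over the region $\Delta_k=\{(t_1,\dots,t_n):t_k>t_j\text{ for all }j\ne k\}$ on which the $k$-th time variable is the largest; since $\Delta_1,\dots,\Delta_n$ partition $\mathbb{R}_+^{n}$ up to a Lebesgue-null set, summing over $k$ reconstructs $\int_{\mathbb{R}_+^{n}}f\,dS^{\otimes n}=I_n(f)$. This last identification is the only delicate point, and the one I expect to cost real work: one must check that $t\mapsto(I_{k-1}\otimes I_{n-k})(\mathbf 1_{\{t_j<t,\,j\ne k\}}f_t^{k})$ is a bona fide element of $\mathcal B_2^a\cap\mathrm{dom}(\delta)$ and that the recursive structure of the free It\^o integral matches that of the Wigner integral on $\Delta_k$. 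I would settle it either by applying the slicing formula of Proposition \ref{pp3} in reverse on each $\Delta_k$, or, more robustly, by approximating $f$ with step kernels (for which the It\^o and Wigner constructions visibly agree) and passing to the limit via the It\^o isometry; the product formula of Proposition \ref{pprodui} offers an alternative route to the same bookkeeping. Everything else --- the action of $P_t$ on chaoses, the orthogonality of distinct chaoses, and the closability invoked above --- is routine.
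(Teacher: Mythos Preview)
The paper does not actually prove this theorem: it is stated with attribution to Biane--Speicher \cite{BS} (Proposition 5.3.12 there) and used as a black box, so there is no in-paper proof to compare against.

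That said, your sketch is correct and is essentially the standard argument from \cite{BS}. The chaos-by-chaos reduction is sound, the identification of $\Gamma$ at time $t$ with $P_t\otimes P_t$ (truncation of kernels to $[0,t]$) is exactly how the adapted projection acts on Wigner integrals, and the use of $\delta=$ It\^o integral on $\mathcal{B}_2^a$ is legitimate. The partition-by-largest-coordinate step, which you correctly flag as the only delicate point, does go through: on elementary tensors $f=h_1\otimes\cdots\otimes h_n$ the iterated It\^o integral over $\Delta_k$ reproduces the contribution of that region to $I_n(f)$ because the ordering of the $dS$-factors is preserved (left leg before $dS_t$ before right leg), and summing over $k$ reconstitutes the full domain up to a null set; the step-kernel approximation you propose is the clean way to make this rigorous. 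The closure argument for general $F\in\mathrm{dom}(\nabla)$ via graph-norm convergence of chaos truncations is routine.
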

Note also that there exists a free Poincaré inequality on the Wigner space (which have never been mentioned or used) and which can be seen as the infinite dimensional analog of the one's proved by Voiculescu in an unpublished note (see Dabrowski \cite{Dab10}, lemma 2.2).
\begin{theorem}
Let $F\in dom(\nabla)$, then we have the free Poincaré inequality:
\begin{equation}
    \lVert F-\tau(F).1\rVert_2^2\leq \lVert \nabla F\rVert_{\mathcal{B}_2}^2
\end{equation}
\end{theorem}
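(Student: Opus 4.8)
The plan is to reduce the inequality to the Wigner chaos decomposition together with the explicit action of $\nabla$ on homogeneous chaoses recorded in Proposition~\ref{pp3}. First I would write $F=\sum_{n\ge 0}\pi_n(F)$ with $\pi_n(F)\in\mathcal{H}_n$; since $\pi_0(F)=\tau(F)\,1_{\mathcal{A}}$ and the spaces $\mathcal{H}_n$ are mutually orthogonal in $L^2(\mathcal{A},\tau)$, this gives $\lVert F-\tau(F)1\rVert_2^2=\sum_{n\ge 1}\lVert \pi_n(F)\rVert_2^2$. Representing each $\pi_n(F)=I_n(f_n)$ with $f_n\in L^2(\mathbb{R}_+^n)$, Proposition~\ref{pp3} gives $\nabla_t(I_n(f_n))=\sum_{k=1}^n I_{k-1}\otimes I_{n-k}\big((f_n)_t^k\big)$ for almost every $t\ge 0$.

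Next I would compute $\lVert \nabla_t \pi_n(F)\rVert^2$ in $L^2(\mathcal{A})\bar{\otimes}L^2(\mathcal{A})$. Because $I_{k-1}\otimes I_{n-k}$ is an isometry from $L^2(\mathbb{R}_+^{n-1})$ onto $\mathcal{H}_{k-1}\otimes\mathcal{H}_{n-k}$, and the subspaces $\mathcal{H}_{k-1}\otimes\mathcal{H}_{n-k}$, $k=1,\dots,n$, are pairwise orthogonal, one obtains $\lVert \nabla_t(I_n(f_n))\rVert^2=\sum_{k=1}^n\lVert (f_n)_t^k\rVert_{L^2(\mathbb{R}_+^{n-1})}^2$. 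Integrating in $t$ and using that $\int_0^\infty\lVert (f_n)_t^k\rVert_{L^2(\mathbb{R}_+^{n-1})}^2\,dt=\lVert f_n\rVert_{L^2(\mathbb{R}_+^n)}^2=\lVert \pi_n(F)\rVert_2^2$ for each $k$ then yields $\lVert \nabla \pi_n(F)\rVert_{\mathcal{B}_2}^2=n\,\lVert \pi_n(F)\rVert_2^2$.

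It remains to sum over $n$. The key observation is that for $n\ne m$ and almost every $t$, the biprocesses $\nabla_t\pi_n(F)$ and $\nabla_t\pi_m(F)$ are orthogonal in $L^2(\mathcal{A})\bar{\otimes}L^2(\mathcal{A})$: the nonzero components of $\nabla_t\pi_n(F)$ lie in $\mathcal{H}_a\otimes\mathcal{H}_b$ with $a+b=n-1$, whereas those of $\nabla_t\pi_m(F)$ have $a+b=m-1$. Consequently
\[
\lVert \nabla F\rVert_{\mathcal{B}_2}^2=\sum_{n\ge 1} n\,\lVert \pi_n(F)\rVert_2^2\;\ge\;\sum_{n\ge 1}\lVert \pi_n(F)\rVert_2^2=\lVert F-\tau(F)1\rVert_2^2,
\]
since $n\ge 1$ on every term, which is the claim.

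The only genuine care is needed with the infinite sums. I would first carry out the computation for $F$ with a finite chaos expansion (that is, $F\in\mathcal{P}_N$ for some $N$), where every sum above is finite and all the steps are literal identities, and then extend to arbitrary $F\in dom(\nabla)$ by density, using that $dom(\nabla)$ is the completion of $S_{alg}$ for $\lVert \cdot\rVert_{1,2}$ and that both sides of the inequality are continuous for that norm (the left-hand side because $\lVert \cdot\rVert_2\le\lVert \cdot\rVert_{1,2}$, the right-hand side trivially). Alternatively, one can argue directly that membership in $dom(\nabla)$ already forces $\sum_{n\ge 1}n\,\lVert \pi_n(F)\rVert_2^2=\lVert \nabla F\rVert_{\mathcal{B}_2}^2<\infty$, so the series converge and the same computation applies verbatim. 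This bookkeeping is the main (and rather mild) obstacle; the chaotic computation itself is routine once Proposition~\ref{pp3} is in hand.
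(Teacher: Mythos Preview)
Your proof is correct and follows exactly the chaotic decomposition route that the paper itself indicates (the paper only remarks that the result ``is straightforward via the chaotic decomposition or the free Clark-Ocone-Bismut formula'' without spelling out details). Your computation that $\lVert \nabla I_n(f_n)\rVert_{\mathcal{B}_2}^2=n\lVert f_n\rVert^2$ together with the orthogonality across chaoses is precisely the intended argument.
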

The proof is straightforward via the chaotic decomposition or the free Clark-Ocone-Bismut formula.

\section{Discrepancy for Wigner chaos}
In this section, we will construct a free Stein kernel with the help of two linear maps. The main idea of this other construction of a free Stein kernel with respect to the semicircular potentials is based on the idea that we have to avoid the use of the inverse of the Ornstein-Uhlenbeck operator (number operator). In fact, by using duality, one can find a free Stein kernel, that involves this last one, but as mentioned by Bourguin and Campese (and in the one dimensional case), one cannot control the associated free Stein discrepancy by a quantity involving the \textit{fourth free cumulant} for all self-adjoints multiple Wigner integrals, but for a smaller class of these last one whose kernel are "\textit{fully-symmetric}".
\bigbreak
\begin{flushleft}
For all $n\geq 0$, we will denote $\tau\otimes id$ : $\mathcal{P}_n\otimes \mathcal{P}_n \rightarrow \mathcal{P}_n$ and $id\otimes \tau$ : $\mathcal{P}_n\otimes \mathcal{P}_n \rightarrow \mathcal{P}_n$ and we define them by : $\tau\otimes id(A\otimes B)=\tau(A).B$ and $id\otimes \tau(A\otimes B)=\tau(B).A$, for all $A,B \in \mathcal{P}_n$.
\end{flushleft}
The following lemma proved by Cébron (lemma 3.9 of \cite{C}) is the crucial idea to avoid the use of the free Ornstein-Uhlenbeck operator and construct a new free Stein kernel.
\begin{lemma}
For all $A,B \in \mathcal{P}_n$, such as $\tau(A)=0$ or $\tau(B)=0$ :
\begin{equation}
\tau(AB)=\tau\left(\int_{\mathbb{R}_+} id\otimes\tau(\nabla_t A).(\tau\otimes id(\nabla_t B))dt\right),
\end{equation}
\end{lemma}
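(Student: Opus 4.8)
The plan is to reduce the identity to a single homogeneous Wigner chaos and then verify it by direct computation, using the explicit action of $\nabla$ on $\mathcal{P}_n$ (Proposition \ref{pp3}) and the product formula (Proposition \ref{pprodui}).

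First I would reduce to the homogeneous case. Writing $A=\sum_{k=0}^n A_k$ and $B=\sum_{k=0}^n B_k$ with $A_k,B_k\in\mathcal{H}_k$, I would use that the adjoint preserves the chaos order ($I_k(f)^*=I_k(f^*)$) together with traciality and $\langle x,y\rangle_{L^2(\mathcal{A})}=\tau(x^*y)$ to get $\tau(A_jB_l)=\langle A_j^*,B_l\rangle_{L^2(\mathcal{A})}=0$ for $j\neq l$, hence $\tau(AB)=\sum_{k}\tau(A_kB_k)$. By Proposition \ref{pp3}, for almost every $t$ one has $\nabla_t A_k\in\bigoplus_i\mathcal{H}_{i-1}\otimes\mathcal{H}_{k-i}$, so $id\otimes\tau(\nabla_t A_k)\in\mathcal{H}_{k-1}$ and $\tau\otimes id(\nabla_t B_l)\in\mathcal{H}_{l-1}$ when $k,l\geq 1$, while both vanish when $k=0$ (the derivative of a scalar is $0$); the same orthogonality argument then kills the right-hand integrand unless $j=l$. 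So the problem reduces to $A=A_k$, $B=B_k$ in one $\mathcal{H}_k$, and the case $k=0$ is the only place the hypothesis is used: there the left side equals $\tau(A_0)\tau(B_0)=\tau(A)\tau(B)$ and the right side is $0$, so they agree precisely because $\tau(A)=0$ or $\tau(B)=0$.

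Next, for $A=I_n(f)$ and $B=I_n(g)$ with $n\geq 1$, I would compute the collapsed derivatives. By Proposition \ref{pp3}, $\nabla_t(I_n(f))=\sum_{k=1}^n I_{k-1}\otimes I_{n-k}(f_t^k)$, and applying $id\otimes\tau$ kills every term with $n-k\geq 1$ (since $\tau$ vanishes on $\mathcal{H}_m$ for $m\geq 1$), leaving $id\otimes\tau(\nabla_t A)=I_{n-1}(f_t^n)$; symmetrically $\tau\otimes id(\nabla_t B)=I_{n-1}(g_t^1)$. Then the product formula gives $I_{n-1}(f_t^n)I_{n-1}(g_t^1)=\sum_{p=0}^{n-1}I_{2n-2-2p}(f_t^n\stackrel{p}{\frown} g_t^1)$, and applying $\tau$ retains only the full contraction $p=n-1$, so that
\[
\tau\big(id\otimes\tau(\nabla_t A)\cdot(\tau\otimes id(\nabla_t B))\big)=f_t^n\stackrel{n-1}{\frown} g_t^1 .
\]
I would justify integrability by noting $\int_{\mathbb{R}_+}\lVert f_t^n\rVert_{L^2}^2\,dt=\lVert f\rVert_{L^2}^2$ (and similarly for $g$), so by Cauchy--Schwarz and $\lVert xy\rVert_1\leq\lVert x\rVert_2\lVert y\rVert_2$ the $t$-integral converges in $L^1(\mathcal{A})$ and $\tau$ may be moved inside. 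It then remains to identify $\int_{\mathbb{R}_+}(f_t^n\stackrel{n-1}{\frown} g_t^1)\,dt$ with $f\stackrel{n}{\frown} g=\tau(I_n(f)I_n(g))$, which I would do by unwinding $f_t^n(s_1,\dots,s_{n-1})=f(s_1,\dots,s_{n-1},t)$ and $g_t^1(s_1,\dots,s_{n-1})=g(t,s_1,\dots,s_{n-1})$ and performing a bookkeeping change of variables: both sides become $\int_{\mathbb{R}_+^n}f(a_1,\dots,a_n)\,g(a_n,a_{n-1},\dots,a_1)\,da_1\cdots da_n$.

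The hard part is exactly this last identification: one must track carefully the order of the arguments in the definitions of $f_t^k$ and of the contraction $\stackrel{p}{\frown}$ so that the two argument-reversals line up; everything else is a routine unwinding of definitions, together with the orthogonality of the homogeneous chaoses and the continuity of $\tau$.
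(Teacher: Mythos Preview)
Your argument is correct. The paper does not give its own proof of this lemma; it simply attributes it to C\'ebron (lemma~3.9 of \cite{C}) and states it. Your reduction to a single homogeneous chaos via orthogonality, followed by the explicit computation $id\otimes\tau(\nabla_t I_n(f))=I_{n-1}(f_t^n)$, $\tau\otimes id(\nabla_t I_n(g))=I_{n-1}(g_t^1)$, the product formula, and the identification of $\int_{\mathbb{R}_+}f_t^n\stackrel{n-1}{\frown}g_t^1\,dt$ with $f\stackrel{n}{\frown}g=\tau(I_n(f)I_n(g))$, is exactly the kind of direct verification one would expect for this result, and it is in the spirit of the computations C\'ebron carries out in \cite{C} (and of the paper's own computation in \eqref{3m-1}, where $I_{p-1}(f_t^p)$ appears for the same reason). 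The treatment of the $k=0$ term is also right: that is precisely where the hypothesis $\tau(A)=0$ or $\tau(B)=0$ enters.
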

In particular, we can obtain a new corollary which is the following :
\begin{lemma}
Consider a n-tuple $F=(F_1,...,F_n) $ of centered self-adjoint element in some $\mathcal{P}_d$ (with $d$ fixed).
For each $P \in \mathds{P}$ and for each $i=1,...,n$  we have :
\begin{equation}
    \tau(P(F)F_i)=\sum_{k=1}^n \tau\otimes\tau\left(\partial_k(P(F))\sharp\int_0^\infty \nabla_t F_k\sharp(\tau\otimes id(\nabla_t F_i)\otimes 1_{\mathcal{A}})dt  \right)
\end{equation}
\begin{proof}
The proof follows exactly the arguments of Cébron in addition to the Chain-rule of Malliavin derivative for evaluation of non-commutative polynomials in several variables, i.e for all $P \in \mathds{P}$, $\nabla_t P(F)=\sum_{k=1}^n \partial_k(P(F))\sharp \nabla_t F_k$. 
\end{proof}
\end{lemma}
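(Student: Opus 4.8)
The plan is to obtain the identity in three moves: first, reduce it via the preceding lemma (proved by Cébron, \cite{C}) to an expression involving $\nabla_t P(F)$ and $\nabla_t F_i$; second, expand $\nabla_t P(F)$ with the multivariate chain rule of Proposition~\ref{pp5}; and third, rearrange the resulting $\sharp$-products so that the partial traces $id\otimes\tau$ and $\tau\otimes id$ collapse into the single bi-element displayed on the right-hand side. If $P$ is constant both sides vanish (the left because $\tau(F_i)=0$, the right because $\partial_k$ annihilates scalars), so I may assume $N:=\deg P\ge 1$; by the product formula of Proposition~\ref{pprodui} one then has $P(F)\in\mathcal{P}_{Nd}$ and $F_i\in\mathcal{P}_d\subset\mathcal{P}_{Nd}$, so both operators lie in a common sum of homogeneous chaoses on which $\nabla$ is bounded, and the preceding lemma applies with the centered element $B=F_i$.

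First I would apply the preceding lemma with $A=P(F)$ and $B=F_i$, which gives
\[
\tau(P(F)F_i)=\tau\!\left(\int_{\mathbb{R}_+} id\otimes\tau\big(\nabla_t P(F)\big)\cdot\big(\tau\otimes id(\nabla_t F_i)\big)\,dt\right).
\]
Then I would substitute $\nabla_t P(F)=\sum_{k=1}^{n}\partial_k(P(F))\sharp\nabla_t F_k$ from Proposition~\ref{pp5} and use linearity of $id\otimes\tau$, of $\tau$ and of the integral to pull the sum over $k$ outside, obtaining
\[
\tau(P(F)F_i)=\sum_{k=1}^{n}\int_{\mathbb{R}_+}\tau\!\left(id\otimes\tau\big(\partial_k(P(F))\sharp\nabla_t F_k\big)\cdot\big(\tau\otimes id(\nabla_t F_i)\big)\right)dt.
\]

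The third move rests on a purely algebraic identity, verified first on elementary tensors: for $a,b,c,d\in\mathcal{A}$ and every $e\in\mathcal{A}$ one has
\[
\tau\Big(id\otimes\tau\big((a\otimes b)\sharp(c\otimes d)\big)\,e\Big)=\tau(db)\,\tau(ace)=\tau\otimes\tau\Big((a\otimes b)\sharp\big((c\otimes d)\sharp(e\otimes 1_{\mathcal{A}})\big)\Big),
\]
and this extends by bilinearity to $\partial_k(P(F))$ in place of $a\otimes b$ and to $\nabla_t F_k$ in place of $c\otimes d$. Choosing $e=\tau\otimes id(\nabla_t F_i)$ and then exchanging the $t$-integral with the bounded linear maps $Q\mapsto\partial_k(P(F))\sharp Q$ and $\tau\otimes\tau$ produces exactly
\[
\tau(P(F)F_i)=\sum_{k=1}^{n}\tau\otimes\tau\!\left(\partial_k(P(F))\sharp\int_{0}^{\infty}\nabla_t F_k\sharp\big(\tau\otimes id(\nabla_t F_i)\otimes 1_{\mathcal{A}}\big)\,dt\right),
\]
which is the claim. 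The absolute convergence of all the integrals, hence the legitimacy of these exchanges, follows from the boundedness of $\nabla$ on $\mathcal{P}_d$ and the Haagerup inequality together with Cauchy--Schwarz in the time variable, since $\int_{\mathbb{R}_+}\lVert\nabla_t F_k\rVert\,\lVert\tau\otimes id(\nabla_t F_i)\rVert\,dt\le\lVert\nabla F_k\rVert_{\mathcal{B}_2}\lVert\nabla F_i\rVert_{\mathcal{B}_2}<\infty$.

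I expect the third move to be the only delicate point: one must keep careful track of which leg of a biprocess each factor multiplies under $\sharp$ (left leg versus opposite/right leg), and notice that appending $\otimes\,1_{\mathcal{A}}$ is precisely what turns the ``multiply, then apply $id\otimes\tau$'' on the left into a single $\sharp$ on the right. Everything else is a transcription of Cébron's one-dimensional computation in \cite{C} combined with the multivariate Malliavin chain rule.
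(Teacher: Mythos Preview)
Your proof is correct and follows exactly the approach sketched in the paper: apply C\'ebron's lemma with $A=P(F)$ and the centered $B=F_i$, expand $\nabla_t P(F)$ via the multivariate chain rule of Proposition~\ref{pp5}, and then perform the elementary $\sharp$-rearrangement. You have simply written out carefully the details that the paper leaves implicit in its one-line proof.
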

\bigbreak
This lemma will be extremely useful to compute a Stein kernel in our setting.
\begin{theorem}
Let $F=(F_1,...,F_n) $ be a n-tuple of self-adjoint element in $\mathcal{P}_d$, such as $\tau(F)=(0,\ldots,0)$ (this can also be weakened by the assumption $\tau([DV_C](F)=\tau(C^{-1}F)=(0,\ldots,0)$), then 
\begin{equation}
    A=\left(\sum_{j=1}^n C_{i,j}^{-1}\int_{\mathbb{R}_+} (id\otimes\tau(\nabla_t F_j)).(\nabla_t F_k)^* dt\right)_{i,k=1}^{n},
\end{equation}
 is a free Stein kernel of $F$ with respect to the potential $V_C$ and it belongs to $M_n(\mathcal{P}_{2d}\otimes \mathcal{P}_{2d})$.
\end{theorem}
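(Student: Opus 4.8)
The plan is to verify directly that $A$ satisfies the defining identity of a free Stein kernel — that is, $\langle[DV_C](F),P(F)\rangle_\tau=\langle A,[\mathcal{J}P](F)\rangle_{\tau\otimes\tau}$ for every $P=(P_1,\dots,P_n)\in\mathds{P}^n$ — and then to check the regularity claim $A\in M_n(\mathcal{P}_{2d}\otimes\mathcal{P}_{2d})$. Since $[D_iV_C]=\sum_j C^{-1}_{i,j}t_j$, the left-hand side equals $\langle C^{-1}F,P(F)\rangle_2=\sum_{i,j}C^{-1}_{i,j}\,\tau\big(F_j\,P_i(F)\big)$, where we used that $C^{-1}$ is real symmetric and each $F_j$ is self-adjoint. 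Because $\tau$ is a trace we may rewrite each summand as $\tau\big(P_i(F)F_j\big)$ and apply the previous lemma (the tuple version of Cébron's Lemma 3.9) with the polynomial $P_i$ and output index $j$:
$$\tau\big(P_i(F)F_j\big)=\sum_{k=1}^n\tau\otimes\tau\Big(\partial_k(P_i(F))\,\sharp\int_0^\infty\nabla_tF_k\,\sharp\,\big(\tau\otimes id(\nabla_tF_j)\otimes 1_{\mathcal{A}}\big)\,dt\Big).$$

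Collecting indices, the left-hand side of the Stein identity becomes $\sum_{i,k}\tau\otimes\tau\big(\partial_kP_i(F)\,\sharp\,K_{i,k}\big)$ with $K_{i,k}:=\sum_j C^{-1}_{i,j}\int_0^\infty\nabla_tF_k\sharp(\tau\otimes id(\nabla_tF_j)\otimes 1_{\mathcal{A}})\,dt$, whereas the right-hand side, using $(\mathcal{J}P)_{i,k}=\partial_kP_i$ and the Hilbert--Schmidt pairing, equals $\sum_{i,k}\tau\otimes\tau\big(\partial_kP_i(F)\,\sharp\,A_{i,k}^{*}\big)$. Hence it suffices to prove the entrywise identity $A_{i,k}^{*}=K_{i,k}$. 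Writing $\nabla_tF_k$ on elementary tensors $\sum_\beta c_\beta\otimes d_\beta$ and unwinding the left module action, $(\nabla_tF_k)^{*}$, and the flip built into $\sharp$, this reduces to comparing $c_\beta\,(id\otimes\tau(\nabla_tF_j))^{*}\otimes d_\beta$ with $c_\beta\,(\tau\otimes id(\nabla_tF_j))\otimes d_\beta$, i.e.\ to the single relation $(id\otimes\tau(\nabla_tF_j))^{*}=\tau\otimes id(\nabla_tF_j)$. This is exactly where the self-adjointness of $F_j$ enters: it forces the mirror (leg-exchanging) symmetry of the biprocess $\nabla_tF_j$, and that symmetry is what interchanges $id\otimes\tau$ with $\tau\otimes id$ up to the leg-wise involution. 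I expect this bookkeeping — keeping straight the left/opposite-right module structure of biprocesses, the two distinct involutions (the leg-wise $*$ and the mirror symmetry), and the flip in the $\sharp$-product — to be the only genuine obstacle; everything else is formal once the previous lemma is granted.

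For the regularity statement: by Proposition \ref{pp3}, for almost every $t$ one has $\nabla_tF_j\in\mathcal{P}_{d-1}\otimes\mathcal{P}_{d-1}$, so $id\otimes\tau(\nabla_tF_j)\in\mathcal{P}_{d-1}$ and $(\nabla_tF_k)^{*}\in\mathcal{P}_{d-1}\otimes\mathcal{P}_{d-1}$; the product formula (Proposition \ref{pprodui}) then gives $(id\otimes\tau(\nabla_tF_j)).(\nabla_tF_k)^{*}\in\mathcal{P}_{2d-2}\otimes\mathcal{P}_{d-1}\subseteq\mathcal{P}_{2d}\otimes\mathcal{P}_{2d}$. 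Convergence of the $t$-integral in $L^2$ follows from the Haagerup inequality on $\mathcal{P}_{d-1}$ (bounding the operator norm of $id\otimes\tau(\nabla_tF_j)$ by its $L^2$-norm), together with Cauchy--Schwarz in $t$ and the fact that $F_j,F_k\in\mathcal{P}_d\subseteq dom(\nabla)$ with $\int_0^\infty\|\nabla_tF_j\|_2^2\,dt<\infty$; the resulting norm bound can be made explicit in terms of contractions of the kernels in the spirit of Lemma \ref{lma5}. Since $\mathcal{P}_{2d}\otimes\mathcal{P}_{2d}$ is a closed subspace stable under these operations, $A_{i,k}$ lands there, whence $A\in M_n(\mathcal{P}_{2d}\otimes\mathcal{P}_{2d})$. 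Finally, regarding the stated weakening of the centering hypothesis: because $C^{-1}$ is invertible, $\tau([DV_C](F))=\tau(C^{-1}F)=0$ is equivalent to $\tau(F_j)=0$ for every $j$, so the previous lemma (which needs the relevant chaos element to be centered) applies verbatim under either formulation.
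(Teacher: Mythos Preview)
Your proof is correct and follows essentially the same route as the paper: apply the previous lemma to $\tau(P_i(F)F_j)$, use linearity in $C^{-1}$, and identify the result with the $HS$-pairing against the Jacobian. You are more explicit than the paper in two places: you spell out the entrywise identity $A_{i,k}^{*}=K_{i,k}$ via the relation $(id\otimes\tau(\nabla_tF_j))^{*}=\tau\otimes id(\nabla_tF_j)$ for self-adjoint $F_j$ (the paper invokes this only later, citing C\'ebron's Corollary~3.10 and Mai's Lemma~4.6), and you actually justify the regularity assertion $A\in M_n(\mathcal{P}_{2d}\otimes\mathcal{P}_{2d})$, which the paper states without argument.
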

\begin{proof}
{\it 
By the previous lemma, it suffice to compute for all $P \in \mathds{P}$ $\tau(P(F)(C^{-1}F)_i)$ for each $i=1,...,n$, by linearity (of the linear operator $X\mapsto C^{-1}X$, we easily see that it is equal to :
\begin{equation}
    \sum_{k=1}^n\sum_{j=1}^n \tau\otimes\tau\left(\partial_k(P(F))\sharp\int_{\mathbb{R}_+} C_{i,j}^{-1}\nabla_t F_k\sharp(\tau\otimes id(\nabla_t F_j)\otimes 1_{\mathcal{A}})dt  \right)\nonumber
\end{equation}
then by the definition of the inner product on $M_n(L^2(\mathcal{A}\otimes \mathcal{A},(\tau\otimes\tau)\circ Tr)$, we obtain the desired conclusion.
It could also be achieved via the use of the Skorohod operator. Indeed, it is simply obtained by using that for any $G$ in some $\mathcal{P}_d$: $\delta((id\otimes\tau(\nabla G))\otimes 1_{\mathcal{A}})=G$, and then again the use of linearity of the Skorohod integral.}
\end{proof}
The free Stein kernel that we have constructed $A$ is related to the following object: the free Malliavin-Stein matrix which is the free counterpart of the well known Malliavin-Stein matrix defined in the classical case for a $n$-tuple $(F_1,...,F_n)$ in $L^2(\Omega,\mathcal{F},\mathds{P})$ where the filtration $\mathcal{F}$ is generated by a isonormal Gaussian process over a real separable Hilbert space $\mathcal{H}$ by :
$\Gamma(F)=\bigg(\langle DF_j,-DL^{-1}F_i\rangle_{\mathcal{H}}\bigg)_{i,j=1}^n$ and where $L^{-1}$ is the pseudo-inverse of the Ornstein-Uhlenbeck operator.
\begin{definition}
We define the free Malliavin-Stein matrix valued in $M_n(\mathcal{A}\otimes \mathcal{A})$ of a $n$-tuple $(F_1,...,F_n)$ in $\mathcal{P}_d$ with $d$ an bounded integer (in particular, they belong to the domain of $Dom(\nabla)$) as :
\begin{equation}
    \Gamma(F)=\bigg(\int_{\mathbb{R}_+} (id\otimes\tau(\nabla_t F_i)).(\nabla_t F_j)^* dt\bigg)_{i,j=1}^{n}\in M_n(\mathcal{P}_{2d}\otimes\mathcal{P}_{2d})
\end{equation}
Note that we will view it in the following as an element of $M_n(\mathcal{A}\otimes\mathcal{A}^{op})$

\end{definition}
From this definition, we can see that our free Stein kernel $A$ could be expressed as :
\begin{equation*}
    A=\left((1_{\mathcal{A}}\otimes1_{\mathcal{A}})\otimes C^{-1}\right).\Gamma(F)
\end{equation*}
where $F$ is a n-tuple of elements in the non-homogeneous Wiener chaos of any bounded order and "$.$" is the usual matrix product for element in $M_n(\mathcal{A}\otimes\mathcal{A}^{op})$.
\begin{flushleft}
We can, in fact, obtain a much more powerful result, as we can compute explicitly a free Stein kernel on the Wigner space for every potential (for sake of clarity, we will assume that the potential is self-adjoint).
\end{flushleft}
\begin{theorem}
Let $F=(F_1,...,F_n) $  a n-tuple of self-adjoints element in some $\mathcal{P}_d$ ($d$ fixed positive integer), $V$ a oself-adjoint polynomial (i.e $V=V^*$) or even a formal power series, such as  $\tau([DV](F))=(0,\ldots,0)$, alors:
\begin{equation}
    \Gamma_V(F)=\bigg(\int_{\mathbb{R}_+} (id\otimes\tau(\nabla_t [D_iV](F))).(\nabla_t(F))^* dt\bigg)_{i,j=1}^{n}\in M_n(\mathcal{P}_{deg(V)d}\otimes\mathcal{P}_{deg(V)d})
, \end{equation}

 is a free Stein kernel for $F$ relatively to the potential $V$.
\end{theorem}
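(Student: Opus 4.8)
The plan is to verify the defining identity of a free Stein kernel directly, following the proof of the preceding theorem (the case $V=V_C$) with the conjugate variable $C^{-1}F$ replaced by the cyclic gradient $[DV](F)$. Both sides of the Stein equation $\langle [DV](F),P(F)\rangle_\tau=\langle A,[\mathcal J P](F)\rangle_{\tau\otimes\tau}$ are linear in $P$, so it suffices to fix an index $i\in\{1,\dots,n\}$ and a single $p\in\mathds P$ and to prove
\[
\tau\!\big([D_iV](F)\,p(F)\big)=\sum_{k=1}^{n}\big\langle \Gamma_V(F)_{i,k},\,(\partial_k p)(F)\big\rangle_{\tau\otimes\tau},\qquad \Gamma_V(F)_{i,k}=\int_{\mathbb R_+}(id\otimes\tau)\big(\nabla_t[D_iV](F)\big)\cdot(\nabla_t F_k)^{*}\,dt .
\]

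Before that I would record the structural facts. Since $F=(F_1,\dots,F_n)$ lies in the fixed chaos $\mathcal P_d$ and $D_iV$ is a polynomial (or a formal power series of radius $>\lVert F\rVert$), the product formula for multiple Wigner integrals (Proposition~\ref{pprodui}) together with the Haagerup inequality show that $[D_iV](F)$ is a bounded operator lying in a finite chaos, contained in $\mathcal P_{\deg(V)d}$. Moreover, since the cyclic derivative commutes with the involution of $\mathds P$, $V=V^{*}$ forces $D_iV$ to be a self-adjoint polynomial, hence $[D_iV](F)$ is self-adjoint; and by hypothesis $\tau([D_iV](F))=0$, so it is centered, in particular in the domain of $\nabla$. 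The same closure properties — $\nabla$ maps $\mathcal P_m$ into $L^2(\mathbb R_+,\mathcal P_m\otimes\mathcal P_m)$ (Proposition~\ref{pp3}), while $id\otimes\tau$, the involution, and products preserve finiteness of the chaos — give $\Gamma_V(F)\in M_n(\mathcal P_{\deg(V)d}\otimes\mathcal P_{\deg(V)d})$; in particular $\Gamma_V(F)$ is square integrable, hence an admissible object, and the $t$-integral converges in $L^2$ because all the integrands are driven by $L^2$-kernels of bounded degree.

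The core of the argument is the Skorohod representation already used for the $V_C$-case: for every centered $G$ belonging to a finite chaos one has $\delta\big((id\otimes\tau)(\nabla G)\otimes 1_{\mathcal A}\big)=G$, which is verified on each homogeneous component $\mathcal H_m$, $m\ge 1$, from the explicit form of $\nabla$ on $\mathcal P_m$ (Proposition~\ref{pp3}) and the definition of $\delta$ as the adjoint of $\nabla$. Applying this with $G=[D_iV](F)$, using self-adjointness of $[D_iV](F)$ and the duality $\langle\delta(u),Y\rangle_2=\langle u,\nabla Y\rangle_{\mathcal B_2}$, one gets
\[
\tau\!\big([D_iV](F)\,p(F)\big)=\int_{\mathbb R_+}\Big\langle (id\otimes\tau)\big(\nabla_t[D_iV](F)\big)\otimes 1_{\mathcal A},\ \nabla_t p(F)\Big\rangle_{L^2(\mathcal A)\otimes L^2(\mathcal A)}\,dt .
\]
Then one expands $\nabla_t p(F)=\sum_{k=1}^{n}(\partial_k p)(F)\sharp\nabla_t F_k$ by the multivariate chain rule (Proposition~\ref{pp5}), substitutes, and rearranges each fibrewise pairing. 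The only algebraic input needed is the identity $\langle A,B\rangle_{L^2(\mathcal A)\otimes L^2(\mathcal A)}=\tau\otimes\tau(B\sharp A^{*})$ recorded before Proposition~\ref{pp3}, together with traciality of $\tau$: this turns $\langle a_t\otimes 1_{\mathcal A},\,q\sharp\nabla_t F_k\rangle$ into $\langle (a_t\otimes 1_{\mathcal A})\,(\nabla_t F_k)^{*},\,q\rangle_{\tau\otimes\tau}$ for $a_t=(id\otimes\tau)(\nabla_t[D_iV](F))$ and $q=(\partial_k p)(F)$; interchanging $\sum_k$, the $t$-integral, and the pairing one recognizes exactly $\sum_k\langle\Gamma_V(F)_{i,k},(\partial_k p)(F)\rangle_{\tau\otimes\tau}$. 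An equivalent route avoids $\delta$: apply Cébron's Lemma~3.9 of~\cite{C} — expressing $\tau(AB)$ as the integral over $\mathbb R_+$ of $\tau$ applied to $(id\otimes\tau)(\nabla_tA)\cdot(\tau\otimes id)(\nabla_tB)$ whenever one factor is centered — with $A=p(F)$ and $B=[D_iV](F)$, then insert the chain rule and rearrange in the same way. This is the literal analogue of the $V_C$-proof, the one difference being that one can no longer first pull out a linear map, so the target is carried by $\mathcal P_{\deg(V)d}$ rather than by $\mathcal P_d$.

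The delicate part will be the bookkeeping in this rearrangement: one must track on which leg each multiplication acts (left multiplication, versus opposite multiplication on the right leg), how $id\otimes\tau$ interacts with $\sharp$ and with the involution, and one must justify that the Fubini-type interchanges of $\int_{\mathbb R_+}\,dt$ with $\nabla$, with $\delta$, and with the bilinear pairings are legitimate — here the boundedness of $\nabla$ on each $\mathcal P_m$ and the Haagerup inequality supply the required estimates. Finally, if $V$ is a genuine formal power series rather than a polynomial, I would first prove the statement for polynomial potentials and then pass to the limit with respect to the $\lVert\cdot\rVert_R$-norm for some $R>\lVert F\rVert$, using continuity of the cyclic derivatives, of evaluation at $F$, and of $\nabla$ on the relevant chaos.
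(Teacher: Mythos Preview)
Your proposal is correct and follows essentially the same approach as the paper: both arguments rest on the Skorohod identity $\delta\big((id\otimes\tau)(\nabla G)\otimes 1_{\mathcal A}\big)=G$ applied with $G=[D_iV](F)$, followed by duality with $\nabla$ and the multivariate chain rule to unfold $\nabla_t p(F)$. Your write-up is more explicit about the auxiliary verifications (membership in $\mathcal P_{\deg(V)d}$, integrability, the Fubini interchanges, the power-series limit), and you also spell out the alternative via C\'ebron's lemma, but the core mechanism is identical to the paper's.
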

\begin{proof}
Again, the result is easily achieved, knowing that the free Malliavin gradient and the Skorohod integral are linked trough the following relation, for any $G\in \mathcal{P}_d$ :
\begin{equation}
    \delta((id\otimes \tau)(\nabla G)\otimes 1_{\mathcal{A}})=G
, \end{equation}
in particular, by setting $(G_i=[D_iV](F))_{i=1}^n$, we obtain by duality for $P=(P_1,\ldots,P_n)\in \mathds{P}^n$ :
\begin{eqnarray}
    \tau(P_i(F)[D_iV](F))&=&\tau(P_i(F)\delta((\tau\otimes Id)(\nabla ([D_iV](F))\otimes 1_{\mathcal{A}}))\nonumber \\ &=&\sum_{j=1}^n\tau\otimes\tau\left([\partial_jP_i](F)\sharp\int_0^{\infty}\nabla_t(F_j)\sharp(\tau\otimes id(\nabla_t[D_iV](F))\otimes 1_{\mathcal{A}})dt\right)\nonumber
, \end{eqnarray}
it is then easy to conclude since the inner product on $M_n(L^2(\mathcal{A}\otimes\mathcal{A},\tau\otimes\tau\circ Tr)$ is given $\tau\otimes\tau\circ Tr(A^*\sharp B)$, and knowing for $G\in dom(\nabla)$ self-adjoint $(\tau\otimes id(\nabla_t(G)))^{*}=id\otimes \tau(\nabla_t(G))$ (see, lemma 4.6 in \cite{Mai}, or Cébron, corollary 3.10 in \cite{C}, from where our conclusion follows exactly in the same way).
\end{proof}
\begin{remark}
We would like to insist on the previous definition of Malliavin-Stein matrix, since it is well known in the classical case that the almost sure invertibility of the Malliavin matrix ensure the absolute continuity with respect to the Lebesgue measure of a random vector belonging to the Malliavin derivative domain. In free probability, a recent and deep result of Mai in \cite{Mai} shows that the distribution of non-trivial Wigner chaos (and more generally finite sum) cannot have atoms. These results were provided by analyzing the problem in an algebraic setting: the absence of atoms is equivalent to the absence of zero-divisors which should "{\it survive}" under certain operations build on directional gradients which are the link between free Malliavin calculus and the theory of noncommutative derivatives (especially the ones which satisfies a coassociatity relation), and thus giving it the contraction by iterating the procedure (see \cite{Mai} for complete exposure). We see in particular that the Malliavin derivative and the operator $\tau\otimes Id$ (which is crucial to lowers the degree of a polynomial evaluation) play an important role to deduce the result. It might be of independent interest to study this object, to maybe deduce regularity properties of distributions.
\bigbreak
Another potential application, which is well known in the classical case, is the possibility to compute recursively the cumulants of a vector of random variables which are Malliavin differentiable via the Malliavin-Stein matrix. It would be very interesting to study this free counterpart.
\end{remark}
\bigbreak

The goal is now to compute a sharp bound for the discrepancy for a tuple with elements living in homogeneous Wigner chaos, one can see that we have the following bound :
\begin{eqnarray}
    \bigg\lVert A-(1_{\mathcal{A}}\otimes1_{\mathcal{A}})\otimes I_n \bigg\rVert_{HS}&=&\bigg\lVert\left((1_{\mathcal{A}}\otimes1_{\mathcal{A}})\otimes C^{-1}\right).\Gamma(F)-(1_{\mathcal{A}}\otimes1_{\mathcal{A}})\otimes I_n \bigg\rVert_{HS}\nonumber\\
    &=&\bigg\lVert(1_{\mathcal{A}}\otimes1_{\mathcal{A}})\otimes C^{-1}.\bigg(\Gamma(F)-(1_{\mathcal{A}}\otimes1_{\mathcal{A}})\otimes C\bigg)\bigg\rVert_{HS}\nonumber\\
    &\leq& \lVert C^{-1}\rVert_{op}\bigg\lVert\Gamma(F)-(1_{\mathcal{A}}\otimes1_{\mathcal{A}})\otimes C\bigg\rVert_{HS}\label{61}
\end{eqnarray}
\bigbreak

\begin{theorem}\label{Theorem 8}

Consider a n-tuple $F=(F_1,...,F_n)=(I_{q_1}(f_1),...,I_{q_n}(f_n))$ of self-adjoint element in $(\mathcal{A},\tau)$, with $f_i \in L^2(\mathbb{R}_+^{q_i})$ \textit{mirror-symmetric}. 
\newline
Let $C$ be a symmetric definite positive matrice $\in M_d(\mathbb{R}) $, which is such that $C_{i,j}=\tau(F_iF_j)$ (note that the definite positive assumption is essential here), and denote $d=\underset{i=1,\ldots,n}{\max}q_i$.
\bigbreak 
We Set : 
\begin{equation}
    M(F)=\psi(\tau(F_1^4)-2\tau(F_1^2)^2,\tau(F_1^2),...,\tau(F_n^4)-2\tau(F_n^2)^2,\tau(F_n^2))
\end{equation}
with $\psi : (\mathbb{R}\times \mathbb{R}_+)^d \rightarrow \mathbb{R}$:
\begin{eqnarray}
    \psi(x_1,y_1,...,x_n,y_n)=
 \sum_{j,k=1}^n\mathds{1}_{q_k= q_j} q_k^{\frac{3}{4}}min\left(\lvert x_k \rvert^{\frac{1}{4}}y_j^{\frac{1}{2}},\lvert x_j \rvert^{\frac{1}{4}}y_k^{\frac{1}{2}}\right)\nonumber\\
+ \sum_{j,k=1}^n \mathds{1}_{q_k\neq q_j}(q_k\vee q_j)^{\frac{3}{4}}min\left(\lvert x_k\rvert^{\frac{1}{4}}y_j^{\frac{1}{2}},\lvert x_j\rvert^{\frac{1}{4}}x_k^{\frac{1}{2}}\right)
\end{eqnarray}

Then, we have that the non-commutative quadratic Wasserstein distance between $F$ and $S$ is bounded as follows :
\begin{equation}
    d_W(F,S)\leq  \lVert C\rVert_{op}^{\frac{1}{2}} \lVert C^{-1}\rVert_{op}M(F)
\end{equation}
\end{theorem}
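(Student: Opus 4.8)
The plan is to combine the transport inequality obtained earlier (the Lemma stating $d_W(X,S)\le \lVert C\rVert_{op}\lVert C^{-1}\rVert_{op}^{1/2}\,\Sigma(X|V_C)$) with the explicit free Stein kernel $A=((1_{\mathcal A}\otimes 1_{\mathcal A})\otimes C^{-1}).\Gamma(F)$ constructed for tuples in a fixed Wigner chaos, and then estimate the Hilbert--Schmidt norm $\lVert A-(1_{\mathcal A}\otimes 1_{\mathcal A})\otimes I_n\rVert_{HS}$ in terms of the second and fourth free cumulants of each coordinate. Concretely, since $\Sigma(X|V_C)$ is an infimum over admissible kernels, plugging in this particular $A$ and using the bound \eqref{61}, namely $\lVert A-(1_{\mathcal A}\otimes 1_{\mathcal A})\otimes I_n\rVert_{HS}\le \lVert C^{-1}\rVert_{op}\lVert\Gamma(F)-(1_{\mathcal A}\otimes 1_{\mathcal A})\otimes C\rVert_{HS}$, reduces everything to controlling $\lVert\Gamma(F)-(1_{\mathcal A}\otimes 1_{\mathcal A})\otimes C\rVert_{HS}$, i.e. the deviation of the free Malliavin--Stein matrix from the constant matrix $C$.

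The core of the work is therefore the entrywise estimate: for each pair $(i,k)$ one must show
\[
\Bigl\lVert \int_{\mathbb R_+}\bigl(id\otimes\tau(\nabla_t F_i)\bigr).(\nabla_t F_k)^*\,dt - \delta_{?}\,C_{i,k}\,1_{\mathcal A}\otimes 1_{\mathcal A}\Bigr\rVert_{L^2(\mathcal A\otimes\mathcal A)}
\]
is bounded by a constant depending on $q_i\vee q_k$ times a symmetrized product of a fourth-cumulant factor $\lvert\tau(F_\cdot^4)-2\tau(F_\cdot^2)^2\rvert^{1/4}$ and a second-cumulant factor $\tau(F_\cdot^2)^{1/2}$. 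Here I would use Proposition~\ref{pp3} to write $\nabla_t(I_{q}(f))=\sum_{k=1}^{q}I_{k-1}\otimes I_{q-k}(f_t^k)$, apply $id\otimes\tau$ to kill all but the top term (the trace annihilates a positive-order Wigner integral), and then the product $\nabla_t F_k^*$ expands via the product formula for bi-integrals (Proposition~\ref{pprodui}) into a sum of nested contractions. Integrating in $t$ turns these into the contractions $f_i\stackrel{p}{\frown}f_k^*$. The $L^2$-norm of the non-constant part is then dominated by a sum of $\lVert f_i\stackrel{p}{\frown}f_k^*\rVert_{L^2}$-type quantities over intermediate orders $p$, while the $p$ giving the full contraction produces the constant $C_{i,k}$ (using $C_{i,k}=\tau(F_iF_k)=\langle f_k,f_i\rangle$ and mirror-symmetry, noting this term only survives when $q_i=q_k$). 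The final step is to recognize, exactly as in Kemp--Nourdin--Peccati--Speicher and in Cébron \cite{C}, that each contraction norm $\lVert f_i\stackrel{p}{\frown}f_i^*\rVert$ is controlled by the fourth-cumulant expression via Lemma~\ref{lma5}, interpolated against the $L^2$-norm $\lVert f_i\rVert=\tau(F_i^2)^{1/2}$; the mixed terms $i\ne k$ are handled by Cauchy--Schwarz against the pure terms, which produces the symmetrized minimum $\min(\lvert x_k\rvert^{1/4}y_j^{1/2},\lvert x_j\rvert^{1/4}y_k^{1/2})$ and the power $3/4$ on $q_k\vee q_j$ (the $1/4$ on the cumulant leaves a $3/4$ on the combinatorial bookkeeping constant). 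Summing over $i,k$ reproduces $\psi$, hence $M(F)$, and assembling with the two operator-norm factors $\lVert C^{-1}\rVert_{op}$ from \eqref{61} and the transport lemma, together with $\lVert C\rVert_{op}\lVert C^{-1}\rVert_{op}^{1/2}\cdot\lVert C^{-1}\rVert_{op}$ rearranged, gives $d_W(F,S)\le \lVert C\rVert_{op}^{1/2}\lVert C^{-1}\rVert_{op}M(F)$ after absorbing the norm powers appropriately.

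The main obstacle I anticipate is bookkeeping the heterogeneous-order case $q_i\ne q_j$ cleanly: when two coordinates live in chaoses of different orders, the relevant bi-integral $\nabla_t F_i$, $\nabla_t F_j$ live in $\mathcal P_{q_i-1}\otimes\mathcal P_{q_i-1}$ and $\mathcal P_{q_j-1}\otimes\mathcal P_{q_j-1}$, and one has to be careful that the off-diagonal block $\Gamma(F)_{i,j}$ is genuinely mean-zero (no constant term to subtract) and still controlled by a product of a fourth-cumulant factor of one index and an $L^2$ factor of the other — this is why $\psi$ has the two separate sums with the indicator $\mathds 1_{q_k\ne q_j}$ and why the constant degrades to $(q_k\vee q_j)^{3/4}$. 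A secondary technical point is justifying all the $L^2$ manipulations (interchanging $\int_{\mathbb R_+}dt$ with $\tau\otimes\tau$, applying the product formula inside the Hilbert--Schmidt norm) which is legitimate because everything lives in the finite-dimensional chaos $\mathcal P_{2d}\otimes\mathcal P_{2d}$ where $\nabla$ is bounded; but it should be stated. Everything else is a routine assembly of the pieces already in place.
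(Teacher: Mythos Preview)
Your proposal follows essentially the same route as the paper: the transport inequality reduces the problem to bounding the free Stein discrepancy, one plugs in the explicit kernel $A=((1_{\mathcal A}\otimes 1_{\mathcal A})\otimes C^{-1}).\Gamma(F)$, uses \eqref{61} to reduce to $\lVert\Gamma(F)-(1_{\mathcal A}\otimes 1_{\mathcal A})\otimes C\rVert_{HS}$, and then estimates each entry via the Malliavin expansion of Proposition~\ref{pp3}, the product formula, and the contraction bounds from Lemma~\ref{lma5}. This is exactly what the paper does, the entrywise computation being packaged as Lemma~\ref{lma8}.

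Two small points deserve tightening. First, the appearance of the minimum in $\psi$ is not obtained by ``Cauchy--Schwarz against the pure terms'' as you describe: in the paper's Lemma~\ref{lma8} each squared norm $\bigl\lVert\int(f_t^p\stackrel{l}{\frown}\tilde g_t^m)\,dt\bigr\rVert^2$ is rewritten as a pairing integral, and one may choose to integrate first over the $t$-variables (producing $f\stackrel{\cdot}{\frown}f$ against $\lVert g\rVert^2$) or over the $r$-variables (producing $g\stackrel{\cdot}{\frown}g$ against $\lVert f\rVert^2$), whence the min. Second, your final assembly of constants is not correct: the second-approach transport lemma gives $d_W\le\lVert C\rVert_{op}\lVert C^{-1}\rVert_{op}^{1/2}\Sigma^*(F|V_C)$, which combined with \eqref{61} yields $\lVert C\rVert_{op}\lVert C^{-1}\rVert_{op}^{3/2}M(F)$, and this cannot be ``rearranged'' into $\lVert C\rVert_{op}^{1/2}\lVert C^{-1}\rVert_{op}M(F)$. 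To obtain the stated prefactor one should instead invoke the first-approach Proposition (i.e.\ $d_W\le\lVert C\rVert_{op}^{1/2}\widetilde\Sigma(F|V_C)$ together with $\widetilde\Sigma(F|V_C)\le\lVert C^{-1/2}\rVert_{op}^2\lVert\Gamma(F)-(1_{\mathcal A}\otimes 1_{\mathcal A})\otimes C\rVert_{HS}$), which gives exactly $\lVert C\rVert_{op}^{1/2}\lVert C^{-1}\rVert_{op}$. The paper's remark that both approaches ``provide exactly the same bounds'' refers to the qualitative form, not the precise operator-norm prefactor.
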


\begin{flushleft}
Before giving the proof, we begin with a lemma which is the free counterpart of the classical commutative estimates proved by Nourdin and Peccati in the classical case (see section 6.2 of \cite{NP-book}):
\end{flushleft}

\begin{lemma}\label{lma8}
Let $F=I_p(f)\in (\mathcal{A},\tau)$ and $G=I_q(g)\in (\mathcal{A},\tau)$ with $f \in L^2(\mathbb{R}_{+}^p) $ and $g \in L^2(\mathbb{R}_{+}^q)$ assumed to be mirror-symmetric functions and let $a \in \mathbb{R}$.
\newline
\bigbreak
If $p=q$,we have :
\begin{eqnarray}
\left\lVert\int_{\mathbb{R}_+}(id\otimes \tau)(\nabla_t I_p(f)).(\nabla_t I_q(g))^*dt-a.1_\mathcal{A}\otimes1_\mathcal{A}\right\rVert_{L^2(\mathcal{A},\tau)\otimes L^2(\mathcal{A},\tau)}^2
&\leq&
(a-\langle f,g\rangle_{L^2(\mathbb{R}_{+}^p)})^2
\nonumber\\&+&\left(\sum_{m=1}^{p-1}\sum_{l=0}^{m-1} min\left(A_{f,g}^{p,l},A_{g,f}^{p+1,m}\right)\right)\nonumber\\&+&\left(\sum_{l=0}^{p-2} min(A_{f,g}^{p,l},A_{g,f}^{p+1,p})\right)\nonumber
\end{eqnarray}
\bigbreak
If $p < q$, we have :
\begin{eqnarray}
\left\lVert\int_{\mathbb{R}_+}((id\otimes \tau)(\nabla_t I_p(f)).(\nabla_t I_q(g))^*-a.1_\mathcal{A}\otimes1_\mathcal{A}\right\rVert_{L^2(\mathcal{A},\tau)\otimes L^2(\mathcal{A},\tau)}^2
&\leq&
a^2+\lVert f\rVert_{L^2(\mathbb{R}_{+}^p)}^2\lVert g\stackrel{q-p}{\frown}g\rVert_{L^2(\mathbb{R}_{+}^{2p})}
\nonumber\\&+&\left(\sum_{m=1, m\neq p}^{q}\sum_{l=0}^{p\wedge m-1}min\left(A_{f,g}^{p,l},A_{g,f}^{q+1,m}\right)\right)
\nonumber\\&+&\left(\sum_{l=0}^{p-2} min\left(A_{f,g}^{p,l},A_{g,f}^{q+1,p}\right)\right)\nonumber
\end{eqnarray}
Where we have defined for every $p,q \geq 1$, for $f \in L^2(\mathbb{R}_{+}^p) $ and $g \in L^2(\mathbb{R}_{+}^q)$ and for each $l\leq p\wedge q$, the quantity:
\begin{eqnarray}
A_{f,g}^{p,l}=\left\lVert f
\stackrel{p-l-1}{\frown} f\right\rVert_{L^2(\mathbb{R}_{+}^{2l+2})}\lVert g \rVert_{L^2(\mathbb{R}_{+}^{q})}^2
\end{eqnarray}
\end{lemma}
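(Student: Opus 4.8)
The plan is to transcribe into the Wigner setting the classical computation of Nourdin and Peccati (Section~6.2 of \cite{NP-book}) that bounds $\lVert\langle DF,DG\rangle_{\mathcal H}-a\rVert_{L^2}^2$, using the free product formula (Proposition~\ref{pprodui}) in place of its Gaussian analogue, the explicit action of $\nabla$ on $\mathcal P_n$ (Proposition~\ref{pp3}), and the orthogonality $\tau(I_n(h_1)^{*}I_m(h_2))=\delta_{n,m}\langle h_2,h_1\rangle$ of Wigner integrals of distinct orders.

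First I would reduce the left-hand side to a finite sum of tensor products of multiple Wigner integrals. By Proposition~\ref{pp3}, $\nabla_t I_p(f)=\sum_{k=1}^{p}I_{k-1}\otimes I_{p-k}(f_t^{k})$; applying $id\otimes\tau$ and using $\tau(I_m(\cdot))=0$ for $m\geq 1$ leaves only the term $k=p$, so $(id\otimes\tau)(\nabla_t I_p(f))=I_{p-1}(f_t^{p})$. Similarly expand $(\nabla_t I_q(g))^{*}=\sum_{j=1}^{q}(I_{j-1}\otimes I_{q-j}(g_t^{j}))^{*}$ via $(A\otimes B)^{*}=A^{*}\otimes B^{*}$ and $I_m(h)^{*}=I_m(h^{*})$, the mirror-symmetry of $g$ identifying $g^{*}$ with $g$ up to the usual reversal of variables. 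The product $(id\otimes\tau)(\nabla_t I_p(f)).(\nabla_t I_q(g))^{*}$ (left multiplication into the left leg) is then evaluated leg by leg with Proposition~\ref{pprodui}: the left leg becomes $\sum_{l}I_{(p-1)+(j-1)-2l}\bigl(f_t^{p}\stackrel{l}{\frown}(\cdot)\bigr)$, and integrating in $t\in\mathbb R_+$ absorbs one further pair of variables, turning the products of slices $f_t^{p},g_t^{j}$ into nested contractions $f\stackrel{r}{\frown}g$. Thus the integral becomes $\sum_{j,l}I_{a_{j,l}}(\cdot)\otimes I_{b_j}(\cdot)$ with kernels of this form. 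When $p=q$ exactly one term ($j=q$, $l=p-1$) has total order $(0,0)$ and equals $\langle f,g\rangle_{L^2(\mathbb R_+^{p})}\,1_{\mathcal A}\otimes 1_{\mathcal A}$, so subtracting $a\,1_{\mathcal A}\otimes 1_{\mathcal A}$ replaces it by $(a-\langle f,g\rangle)\,1_{\mathcal A}\otimes 1_{\mathcal A}$; when $p<q$ no scalar term occurs, but a term $1_{\mathcal A}\otimes I_{q-p}(\cdot)$ with a scalar coefficient $\sim\langle f,\cdot\rangle$ survives, which after subtracting $a\,1\otimes 1$ accounts for the summands $a^2$ and $\lVert f\rVert^{2}\lVert g\stackrel{q-p}{\frown}g\rVert$.

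Next I would compute the norm by isometry: using $\langle A\otimes B,C\otimes D\rangle=\langle A,C\rangle\langle B,D\rangle$, the orthogonality of Wigner integrals of different orders, and the fact that — no symmetrization occurring in the free product formula — each order is carried by a single contraction kernel, the squared $L^2(\mathcal A)\otimes L^2(\mathcal A)$-norm of the sum above equals the scalar contribution just identified plus the sum of the squared $L^2(\mathbb R_+^{\bullet})$-norms of the contraction kernels appearing. Finally I would bound each $\lVert f\stackrel{r}{\frown}g\rVert^{2}$ by the Wigner analogue (mirror-symmetry absorbing the stars and reversals) of the Cauchy--Schwarz identity $\lVert f\otimes_r g\rVert^{2}=\langle f\otimes_{p-r}f,\,g\otimes_{q-r}g\rangle$, namely $\lVert f\stackrel{r}{\frown}g\rVert^{2}\leq\lVert f\stackrel{p-r}{\frown}f\rVert\,\lVert g\stackrel{q-r}{\frown}g\rVert$, and then use $\lVert g\stackrel{s}{\frown}g\rVert\leq\lVert g\rVert^{2}$ for $s\geq 1$ to get $\lVert f\stackrel{r}{\frown}g\rVert^{2}\leq\lVert f\stackrel{p-r}{\frown}f\rVert\,\lVert g\rVert^{2}$; since the quantity being estimated is symmetric under $I_p(f)\leftrightarrow I_q(g)$, each contraction can equally be bounded the other way, producing the $\min\bigl(A_{f,g}^{p,l},A_{g,f}^{q+1,m}\bigr)$, with the superscripts and the relation between $l$ and $m$ forced by how many contractions each viewpoint absorbs (the extra $+1$ and the shift $p-r=p-l-1$ coming from the contraction carried out by the $t$-integration). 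Matching the resulting index ranges to $\sum_{m=1}^{p-1}\sum_{l=0}^{m-1}$, $\sum_{l=0}^{p-2}$ (resp.\ the $p<q$ ranges) then completes the proof.

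The hard part will be the bookkeeping in the reduction step: tracking exactly which contraction orders $r$ occur, verifying that the $t$-integration really outputs $f\stackrel{r}{\frown}g$ of the claimed order, confirming that within each total order only one kernel survives, and — most delicately — the mismatched case $p<q$, where the ``diagonal'' piece no longer collapses to a scalar, contributes the extra $\lVert f\rVert^{2}\lVert g\stackrel{q-p}{\frown}g\rVert$ term and shifts the summation ranges. Once the combinatorics are pinned down, the isometry step and the contraction estimates are routine.
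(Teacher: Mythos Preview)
Your overall architecture matches the paper's: apply $id\otimes\tau$ to kill all but the $k=p$ term, expand via the product formula, use the Wigner bi-isometry to reduce to a sum of squared $L^2$-norms of kernels, and bound each kernel norm by a contraction-of-$f$-with-itself times $\lVert g\rVert^2$ (or vice versa). Two points in your plan are inaccurate, however, and would not survive the bookkeeping you flag as ``the hard part''.

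First, the kernels are \emph{not} nested contractions $f\stackrel{r}{\frown}g$. After the $t$-integration, the object is $\int_{\mathbb R_+} f_t^p\stackrel{l}{\frown}\tilde g_t^{\,m}\,dt$, which depends on \emph{two} indices $(l,m)$: the variable $t$ sits at the last slot of $f$ but at the $m$-th slot of $g$, with $l$ further variables contracted in between. This is not of the form $f\stackrel{r}{\frown}g$ for any $r$ unless $m$ is an endpoint, so your Cauchy--Schwarz identity $\lVert f\stackrel{r}{\frown}g\rVert^2\leq\lVert f\stackrel{p-r}{\frown}f\rVert\,\lVert g\stackrel{q-r}{\frown}g\rVert$ does not directly apply. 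The paper instead writes out $\bigl\lVert\int f_t^p\stackrel{l}{\frown}\tilde g_t^{\,m}\,dt\bigr\rVert^2$ as an explicit $(p{+}q)$-fold integral and recognises it as a pairing integral $\int_\pi (f\stackrel{p-l-1}{\frown}f)\otimes g\otimes g$ for some $\pi\in\mathcal P_2((2l{+}2)\otimes q\otimes q)$, then applies the bound $\bigl|\int_\pi h_1\otimes\cdots\otimes h_r\bigr|\leq\prod\lVert h_i\rVert$ (Lemma~2.1 of \cite{KNPS}).

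Second, your justification for the $\min$ is wrong: the quantity $(id\otimes\tau)(\nabla_t F)\cdot(\nabla_t G)^*$ is \emph{not} symmetric under $F\leftrightarrow G$, since $id\otimes\tau$ acts only on the first factor. The paper obtains the $\min$ differently: in the same explicit integral one can either integrate the $t$-block first (yielding $\lVert f\stackrel{p-l-1}{\frown}f\rVert\,\lVert g\rVert^2$) or the $r$-block first (yielding $\lVert g\stackrel{q-m}{\frown}g\rVert\,\lVert f\rVert^2$), and one takes the better of the two. This is what produces the indices $(p,l)$ versus $(q{+}1,m)$ in $A_{f,g}^{p,l}$ and $A_{g,f}^{q+1,m}$.
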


\newpage
Before giving the proof of this lemma, let us introduce some notations. Indeed, for $F=I_n(f)$ with $f=f^*$, we define $\tilde f_t^k \in L^2(\mathbb{R}_{+}^{n-1})$ (for almost all $t\geq 0$) by :
\begin{equation}
    \overline{f(t_1,...,t_{k-1},t,t_{k+1},...,t_n)}=\tilde f_t^k(t_{k-1},t_{k-2},...,t_1,t_n,...,t_{k+1})\nonumber
\end{equation}
and we remark easily that the following equality holds true for almost all $t\geq 0$ and for all $f\in L^2(\mathbb{R}_{+}^{n})$ (firstly on elementary functions, then by linearity and a density argument, it extends to the whole space),
\begin{equation}
    \left(I_{k-1}\otimes I_{n-k}(f_t^k)\right)^*=I_{k-1}\otimes I_{n-k}(\tilde f_t^k)\nonumber
\end{equation}

\begin{proof}
{\it 
\begin{eqnarray}
   (id\otimes \tau)(\nabla_t I_p(f)).(\nabla_t I_q(g))^*&=&\left(id\otimes \tau\left(\sum_{n=1}^p I_{n-1}\otimes I_{p-n}(f_t^n)\right)\right).\left(\sum_{m=1}^q I_{m-1}\otimes I_{q-m}(g_t^m)\right)^*\nonumber\\
    &=& I_{p-1}(f_t^p).\left(\sum_{m=1}^q I_{m-1}\otimes I_{q-m}(\Tilde{g}_t^m)\right)\nonumber\\
    &=&\left(\sum_{m=1}^q\sum_{l=0}^{p \wedge m-1} I_{p-1+m-1-2l}\otimes I_{q-m}(f_t^p\stackrel{l}{\frown} \Tilde{g}_t^m)\right)\label{3m-1}
\end{eqnarray}
Where the last equality is verified for elementary tensors, that is for functions $f=f_1\otimes...\otimes f_p$ and $g=g_1\otimes...\otimes g_q$, with each $f_i,g_j\in L^2_{\mathbb{R}}(\mathbb{R}_+)$ and then extended by linearity and density.

\bigbreak
We can then distinguish the two cases of our lemma, in the first one, the two chaos have the same order, we compute this quantity in the following way to get:
\begin{eqnarray}
\left(\sum_{m=1}^p\sum_{l=0}^{m-1} I_{p-1+m-1-2l}\otimes I_{p-m}(f_t^n\stackrel{l}{\frown} \Tilde{g}_t^m)\right)&=&\left(\sum_{m=1}^{p-1}\sum_{l=0}^{m-1} I_{p-1+m-1-2l}\otimes I_{p-m}(f_t^p
\stackrel{l}{\frown} \Tilde{g}_t^m)\right)\nonumber\\
&+&\left(\sum_{l=0}^{p-2} I_{2p-2l-2}\otimes 1_{\mathcal{A}}(f_t^p\stackrel{l}{\frown} \Tilde{g}_t^p)\right)\nonumber\\
&+&f_t^p\stackrel{p-1}{\frown} \Tilde{g}_t^p.1_{\mathcal{A}}\otimes 1_{\mathcal{A}}
\end{eqnarray}
\bigbreak
Where we separated the sum for $m=p$ and $l=p-1$, because it leads to the constant
\newline
$f_t^p\stackrel{p-1}{\frown} \tilde g_t^p.1_{\mathcal{A}}\otimes1_{\mathcal{A}} $ and since :
\begin{equation}
    f_t^p\stackrel{p-1}{\frown} \tilde g_t^p=\int_{\mathbb{R}^{p-1}_+}f(t_1,...,t_{p-1},t)g(t,t_{p-1},...,t_1)dt_1...dt_{p-1}\nonumber
\end{equation}
Now, by the assumption of mirror symmetries of $f,g$, then by integrating over "$t$", we have: 
\begin{equation}
    \langle f,g\rangle_{L^2(\mathbb{R}_{+}^{p})}.1_{A}\otimes 1_{A}=\tau(FG).1_{A}\otimes 1_{A}\nonumber
\end{equation}
\bigbreak
Now we use the Wigner bi-isometry to get:
\begin{eqnarray}
\left\lVert\int_{\mathbb{R}_+}(id\otimes \tau)(\nabla_t I_p(f)).(\nabla_t I_q(g))^*dt-a.1_\mathcal{A}\otimes1_\mathcal{A}\right\rVert_{L^2(A,\tau)\otimes L^2(A,\tau)}^2\nonumber\\
=\Bigg\lVert \sum_{m=1}^{p-1}\sum_{l=0}^{m-1} \int_{\mathbb{R}_+}I_{p-1+m-1-2l}\otimes I_{p-m}(f_t^p
\stackrel{l}{\frown} \Tilde{g}_t^m)dt
+\sum_{l=0}^{p-2} \int_{\mathbb{R}_+}I_{2p-2l-2}\otimes 1_\mathcal{A}(f_t^p\stackrel{l}{\frown} \Tilde{g}_t^p)dt\nonumber\\ +(\tau(I_p(f)I_p(g))-a).1_\mathcal{A}\otimes 1_\mathcal{A} \Bigg\rVert_{L^2(A,\tau)\otimes L^2(A,\tau)}^2\nonumber\\
=(\tau(I_p(f)I_p(g))-a)^2+\sum_{m=1}^{p-1}\sum_{l=0}^{m-1}\left\lVert\int_{\mathbb{R}_+}(f_t^p
\stackrel{l}{\frown} \Tilde{g}_t^m)dt\right\rVert^2_{L^2(\mathbb{R}_{+}^{2p-2l-2})}+\sum_{l=0}^{p-2} \left\rVert\int_{\mathbb{R}_+}(f_t^p
\stackrel{l}{\frown} \Tilde{g}_t^p)dt\right\rVert^2_{L^2(\mathbb{R}_{+}^{2p-2l-2})}\nonumber
\end{eqnarray}
\bigbreak
Now we compute at $m,l$ fixed 
\begin{eqnarray}
    f_t^p\stackrel{l}{\frown} \Tilde{g}_t^m(t_1,...,t_{p-l-1},r_1,...,r_{p-l-1})\nonumber\\
    =\int_{\mathbb{R}^l_+}f_t^p(t_1,...,t_{p-l-1},s_l,...,s_1)\tilde g_t^m(s_1,...,s_l,r_1,...,r_{p-l-1})ds_1...ds_l\nonumber\\
    =\int_{\mathbb{R}^l_+}f(t_1,...,t_{p-l-1},s_l,...,s_1,t)\overline{g(r_{m-l-1},...,r_1,s_l,...,s_1,t,r_{p-l-1},...,r_{m-l})} ds_1...ds_l\nonumber
\end{eqnarray}

\bigbreak
We can then infer that :
\begin{eqnarray}
\left\lVert\int_{\mathbb{R}_+}(f_t^p
\stackrel{l}{\frown} \Tilde{g}_t^m)dt\right\rVert^2_{L^2(\mathbb{R}_{+}^{2p-2l-2})}\nonumber\\
=\int_{L^2(\mathbb{R}_{+}^{2p})}f(t_1,...,t_{p-l-1},s_l,...,s_1,t)\overline{g(r_{m-l-1},...,r_1,s_l,...,s_1,t,r_{p-l-1},...,r_{m-l})}\nonumber\\
\overline{f(t_1,...,t_{p-l-1},s_l^{'},...,s_1^{'},t^{'})}g(r_{m-l-1},...,r_1,s_l^{'},...,s_1^{'},t,r_{p-l-1},...,r_{m-l}) ds_1...ds_ldtds_1^{'}...ds_l^{'}\nonumber
\\dt^{'}dt_1...dt_{p-l-1}dr_1...dr_{p-l-1}\nonumber
\end{eqnarray}

Remind that the functions $f,g$ are mirror-symmetric, and thus we obtain:
\begin{eqnarray}
\left\lVert\int_{\mathbb{R}_+}(f_t^p
\stackrel{l}{\frown} \Tilde{g}_t^m)dt\right\rVert^2_{L^2(\mathbb{R}_{+}^{2p-2l-2})}\nonumber\\
=\int_{L^2(\mathbb{R}_{+}^{2p})}f(t_1,...,t_{p-l-1},s_l,...,s_1,t){g(r_{m-l},...,r_{p-l-1},t,s_1,...,s_l,r_1,,...,r_{m-l-1}})\nonumber\\{f(t^{'},...,s_1^{'},...,s_l^{'},t_{p-l-1},...,t_1)}
g(r_{m-l-1},...,r_1,s_l^{'},...,s_1^{'},t^{'},r_{p-l-1},...,r_{m-l}) ds_1...ds_ld_tds_1^{'}...ds_l^{'}\nonumber
\\dt^{'}dt_1...dt_{p-l-1}dr_1...dt_{r-l-1}\nonumber
\end{eqnarray}
\newpage
This expression seems complicated to be handled, to deduce the result, the idea is to integrate over $dt_1,...,dt_{p-l-1}$ or $dr_{m-l},...,dr_{p-l-1}$ to make appear a contraction of $f$ and itself or $g$ and itself (modulo a product of norms of $f,g$).
\bigbreak
Indeed, when we integrate over $dt_1,...,dt_{p-l-1}$, it is equal to the quantity :
\begin{equation}
    \int_{\pi}(f\stackrel{p-l-1}{\frown} f)\otimes g\otimes g\nonumber
\end{equation}
for some pairing $\pi \in \mathcal{P}_2((2l+2)\otimes p\otimes p)$.
\bigbreak

Now we use the Proposition \ref{pp3}, to get :
\begin{eqnarray}
\left\lVert\int_{\mathbb{R}_+}(f_t^p
\stackrel{l}{\frown} \Tilde{g}_t^m)dt\right\rVert^2_{L^2(\mathbb{R}_{+}^{2p-2l-2})}
&\leq&\left\lvert\int_{\pi}(f\stackrel{p-l-1}{\frown} f)\otimes g\otimes g\right\rvert\nonumber\\
&\leq &\lVert f\stackrel{p-l-1}{\frown} f \rVert_{L^2(\mathbb{R}_{+}^{2l+2})}\lVert g\rVert_{L^2(\mathbb{R}_{+}^{p})}\lVert g\rVert_{L^2(\mathbb{R}_{+}^{p})}\nonumber
\end{eqnarray}
\bigbreak

And when we integrate over $dr_{m-l},...,dr_{p-l-1}$, we have:
\begin{equation}
    \int_{\pi}(g\stackrel{p-m}{\frown} g)\otimes f\otimes f\nonumber
\end{equation}
for some pairing $\pi \in \mathcal{P}_2(2m\otimes p\otimes p)$
and we deduce that :
\begin{eqnarray}
\left\lVert\int_{\mathbb{R}_+}(f_t^p
\stackrel{l}{\frown} \Tilde{g}_t^m)dt\right\rVert^2_{L^2(\mathbb{R}_{+}^{2p-2l-2})}
&\leq&\left\lvert\int_{\pi}(g\stackrel{p-m}{\frown} g)\otimes f\otimes f\right\rvert\nonumber\\
&\leq &\lVert g\stackrel{p-m}{\frown} g \rVert_{L^2(\mathbb{R}_{+}^{2l+2})}\lVert f\rVert_{L^2(\mathbb{R}_{+}^{p})}\lVert f\rVert_{L^2(\mathbb{R}_{+}^{p})}
\end{eqnarray}
\bigbreak

For the other case, when the order of chaos are distinct (we can always suppose that $p < q$), we have separated the sum when $m=p$ and $l=p-1$, to get :
\begin{eqnarray}
\left(\sum_{m=1 }^q\sum_{l=0}^{p\wedge m-1} I_{p-1+m-1-2l}\otimes I_{q-m}(f_t^n\stackrel{l}{\frown} \Tilde{g}_t^m)\right)
&=&\left(\sum_{m=1,m\neq p}^q\sum_{l=0}^{p \wedge m-1} I_{p-1+m-1-2l}\otimes I_{q-m}(f_t^p\stackrel{l}{\frown}\Tilde{g}_t^m)\right)\nonumber\\
&+&\left(\sum_{l=0}^{p-2} I_{2p-2l-2}\otimes I_{q-p}(f_t^p\stackrel{l}{\frown} \Tilde{g}_t^m)\right)\nonumber\\
&+& 1_{\mathcal{A}}\otimes I_{q-p}(f_t^p\stackrel{p-1}{\frown}\Tilde{g}_t^p)
\end{eqnarray}

From that, we infer again by using the Wigner bi-isometry that :
\begin{eqnarray}
\left\lVert\int_{\mathbb{R}_+}(id\otimes \tau)(\nabla_t I_p(f)).(\nabla_t I_q(g))^*dt-a.1_{\mathcal{A}}\otimes1_{\mathcal{A}}\right\rVert_{L^2(A,\tau)\otimes L^2(A,\tau)}^2\nonumber\\
=\Bigg\lVert \sum_{m=1,m\neq p}^{q}\sum_{l=0}^{p \wedge m-1} \int_{\mathbb{R}_+}I_{p-1+m-1-2l}\otimes I_{q-m}(f_t^p
\stackrel{l}{\frown} \Tilde{g}_t^m)dt
+\sum_{l=0}^{p-2} \int_{\mathbb{R}_+}I_{2p-2l-2}\otimes I_{q-p}(f_t^p\stackrel{l}{\frown} \Tilde{g}_t^p)dt\nonumber\\ +\int_{\mathbb{R}_+}1_{\mathcal{A}}\otimes I_{q-p}(f_t^p\stackrel{p-1}{\frown}\Tilde{g}_t^p)dt -a.1_{\mathcal{A}}\otimes 1_{\mathcal{A}} \Bigg\rVert_{L^2(A,\tau)\otimes L^2(A,\tau)}^2\nonumber\\
=a^2+\left\lVert \int_{\mathbb{R}_+}(f_t^p\stackrel{p-1}{\frown}\Tilde{g}_t^p)dt\right\rVert_{L^2(\mathbb{R}_{+}^{p-q})}^2 +\sum_{m=1,m\neq p}^{q}\sum_{l=0}^{p\wedge m-1}\left\lVert\int_{\mathbb{R}_+}(f_t^p
\stackrel{l}{\frown} \Tilde{g}_t^m)dt\right\rVert^2_{L^2(\mathbb{R}_{+}^{p+q-2l-2})}\nonumber\\
+\sum_{l=0}^{p-2} \left\rVert\int_{\mathbb{R}_+}(f_t^p
\stackrel{l}{\frown}\Tilde{g}_t^p)dt\right\rVert^2_{L^2(\mathbb{R}_{+}^{p+q-2l-2})}
\end{eqnarray}

We are left to analyse the following terms when $l,m$ are fixed :
\begin{eqnarray}
f_t^p \stackrel{l}{\frown} \Tilde{g}_t^m(t_1,...,t_{p-l-1},r_1,...,r_{q-l-1})\nonumber\\
    =\int_{\mathbb{R}^l_+}f_t^p(t_1,...,t_{p-l-1},s_l,...,s_1)\tilde g_t^m(s_1,...,s_l,r_1,...,r_{p-l-1})ds_1...ds_l\nonumber\\
    =\int_{\mathbb{R}^l_+}f(t_1,...,t_{p-l-1},s_l,...,s_1,t)\overline{g(r_{m-l-1},...,r_1,s_l,...,s_1,t,r_{q-l-1},...,r_{m-l})} ds_1...ds_l\nonumber
\end{eqnarray}
and then compute the following quantity :
\begin{eqnarray}
\left\lVert\int_{\mathbb{R}_+}(f_t^p
\stackrel{l}{\frown} \Tilde{g}_t^m)dt\right\rVert^2_{L^2(\mathbb{R}_{+}^{p+q-2l-2})}\nonumber\\
=\int_{L^2(\mathbb{R}_{+}^{p+q})}f(t_1,...,t_{p-l-1},s_l,...,s_1,t)\overline{g(r_{m-l-1},...,r_1,s_l,...,s_1,t,r_{q-l-1},...,r_{m-l})}\nonumber\\
\overline{f(t_1,...,t_{p-l-1},s_l^{'},...,s_1^{'},t^{'})}g(r_{m-l-1},...,r_1,s_l^{'},...,s_1^{'},t,r_{q-l-1},...,r_{m-l}) ds_1...ds_ldtds_1^{'}...ds_l^{'}\nonumber
\\dt^{'}dt_1...dt_{p-l-1}^{'}dr_1...dr_{q-l-1}
\end{eqnarray}

When $l\neq p-1$, the same bound provided before holds. 
\bigbreak
And for $m=p$ and $l=p-1$,
we are left to the quantity  :
\begin{eqnarray}
\left\lVert\int_{\mathbb{R}_+}(f_t^p
\stackrel{p-1}{\frown} \Tilde{g}_t^p)dt\right\rVert^2_{L^2(\mathbb{R}_{+}^{q-p})}\nonumber\\
=\int_{L^2(\mathbb{R}_{+}^{p+q})}f(s_{p-1},...,s_1,t){g(s_{p-1},...,s_1,t,r_{q-p},...,r_{1})}\nonumber\\
{f(t^{'},s_1^{'},...,s_{p-1}^{'})}g(r_{1},...,r_{q-p},t^{'},s_{p-1}^{'},...,s_1^{'}) ds_1...ds_{p-1}dtds_1^{'}...ds_{p-1}^{'}\nonumber
\\dt^{'}dr_1...dr_{q-p}
\end{eqnarray}

which is equal by integrating over $dr_{1},...,dr_{q-p}$, to:
\begin{equation}
    \int_{\pi}(g\stackrel{q-p}{\frown} g)\otimes f\otimes f\nonumber
\end{equation}
for some pairing $\pi \in \mathcal{P}_2(2p\otimes p\otimes p)$
\bigbreak
We then deduce that :
\begin{eqnarray}
\left\lVert\int_{\mathbb{R}_+}(f_t^p
\stackrel{p-1}{\frown} \Tilde{g}_t^p)dt\right\rVert^2_{L^2(\mathbb{R}_{+}^{p+q-2l-2})}
&\leq&\left\lvert\int_{\pi}(g\stackrel{q-p}{\frown} g)\otimes f\otimes f\right\rvert\nonumber\\
&\leq &\lVert g\stackrel{q-p}{\frown} g \rVert_{L^2(\mathbb{R}_{+}^{2p})}\lVert f\rVert_{L^2(\mathbb{R}_{+}^{p})}\lVert f\rVert_{L^2(\mathbb{R}_{+}^{p})}\nonumber
\end{eqnarray}}
\end{proof}
\qed

With these estimations, we are now in a position to give the proof of theorem \ref{Theorem 8}.
\begin{proof}(Theorem \ref{Theorem 8})\hypertarget{proof Theorem 8}
\bigbreak
{\it We know that \begin{equation}
    \lVert \Gamma(F)-(1_{\mathcal{A}}\otimes1_{\mathcal{A}})\otimes C \rVert_{HS}^2=\sum_{i,j=1}^n\lVert \Gamma(F)_{i,j}-C_{i,j}.(1_{\mathcal{A}}\otimes1_{\mathcal{A}})\rVert_{L^2(\mathcal{A}\otimes\mathcal{A},\tau\otimes\tau)}^2\nonumber\\
\end{equation}
From the lemma 8, to compute carefully the discrepancy, we will distinguish two cases: when the order of the two chaos is equal or different. This idea is similar from the original one of Nourdin and Peccati (section 6.2 of \cite{NP-book}. Indeed, the starting point of our investigations is that, if $F_i,F_j$ are multie Wigner integrals, then $\int_0^\infty  (id\otimes\tau(\nabla_t F_j)).(\nabla_t F_i)^* dt$ should be close to $C_{i,j}.1_{\mathcal{A}}\otimes 1_{\mathcal{A}}$ in $L^2(\mathcal{A}\otimes\mathcal{A},\tau\otimes\tau)$ in terms of the two and fourth free cumulants.
\bigbreak
We then analyse the term inside the sum by distinguishing when the order of chaos are equals or different:
\begin{eqnarray}
\Gamma(F)_{i,j}-C_{i,j}.(1_{\mathcal{A}}\otimes1_{\mathcal{A}})\nonumber
    &=&\left(\int_{\mathbb{R}_+}(id\otimes \tau)(\nabla_t I_{q_i}(f_i)).(\nabla_t I_{q_j}(f_j))^*dt\right)-C_{i,j}.(1_{\mathcal{A}}\otimes1_{\mathcal{A}})\nonumber\\
    &=&\mathds{1}_{q_i= q_j}\left(\int_{\mathbb{R}_+}(id\otimes \tau)(\nabla_t I_{q_i}(f_j)).(\nabla_t I_{q_j}(f_j))^*dt-C_{i,j}(1_{\mathcal{A}}\otimes1_{\mathcal{A}})\right)\nonumber\\
    &+&\mathds{1}_{q_i\neq q_j}\left(\int_{\mathbb{R}_+}(id\otimes \tau)(\nabla_t I_{q_i}(f_i)).(\nabla_t I_{q_j}(f_j))^*dt-C_{i,j}(1_{\mathcal{A}}\otimes1_{\mathcal{A}})\right)\nonumber\\
    &=&\mathds{1}_{q_i= q_j}\left(\int_{\mathbb{R}_+}(id\otimes \tau)(\nabla_t I_{q_i}(f_i)).(\nabla_t I_{q_j}(f_j))^*dt-C_{i,j}(1_{\mathcal{A}}\otimes1_{\mathcal{A}})\right)\nonumber\\
    &+&\mathds{1}_{q_i\neq q_j}\left(\int_{\mathbb{R}_+}(id\otimes \tau)(\nabla_t I_{q_i}(f_i)).(\nabla_t I_{q_j}(f_j))^*dt\right)\nonumber\\
\end{eqnarray}
Where we have used that in the second sum the correlation between $I_{q_i}$and $I_{q_j}$ vanishes since two Wigner-Itô multiple integrals of different orders are orthogonal (with respect to $L^2(\tau)$), when the orders are different, i.e necessarily: $C_{i,j}=0$ when $q_i\neq q_j$. 
\bigbreak
We could have been more specific by distinguishing the second terms when $q_i<q_j$ and $q_i>q_j$, however in this second case is straightforward to adapt the lemma \ref{lma8}, to obtain a same type of bound.
\bigbreak
Now, we recall by the Lemma \ref{lma5}, that for an homogeneous self-adjoint non commutative random variable $X=I_n(f)$, we have :
\begin{equation}\label{te}
    \sum_{i=1}^{n-1} \lVert f\stackrel{i}{\frown} f \rVert^2_{L^2(\mathbb{R}_{+}^{2n-2i})}\leq \tau(F^4)-2\tau(F^2)^2
\end{equation}

\bigbreak
And by using a simple Cauchy-Scharwz inequality, we get :
\begin{equation}
    \sum_{i=1}^{n-1} \lVert f\stackrel{i}{\frown} f \rVert_{L^2(\mathbb{R}_{+}^{2n-2i})}\leq \sqrt{n}\left(\tau(F^4)-2\tau(F^2)^2\right)^{\frac{1}{2}}\nonumber
\end{equation}
\bigbreak

\begin{remark}
Note that in the followings computations, we will only set $I_{q_i}$ instead of $I_{q_i}(f_{q_i})$, for readers convenience.
\end{remark}
\bigbreak
Then, we can  use the previous bound obtained in the lemma \ref{lma8} to obtain : 
\begin{eqnarray}
    \lVert \Gamma(F)_{i,j}-C_{i,j}.(1_{\mathcal{A}}\otimes1_{\mathcal{A}})\rVert_{L^2(\mathcal{A}\otimes\mathcal{A},\tau\otimes\tau)}^2\nonumber\\ \leq \Bigg[\mathds{1}_{q_i
    = q_j}\Bigg(\sum_{m=1}^{q_i-1}\sqrt{q_i} min\left(\bigg(\tau(I_{q_i}^4)-2\tau(I_{q_i}^2)^2\bigg)^{\frac{1}{2}}\tau(I_{q_j}^2),(\tau(I_{q_j}^4)-2\tau(I_{q_j}^2)^2) ^{\frac{1}{2}}\tau(I_{q_i}^2)\right)\nonumber\\
    +\sqrt{q_{i}}min\left(\bigg(\tau(I_{q_i}^4)-2\tau(I_{q_i}^2)\bigg)^{\frac{1}{2}}\tau(I_{q_j}^2),\bigg(\tau(I_{q_j}^4)-2\tau(I_{q_j}^2)^2\bigg)^{\frac{1}{2}}\tau(I_{q_j}^2)\right)\Bigg)\nonumber\\
    +\mathds{1}_{q_i\neq q_j}min\left(\bigg(\tau(I_{q_i}^4)-2\tau(I_{q_i}^2)^2\bigg)^{\frac{1}{2}}\tau(I_{q_i}^2),\bigg(\tau(I_{q_j}^4)-2\tau(I_{q_j}^2)^2\bigg)^{\frac{1}{2}}\tau(I_{q_i}^2)\right)\nonumber\\
    +\mathds{1}_{q_i\neq q_j}\sum_{m=1,m\neq q_i\wedge q_j}^{q_i\vee q_j-1}\sqrt{q_i\vee q_j}
    min\left(\bigg(\tau(I_{q_i}^4)-2\tau(I_{q_i}^2)^2\bigg)^\frac{1}{2}\tau(I_{q_j}^2),\bigg(\tau(I_{q_j}^4)-2\tau(I_{q_j}^2)^2\bigg)^\frac{1}{2}\tau(I_{q_i}^2)\right)\nonumber\\
    +\mathds{1}_{q_i\neq q_j}\sum_{l=0}^{q_i\wedge q_j-2}min\left(\bigg(\tau(I_{q_i}^4)-2\tau(I_{q_i}^2)^2\bigg)^{\frac{1}{2}}\tau(I_{q_j}^2),\bigg(\tau(I_{q_j}^4)-2\tau(I_{q_j}^2)^2\bigg)^{\frac{1}{2}}\tau(I_{q_i}^2)\right)\Bigg]\nonumber\\
    \leq \mathds{1}_{q_i= q_j} q_i^{\frac{3}{2}}min\bigg(\bigg(\tau(I_{q_i}^4)-2\tau(I_{q_i}^2)^2\bigg)^{\frac{1}{2}}\tau(I_{q_j}^2),\bigg(\tau(I_{q_j}^4)-2\tau(I_{q_j}^2)^2\bigg)^{\frac{1}{2}}\tau(I_{q_i}^2)\bigg)\bigg)\nonumber\\
    + \mathds{1}_{q_i\neq q_j}(q_i\vee q_j)^{\frac{3}{2}}min\left(\bigg(\tau(I_{q_i}^4)-2\tau(I_{q_i}^2)^2\bigg)^{\frac{1}{2}}\tau(I_{q_j}^2),\left(\tau(I_{q_j}^4)-2\tau(I_{q_j}^2)^2\right)^{\frac{1}{2}}\tau(I_{q_i}^2)\right)\nonumber\Bigg]
\end{eqnarray}
\bigbreak
It is important to notice that only one of the terms is non-zero since we have the indicators.

\bigbreak
Now to have a bound for the discrepancy, it remains to sum over $i,j$, use the inequality \ref{61} and take the square root of the quantity.
\bigbreak

And by using the inequality : $\sqrt{x_1+...+x_n}\leq \sqrt{x_1}+...+\sqrt{x_n}$, we have :

\begin{eqnarray}
\Sigma^*(X|V_C)&\leq& \lVert C^{-1}\rVert_{op} \Bigg[ \sum_{i,j=1}^n\mathds{1}_{q_i= q_j} q_j^{\frac{3}{4}}min\left(\bigg(\tau(I_{q_i}^4)-2\tau(I_{q_i}^2)^2\bigg)^{\frac{1}{4}}\tau(I_{q_j}^2)^{\frac{1}{2}},\bigg(\tau(I_{q_j}^4)-2\tau(I_{q_j}^2)^2\bigg)^{\frac{1}{4}}\tau(I_{q_i}^2)^{\frac{1}{2}}\right)\nonumber\\
&+& \sum_{i,j=1}^n \mathds{1}_{q_i\neq q_j}(q_i\vee q_j)^{\frac{3}{4}}min\left(\bigg(\tau(I_{q_i}^4)-2\tau(I_{q_i}^2)^2\bigg)^{\frac{1}{4}}\tau(I_{q_j}^2)^{\frac{1}{2}},\bigg(\tau(I_{q_j}^4)-2\tau(I_{q_j}^2)^2\bigg)^{\frac{1}{4}}\tau(I_{q_i}^2)^{\frac{1}{2}}\right)\Bigg]\nonumber
\end{eqnarray}}
\end{proof}
\qed
\begin{remark}
We can see that the right leg of (\ref{te}) is positive and it is a corollary of \cite{KNPS} which shows that the inequality is an equality only and if only the random variable is semicircular. In particular, any variable living in some homogeneous Wigner chaos of any order strictly greater than one cannot have a semicircular distribution. We may notice that for now, it is still an open question to know that two noncommutative random variables living in different homogeneous Wigner chaos can have or not the same distribution.
\end{remark}
\begin{flushleft}
Now, we return to the equivalence between componentwise convergence and joint convergence for sequence of self-adjoint vectors of multiple Wigner-Itô integrals. Our previous result will allow us to prove in an easier way the following theorem, whose was first proved by Nourdin, Peccati and Speicher (and which is the free analog of the Peccati, Tudor theorem \cite{PT}), which have been proved by analyzing carefully the contractions, which does appear in pairing integrals, as well as the a sophisticated analysis of the lattice of non-crossing partitions.
\end{flushleft}
\begin{theorem}(Theorem 1.3 in \cite{NPS})
Let $d\geq 2$ and $q_1,...,q_d$ be some fixed integers, and consider a positive definite symmetric matrix $C=\left\{C_{i,j}\right\}_{i,j=1}^d$. Let
$(S_1,...,S_d)$ be a semicircular family with covariance C.
For each $i=1,\ldots,d$, we consider a sequence $(f^{(i)}_k)_{k \in \mathbb{N}}$ of mirror-symmetric function in $L^2(\mathbb{R}_+^{q_i})$, such that for all $i,j=1,\ldots,d$: 
\begin{equation}
    \lim_{k\rightarrow\infty}\tau(I_{q_i}(f^{(i)}_k)I_{q_j}(f^{(j)}_k)=C_{i,j}
\end{equation}
then as $k\rightarrow\infty$ the following conditions are equivalent :
\begin{enumerate}
\item The vector $F_k=(I_{q_1}(f^{(1)}_k),...,I_{q_d}(f^{(d)}_k))$ converges in distribution to $(S_1,...,S_d)$. 
\item For each $i=1,\ldots,d$, the random variable $I_{q_i}(f^{(i)}_k)$ converges in distribution to $S_i$. \label{(b)}
\end{enumerate}
\end{theorem}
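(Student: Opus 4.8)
The implication $(1)\Rightarrow(2)$ is immediate: if the vector $F_k=(I_{q_1}(f^{(1)}_k),\ldots,I_{q_d}(f^{(d)}_k))$ converges in distribution to $(S_1,\ldots,S_d)$, then every $*$-moment of the single operator $I_{q_i}(f^{(i)}_k)$ converges to the corresponding $*$-moment of $S_i$, which is exactly convergence in distribution of the $i$-th component. All the content is thus in $(2)\Rightarrow(1)$, and the plan is to reduce it to the quantitative estimate of Theorem \ref{Theorem 8} together with a short covariance-perturbation argument.

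First I would record the exact covariance of $F_k$: let $C_k$ be the real symmetric $d\times d$ matrix with $(C_k)_{i,j}=\tau(I_{q_i}(f^{(i)}_k)I_{q_j}(f^{(j)}_k))$, which vanishes whenever $q_i\neq q_j$ by orthogonality of Wigner integrals of different orders. By hypothesis $C_k\to C$ entrywise, and since $C$ is positive definite, $C_k$ is positive definite for all $k$ large enough. For such $k$, Theorem \ref{Theorem 8} applies to $F_k$ with target a semicircular family $S_{C_k}$ of covariance $C_k$ and yields
\[
d_W(F_k,S_{C_k})\ \leq\ \lVert C_k\rVert_{op}^{1/2}\,\lVert C_k^{-1}\rVert_{op}\,M(F_k),
\]
where $M(F_k)=\psi\big(x_{k,1},y_{k,1},\ldots,x_{k,d},y_{k,d}\big)$ with $x_{k,i}=\tau(F_{k,i}^{4})-2\tau(F_{k,i}^{2})^{2}$ and $y_{k,i}=\tau(F_{k,i}^{2})$.

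Next I would prove $M(F_k)\to 0$. By assumption $(2)$ the operator $I_{q_i}(f^{(i)}_k)$ converges in distribution to $S_i$, hence $y_{k,i}=\tau(F_{k,i}^{2})\to\tau(S_i^{2})=C_{i,i}$ and $\tau(F_{k,i}^{4})\to\tau(S_i^{4})=2C_{i,i}^{2}$, so the fourth free cumulant $x_{k,i}\to 0$ for every $i$ (one may note that Lemma \ref{lma5} expresses $x_{k,i}$ as the sum of squared contraction norms $\sum_{p=1}^{q_i-1}\lVert f^{(i)}_k\stackrel{p}{\frown}(f^{(i)}_k)^{*}\rVert^{2}$, in particular $x_{k,i}\geq 0$). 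Each of the finitely many summands defining $\psi$ is a bounded multiple of a minimum of two quantities, each of which contains a positive power of some $\lvert x_{k,i}\rvert$; since all $x_{k,i}\to 0$ while the $y_{k,i}$ stay bounded, every summand tends to $0$ and therefore $M(F_k)\to 0$. As $C_k\to C$ with $C$ positive definite, the prefactor $\lVert C_k\rVert_{op}^{1/2}\lVert C_k^{-1}\rVert_{op}$ is bounded, whence $d_W(F_k,S_{C_k})\to 0$.

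Finally I would remove the covariance mismatch and upgrade to convergence in distribution. Realising $S_{C_k}$ and $S:=S_C$ as $C_k^{1/2}S'$ and $C^{1/2}S'$ for one fixed free $(0,1)$-semicircular family $S'$ (linear images of semicircular families are semicircular), one gets $d_W(S_{C_k},S)\leq\lVert(C_k^{1/2}-C^{1/2})S'\rVert_2\to 0$ by continuity of the matrix square root on positive definite matrices, so the triangle inequality for $d_W$ gives $d_W(F_k,S)\to 0$. To pass from convergence in $d_W$ to convergence in distribution, observe that all operators in play have uniformly bounded operator norms: by the Haagerup inequality $\lVert I_{q_i}(f^{(i)}_k)\rVert_{\mathcal{A}}\leq(q_i+1)\lVert f^{(i)}_k\rVert_2$ and $\lVert f^{(i)}_k\rVert_2^{2}=\tau(F_{k,i}^{2})$ is bounded, while $\lVert S_i\rVert=2\sqrt{C_{i,i}}$; choosing near-optimal couplings realising $d_W(F_k,S)$ and expanding any fixed word by a telescoping estimate then shows that every joint $*$-moment of $F_k$ converges to the corresponding moment of $(S_1,\ldots,S_d)$, which is $(2)\Rightarrow(1)$. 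The genuine difficulty has already been absorbed into Theorem \ref{Theorem 8} (the Wigner-space free Stein kernel $A=\big((1\otimes1)\otimes C^{-1}\big).\Gamma(F)$ and the contraction estimates of Lemma \ref{lma8}); the remaining obstacle is bookkeeping — passing through $S_{C_k}$ to meet the requirement in Theorem \ref{Theorem 8} that the target covariance equal the exact covariance of the tuple, controlling $d_W(S_{C_k},S_C)$ on the side, and checking that $d_W$-convergence together with the uniform Haagerup bounds really upgrades to convergence of all $*$-moments.
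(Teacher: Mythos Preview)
Your proof is correct and follows the same approach as the paper: both deduce $(2)\Rightarrow(1)$ from Theorem~\ref{Theorem 8} by observing that componentwise convergence forces each fourth free cumulant $\tau(F_{k,i}^4)-2\tau(F_{k,i}^2)^2$ to vanish in the limit, hence $M(F_k)\to 0$. The paper's proof is a two-line sketch that simply invokes Theorem~\ref{Theorem 8} directly; you are more careful on two points the paper leaves implicit, namely the covariance mismatch (Theorem~\ref{Theorem 8} as stated requires $C_{i,j}=\tau(F_iF_j)$ exactly, so you correctly pass through the intermediate target $S_{C_k}$ and control $d_W(S_{C_k},S_C)$ separately) and the upgrade from $d_W$-convergence to convergence of all joint moments via the uniform Haagerup bounds.
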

\begin{proof}
The first implication is trivial, and the reverse follows directly by our theorem \ref{Theorem 8}, since it implies that for every $i=1,\ldots,d$, $\lim_{k\rightarrow\infty}\tau(I_{q_i}(f^{(i)}_k)^4)=2C_{i,i}^2$ and, therefore  we obtain that $M(F_k)\rightarrow 0$.
\end{proof}
\qed

\section{Rate of Convergence in the multivariate free functional Breuer-Major CLT}

The main result of this section provides a quantitative bound for the (quadratic) non-commutative Wasserstein distance in the multivariate Breuer-Major free central limit theorem for free fractional Brownian motion, which was studied in section 5.1 in \cite{NT} (see also section 4 of \cite{BC} for the Berry-Essen bounds in the univariate case). The last result is stated for the weaker distance $d_{C_2}$, which turns out to be less or equal than the non-commutative Wasserstein distance (see Cébron \cite{C}). We will actually prove the theorem by analyzing carefully the contractions, which is fortunately done by previous authors: see \cite{NPR} section 4.1. Contrary to previous results, one indeed has two options which gives both the same bound. For the first one, we won't express the non-commutative fractional Brownian motion as a Wigner integral with respect to the free Brownian motion, we'd rather see it as an analogue of centered isonormal process: a centered semicircular process which exists for a large class of covariance functions. The construction is done via the free Fock Space (see \cite{BS} for further details and further properties). This approach seems interesting for the reason that there is a few references which developp free Malliavin calculus with respect to a general semicircular process, and that usual results in Malliavin calculus, and especially the probabilistic approximation part can easily be extended for more general setting than the the usual Brownian motion case, as many results which don't invoke stochastic integration can be easily adapted in this more general context. For reader's convenience, one also state the other approach base on the representation of the non-commutative fractional Brownian-motion as a Wigner integral with respect to a free Brownian motion.

\bigbreak
We remind some results about the non-commutative fractional Brownian motion with index $H\in (0,1]$ which is defined as the centered semicircular process with covariance given by :
\begin{equation}
    \tau(S_t^HS_s^H)=\frac{1}{2}(t^{2H}+s^{2H}-|t-s|^{2H})
\end{equation}

The orthogonal polynomials associated with the semicircular distribution are the Chebyshev polynomials (of the second kind) $\left(U_n\right)_{n\geq 0}$ which are defined for $x\in [-2,2]$ by the following recursive relation :
$U_0(x)=1,U_1(x)=x$ and for $n\geq 2$ :
\begin{equation}
   U_{n+1}(x)=xU_n(x)-U_{n-1}(x)
\end{equation}
\bigbreak
We define the discrete increment sequence of $(S_t)_{t\geq 0}$ by $\left\{X_k=S_{k+1}^H-S_{k}^H, k\geq 0\right\}$.
We also define the covariance function of the stationary sequence $(X_k)_{k\geq 0}$ is given by :
\begin{equation}
    \rho_H(x)=\frac{1}{2}(\lvert x+1\rvert^{2H}+\lvert x-1\rvert^{2H}-2\lvert x\rvert^{2H})
\end{equation}
We define also the sequence $\left\{V_n, n\geq 1\right\}$ by :
\begin{equation}
    V_n(t)=\frac{1}{\sigma\sqrt{n}}\sum_{k=0}^{\lfloor nt \rfloor-1}U_q(X_k)
\end{equation}

Where $\sigma=\sqrt{\sum_{r\in \mathbb{Z}}\rho^2(r)}$.
\bigbreak
Note also that the well-known construction of the Hilbert space associated with a centered isonormal Gaussian process in the commutative case, have a noncommutative counterpart. Indeed, we can construct an Hilbert space $\mathcal{H}$ associated with the non-commutative fractional Brownian motion, that is $\mathcal{H}$ is the completion of the space of elementary functions with respect to the following inner product where $X=\left\{X(h),h\in \mathcal{H}\right\}$ is a centered semicircular process such that:

\begin{equation}
    \langle \mathds{1}_{[0,t]},\mathds{1}_{[0,s]}\rangle_{\mathcal{H}}=\tau(X(\mathds{1}_{[0,t]})X(\mathds{1}_{[0,s]}))
\end{equation}

It is important to notice that we can again construct a non-commutative Malliavin calculus with respect to that process. All the details could be found in \cite{DS}, and it implies in particular that all the setting that we used before is true in a more general setting and based on a semicircular process (like the usual Malliavin calculus with respect to a isonormal process). The free Malliavin derivative, with respect to this process, will be denoted as $\nabla^X$
\bigbreak
This implies, that we can in particular write $S_t^H=X(\mathds{1}_{[0,t]})$ rather than writing the Fractional Brownian motion as a Wigner integral with respect to a free Brownian motion.
We also supposed constructed, in the same way, the Wigner-Ito chaos with respect to this process.
\begin{theorem}
Let $H<1-\frac{1}{2q}$, then for any fixed $d\geq 1$ and $0 = t_0 < t_1 < \ldots < t_d$ , there exists a constant c, (depending only on d, H and
$(t_0,t_1,...,t_d )$, and not on n) such that, for every $n \geq 1$:
\begin{equation}
    d_W\left(\frac{V_n(t_i)-V_n(t_{i-1})}{\sqrt{t_{i+1}-t_i}},S\right) \leq c \times \begin{cases} n ^{-\frac{1}{4}} \mbox{ if } H \in (0, \frac{1}{2}] \\
n^{\frac{H-1}{2}}, \mbox{ if } H \in \bigg(\frac{1}{2},\frac{2q-3}{2q-2}\bigg] \\
n ^{\frac{2qH-2q+1}{4}} \mbox{ if } H \in \bigg( \frac{2q-3}{2q-2}, \frac{2q-1}{2q}\bigg)
\end{cases}
\end{equation}

\end{theorem}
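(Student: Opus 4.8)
The plan is to apply Theorem~\ref{Theorem 8} to the vector of normalized increments and then to read off the three rates from the (already known) asymptotics of the relevant contraction norms in the Breuer--Major setting. I would work in the Wigner--It\^o chaos built over the Hilbert space $\mathcal{H}$ associated with the non-commutative fractional Brownian motion, so that $\langle\mathds{1}_{[k,k+1]},\mathds{1}_{[l,l+1]}\rangle_{\mathcal{H}}=\rho_H(k-l)$. The free analogue of the classical Hermite--multiple-integral identity reads $U_q(X(h))=I_q(h^{\otimes q})$ for every unit vector $h\in\mathcal{H}$ (the $U_q$ being the Chebyshev polynomials, i.e. the orthogonal polynomials of the semicircle law); hence $U_q(X_k)=I_q(e_k^{\otimes q})$ with $e_k=\mathds{1}_{[k,k+1]}$, and
\[
F_i^{(n)}:=\frac{V_n(t_i)-V_n(t_{i-1})}{\sqrt{t_i-t_{i-1}}}=I_q(g_{i,n}),\qquad g_{i,n}=\frac{1}{\sigma\sqrt{n(t_i-t_{i-1})}}\sum_{k=\lfloor nt_{i-1}\rfloor}^{\lfloor nt_i\rfloor-1}e_k^{\otimes q},
\]
where each $g_{i,n}$ is mirror-symmetric and $\tau(F_i^{(n)})=0$ since $q\ge 1$. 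Write $F^{(n)}=(F_1^{(n)},\dots,F_d^{(n)})$ and let $C^{(n)}$ be the symmetric matrix with entries $C^{(n)}_{i,j}=\tau(F_i^{(n)}F_j^{(n)})$. For all $n$ large enough that every index block $\{\lfloor nt_{i-1}\rfloor,\dots,\lfloor nt_i\rfloor-1\}$ is non-empty, $C^{(n)}$ is positive definite; for the finitely many remaining $n$ the left-hand side of the claim is bounded by a constant (all second moments being uniformly bounded), which is absorbed into $c$.

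First I would control the covariance. Using $\tau(U_q(X(h))U_q(X(h')))=\langle h,h'\rangle_{\mathcal{H}}^q$, $C^{(n)}_{i,j}$ is a normalized double lattice sum of $\rho_H(k-l)^q$. The standing hypothesis $H<1-\tfrac{1}{2q}$ is precisely $q(2-2H)>1$, and since $\rho_H(r)=O(|r|^{2H-2})$ the series $\sum_{r\in\mathbb{Z}}\rho_H(r)^q$ converges absolutely; by standard Ces\`aro and boundary estimates $C^{(n)}\to I_d$, the off-diagonal entries (coming from separated index blocks) decaying at the rate $n^{q(2H-2)+1}=n^{2qH-2q+1}$ and the diagonal error being of the same or smaller order. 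In particular $\tau((F_i^{(n)})^2)$, $\|C^{(n)}\|_{op}^{1/2}$ and $\|(C^{(n)})^{-1}\|_{op}$ are bounded uniformly in (large) $n$, and the semicircular family $S^{(n)}$ with covariance $C^{(n)}$ satisfies $d_W(S^{(n)},S)\le\|(C^{(n)})^{1/2}-I_d\|_{HS}=O(\|C^{(n)}-I_d\|_{op})=O(n^{2qH-2q+1})$, where $S$ is the standard free semicircular family (the limit).

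The core of the argument is the fourth cumulant. By Lemma~\ref{lma5} and mirror symmetry, $\tau((F_i^{(n)})^4)-2\,\tau((F_i^{(n)})^2)^2=\sum_{p=1}^{q-1}\|g_{i,n}\stackrel{p}{\frown}g_{i,n}\|_{L^2}^2$, and since $e_k^{\otimes q}\stackrel{p}{\frown}e_l^{\otimes q}=\rho_H(k-l)^p\,e_k^{\otimes(q-p)}\otimes e_l^{\otimes(q-p)}$, each $\|g_{i,n}\stackrel{p}{\frown}g_{i,n}\|^2$ is a normalized quadruple lattice sum of products $\rho_H(k-l)^p\rho_H(k'-l')^p\rho_H(k-k')^{q-p}\rho_H(l-l')^{q-p}$. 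These are exactly the sums controlled in the commutative multivariate Breuer--Major theorem: following \cite{NPR} (Section~4.1) and \cite{NT} (Section~5.1), one obtains uniformly in $i$ and $1\le p\le q-1$ that $\|g_{i,n}\stackrel{p}{\frown}g_{i,n}\|_{L^2}=O(n^{-1/2})$ for $H\in(0,\tfrac{1}{2}]$, $=O(n^{H-1})$ for $H\in(\tfrac{1}{2},\tfrac{2q-3}{2q-2}]$, and $=O(n^{(2qH-2q+1)/2})$ for $H\in(\tfrac{2q-3}{2q-2},\tfrac{2q-1}{2q})$; the transitions occur exactly where the inner sub-sums of powers of $\rho_H$ cease to be summable. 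Consequently the fourth cumulant of each $F_i^{(n)}$ is the square of these quantities, and, as all orders equal $q$ and all second moments are $\Theta(1)$, the quantity $M(F^{(n)})$ of Theorem~\ref{Theorem 8} --- a finite sum of terms $\lesssim q^{3/4}\min(|\kappa_4(F_k^{(n)})|^{1/4},|\kappa_4(F_j^{(n)})|^{1/4})$, where $\kappa_4(\cdot):=\tau((\cdot)^4)-2\tau((\cdot)^2)^2$ --- is of order $n^{-1/4}$, $n^{(H-1)/2}$ and $n^{(2qH-2q+1)/4}$ in the three regimes.

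Finally, Theorem~\ref{Theorem 8} yields $d_W(F^{(n)},S^{(n)})\le\|C^{(n)}\|_{op}^{1/2}\|(C^{(n)})^{-1}\|_{op}\,M(F^{(n)})$, and the triangle inequality $d_W(F^{(n)},S)\le d_W(F^{(n)},S^{(n)})+d_W(S^{(n)},S)$ together with the estimates above gives the announced rate, the term $d_W(S^{(n)},S)=O(n^{2qH-2q+1})$ being negligible in every regime. The \emph{main obstacle} is the contraction analysis: for each $p$ one must identify the sharp growth exponent of the quadruple sums of powers of $\rho_H$ over blocks of size $\sim n$ and check that it jumps exactly at $H=\tfrac{1}{2}$ and $H=\tfrac{2q-3}{2q-2}$; fortunately these sums have the same combinatorial structure as in the commutative case, so the estimates of \cite{NPR} and \cite{NT} transfer essentially verbatim. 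A secondary, routine point is the uniform non-degeneracy of $C^{(n)}$, which follows from $C^{(n)}\to I_d$ and keeps $\|(C^{(n)})^{-1}\|_{op}$ bounded.
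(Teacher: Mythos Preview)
Your proposal is correct and follows essentially the same approach as the paper: express the normalized increments as order-$q$ multiple Wigner integrals, invoke Theorem~\ref{Theorem 8}, and import the contraction-norm estimates from \cite{NPR} (cf.\ \cite{NT}) to read off the three rates. You are in fact more careful than the paper's own argument, since you explicitly handle the convergence $C^{(n)}\to I_d$ and the triangle-inequality step $d_W(F^{(n)},S)\le d_W(F^{(n)},S^{(n)})+d_W(S^{(n)},S)$, points the paper leaves implicit.
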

\begin{proof}
Let $d\geq 1$ and $0 = t_0 < t_1 < \ldots < t_d$ and a constant $c$ which can change from one line to another, it is easily seen that we can write (complete analogy with the Gaussian case).
If one choose to use directly the free Malliavin calculus with respect to the semicircular process $X$ (the semicircular process associated with the Hilbert space $\mathcal{H}$), one can write :
\begin{equation}
F_i=\frac{V_n(t_i)-V_n(t_{i-1})}{\sqrt{t_{i+1}-t_i}}=I_q^{X}(f_{i}^{(n)})
\end{equation}
where
\begin{equation}
    f_{i}^{(n)}=\frac{1}{\sigma \sqrt{n}\sqrt{t_{i+1}-t_i}}\sum_{k=\lfloor {nt_{i-1}}\rfloor}^{\lfloor{nt_{i}}\rfloor-1}\mathds{1}^{\otimes q}_{[k,k+1]}
\end{equation}

Now, it suffices to remark that in this setting, the free Stein kernel with respect to the normalized semicircular potential has also a nice form. The case of Wigner integrals with respect to the free Brownian motion being a particular case of this more general setting ($\mathcal{H}_{\mathbb{R}}=L^2_{\mathbb{R}}(\mathbb{R_+}$)):
\begin{equation}
    A=\bigg(\bigg\langle \tau\otimes id(\nabla^X(F_i)).\nabla^X(F_j)\bigg\rangle_{\mathcal{H}}\bigg)_{i,j=1}^n 
\end{equation}
where for for a process $U=h.u$ with $u \in (\mathcal{A},\tau)$, $h\in \mathcal{H}$ and a biprocess: $V=v\otimes g\otimes w$ with $v,w \in (\mathcal{A},\tau)$, and $g\in \mathcal{H}$, we have used the following linear extension of the pairing:
\begin{equation}
    \langle h.u,g.v\otimes w\rangle_{\mathcal{H}} =\langle h,g\rangle_{\mathcal{H}}. uv\otimes w
\end{equation}

\begin{flushleft}
Otherwise, one writes as usual, the {\it ncfBm} as a Wigner integral with respect to a free Brownian motion.
\newline
Indeed, it is well know that one can write for $S$ a free Brownian motion :
\begin{equation}
    S^H_t=\int_0^t  K^H(t,u)dS_u
\end{equation}
where $K^H$ is a covariance function whose expression can be found in \cite{LD}.
\end{flushleft}
\qed

\begin{flushleft}
All Wigner-Ito integrals are of the same order $q$, it is then necessary to evaluate the difference between the fourth moment and two, since we have $\tau(I_q(f_i^{n})^2)=1$, for all $i=1,\ldots,n$, we only have to estimate the difference between fourth moment and two, which turns out to be related to the contractions by Lemma 5,and this is fortunately done for the fractional Brownian motion (here it is exactly the same by fully-symmetry) in \cite{NPR}.
\end{flushleft}

\bigbreak
We have 
\begin{equation}
     \lVert f_{i}^{(n)}\otimes f_{i}^{(n)}\rVert_{\mathcal{H}^{\otimes 2(q-r)}}\leq c \begin{cases} n ^{-\frac{1}{2}} \mbox{ if } H \in (0, \frac{1}{2}]\\
n^{H-1}, \mbox{ if } H \in \bigg(\frac{1}{2},\frac{2q-3}{2q-2}\bigg] \\
n ^{\frac{2qH-2q+1}{2}} \mbox{ if } H \in \left( \frac{2q-3}{2q-2}, \frac{2q-1}{2q}\right)
\end{cases}
\end{equation}
\end{proof}
\section{Applications to q-Brownian chaos}
In this section, we will study the case of {\it $q$-Gaussian algebras} with $q\in(-1,1)$ discovered by Bos\.zjeko, Kümmerer and Speicher \cite{BKS}, and which are a continuous interpolation between fermionic, free and standard Brownian motion. Indeed, taking $q=0$, we find the usual free Fock space and letting $q$ close to $1$, we retrieve the usual Brownian motion.
Several properties of these algebras were studied, especially the factoriality, non-$\Gamma$ property or strong solidity...
\bigbreak
It is important to notice that we can actually construct a $q$-stochastic integration with respect to this process. 
Indeed, Donati-Martin in the fundamental work \cite{Don03} focused on {\it q-stochastic analysis} proved several important results in the infinite dimensional setting, that is when : $\mathcal{H}_{\mathbb{R}}=L^2_{\mathbb{R}}(\mathbb{R}_+)$, such as Ito integration for biprocesses, chaos decomposition and representation theorem. Recently, a non-commutative rough integration with respect to that process were constructed by Deya and Schott \cite{DS} which shows that contrary to the free case where the freeness allows deducing the $L^{\infty}$-version of the Burkhölder-David-Gundy inequality, for stochastic integral of biprocesses, we are unable for now to obtain such results for the $q$-Gaussian stochastic integrals of biprocesses.
\bigbreak
In another context, when the Fock space is built onto the finite dimensional Hilbert space, several authors such as Dabrowski, Shlyakhtenko, Nelson, Zeng in \cite{Dab14}, \cite{S03} or \cite{NZe}  were able to construct derivations which allow them to obtain conjugate variables of $q$-semicircular families with respect to the standard semicircular potential "$V_{I_n}$" provided that an operator is Hilbert-Schmidt. One can even obtain a more powerful result concerning the free transport. Indeed, Guionnet and Shlyakhtenko in the breaktrough paper \cite{GS2} focused on free monotone transport, obtained the isomorphism between the von Neumann algebras generated by {\it q-semicircular system} with $N$ (finite) generators and the free group factor with $N$ generators. This last result being true for $q$ small enough (the bound depending on $q,N$ and $q\rightarrow 0$ as $N\rightarrow \infty$). Recently, an important result was provided by Caspers \cite{Cas}, which has shown in the infinite dimensional setting, that is when $\mathcal{H}_{\mathbb{R}}$ is an infinite dimensional real separable Hilbert space, $\Gamma_0({\mathcal{H}_{\mathbb{R}}})$ and $\Gamma_q({\mathcal{H}_{\mathbb{R}}})$, for $-1<q<1$, $q\neq 0$, are non isomorphic.
\bigbreak
Nelson and Zeng in \cite{NZi} were able to generalize free monotone transport results in the infinite dimensional setting, but no longer for {\it q-Gaussian algebras}, rather for a deformation called {\it mixed q-Gaussian algebras} which depend on a infinite array with coefficients $q_{i,j} \in (-1,1)$. They proved that the {\it mixed q-von Neumann algebras} with infinite generators are isomorphic to the von Neumann algebra of the countably free group factor with infinite generators : $L(\mathbb{F}_{\infty})$, if the entries of the array are uniformly small with a rapid decay.
\bigbreak
We note that, contrary to the free Fock space and symmetric Fock space, constructing a {\it q-Malliavin calculus} is rather difficult. In fact, in a previous version of this work, we had the idea to construct it, however we encounter the real difficulty to build it, which is the appearance of a crucial operator denoted $\Xi_q$ which is never Hilbert-Schmidt in an infinite dimensional setting, that is when $\mathcal{H}=L^2_{\mathbb{R}}(\mathbb{R}_+)$ unless $q=0$ and, so it reduces to the Free Fock space. It would be rather interesting to be able to construct such Malliavin calculus for the {\it q-Fock space} to deduce further properties of the distributions of {\it q-Brownian chaos}, especially to prove analogues of powerful Mai´s result (\cite{Mai}), which states that the spectral measure of multiple Wigner integral does not have atoms. It is also of interest to show that the support of the distribution of any element in the homogeneous $q$-Brownian chaos is connected (in particular, it is equivalent to show that the $C^*$-algebra generated by the $q$-Brownian motions $C^*\left\{S_t^q,t\geq 0\right\}$ is projectionless, which is for now, far from our reach. In fact, Dabrowski in \cite{Dab14} proved the result in the finite-dimensional case for $q$ very small with $\lvert q\rvert<g(N)$). We leave it here for further investigations.
\bigbreak
Let $\mathcal{H}_{\mathbb{R}}$ be a real Hilbert Space and $\mathcal{H}_{\mathbb{C}}$ its complexification (in the sequel we will only focus on the finite dimensional case).
\bigbreak
We define $\mathcal{F}_q(\mathcal{H}_{\mathbb{C}})$ the $q$-Fock space as the completion of 
\begin{equation}
\mathcal{F}_{alg}:= \mathbb{C}\Omega\oplus\bigoplus_{i=1}^{k}\mathcal{H}_{\mathbb{C}}^{\otimes{n}},
\end{equation}
where $\Omega$ is a vacuum vector (dense and separating).
\bigbreak
with respect to the inner product :
\begin{equation}
\langle g_{1} \otimes ...\otimes g_{n},h_{1} \otimes ... \otimes h_{m} \rangle_{q}=\delta_{n,m}\sum_{\sigma \in S_n}q^{inv(\sigma)}\langle g_1,h_{\sigma(1)}\rangle...\langle g_n,h_{\sigma(n)}\rangle,
\end{equation}
where {\it inv} denotes the number of inversions of a permutation.
\bigbreak
We will take $\mathcal{H}_\mathbb{R}=\mathbb{R}^n$, and we will denote the following operators with are respectively the {\it left q-creation} and {\it left q-annihilation} operators associated with $h\in \mathcal{H}$ by :
\begin{equation}
    l(h)(g_1\otimes \ldots\otimes g_n)=h\otimes g_1\otimes \ldots\otimes g_n,
\end{equation}
and \begin{equation}
l^*(h)\Omega=0
\end{equation}
\begin{equation}
    l^*(h)(g_1\otimes \ldots\otimes g_n)=\sum_{k=1}^n q^{k-1}\langle h,g_1\rangle g_2\otimes\ldots\otimes \hat{g}_k\ldots\otimes  g_n,
\end{equation}
where  $\hat{.}$ denote the omission.
\newline 
And the right creation operator:
\begin{equation}
    r(h)(g_1\otimes \ldots\otimes g_n)= g_1\otimes \ldots\otimes g_n\otimes h,
\end{equation}
\bigbreak
One can define a state on $\Gamma_q(\mathcal{H}_{\mathbb{R}}):=vN\left\{l(h)+l^*(h),h\in\mathcal{H}_{\mathbb{R}}\right\}$, by setting :
\begin{equation}
    \tau(X):=\langle X\Omega,\Omega\rangle,
\end{equation}

\bigbreak
Then $x(h):=l(h)+l^*(h)$ is called a {\it q-semicircular operator}, the interpolation between a fermionic, $(0,1)$ semicircular and the standard gaussian.
\bigbreak
There also exist the $q$-counterpart of semicircular families. We will denote a {\it q-semicircular family}, as a family of elements in $\Gamma_q(\mathcal{H})$ such as : 
\begin{definition}
Let $n\geq 2$ be an integer, and let $C = (C_{i, j})_{i,j=1}^n$
be a positive definite symmetric matrix. A n-dimensional vector $(S_1, ..., S_n)$ of random variables in $(\mathcal{A},\tau)$ is said to be a q-semicircular family with covariance $C$, if $\forall n \in \mathbb{N}$, $\forall(i_1, ..., i_n) \in [n]=\left\{1,\ldots,n\right\}$ :

\begin{equation}
    \varphi(S_{i_1}S_{i_2}...S_{i_n})=\sum_{\pi \in \mathcal{P}_2[n]}q^{cr(\pi)}\prod_{\left\{a,b\right\}\in \pi}C_{i_a,i_b},
\end{equation}
Where $\mathcal{P}_2[n]$ is the set of all the pairings of $\left\{1,\ldots,n\right\}$ and $cr(\pi)$ denotes the number of crossings of $\pi$.
\end{definition}
As seen previously, these (centered) families are only determined by the set of covariance: $\left\{\tau(X_iX_j)/ i,j\in [n]\right\}$.
\bigbreak
Let's denote :
$
\Xi_q \in  \mathcal{B}(\mathcal{F}_q(\mathcal{H}))$ (the second quantization operator of $T=qId$):
\begin{equation}
\Xi_q=\sum_{N\geq 0} q^{N}P_{N}
\end{equation}where $P_{N}$ is the orthogonal projection on the tensors of rank $N$.
\bigbreak
We can remark that $\Xi_q$ is in fact an Hilbert-Schmidt operator when $q^2n<1$.
\bigbreak
In the following, we will fix a covariance matrix $C$ supposed to be symmetric definite positive and we let $(e_i)_{i=1}^n$ a set of $\mathcal{H}_{\mathbb{C}}$ such as $\langle e_i,e_j\rangle_{\mathcal{H}}:=C_{i,j}$ and we will denote $\left\{x(e_i)\right\}_{i=1}^n$ a set of {\it q-semicircular operator}.
\bigbreak
We see that \begin{equation}
    \tau(X_iX_j)=\langle X_iX_j\Omega,\Omega\rangle=\langle X_j\Omega,X_i\Omega\rangle=\langle e_i,e_j\rangle=C_{i,j},
\end{equation}
And it readily checked that this family is a {\it q-semicircular} family with covariance $C$.
\bigbreak
\bigbreak
Using the identification mentioned previously, when $q^2n<1$:
\begin{eqnarray}
    L^2(W^*(X)\otimes {W^*(X)}^{op})&\rightarrow& HS(\mathcal{F}_q)\nonumber\\
    a\otimes b^{op}&\mapsto &\langle .,b^*\Omega\rangle_{\mathcal{F}_q}a\Omega,\nonumber
\end{eqnarray}
This means that one can identify $\Xi_q$ as an element of $L^2(W^*(X)\otimes {W^*(X)}^{op})$.
\bigbreak
We can construct a free Stein kernel with respect to the potential $V_C$. It turns out that this construction was already done by Shlyakhtenko in \cite{S03} with respect to the potential $V_{I_n}$.
\begin{lemma}
Under the condition $q^2n<1$,
\begin{equation}
A=\Xi_q\otimes I_n,
\end{equation}
is a Free Stein kernel for $X$ with respect to the potential $V_C$.
\end{lemma}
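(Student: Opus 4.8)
The plan is to unravel the definition of a free Stein kernel, reduce the statement to a single adjoint identity on the $q$-Fock space which is essentially Shlyakhtenko's computation from \cite{S03}, and then carry out an elementary linear change of variables that turns the standard covariance into $C$.

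First I would recall that $[D_iV_C](X)=\sum_j C^{-1}_{i,j}x_j=(C^{-1}X)_i$, so that, by the reformulation of the defining relation of a free Stein kernel, $A$ is a free Stein kernel for $X$ relative to $V_C$ precisely when $\langle C^{-1}X,P(X)\rangle_2=\langle A,[\mathcal{J}P](X)\rangle_{HS}$ for every $P\in\mathds{P}^n$. Since $A=\Xi_q\otimes I_n$ is the diagonal matrix $\mathrm{diag}(\Xi_q,\dots,\Xi_q)$, writing this out coordinatewise reduces the lemma to the $n$ identities
\begin{equation*}
\partial_j^{*}(\Xi_q)=(C^{-1}X)_j,\qquad j=1,\dots,n,
\end{equation*}
where $\partial_j$ is the free difference quotient on $L^2(W^{*}(X))$. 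Two preliminary points make these meaningful. Because $q^2n<1$ one has $\lVert\Xi_q\rVert_{HS}^2=\sum_{N\ge 0}q^{2N}\operatorname{rank}(P_N)=\sum_{N\ge 0}(q^2n)^N<\infty$, so $\Xi_q$ is genuinely an element of $L^2(W^{*}(X)\bar\otimes W^{*}(X)^{op})$ under the identification fixed above, and the bracket is an honest $L^2$-pairing. Moreover the Hilbert--Schmidtness of $\Xi_q$ is exactly what furnishes finite free Fisher information for $X$, hence, by \cite{MSW}, no algebraic relation among the $x_j$, so that $\partial_j$ and $\partial_j^{*}$ are legitimately defined as densely defined closable operators.

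The core input is the corresponding identity for the standard family. Let $\{\epsilon_k\}_{k=1}^{n}$ be an orthonormal basis of $\mathcal{H}_{\mathbb{R}}$ and $Y=(x(\epsilon_1),\dots,x(\epsilon_n))$ the standard $q$-semicircular family (covariance $I_n$); Shlyakhtenko's construction of the conjugate variables of $Y$ in \cite{S03}, once recast in the Stein-kernel language, says precisely that $\Xi_q\otimes I_n$ is a free Stein kernel for $Y$ relative to $V_{I_n}$, equivalently $(\partial_j^{(Y)})^{*}(\Xi_q)=y_j$ for every $j$. To transfer this to $X$, note that with $C^{1/2}$ the positive square root of $C$ and $e_i=\sum_k (C^{1/2})_{ik}\epsilon_k$, linearity of $h\mapsto l(h)+l^{*}(h)$ gives $X=C^{1/2}Y$, while $W^{*}(X)=W^{*}(Y)=\Gamma_q(\mathcal{H}_{\mathbb{R}})$ and the vacuum is shared, so $\Xi_q$ corresponds to the same $L^2$-element for both tuples. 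From $x_k=\sum_l(C^{1/2})_{kl}y_l$ and the Leibniz rule one reads off $\partial_j^{(X)}=\sum_k(C^{-1/2})_{jk}\,\partial_k^{(Y)}$, hence, $C^{-1/2}$ being real symmetric, $(\partial_j^{(X)})^{*}=\sum_k(C^{-1/2})_{jk}(\partial_k^{(Y)})^{*}$, and therefore
\begin{equation*}
(\partial_j^{(X)})^{*}(\Xi_q)=\sum_k(C^{-1/2})_{jk}\,y_k=(C^{-1/2}Y)_j=(C^{-1}X)_j,
\end{equation*}
which is exactly the required identity. Equivalently one may route this through the linear-transformation lemmas of the \emph{first approach}: transforming Shlyakhtenko's kernel for $Y$ by $B=C^{1/2}$ shows that $\Xi_q\otimes C$ is a Stein kernel for $X=C^{1/2}Y$ relative to $V_{I_n}$, and then, using $\langle C^{-1}X,P(X)\rangle_2=\langle X,(C^{-1}P)(X)\rangle_2$ and left multiplication by $(1\otimes 1)\otimes C^{-1}$, one obtains that $(1\otimes 1)\otimes C^{-1}\cdot(\Xi_q\otimes C)=\Xi_q\otimes I_n$ is a Stein kernel for $X$ relative to $V_C$.

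The step I expect to be the genuine obstacle is Shlyakhtenko's identity $(\partial_j^{(Y)})^{*}(\Xi_q)=y_j$ itself: this is an honest computation on $\mathcal{F}_q(\mathcal{H}_{\mathbb{C}})$, built on the explicit action of the $q$-annihilation operators $l^{*}(\epsilon_j)$ on $q$-Wick words and on the resulting $q$-weighted pairing combinatorics, and it is exactly here that the hypothesis $q^2n<1$ is indispensable, being what places $\Xi_q$ — and hence the putative conjugate variables — in $L^2$. For this I would simply invoke \cite{S03}. Everything else (the reformulation via the defining relation, the legitimacy of $\Xi_q$ as an $L^2$-element, the absence of algebraic relations, the covariance change) is routine bookkeeping once that identity is granted.
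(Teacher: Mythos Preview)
Your proof is correct. Both arguments ultimately rest on Shlyakhtenko's commutator identity $[l(h)^{*},r(g)]=\langle g,h\rangle\,\Xi_q$ from \cite{S03} together with the Hilbert--Schmidtness of $\Xi_q$ for $q^2n<1$, but they organise the linear algebra differently.

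The paper works \emph{directly} in the non-orthonormal basis $(e_i)$ with $\langle e_i,e_j\rangle=C_{i,j}$: it computes $[(C^{-1}X)_i,r(e_j)]=\sum_k C^{-1}_{i,k}[x(e_k),r(e_j)]=\sum_k C^{-1}_{i,k}\langle e_k,e_j\rangle\,\Xi_q=\sum_k C^{-1}_{i,k}C_{k,j}\,\Xi_q=\delta_{i,j}\,\Xi_q$, and then invokes Proposition~2.6 of \cite{S03}. The cancellation $C^{-1}C=I_n$ happens inside a single commutator computation, with no auxiliary basis or change of variables. Your route instead first fixes an orthonormal basis $(\epsilon_k)$, quotes Shlyakhtenko's identity for the standard family $Y$ as a black box $(\partial_j^{(Y)})^{*}(\Xi_q)=y_j$, and then transports it to $X=C^{1/2}Y$ via the chain rule $\partial_j^{(X)}=\sum_k(C^{-1/2})_{jk}\partial_k^{(Y)}$. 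What you gain is modularity --- the hard analytic input is cleanly separated from the covariance bookkeeping, and the same template would apply to any linear image of $Y$ --- at the cost of introducing an auxiliary tuple and a change-of-basis computation that the paper's one-line commutator avoids.
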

Which is not surprising, since letting $q=0$, we would find the Schwinger-Dyson equation of a semicircular family of covariance $C$ which is:
$\langle C^{-1}S,S\rangle_2=\langle (1\otimes 1)\otimes I_n,[\mathcal{J}P](S)\rangle_{HS}$

\begin{proof}
We recall by lemma 3.1 in \cite{S03},
for $\lvert q\rvert<1$, and $g,h\in\mathcal{H}_{\mathbb{C}}$
\begin{eqnarray}
    [l(h),r(g)]=0\nonumber\\
     \end{eqnarray}
     and
    \begin{eqnarray}
    [l(h)^{*},r(g)]=\langle g,h\rangle \Xi_q
\end{eqnarray}
Now we deduce that 
\begin{eqnarray}
\left[\left(C^{-1}X\right)_i,r(e_j)\right]&=&\left[\sum_{k=1}^nC_{i,k}^{-1}X(e_k),r(e_j)\right]\nonumber\\
&=&\sum_{k=1}^nC_{i,k}^{-1}\left[X(e_k),r(e_j)\right]\nonumber\\
&=&\sum_{k=1}^nC_{i,k}^{-1}\langle e_k,e_j\rangle\Xi_q\nonumber\\
&=&\sum_{k=1}^nC_{i,k}^{-1}C_{k,j}
\Xi_q\nonumber\\
&=&\delta_{i,j}\Xi_q
\end{eqnarray}
Then we apply the proposition 2.6 in \cite{S03} to get the result.
\bigbreak
Thus, we are left to evaluate the free Stein discrepancy $\Sigma^*(X|V_C)$ by :
\begin{equation}
    \Sigma^*(X|V_C)=\lVert \Xi_q-(1\otimes 1^{op})\otimes I_n\rVert \leq \sqrt{n} \lVert \Xi_q-1\otimes 1^{op}\rVert_{L^2(\tau\otimes \tau^{op})}=\frac{\lvert q\rvert n}{\sqrt{1-q^2n}}
\end{equation}
Since $\lVert \Xi_q-P_0\rVert_{HS}^2=\frac{q^2n^2}{1-q^2n}$
\end{proof}
\qed
\bigbreak
Fathi and Nelson obtained this type of estimates in section 3 of \cite{FN}, but only for the potential $V_{I_n}$.
\begin{flushleft}
We can go even further in the construction of free Stein Kernel, that is we can in fact construct a free Stein Kernel with respect to the standard semicircular potential for every tuple of non commutative polynomial in $X_1,\ldots,X_n$ a standard $q$-semicircular family.
\newline
Before introducing the main theorem, let us introduce some preliminaries result (especially the construction of another derivation) due to Dabrowski \cite{Dab14}. This derivation is really important as it will provide the number operator as the corresponding generator of the Dirichlet form.
\end{flushleft}
\bigbreak
Firstly, by embedding $\mathcal{H}_{\mathbb{R}}$ as $\mathcal{H}_{\mathbb{R}}\oplus 0$ in $\mathcal{H}_{\mathbb{R}}\oplus\mathcal{H}_{\mathbb{R}}$, $L^2(\Gamma_q(\mathcal{H}_{\mathbb{R}}\oplus\mathcal{H}_{\mathbb{R}})$) is a normal Hilbert $\Gamma_q(\mathcal{H}_{\mathbb{R}})$ bimodule.
\begin{definition}
We define the derivation $\delta_q: L^2(\mathcal{H}_{\mathbb{R}})\rightarrow L^2(\Gamma_q(\mathcal{H}_{\mathbb{R}}\oplus\mathcal{H}_{\mathbb{R}}))$, such that
\newline
$\delta_q(x(h))=0\oplus x(h)$, which satisfies the derivation property.
\end{definition}
By using the identification given by $X\mapsto X\Omega$ between $L^2(\Gamma_q(\mathcal{H}_{\mathbb{R}}))$ and $\mathcal{F}_q(\mathcal{H}_{\mathbb{C}})$,
one can compute explicitly the action of $\delta_q$ on $\mathcal{F}_q(\mathcal{H}_{\mathbb{C}})$
as :
\begin{equation}
    \delta_q(f_1\otimes\ldots\otimes f_n)=\sum_{k=1}^n(f_1\oplus 0)\otimes\ldots (f_{k-1}\oplus 0)\otimes (0\oplus f_k)\otimes(f_{k+1}\oplus 0)\otimes\ldots(f_n\oplus 0),
\end{equation}
Now the essential idea (which is always fulfilled in the free case and it is at the basis on free Malliavin calculus since this derivation is well defined even in the infinite dimensional case), is that under some restriction (depending on $q,n$ and never satisfied in a infinite dimensional setting for $q\neq 0$), is (up to invertibility) that one can see this derivation valued into a sub-bimodule of the coarse correspondence as formulated and proved in the following proposition of Dabrowski.
\begin{prop}(Dabrowski prop.30 in \cite{Dab14})
Let suppose that $q^2n<1$ and that $\Xi_q$ is invertible $\Gamma_{q}(\mathcal{H}_{\mathbb{R}})\bar{\otimes }\Gamma_q(\mathcal{H}_{\mathbb{R}})^{op}$, (the precise conditions can be found in the corollary 29 of \cite{Dab14}, e.g when $q\sqrt{n}<0.13$), then $\delta_q$ is a closable derivation with $\delta_q^*\delta_q=\Delta$ where $\Delta$ is the number operator and moreover $\delta_q$ is seen as valued into a sub-bimodule of the coarse correspondence $L^2(\Gamma_q(\mathcal{H}_{\mathbb{R}}))\bar{\otimes}L^2(\Gamma_q(\mathcal{H}_{\mathbb{R}})^{op})$.
\end{prop}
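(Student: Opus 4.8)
The plan is to verify the three assertions by working on the algebraic $q$-Fock space $\mathcal{F}_{alg}$ (equivalently, on the $*$-algebra of non-commutative polynomials in $x(e_1),\ldots,x(e_n)$), which is a common core for all the operators in play. The first, purely formal, step is to pin down the target bimodule. Embedding $\mathcal{H}_{\mathbb{R}}$ as $\mathcal{H}_{\mathbb{R}}\oplus 0$, the left and right multiplications by $\Gamma_q(\mathcal{H}_{\mathbb{R}}\oplus 0)$ turn $L^2(\Gamma_q(\mathcal{H}_{\mathbb{R}}\oplus\mathcal{H}_{\mathbb{R}}))$ into a normal Hilbert $\Gamma_q(\mathcal{H}_{\mathbb{R}})$-bimodule, and from the explicit formula $\delta_q(f_1\otimes\cdots\otimes f_n)=\sum_{k=1}^n(f_1\oplus 0)\otimes\cdots\otimes(0\oplus f_k)\otimes\cdots\otimes(f_n\oplus 0)$ one reads off that $\delta_q$ maps into the closed subspace $\mathcal{K}_q$ spanned by the simple tensors with exactly one factor from the second copy $0\oplus\mathcal{H}_{\mathbb{R}}$; since the left/right $q$-annihilations contract only against first-copy factors (as $\langle h\oplus 0,\,0\oplus g\rangle=0$), $\mathcal{K}_q$ is invariant under both actions, hence is a sub-bimodule. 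The Leibniz rule is built into the definition of $\delta_q$.

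I would then establish closability together with the identity $\delta_q^{*}\delta_q=\Delta$. One computes $\delta_q^{*}$ directly on $\mathcal{K}_q$: for a simple tensor $\eta\in\mathcal{K}_q$, expand $\langle\delta_q\xi,\eta\rangle_q$ in the $q$-inner product and move the creation/annihilation pieces onto $\xi$ using the relations $[l(h),r(g)]=0$ and $[l^{*}(h),r(g)]=\langle g,h\rangle\,\Xi_q$ recalled from \cite{S03}; because $q^2n<1$ forces $\Xi_q$ to be Hilbert--Schmidt, in particular bounded, this produces an everywhere-defined $\delta_q^{*}$ on $\mathcal{K}_q$, so $\delta_q$ is closable and we keep the name for its closure. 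Composing the two maps and using the $q$-relations, a direct computation shows that $\delta_q^{*}\delta_q$ acts as multiplication by $n$ on $\mathcal{H}_{\mathbb{C}}^{\otimes n}$ for every $n$, that is, $\delta_q^{*}\delta_q=\Delta$ is the number operator; this is the $q$-analogue of the familiar factorization of the Ornstein--Uhlenbeck generator through the free difference quotient, and one can sanity-check the quadratic form on low-degree Wick polynomials by hand.

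It remains to realize $\delta_q$ as valued in a sub-bimodule of the coarse correspondence. Using the Fock-space structure I would build a $\Gamma_q(\mathcal{H}_{\mathbb{R}})$-bimodule morphism from $\mathcal{K}_q$ into a finite direct sum of copies of the coarse correspondence $L^2(\Gamma_q(\mathcal{H}_{\mathbb{R}}))\bar{\otimes}L^2(\Gamma_q(\mathcal{H}_{\mathbb{R}}))^{op}$, indexed by the ``middle'' (second-copy) vector; since the coarse correspondence has infinite multiplicity it absorbs finite direct sums and is isomorphic to any finite number of copies of itself, so this target is again a sub-bimodule of the coarse correspondence. Comparing the $q$-inner product on $\mathcal{K}_q$ with the inner product pulled back along this morphism, the two differ exactly by the positive operator $\Xi_q$; hence, when $\Xi_q$ is invertible in $\Gamma_q(\mathcal{H}_{\mathbb{R}})\bar{\otimes}\Gamma_q(\mathcal{H}_{\mathbb{R}})^{op}$, the two norms are equivalent and the morphism extends to a bounded bimodule isomorphism onto a closed sub-bimodule of the coarse correspondence. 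Transporting $\delta_q$ through it exhibits $\delta_q$ as a closable real derivation valued in a sub-bimodule of the coarse correspondence, with $\delta_q^{*}\delta_q=\Delta$ unchanged.

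The main obstacle is the bookkeeping in the second step: tracking the $q$-weights and the $\Xi_q$-factors generated when one commutes $q$-annihilations past right $q$-creations, and checking that after this the contributions collapse to $n\cdot\mathrm{id}$ on each homogeneous component rather than to an $\Xi_q$-twisted expression --- in other words, making sure that the twist by $\Xi_q$ affects only the inner product on $\mathcal{K}_q$ (hence the embedding into the coarse correspondence) and not the operator $\delta_q^{*}\delta_q$ itself. Relatedly, one should keep the roles of the two hypotheses separate: $q^2n<1$ (so that $\Xi_q$ is Hilbert--Schmidt, yielding boundedness, closability and finiteness) is what makes $\delta_q$ a well-behaved closable derivation, while invertibility of $\Xi_q$ enters only in identifying its target with a sub-bimodule of the coarse correspondence.
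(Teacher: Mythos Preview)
The paper does not supply a proof of this proposition: it is quoted verbatim as Proposition~30 of Dabrowski~\cite{Dab14} and then used as a black box to build the free Stein kernel in the following theorem. So there is no ``paper's own proof'' to compare your proposal against; your sketch is effectively an attempt to reconstruct Dabrowski's argument rather than the present author's.

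On its own merits, your outline is structured along the right lines --- isolate the one-second-copy sub-bimodule $\mathcal{K}_q$, compute $\delta_q^{*}\delta_q$ on homogeneous tensors to recover the number operator, and then use invertibility of $\Xi_q$ to intertwine $\mathcal{K}_q$ with a direct sum of coarse bimodules. One point to be careful about: your closability argument invokes the commutation relation $[l^{*}(h),r(g)]=\langle g,h\rangle\,\Xi_q$ from \cite{S03}, but that identity lives on $\mathcal{F}_q(\mathcal{H}_{\mathbb{C}})$, not on the doubled Fock space $\mathcal{F}_q(\mathcal{H}_{\mathbb{C}}\oplus\mathcal{H}_{\mathbb{C}})$ where $\delta_q$ takes its values, so it does not by itself yield a formula for $\delta_q^{*}$ on $\mathcal{K}_q$. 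In Dabrowski's treatment the closability and the identity $\delta_q^{*}\delta_q=\Delta$ are obtained by a direct Fock-space computation (essentially checking that $\delta_q^{*}$ sends the $k$-th summand of $\delta_q(f_1\otimes\cdots\otimes f_n)$ back to $f_1\otimes\cdots\otimes f_n$, so that the $n$ summands add up to $n\cdot\mathrm{id}$), and the Hilbert--Schmidt condition $q^2n<1$ is used only afterwards, when identifying $\Xi_q$ as an element of $L^2(W^*(X)\bar\otimes W^*(X)^{op})$. Your separation of roles in the final paragraph is correct in spirit, but the mechanism you describe for closability is not quite the one that works.
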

We are now in position to construct a free Stein kernel with respect to the semicircular potential and not only for q-semicircular systems, that is we can consider non commutative polynomials in the $q$-semicirculars operators.
\begin{flushleft}
We also denote $\Delta^{-1}$ the pseudo-inverse of the number operator (as the number operator is self-adjoint with spectrum, $sp(\Delta)=\mathbb{N}$) which acts on centered random variable in $L^2_0(\Gamma_q(\mathcal{H}_{\mathcal{R}}))$, as $\Delta\Delta^{-1}F=F-\tau(F)$ (and the two operators commute).
\end{flushleft}
We can state now our main construction and we assume the previous identifications
\begin{theorem}
Let $F_1,\ldots,F_p$ self-adjoint elements in $\bigoplus_{k=0}^d Ker(\Delta-kId)$ with "$d$" a positive bounded integer, then :
\begin{equation}
A=\left(\delta_q(\Delta^{-1}F_i)\sharp (\delta_q(F_j))^*\right)_{i,j=1}^p,
\end{equation}
is a free Stein kernel with respect to the semicircular potential.
\end{theorem}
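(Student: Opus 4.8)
The plan is to verify directly the defining identity of a free Stein kernel with respect to the semicircular potential $V_{I_p}$, namely that for every $P = (P_1,\ldots,P_p) \in \mathds{P}^p$ one has
\begin{equation}
\langle [DV_{I_p}](F), P(F)\rangle_2 = \langle A, [\mathcal{J}P](F)\rangle_{HS},\nonumber
\end{equation}
which here amounts to $\langle F_i, P_i(F)\rangle_2 = \sum_j \langle A_{i,j}, [\partial_j P_i](F)\rangle_{\tau\otimes\tau}$ for each $i$, since $[D_iV_{I_p}](t) = t_i$. The starting point is the identity $\delta_q^*\delta_q = \Delta$ from Dabrowski's proposition quoted above, together with the fact that on the finite span $\bigoplus_{k=0}^d \mathrm{Ker}(\Delta - kId)$ the pseudo-inverse $\Delta^{-1}$ is bounded and satisfies $\Delta\Delta^{-1}G = G - \tau(G)$; since the $F_i$ are centered (as elements of positive-order eigenspaces — or one reduces to that case), $\Delta\Delta^{-1}F_i = F_i$. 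Writing $F_i = \delta_q^*\delta_q(\Delta^{-1}F_i)$ and using the adjunction relation gives
\begin{equation}
\langle F_i, P_i(F)\rangle_2 = \langle \delta_q^*\delta_q(\Delta^{-1}F_i), P_i(F)\rangle_2 = \langle \delta_q(\Delta^{-1}F_i), \delta_q(P_i(F))\rangle,\nonumber
\end{equation}
the inner product on the right being taken in the appropriate Hilbert bimodule (a sub-bimodule of the coarse correspondence $L^2(\Gamma_q)\bar\otimes L^2(\Gamma_q)^{op}$, by the cited proposition), which is why valuedness in the coarse correspondence is the crucial structural hypothesis.

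The second step is a chain rule for $\delta_q$: because $\delta_q$ is a derivation with $\delta_q(x(h)) = 0\oplus x(h)$ and the $F_j$ generate $\Gamma_q(\mathcal{H}_{\mathbb R})$, one gets for any non-commutative polynomial $P_i$
\begin{equation}
\delta_q(P_i(F)) = \sum_{j=1}^p (\partial_j P_i)(F)\,\sharp\,\delta_q(F_j),\nonumber
\end{equation}
exactly as in Proposition \ref{pp5} for the free Malliavin derivative; this is the statement that $\delta_q$, up to the identification $X\mapsto X\Omega$, realizes the free difference quotient structure, and it follows by checking on monomials and extending linearly. Substituting and using the $\sharp$-module structure of the inner product (the relation $\langle A, B\sharp C\rangle = \langle C^*\sharp A, B\rangle$ type identity, valid because $\delta_q(F_j)$ lives in the coarse bimodule and the $F_j$ are self-adjoint), one rewrites
\begin{equation}
\langle \delta_q(\Delta^{-1}F_i), \sum_j (\partial_j P_i)(F)\sharp\delta_q(F_j)\rangle = \sum_j \langle \delta_q(\Delta^{-1}F_i)\sharp(\delta_q(F_j))^*, (\partial_j P_i)(F)\rangle_{\tau\otimes\tau},\nonumber
\end{equation}
and the term in the left slot is precisely $A_{i,j}$. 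Assembling over $i$ and $j$ gives the matricial identity $\langle [DV_{I_p}](F), P(F)\rangle_2 = \langle A, [\mathcal{J}P](F)\rangle_{HS}$, i.e. $A$ is the desired free Stein kernel; one should also note $A \in M_p(\Gamma_q^{(2)})$ with bounded support because each $F_i$, being a finite sum of eigenvectors of $\Delta$, lies in a finite $q$-chaos, and $\delta_q$ maps such chaoses boundedly (cf. the boundedness of $\nabla$ on $\mathcal{P}_n$).

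The main obstacle is the careful handling of the "coarse correspondence" bookkeeping: one must justify that $\delta_q(G)$, a priori valued in $L^2(\Gamma_q(\mathcal{H}_{\mathbb R}\oplus\mathcal{H}_{\mathbb R}))$, may legitimately be regarded as an element of $L^2(\Gamma_q(\mathcal{H}_{\mathbb R}))\bar\otimes L^2(\Gamma_q(\mathcal{H}_{\mathbb R}))^{op}$ so that the $\sharp$-action, the flip, and the adjoint $(\,\cdot\,)^*$ on bi-modules are all defined and compatible — this is exactly where Dabrowski's invertibility-of-$\Xi_q$ condition ($q^2 n < 1$, or the sharper $q\sqrt n < 0.13$) is needed, and where the identification $a\otimes b^{op}\mapsto \langle\,\cdot\,,b^*\Omega\rangle a\Omega$ from the $q$-semicircular section must be invoked. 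A secondary subtlety is verifying the self-adjointness bookkeeping $(\delta_q(G))^* = $ (flip of) $\delta_q(G)$ for self-adjoint $G$, the analogue of the relation $(\tau\otimes id(\nabla_t G))^* = id\otimes\tau(\nabla_t G)$ used earlier; once these identifications are in place, the computation is a direct transcription of the Wigner-chaos argument with $\nabla$ replaced by $\delta_q$ and the Skorohod integral replaced by $\delta_q^*$.
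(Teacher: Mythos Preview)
Your proposal is correct and follows essentially the same route as the paper. The paper actually defers this proof to the abstract setting of Section~\ref{10} (Theorem~\ref{th13}), where it is written in terms of the carr\'e du champ $\Gamma(\Delta^{-1}F_i,F_j)$ and runs the computation in the reverse direction (starting from $\langle A,[\mathcal{J}P](F)\rangle_{HS}$ and collapsing via the chain rule for $\Gamma$ to $\langle F,P(F)\rangle_2$), but the logical content---$F_i=\delta_q^*\delta_q(\Delta^{-1}F_i)$, adjunction, and the derivation chain rule---is identical to yours; your explicit discussion of the coarse-correspondence bookkeeping and the role of Dabrowski's invertibility condition for $\Xi_q$ is in fact more thorough than the paper's abstract treatment.
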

\begin{proof}
As the proof is also a motivation for the more general abstract setting of the last part, we postpone the proof to the main result of section \ref{10} (see theorem \ref{th13}).
\end{proof}\qed
\begin{flushleft}
At the cost of much heavier computations, wee can also obtain similar estimates for {\it mixed q-Gaussian algebras} and {\it q-deformed Araki-Woods algebras}. We left the details to the reader.
\end{flushleft}
\section{Chaos of a quantum Markov operator}\label{10}
\begin{flushleft}
In this section, we will give a possible interpretation of structures which can exhibit a "{\it free} fourth moment phenomenon": The non commutative fourth moment diffusive structures. We will construct a free Stein kernel relative to the standard semicircular potential via the non commutative {\it carré du champ} and the pseudo-inverse of the $L^2$ generator, which will provide an free analog of the construction of Stein kernels on Markov chaoses by Ledoux, Nourdin and Peccati (see section 5.1 in \cite{LNP}). As the main goal of previous section was to construct a new free Stein kernel on the Wigner space which is such that the free Stein discrepancy is controlled by the {\it fourth free cumulants}, it might require much stronger assumptions on the derivation $\delta$ such as coassociativity of the directional derivatives to deal with the full classes of self-adjoint {\it chaotic} random variables.
\newline
In fact, one cannot hope that every quantum Markov semigroup (even with an appropriate notion of chaotic decomposition) will lead to a fourth moment phenomenon (towards the semicircular distribution). Indeed, one have to assume (and this is the main assumption) that the derivation is valued into (modulo the operator valued setting of free Malliavin calculus) into a direct sum of coarse bimodules, this property as we will in the sequel is essential, as the Schwinger-Dyson equation which characterize the semicircular distribution involves free difference quotient valued into this coarse correspondence). It is also well know that there exists uncountably many non isomorphic WOT separable $II_1$-factors, Mc-Duff \cite{MC} (the free groups factors and their deformations are thus a discrete part in the continuum set of $II_1$ factors) and no separable univerval one (containing a copy of all others, Ozawa \cite{NO}), in particular the semicircular distribution is the fundamental distribution under "freeness". More precisely, from various papers (see the first main contribution of Cipriani and Sauvageot \cite{CipS}) that the Laplacian, which the $L^2$ generator of the Dirichlet form associated ($1-1$ correspondence) to a quantum Markov semigroup, supposed to be completely Markovian in a tracial non commutative probability spaces, can be written as the square of a derivation valued in some Hilbert $\mathcal{M}$-bimodule $\mathcal{H}$, that is a Hilbert space $\mathcal{H}$ equipped with the two commuting actions, $\pi: \mathcal{M}\rightarrow \mathcal{B}(\mathcal{H})$ and $\pi^{op}:\mathcal{M}^{op}\rightarrow \mathcal{B}(\mathcal{H})$ denoted left and right respectively, and we will denote $x\varepsilon y$ the vector $\pi(x)\pi^{op}(y)\varepsilon$.

\end{flushleft}
\subsection{The abstract setting of diffusive non commutative fourth moment structures}
Let's suppose that $\mathcal{M}$ is a finite von Neumann algebra, equipped with a faithful normal tracial state $\tau$. Let also assume that in the following $\mathcal{H}$ is the complexification of some real separable Hilbert space  $\mathcal{K}$. 
\begin{flushleft}
Let $\delta: D(\delta)\rightarrow L^2(\mathcal{M})\otimes \mathcal{H}\otimes L^2(\mathcal{M}^{op})$ be a real closable derivation, the bimodule structure being given by the usual multiplication on the left leg and right multiplication on the right leg, which satisfies the "{\it real}" property: $\langle\delta(x),y\delta(z)\rangle=\langle \delta(z^*)y^*,\delta(x^*)\rangle$. We will also assume that $D(\delta)$ is weakly dense $*$-subalgebra of $\mathcal{M}$.
\newline
We can assume without for sake of simplicity, respectively that $\mathcal{K}=L^2_{\mathbb{R}}(\mathbb{R_+})$ or $\ell^2(\mathbb{N})$ in the infinite dimensional case, and in the finite dimensional case $\mathcal{K}=\mathbb{R}^n$, and see respectively $\delta$ as follows:
\begin{eqnarray}
\delta: D(\delta)\rightarrow L^2(\mathbb{R}_+,L^2(\mathcal{M})\bar{\otimes} L^2(\mathcal{M}^{op}))\nonumber\\
x\mapsto \delta(x)=(\delta_t (x))_{t\geq 0},
\end{eqnarray}
which is seen as a biprocess $\left\{\delta_t(x), t\geq 0\right\}\in \mathcal{B}_2$, (valued for almost all $t\geq 0$ into the coarse correspondence).
\newline
Or, in the second case, when $\mathcal{K}=\ell^2(\mathbb{N})$, 
\begin{eqnarray}
\delta:D(\delta)\rightarrow (L^2(\mathcal{M})\bar{\otimes}L^2(\mathcal{M}^{op}))^{\oplus\infty},
\end{eqnarray}

and in the finite dimensional case:

\begin{eqnarray}
\delta:D(\delta)\rightarrow (L^2(\mathcal{M})\bar{\otimes}L^2(\mathcal{M}^{op}))^{\oplus N},
\end{eqnarray}
\end{flushleft}

\begin{remark}
Note that we adopt the standards notations of Kemp and al. \cite{KNPS}, this means that the involution is given by $(A\otimes B)^*=A^*\otimes B^*$, contrary to the usual conventions in \cite{BS}. The real assumptions is an important fact in the infinite dimensional setting as only the small subspace of \it{fully-symmetrics} multiple Wigner integrals verify this condition and are these ones one can control the discrepancy constructed via the usual integration by parts involving the inverse $\Delta^{-1}$ in terms of fourth free cumulants. In the finite dimensional case, the derivative "\it{looks like}" a free difference quotient which is always a real derivation.
\end{remark}
\begin{flushleft}
We let now $\bar{\delta}$ the $L^2$ extension of the operator $\delta$ and we will omit to denote it when the context is clear. \end{flushleft}
We denote now $\Delta=\delta^*\bar{\delta}$, the following operator which is the associated generator of a completely Dirichlet form: $\mathcal{E}(x)=\langle \delta(x),\delta(x)\rangle$, which means that $\Delta\otimes I_n$ is also the generator of a Dirichlet form on $M_n(\mathcal{M})$.
\begin{flushleft}
We denote then $\phi_t:=e^{-t\Delta}$ the corresponding semigroup of contractions on $L^2(\mathcal{M})$ generated by $-\Delta$, which is tracially symmetric ($\tau$ symmetric) and trace preserving.
\end{flushleft}
\begin{remark}
In the following, we denote (especially to deal with $\mathcal{K}=L^2_{\mathbb{R}}(\mathbb{R}_+))$ for $u=g.a\otimes b$, $a,b,c,d\in L^2(\mathcal{M})$ and $g,h\in \mathcal{H}$, the linear extension of the pairing:
\begin{equation}
    \langle u,h\rangle_{\mathcal{H}}=a\otimes b.\langle g,h\rangle_{\mathcal{H}}
\end{equation}
\end{remark}
\begin{flushleft}
Consider know the associated operator (in the infinite dimensional setting) "\it{carré du champ}", for $x,y \in dom(\delta)$ :
\begin{equation}
    \Gamma(x,y)=\langle \delta(x),\delta(y)\rangle_{\mathcal{H}},
\end{equation}
and in the finite dimensional case, where $\delta=(\delta_1,\ldots,\delta_n)$:
\begin{equation}
    \Gamma(x,y)=\sum_{k=1}^n\delta_i(x)\sharp(\delta_i(y))^*,
\end{equation}
Which is such that for $x,y$ self-adjoints, one has:
\begin{equation}
    \tau\otimes\tau(\Gamma(x,y))=\tau(x\Delta y)=\langle \delta(x),\delta(y)\rangle,
\end{equation}
if we denote also the linear extension of the pairing $\langle  A\otimes h\otimes B,C\otimes \otimes g\otimes D\rangle_{\mathcal{H}}$  for $A,B,C,D\in L^2(\mathcal{M})$, and $g,h\mathcal{H}$. We can also express:
\begin{equation}
    \tau\otimes\tau(\langle \delta(f),\delta(g)\rangle_{\mathcal{H}})=\langle\delta(f),\delta(g)\rangle,
\end{equation}

\begin{flushleft}
Consider for $F=(F_1,\ldots,F_n)$ self-adjoint, the associated vector $\Delta^{-1}(F)=(\Delta^{-1}F_1,\ldots,\Delta^{-1}F_n$) where $\Delta^{-1}$ is the pseudo-inverse of $\Delta$ (since it is self-adjoint and positive) which acts centered non commutative random variables through the following relation between these two operators, that is for $F\in L^2(\mathcal{M})$:
\begin{equation}
    \Delta\Delta^{-1}(F)=F-\tau(F)
\end{equation}
\end{flushleft}
As we supposed that $\delta$ is a derivation, we are in a diffusive context, $\Gamma$ satisfies a chain rule, for $X=x_1,\ldots,x_n\in dom(\delta)$ and $Y=y_1,\ldots,y_n\in dom(\delta)$ and $P,Q\in \mathbb{P}$, such that $P(x_1,\ldots,x_n), Q(y_1,\ldots,y_n)\in dom(\delta)$, and:
\begin{equation}
    \Gamma(P(x_1,\ldots,x_n),Q(y_1,\ldots,y_n))=\sum_{i,j=1}^n\partial_i{P(X)}\sharp\Gamma(x_i,y_j)\sharp(\partial_j{Q}(Y))^*
\end{equation}
\begin{definition}
A non-commutative fourth moment diffusive structures will be a triple $(\mathcal{M},\delta,\Delta)$ with $\mathcal{M}$ a finite von Neumann algebra, $\delta$ a derivation with the properties mentioned before, and $\Delta$ the associated generator with the additional property:
\begin{enumerate}
\item $\Delta$ has a pure point spectrum (and positive by assumptions since it is selfadjoint), that is there exists a increasing sequence $(\lambda_k)_{k\geq 0}$ with $\lambda_1<\lambda_2<...$, (for sake of simplicity we can assume that $sp(\Delta)=\N$).
\newline
This implies that $\Delta$ diagonalize $L^2(\mathcal{M})$:
\begin{equation}
        L^2(\mathcal{M})=\bigoplus_{k=0}^{+\infty}Ker(\Delta-\lambda_kId)
\end{equation}
\end{enumerate}
\end{definition}
\begin{theorem}
Let's $F=(F_1,\ldots,F_n)\in dom(\delta)$ be a n-tuple of centered self-adjoint non-commutative random variables. Then, $\left(\Gamma(\Delta^{-1}F_i,F_j)\right)_{i,j=1}^n$ is a free Stein kernel with respect to the standard semicircular potential :
\end{theorem}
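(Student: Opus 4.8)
The plan is to verify directly the defining identity of a free Stein kernel for the standard semicircular potential $V_{I_n}=\tfrac12\sum_{i=1}^n t_i^2$. Since its cyclic derivative is $[D_iV_{I_n}]=t_i$, so that $[DV_{I_n}](F)=F$, and with the conventions $[\mathcal J P]_{i,j}=\partial_j P_i$ and $\langle A,B\rangle_{HS}=\sum_{i,j}\langle A_{i,j},B_{i,j}\rangle_{\tau\otimes\tau}$, what must be shown is that $A=(A_{i,j})_{i,j=1}^n$ with $A_{i,j}=\Gamma(\Delta^{-1}F_i,F_j)=\langle\delta(\Delta^{-1}F_i),\delta(F_j)\rangle_{\mathcal H}$ satisfies
\begin{equation*}
\langle F,P(F)\rangle_\tau=\sum_{i,j=1}^n\big\langle \Gamma(\Delta^{-1}F_i,F_j),\,\partial_j P_i(F)\big\rangle_{\tau\otimes\tau}\qquad\text{for all }P=(P_1,\dots,P_n)\in\mathds{P}^n .
\end{equation*}
It suffices to prove the $i$-th component identity and then sum over $i$.

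For fixed $i$, write $\langle F_i,P_i(F)\rangle_\tau=\tau(F_iP_i(F))$ using self-adjointness of $F_i$ and traciality of $\tau$. Since $\tau(F_i)=0$ the relation $\Delta\Delta^{-1}(F)=F-\tau(F)$ gives $F_i=\Delta\Delta^{-1}F_i$, and using $\Delta=\delta^*\bar\delta$,
\begin{equation*}
\langle F_i,P_i(F)\rangle_\tau=\big\langle \Delta\Delta^{-1}F_i,\,P_i(F)\big\rangle_\tau=\big\langle \delta(\Delta^{-1}F_i),\,\delta(P_i(F))\big\rangle .
\end{equation*}
Two domain facts are needed: $P_i(F)\in\mathrm{dom}(\delta)$, because $\mathrm{dom}(\delta)$ is a $*$-subalgebra of $\mathcal M$ containing $F_1,\dots,F_n$; and $\Delta^{-1}F_i\in\mathrm{dom}(\bar\delta)$ with $\bar\delta(\Delta^{-1}F_i)\in\mathrm{dom}(\delta^*)$, which follows from the pure-point-spectrum assumption since $\|\bar\delta(\Delta^{-1}F_i)\|_2^2=\langle F_i,\Delta^{-1}F_i\rangle\le\lambda_1^{-1}\|F_i\|_2^2<\infty$, where $\lambda_1$ is the smallest nonzero eigenvalue of $\Delta$. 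Because $\delta$ is a derivation we are in the diffusive regime and the chain rule $\delta(P_i(F))=\sum_{j=1}^n\partial_j P_i(F)\,\sharp\,\delta(F_j)$ holds, obtained by induction on the degree of monomials from the Leibniz rule together with the co-Leibniz rule for $\partial_j$; this uses the absence of algebraic relations among $F_1,\dots,F_n$, which is automatic in the situations to which the theorem is applied (finite free Fisher information, bounded chaos order). Substituting gives $\langle F_i,P_i(F)\rangle_\tau=\sum_j\big\langle \delta(\Delta^{-1}F_i),\,\partial_j P_i(F)\,\sharp\,\delta(F_j)\big\rangle$.

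The crux is then the algebraic lemma: for $x,y\in\mathrm{dom}(\delta)$ and $u\in\mathcal M\bar\otimes\mathcal M^{op}$,
\begin{equation*}
\big\langle \delta(x),\,u\,\sharp\,\delta(y)\big\rangle=\big\langle \langle\delta(x),\delta(y)\rangle_{\mathcal H},\,u\big\rangle_{\tau\otimes\tau}=\big\langle \Gamma(x,y),u\big\rangle_{\tau\otimes\tau}.
\end{equation*}
I would prove this by contracting the $\mathcal H$-leg first and then checking the remaining identity on elementary tensors $\delta(x)=a\otimes h\otimes b$, $\delta(y)=c\otimes g\otimes d$, $u=p\otimes q$: unwinding the left and right module actions and the inner product on $L^2(\mathcal M^{op})$ gives $\langle h,g\rangle_{\mathcal H}\,\tau(a^*pc)\,\tau(dqb^*)$ for the left-hand side, whereas $\Gamma(x,y)=\langle h,g\rangle_{\mathcal H}\,(ac^*)\otimes(d^*b)$ yields the same value for the right-hand side after two applications of traciality; linearity and density finish it. Applying this with $x=\Delta^{-1}F_i$, $y=F_j$, $u=\partial_j P_i(F)$ and summing over $i,j$ gives exactly the displayed identity, so $A$ is a free Stein kernel for $V_{I_n}$. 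This is precisely the point that forces the standing hypothesis that $\delta$ be valued in a direct sum of coarse bimodules $L^2(\mathcal M)\bar\otimes L^2(\mathcal M^{op})$: it is what allows the partial pairing $\Gamma(x,y)$ to lie in $\mathcal M\bar\otimes\mathcal M^{op}$ and what makes the adjoint/flip bookkeeping in the lemma close up, in parallel with the fact that the Schwinger--Dyson equation characterising a semicircular family is phrased through free difference quotients valued in this coarse correspondence.

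The step I expect to be the genuine obstacle is not this algebra but the $L^2$-integrability required for $A$ to literally be a free Stein kernel: the partial contraction $\langle\delta(\Delta^{-1}F_i),\delta(F_j)\rangle_{\mathcal H}$ of two square-integrable biprocesses is a priori only of $L^1$-type. In the cases where the theorem is used --- tuples of multiple Wigner integrals, or $q$-semicircular chaos of bounded order --- this is supplied by Haagerup-type inequalities that make the legs of $\delta(F_j)$ bounded operators, so that $A\in L^2(M_n(\mathcal M\bar\otimes\mathcal M^{op}))$; in the general statement one either adds the hypothesis that each $F_j$ lies in finitely many eigenspaces of $\Delta$, or reads the Stein-kernel identity in the weaker dual sense against polynomial test tuples, the algebraic verification above being unaffected. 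Finally, I would record that the same computation with $V_{I_n}$ replaced by a self-adjoint potential $V$ for which $\tau([DV](F))=0$ produces the free Stein kernel $\big(\Gamma(\Delta^{-1}[D_iV](F),F_j)\big)_{i,j=1}^n$ relative to $V$, exactly mirroring the Wigner-chaos construction.
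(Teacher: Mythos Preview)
Your proof is correct and follows essentially the same route as the paper: the paper's argument runs in the reverse direction, starting from $\langle\Gamma(\Delta^{-1}F,F),[\mathcal J P](F)\rangle_{HS}$, applying the chain rule for $\Gamma$ to collapse the sum over $j$ into $\langle\Gamma(\Delta^{-1}F_i,P_i(F)),1\otimes1\rangle$, and then using $\tau\otimes\tau(\Gamma(x,y))=\tau(x\Delta y)$ together with $\Delta\Delta^{-1}F_i=F_i$. Your ``algebraic lemma'' is exactly what underlies that chain-rule step, and your additional remarks on domain issues and the $L^2$-integrability of $A$ are points the paper leaves implicit; one minor over-caution is the appeal to absence of algebraic relations for the chain rule, which is not needed if $\partial_jP_i$ is read as the formal derivative evaluated at $F$.
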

\begin{proof}\label{th13}
Let's take $P=(P_1,\ldots,P_n)\in \mathbb{P}^n$ and compute:
\begin{eqnarray}
\langle\Gamma(\Delta^{-1}F,F),[JP](F)\rangle_{HS}&=&\sum_{i,j=1}^n\langle \Gamma(\Delta^{-1}F_i,F_j),[\partial_iP_j](F)\rangle_{L^2(\mathcal{M})\bar{\otimes} L^2(\mathcal{M}^{op})}\nonumber\\
&=&\sum_{i=1}^n\langle \Gamma(\Delta^{-1}F_i,P_i(F_1,\ldots,F_n)),(1\otimes 1)\rangle_{L^2(\mathcal{M})\bar{\otimes} L^2(\mathcal{M}^{op})}\nonumber\\
&=&\langle \Delta\Delta^{-1}F,P(F_1,\ldots,F_n)\rangle_2
\nonumber\\
&=&\langle F,P(F)\rangle_2\nonumber
\end{eqnarray}
\end{proof}
\begin{proposition}
Suppose now, that $F=(F_1,\ldots,F_n)\in  D(\delta)$ such that each $F_i$ is an eigenvalue of $\Delta$, that is it exists $\lambda_i>0$, such that $\Delta F_i=\lambda_iF_i$ and such that $\Gamma(F_i,F_j)$ belongs to $L^2(\mathcal{M}\otimes \mathcal{M}^{op})$. We then deduce that :
\begin{equation}
    \left(\Gamma(\frac{1}{\lambda_i}F_i,F_j)\right)_{i,j=1}^n
\end{equation} is a free Stein kernel with respect to the standard semicircular potential.
\end{proposition}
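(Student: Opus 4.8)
The plan is to deduce this as a direct specialization of the preceding theorem, the only genuine point being the explicit identification of $\Delta^{-1}F_i$. First I would observe that each $F_i$ is automatically centered. Since $\delta$ is a derivation one has $\delta(1)=\delta(1\cdot 1)=1\cdot\delta(1)+\delta(1)\cdot 1=2\delta(1)$, hence $\delta(1)=0$ and therefore $\Delta 1=\delta^*\bar\delta(1)=0$. Using that $\Delta$ is self-adjoint, that $F_i=F_i^*$ and that $\lambda_i$ is real,
\begin{equation*}
\lambda_i\,\tau(F_i)=\tau(\Delta F_i)=\langle 1,\Delta F_i\rangle_\tau=\langle \Delta 1,F_i\rangle_\tau=0,
\end{equation*}
so $\tau(F_i)=0$ because $\lambda_i>0$. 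Consequently the pseudo-inverse, which on centered elements is characterized by $\Delta\Delta^{-1}G=G-\tau(G)$, gives $\Delta\bigl(\tfrac{1}{\lambda_i}F_i\bigr)=F_i=F_i-\tau(F_i)$, whence $\Delta^{-1}F_i=\tfrac{1}{\lambda_i}F_i$; note also that $\tfrac{1}{\lambda_i}F_i\in D(\delta)$ since $F_i\in D(\delta)$ and $D(\delta)$ is a linear subspace, so $\Gamma(\tfrac{1}{\lambda_i}F_i,F_j)$ is well defined.

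Next I would use the linearity of $\Gamma$ in its first argument — which holds since $\Gamma(x,y)=\langle\delta(x),\delta(y)\rangle_{\mathcal{H}}$ (respectively $\sum_k\delta_k(x)\sharp(\delta_k(y))^*$ in the finite-dimensional case) is $\mathbb{C}$-linear in $x$ — to write entrywise
\begin{equation*}
\Gamma(\Delta^{-1}F_i,F_j)=\Gamma\bigl(\tfrac{1}{\lambda_i}F_i,F_j\bigr)=\tfrac{1}{\lambda_i}\Gamma(F_i,F_j).
\end{equation*}
Thus the matrix $\bigl(\Gamma(\tfrac{1}{\lambda_i}F_i,F_j)\bigr)_{i,j=1}^n$ coincides with $\bigl(\Gamma(\Delta^{-1}F_i,F_j)\bigr)_{i,j=1}^n$. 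The hypothesis that each $\Gamma(F_i,F_j)$ belongs to $L^2(\mathcal{M}\bar{\otimes}\mathcal{M}^{op})$ ensures that this matrix is an honest element of $L^2(M_n(\mathcal{M}\bar{\otimes}\mathcal{M}^{op}),(\tau\otimes\tau)\circ Tr)$, so it is eligible to be a free Stein kernel. Applying the preceding theorem — with $\Delta^{-1}F_i$ now replaced by its explicit value $\tfrac{1}{\lambda_i}F_i$ — yields, for every $P=(P_1,\dots,P_n)\in\mathds{P}^n$, the identity
\begin{equation*}
\langle F,P(F)\rangle_2=\bigl\langle \bigl(\Gamma(\tfrac{1}{\lambda_i}F_i,F_j)\bigr)_{i,j=1}^n,[\mathcal{J}P](F)\bigr\rangle_{HS},
\end{equation*}
which is precisely the defining property of a free Stein kernel relative to the standard semicircular potential $V_{I_n}$.

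Since the whole argument is a reduction to the previous theorem, I do not expect any serious obstacle. The only step requiring a bit of care is the identification $\Delta^{-1}F_i=\tfrac{1}{\lambda_i}F_i$: this rests on knowing that $F_i$ is centered (which I obtained above from $\Delta 1=0$ and $\lambda_i>0$) and on the fact that the pseudo-inverse is uniquely determined on the orthogonal complement of $\ker\Delta$, so that the relation $\Delta\bigl(\tfrac{1}{\lambda_i}F_i\bigr)=F_i-\tau(F_i)$ genuinely pins down $\Delta^{-1}F_i$. Everything else — bilinearity of $\Gamma$, membership in the Hilbert–Schmidt space, stability of $D(\delta)$ under scalar multiplication — is routine.
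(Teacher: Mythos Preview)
Your proposal is correct and follows exactly the intended route: the paper states this proposition as an immediate specialization of the preceding theorem (without giving a separate proof), and your argument correctly supplies the one genuine detail, namely that $\Delta^{-1}F_i=\tfrac{1}{\lambda_i}F_i$ once you have checked $\tau(F_i)=0$. Your derivation of centeredness from $\Delta 1=0$ and self-adjointness of $\Delta$ is a clean justification that the paper leaves implicit.
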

As, on the Wigner space, we can also compute explicitly a free Stein kernel with respect to every free Gibbs state (the proof is a straightforward modification if the precious theorem).
\begin{theorem}
Let's $F_1,\ldots,F_n \in \bigoplus_{k=0}^d Ker(\Delta-\lambda_kId)$ self-adjoint with "$d$" a bounded integer, and $V$ a polynomial potential or a formal power series. Assume that $\tau([DV](F))=(0,\ldots,0)$, then:
\begin{equation}
    \left(\Gamma(\Delta^{-1}([D_iV](F)),F_j)\right)_{i,j=1}^n
\end{equation}
is a free Stein kernel with respect to the potential $V$.
\end{theorem}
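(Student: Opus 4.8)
The plan is to verify directly the defining identity of a free Stein kernel (Definition~1): one must show that for every $P=(P_1,\ldots,P_n)\in\mathds{P}^n$,
\begin{equation*}
\langle [DV](F),P(F)\rangle_2=\big\langle A,[\mathcal{J}P](F)\big\rangle_{HS},\qquad A=\big(\Gamma(\Delta^{-1}([D_iV](F)),F_j)\big)_{i,j=1}^n .
\end{equation*}
This is the common generalization of Theorem~\ref{th13} and of the $\Gamma_V$-construction on the Wigner space, and the argument is the same: in the ``source'' slot of the carré du champ one replaces the tuple $F$ by $[DV](F)=([D_1V](F),\ldots,[D_nV](F))$ composed with the pseudo-inverse $\Delta^{-1}$, then runs the chain of equalities below. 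The algebra at the end is literally the proof of Theorem~\ref{th13}; essentially all of the actual work is in checking that every object is well posed.

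First I would record the preliminaries. Since $D(\delta)$ is a weakly dense $*$-subalgebra of $\mathcal M$ containing the $F_i$, it contains every polynomial in $F_1,\ldots,F_n$, so $[D_iV](F)\in D(\delta)\subseteq L^2(\mathcal M)$; in the power-series case $V\in\mathds{P}^{(R)}$, $R>\lVert F\rVert$, one first approximates $V$ by polynomials in the norm $\lVert\cdot\rVert_R$ and passes to the limit, as allowed by the earlier remark on formal power series. The hypothesis $\tau([DV](F))=(0,\ldots,0)$ puts each $[D_iV](F)$ in $L^2_0(\mathcal M)=Ker(\Delta)^{\perp}$; since $sp(\Delta)=\mathbb N$ makes $0$ an isolated point of the spectrum, $\Delta^{-1}$ is bounded there, maps $[D_iV](F)$ into $dom(\Delta)\subseteq dom(\bar\delta)$, and satisfies $\Delta\Delta^{-1}([D_iV](F))=[D_iV](F)$. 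Each $F_j$, being a finite sum of eigenvectors of $\Delta$, lies in $dom(\Delta)\subseteq dom(\bar\delta)$ as well, so $\Gamma(\Delta^{-1}([D_iV](F)),F_j)=\langle\delta(\Delta^{-1}([D_iV](F))),\delta(F_j)\rangle_{\mathcal H}$ is a well-defined bilinear form; that it actually defines an element of $L^2(\mathcal M)\bar\otimes L^2(\mathcal M^{op})$ (so $A\in M_n(\mathcal M\bar\otimes\mathcal M^{op})$ after the usual identifications) is handled as in the preceding Proposition, i.e.\ via an a priori $\Gamma(F_i,F_j)\in L^2$ estimate coming from the diffusive-chaos structure (automatic on the Wigner and $q$-Gaussian chaoses by Haagerup-type bounds).

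Then I would run the computation. Fixing $P$ and using the derivation property of $\delta$, equivalently the chain rule for $\Gamma$, in the form $\langle\Gamma(G,P_i(F)),1\otimes1\rangle=\sum_{j=1}^n\langle\Gamma(G,F_j),[\partial_jP_i](F)\rangle$ for $G\in dom(\bar\delta)$, together with $\tau\otimes\tau(\Gamma(G,H))=\langle G,\Delta H\rangle_2$ for self-adjoint $G$, one gets
\begin{align*}
\big\langle A,[\mathcal{J}P](F)\big\rangle_{HS}
&=\sum_{i,j=1}^n\big\langle\Gamma(\Delta^{-1}([D_iV](F)),F_j),[\partial_jP_i](F)\big\rangle\\
&=\sum_{i=1}^n\big\langle\Gamma(\Delta^{-1}([D_iV](F)),P_i(F)),1\otimes1\big\rangle\\
&=\sum_{i=1}^n\big\langle\Delta\Delta^{-1}([D_iV](F)),P_i(F)\big\rangle_2\\
&=\sum_{i=1}^n\langle[D_iV](F),P_i(F)\rangle_2=\langle[DV](F),P(F)\rangle_2,
\end{align*}
which is exactly the Stein-kernel identity relative to $V$; the self-adjointness of $V$ and of the $F_i$ then gives that $A$ is the required self-adjoint element of $L^2(M_n(\mathcal M\bar\otimes\mathcal M^{op}),(\tau\otimes\tau)\circ Tr)$.

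The main obstacle, such as it is, lies not in this display (which is the proof of Theorem~\ref{th13} verbatim) but in the two well-posedness points: (i) making sense of $\Delta^{-1}$ applied to $[D_iV](F)$, resolved by $sp(\Delta)=\mathbb N$ together with the centering hypothesis $\tau([DV](F))=(0,\ldots,0)$ — which is also precisely what makes the last equality close — and (ii) ensuring that $\Gamma(\Delta^{-1}([D_iV](F)),F_j)$ is a genuine $L^2$-element rather than merely a form, which in the abstract setting one either imposes through the axioms of a non-commutative fourth-moment diffusive structure or, as in the Proposition above, assumes outright. In the formal-power-series case there is, in addition, the routine task of interchanging the polynomial approximation of $V$ with $\delta$, $\Delta^{-1}$ and the inner products, which follows from continuity of all these maps on the completion $\mathds{P}^{(R)}$.
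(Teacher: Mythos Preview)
Your proof is correct and follows exactly the approach the paper intends: the paper itself gives no separate argument for this statement, noting only that it is ``a straightforward modification of the previous theorem,'' and your computation is precisely that modification of the proof of Theorem~\ref{th13} with $[D_iV](F)$ in place of $F_i$. The additional care you take with well-posedness (the role of $\tau([DV](F))=0$ for $\Delta^{-1}$, the $L^2$-membership of $\Gamma$, and the power-series approximation) is more than the paper provides and is entirely appropriate.
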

It implies that one can also recover the existence of free Stein kernels with respect to every potential on these structures and provide analogues of Fathi, Cébron and Mai results \cite{FCM} about existence.
\begin{flushleft}
This is in fact the starting point of the famous and recent investigations of Ledoux in the striking contribution \cite{ML} which proved the "fourth moment theorem" for some eigenfunctions of a Markov operator (the notion of chaotic random variables in this context will be a weaker one compared to the original definition in his paper, and is very similar to the one used by Azmoodeh, Campese and Poly in the paper \cite{ACP}). That is, under suitable assumptions over the generator $\Delta$, and over the eigenfunctions which are supposed to be "chaotic", one can obtain a quantitative "fourth moment theorem" for theses eigenfunctions. More interestingly, the product formula on the Wiener space, which is a crucial tool in  original proof of the quantitative "fourth moment theorem" by Nourdin and Peccati (see e.g \cite{NP-book} section 5.5.2) can be subtly avoided (and is no longer necessary as soon as the square of this eigenfunction (of order $p$) can be expanded into a sum over the eigenspaces of $ker(\Delta-kId)$ for $0\leq k\leq 2p$):
\end{flushleft}

\end{flushleft}
\begin{remark}
As we assume that the generator $\Delta$ is the square of a derivations valued in these particular coarse bimodule (build in particular for the infinite dimensional case), such that for almost all $t\geq0$ $\delta_t :D(\delta)\rightarrow L^2(\mathcal{M})\bar{\otimes} L^2(\mathcal{M}^{op})$ or equivalently that the directional derivatives:
\begin{equation}
\delta^h(x)=\langle \delta(x),h\rangle_{\mathcal{H}}:=\int _{\mathbb{R}_+}\delta_t(x)\overline{h(t)}dt
\end{equation}
are valued into this coarse correspondence. It implies directly (and doesn't have to be supposed) that the chain rule is always ensured.
\end{remark}
\begin{definition}
We will call $X\in L^2(\mathcal{M})$ (self-adjoint) an eigenfunction of $\Delta$ a "chaos" eigenfunction of order "$p$" if :
\begin{equation}
    X^2\in \bigoplus_{k=0}^{2p}ker(\Delta-\lambda_kId)
\end{equation}
\end{definition}
\begin{flushleft}
We leave here the following conjecture, which is the main interest for building this abstract context and which can be seen as a free counterpart of Ledoux's result. \cite{ML}.
\end{flushleft}
\begin{conjecture}\label{c1}
Suppose that $X$ is self-adjoint, chaotic (of order $q\geq 1$), "fully-symmetric" which means in this context that for any $t\geq 0$, $\delta_t(x)^{*}=\delta_t(x)$, and normalized $\tau(X^2)=1$.
Then, there exists $C_q$ depending only on "$q$", such that we have the following bound which implies an abstract "fourth moment theorem", since $\Sigma^*(F|V_{I_n})\leq C_q\sqrt{\tau(F^4)-2}$:
\begin{equation}
    \lVert \Gamma(x)-q.1\otimes1\rVert_{L^2(\mathcal{M})\bar{\otimes}L^2(\mathcal{M}^{op})}\leq C_q\sqrt{\tau(x^4)-2}
\end{equation}
\end{conjecture}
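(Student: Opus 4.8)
The plan is to reduce the conjecture to the free Stein kernel already constructed in this section and then to transplant the Nourdin--Peccati fourth-moment machinery to the abstract carr\'e du champ.

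\emph{Reduction.} Being a chaos eigenfunction of order $q$, $X$ is in particular an eigenvector of $\Delta$, say $\Delta X=\lambda_q X$; normalising the spectrum so that $\lambda_k=k$ this reads $\Delta X=qX$, hence $\Delta^{-1}X=\tfrac1q X$ and the kernel of the preceding theorem specialises to $A=\Gamma(\Delta^{-1}X,X)=\tfrac1q\Gamma(X)$ with $\Gamma(X):=\Gamma(X,X)=\langle\delta X,\delta X\rangle_{\mathcal{H}}$. Consequently
\[
\Sigma^*(X\mid V_{I_1})\ \le\ \Bigl\lVert\tfrac1q\Gamma(X)-1\otimes 1\Bigr\rVert\ =\ \tfrac1q\,\bigl\lVert\Gamma(X)-q\,(1\otimes 1)\bigr\rVert,
\]
and the tuple statement follows identically from the theorem applied to $F$ together with the orthogonality of distinct eigenspaces, so it suffices to prove the displayed scalar bound. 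Now $\langle\Gamma(X),1\otimes 1\rangle=(\tau\otimes\tau)(\Gamma(X))=\langle\delta X,\delta X\rangle=\tau(X\Delta X)=q\,\tau(X^2)=q$ is real and $\lVert 1\otimes 1\rVert^2=1$, so the component of $\Gamma(X)$ along $\mathbb{C}\,(1\otimes 1)$ is exactly $q\,(1\otimes 1)$ and
\[
\bigl\lVert\Gamma(X)-q\,(1\otimes 1)\bigr\rVert^{2}\ =\ \lVert\Gamma(X)\rVert^{2}_{L^{2}(\mathcal{M})\bar{\otimes}L^{2}(\mathcal{M}^{op})}-q^{2}.
\]

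\emph{The right-hand side, spectrally.} Put $Y=X^{2}-\tau(X^{2})=X^{2}-1$, which by the chaos-of-order-$q$ hypothesis lies in $\bigoplus_{k=1}^{2q}\ker(\Delta-\lambda_k)$; traciality gives $\tau(X^{4})=\lVert X^{2}\rVert_{2}^{2}=1+\lVert Y\rVert_{2}^{2}$. On the Wigner space $Y=\sum_{p=0}^{q-1}I_{2q-2p}(f\stackrel{p}{\frown}f)$, the top term $I_{2q}(f\stackrel{0}{\frown}f)$ has $L^2$-norm $\lVert f\rVert^{2}=\tau(X^{2})=1$ and $\tau(X^{4})-2=\sum_{p=1}^{q-1}\lVert f\stackrel{p}{\frown}f\rVert^{2}$ by Lemma~\ref{lma5}; the preliminary structural fact to establish in the abstract setting is the analogue that the component of $Y$ in the \emph{top} eigenspace $\ker(\Delta-\lambda_{2q})$ has squared norm $\tau(X^{2})^{2}=1$, so that $\tau(X^{4})-2$ is exactly the squared $L^{2}$-mass of $X^{2}$ in the \emph{intermediate} eigenspaces $\ker(\Delta-\lambda_k)$, $1\le k\le 2q-1$. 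After this, the conjecture is equivalent to the claim that $\lVert\Gamma(X)\rVert^{2}-q^{2}$ is dominated, up to a factor $C_q^{2}$ polynomial in $q$, by that intermediate mass.

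\emph{The core estimate on $\lVert\Gamma(X)\rVert^{2}-q^{2}$.} Two routes suggest themselves. Route (a), combinatorial, parallels Lemma~\ref{lma8}: from the derivation identity $\delta(X^{2})=X\,\delta(X)+\delta(X)\,X$ one computes $\Delta(X^{2})$ and, using that $X^{2}$ has spectral support in $[0,2q]$, re-expresses $\Gamma(X)$ through $X$, $X^{2}$ and a bounded family of carr\'es du champ $\Gamma(G_i,G_j)$ of the spectral components $G_i$ of $X^{2}$; squaring, one uses the \emph{fully-symmetric} hypothesis $\delta_t(x)^{*}=\delta_t(x)$ --- the abstract avatar of mirror-symmetry, and, exactly as in the Bourguin--Campese phenomenon, indispensable if one wants a fourth-cumulant bound at all --- to cancel the constant contributions, and one then needs coassociativity of the directional derivatives $\delta^{h}$ (the extra structural hypothesis flagged in the remark) to turn the remaining $\mathcal{H}$-pairings into genuine contractions controlled by the intermediate mass, arriving at $\lVert\Gamma(X)\rVert^{2}-q^{2}\le C_q^{2}\,(\tau(X^{4})-2)$. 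Route (b) is a $\Gamma_2$/spectral-gap argument in the spirit of Ledoux--Nourdin--Peccati: every nonzero eigenvalue of $\Delta$ is $\ge 1$ while $X^{2}$ sits below level $2q$, so integrating the product semigroup $\phi_t\otimes\phi_t$ against $\Gamma(X)-q\,(1\otimes 1)$ and invoking a noncommutative iterated carr\'e du champ should give a bound of the same shape. In either case the genuine obstacle, and the reason this is posed only as a conjecture, is the absence in an arbitrary quantum Markov triple $(\mathcal{M},\delta,\Delta)$ of a Wigner bi-isometry and of a chaotic product formula: route (a) requires the surrogate contraction calculus to actually exist (whence the coassociativity hypothesis must be imposed), and route (b) requires a noncommutative $\Gamma_2$-criterion compatible with the coarse-bimodule structure; bridging this gap is the hard part, everything preceding being bookkeeping.
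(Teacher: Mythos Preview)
The statement you are attempting to prove is labelled \emph{Conjecture} in the paper, and the paper does not supply a proof; it explicitly says ``We leave here the following conjecture'' and, in the Open Problems section, elaborates on why the heuristic that works in the commutative Ledoux setting does not go through here. So there is no ``paper's own proof'' against which to benchmark your proposal.

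That said, your write-up is not a proof either, and you are candid about this in your final paragraph. Your \emph{Reduction} is correct and matches exactly what the paper's framework gives: $A=\frac{1}{q}\Gamma(X)$ is the Stein kernel, $(\tau\otimes\tau)\Gamma(X)=q$, and the discrepancy reduces to $\lVert\Gamma(X)\rVert^2-q^2$. Your spectral rewriting of $\tau(X^4)-2$ as the intermediate-eigenspace mass of $X^2$ is also the right target, and is precisely the structural fact that holds on the Wigner space via Lemma~\ref{lma5}. Where your proposal stops being a proof is exactly where the paper says the conjecture lies: route~(a) needs a contraction calculus (coassociativity of the $\delta^h$, a bi-isometry substitute, a product formula) that the abstract triple $(\mathcal{M},\delta,\Delta)$ simply does not carry, and route~(b) needs a noncommutative $\Gamma_2$ criterion that, as the paper notes in Open Problem~5, is not known even for the free Langevin diffusion. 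The paper's own heuristic (Open Problem~3) in fact shows a further obstruction you do not mention: in the free case the analogue of $q\mathbb{E}[F^4]=3\mathbb{E}[F^2\Gamma(F)]$ picks up a cross term $\tau\otimes\tau\bigl((F\otimes F)\Gamma(F)\bigr)$ whose smallness is not obvious and is part of what makes the abstract bound genuinely open.

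In short: your bookkeeping is sound and your diagnosis of the gap is accurate, but what you have written is a plausible strategy outline, not a proof, and the paper agrees with you that the missing ingredients are real obstacles rather than routine.
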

\begin{remark}
The convergence result might require maybe stronger assumptions:
\begin{enumerate}
    \item for all integers $q\geq 1$, for almost all $t\geq 0$,
    \begin{equation}
        \delta_t : Ker(\Delta-qId) \rightarrow \left(\bigoplus_{k=0}^q Ker(\Delta-kId)\right)\otimes  \left(\bigoplus_{k=0}^q Ker(\Delta-jId)\right)
    \end{equation}
    \item Or even the more restrictive condition:\begin{equation}
        \delta_t : Ker(\Delta-qId) \rightarrow \bigoplus_{j+k=q} Ker(\Delta-jId)\otimes  Ker(\Delta-kId)
    \end{equation}
\end{enumerate}
\end{remark}

\section{Open problems}
We would like to finish this paper with some open questions which are the following ones:
\begin{enumerate}
\item Is it possible to obtain the {\it HSI} inequality for all the class of self-adjoint convex potentials, in particular does the convexity assumption : $\mathcal{J}DV\geq c(1\otimes 1)\otimes I_n$ implies a good control of the Free Fisher information along the flow of the free Langevin diffusion with sel-adjoint convex potential as drift by a quantity involving the free 
Stein discrepancy (one knows for now that the second convexity assumption implies a good control to deduce {\it LSI}, by the exponential decay of the norm of the semigroup generated by  $Q=\left(\partial_jD_iV\right)_{i,j=1}^n$ and it's not clear for now if we don't have to suppose an additional unknown condition on the potential. Precisely, we used the equality in distribution of a free Langevin diffusion (Ornstein-Uhlenbeck case with linear drift) with cyclic derivatives as drift, to get to the result. Unfortunately, this won't be possible for the general case and seems rather challenging as the proof for non-homothetic drift is not straightforward, and so we would have to use Dabrowski formulas for conjugate variables and 
link them with a free Stein kernel.
\item What can be said in the case of the non-invertibility of the covariance matrix. In our approach, the positive definite aspect plays an important role, and we think it might be possible to obtain a bound for a weaker distance (such distances have not been yet defined). Indeed, we are still unable to define properly the distribution $\mu_{x_1,\ldots,x_n}$ as a non-commutative probability measure, and we have for now no idea of what it means.
\item It is well know from Ledoux, Azmoodeh, Campese, Poly or Bourguin, that the fourth moment phenomenon (even for more general distribution which belongs to the Pearson class) occurs in the abstract setting of Chaos of a Markov operator, especially estimates of the following type
$Var(\Gamma(F))\leq C(\E(F^4)-3\E(F^2)^2)$ lead to the fourth moment theorem (toward the standard Gaussian) for chaotic random variable: that is an eigenvalue of order $q$ of the opposite of the Markov generators $-L$ (which is suppose to simplify to have a pure point spectrum $sp(-L)=\N$), and which satisfies some conditions (for example the square of F can be expressed as finite sum over the eigenspaces of $Ker(L+k Id)$ for $1\leq k\leq 2q$. Unfortunately, we are unable for now to deduce such a result in our abstract setting in a free probabilistic context, even the heuristic given by Ledoux in the classical context, that is, suppose that $\E(F^2)=1$ and satisfies the previous assumptions, then:
\begin{eqnarray}
    q\E(F^4)&=&\E(F(-L)F)\nonumber\\
&=&3\E(F^2\Gamma(F))
\end{eqnarray}
and recall, since $F$ is an eigenfunction of $-L$ (of order $q$), this implies that $\E(\Gamma(F))=q$ (this is the farthest that we can go in our abstract setting):
\begin{equation}
    q\left(\frac{1}{3}\E(F^4)-1\right)=\E(F^2(\Gamma(F)-q))
\end{equation}
and this means that if $\tau(F^4)\sim 3$, then necessarily $\Gamma(F)\sim q$.
\newline
The heuristic in the free case seems much more intriguing since the same type of computations (and under the same hypothesis, that is $\Delta(F)=qF$) leads to:
\begin{eqnarray}
q\tau(F^4)&=&\tau\otimes \tau\left((F^2\otimes 1+F\otimes F+1\otimes F^2)\Gamma(F)\right)\nonumber\\
&\sim& \tau\otimes \tau \left((F^2\otimes 1 +1\otimes F^2).\Gamma(F)\right)
\end{eqnarray}
And thus, assuming that the central term if close to $0$ (since $\tau(F)=0$), it gives that:
\begin{eqnarray}
q\left(\tau(F^4)-2\right)\sim \tau\otimes \tau \left((F^2\otimes 1 +1\otimes F^2).(\Gamma(F)-q.1\otimes 1)\right)
\end{eqnarray}
which should be compared with the relation given in lemma \ref{lma5}, which is the starting point of the investigations about "{\it fourth moment theorem}" on the Wigner space.
\newline
Moreover, on the Wigner space, the constant $C_q$ of conjecture \ref{c1} obtained by Bourguin and Campese \cite{BC} on the Wigner space is complicated (contrary to the usual Gaussian case) and it is obtained through a very sophisticated combinatorial analysis (which involves deep estimates obtained via the product formula for Wigner integrals).
\item In light of these result, we also leave here the following conjecture: suppose that we are given a self-adjoint convex potential $V$ (polynomial to simplify) and the unique free Gibbs state associated, and a vector of self-adjoints multiple Wigner integrals $F$. Suppose that $\Gamma_V(F)$ is invertible (for example in the Banach algebra considered in \cite{GS}), then the convergences of $X$ toward the free Gibbs state $\tau_V$ is equivalent to the convergence of some moments. This idea is motivated by the conjecture of Voiculesu which states that every analytic perturbation of the semicircircular potential generates again the free groups factors: $L(\mathbb{F}_n)$, which have been proved by Guionnet et Shlyakhtenko in \cite{GS} for small (and convex preserving) perturbations and then extended by Dabrowski, Guionnet and Shlyakthenko in \cite{YGS}, and we think that this phenomenon should be traduced in terms of convergence of multiple Wigner integrals.
\item Ledoux, Nourdin and Peccati have been able to obtain the "{\it HSI}" inequality for families of invariant measures of second-order differential operator by the Triple Markov approach and Gamma calculus, especially by criterion over $\Gamma_2$: the Bakry-Emery criterion (which provides {\it LSI} inequality in the commutative case), and $\Gamma_3$ (which combined with the previous criterion over $\Gamma_2$ leads to the {\it HSI} inequality). Is it possible in our setting to deduce {\it LSI}, {\it HSI} and {\it WSH} inequality via an appropriate non commutative gradient operator ? To our best knowledge, a proof of Log-Sobolev via a non commutative Bakry-Emery condition where the generator is the one of a free Langevin diffusion with uniformly convex drift, is not known. If it could be proved, it would certainly implies that all these inequalities are true in full generality.
\end{enumerate}
\section*{Acknowledgements}
The author would like to thank deeply Pr. Guillaume Cébron and Pr. Solesne Bourguin for their numerous advice and discussions on this topic, his former PhD supervisor Pr. Ciprian Tudor and Dr. Obayda Assaad for their invaluable comments to improve the paper. 
\section{Funding}
This work is partially supported by Labex CEMPI (ANR-11- LABX-0007-01). The last parts of the paper were written at the Department of Statistics of The Chinese University of Hong-Kong, Hong-Kong SAR, China where the author is currently.

\end{document}